\RequirePackage{luatex85}
\documentclass[10pt,a4paper,oneside,no-math,reqno]{amsart}
\usepackage[a4paper,margin=2.5cm]{geometry}
\usepackage[dvipsnames]{xcolor}
\usepackage[hypertexnames=false,colorlinks=true,linkcolor=blue,%
citecolor=purple,filecolor=magenta,urlcolor=cyan]{hyperref}                      
\usepackage{amssymb,anyfontsize,enumerate,layout,mathrsfs,mdwlist,stmaryrd,thmtools,tikz}
\usepackage{dsfont} 
\usepackage[textsize=footnotesize]{todonotes}
\presetkeys%
    {todonotes}%
    {color=Apricot}{}%
\usepackage{csquotes} 
\usepackage{fontspec}

\usepackage{polyglossia}
\setdefaultlanguage{english}
\usepackage{microtype}
\usepackage[style=alphabetic,maxnames=99,maxalphanames=5]{biblatex}
\renewbibmacro{in:}{}
\usepackage{mathtools}
\usepackage[english,noabbrev]{cleveref}
\setdefaultlanguage{english}

\allowdisplaybreaks 

\theoremstyle{plain}                          
\newtheorem{theorem}{Theorem}[section]
\newtheorem{proposition}[theorem]{Proposition}    
\newtheorem{lemma}[theorem]{Lemma}
\newtheorem{corollary}[theorem]{Corollary}
\theoremstyle{definition}
\newtheorem{definition}[theorem]{Definition}

\newtheorem{assumption}[theorem]{Assumption}
\newtheorem{prop-defin}[theorem]{Proposition-definition} 
\newtheorem{example}[theorem]{Example}
\theoremstyle{remark}
\newtheorem{remark}[theorem]{Remark}

\numberwithin{equation}{section}

\renewcommand{\theta}{\vartheta}
\renewcommand{\phi}{\varphi}
\renewcommand{\epsilon}{\varepsilon}

\newcommand{\mb}[1]{\mathbb{#1}} 
\newcommand{\mf}[1]{\mathfrak{#1}}
\newcommand{\mc}[1]{\mathcal{#1}}

\newcommand{\N}{\mb{N}} 
\newcommand{\C}{\mb{C}} 
\newcommand{\Z}{\mb{Z}} 

\renewcommand{\P}{\mb{P}}

\newcommand{\<}{\langle}
\renewcommand{\>}{\rangle}
\newcommand{\del}{\partial}

\DeclareMathOperator{\im}{Im}

\DeclareMathOperator{\Aut}{Aut}

\newcommand{\Id}{\mathord{\mathrm{Id}}}
\DeclareMathOperator*{\Res}{Res}

\DeclareMathOperator{\ad}{ad}

\newcommand{\lh}{\mathord{\mathrm{l\!h}}}

\begingroup\expandafter\expandafter\expandafter\endgroup
\expandafter\ifx\csname pdfsuppresswarningpagegroup\endcsname\relax
\else
\fi
{}

\begin{document}

\title[Failing to keep the balance]{Failing to keep the balance: explicit formulae and topological recursion for leaky Hurwitz numbers}
\author{Marvin Anas Hahn and Reinier Kramer}
\date{}

\subjclass[2020]{Primary 14N10, 14H30; Secondary 14N35, 14H70, 14T90, 05A15, 05E05}
\keywords{Hurwitz numbers, tropical geometry, KP hierarchy, topological recursion}

\begin{abstract}
    Recently a new family of enumerative invariants called \emph{leaky Hurwitz numbers} was introduced in the context of logarithmic intersection theory \cite{CMR25}. They admit an interpretation via tropical covers where the balancing condition fails. We employ tropical geometry to prove a generalisation of the piecewise polynomiality of \cite{AKL25} for leaky completed cycles Hurwitz numbers, and a different wall crossing that is cubic instead of quadratic. Using tropical combinatorics and generatingfunctionology, we also find closed formulae for one--part and two--part completed cycles leaky Hurwitz numbers in genus $0$. Working more generally with a view towards topological recursion, we use Hamiltonian flows to associate spectral curves to very general cut--and--join operators. Under mild analytic constraints, we find the appropriate spectral curves, and in case the leakiness is fixed, we show that the resulting enumerative invariants satisfy topological recursion. This provides a partial inverse to recent work of Alexandrov--Bychkov--Dunin-Barkowski--Kazarian--Shadrin \cite{ABDKS24a} producing differentials satisfying topological recursion for KP $\tau$-functions. In particular these results specialise to completed cycles leaky Hurwitz numbers.
    \end{abstract}

\maketitle

\tableofcontents

\section{Introduction}

\subsection{Background}
Hurwitz numbers were introduced \cite{Hur1879,Hur01} as counts of ramified covers of Riemann surfaces with given ramification profiles. They were soon found to be related to decomposition problems in symmetric groups and to the representation theory of these groups. In the last three decades, there has been a new surge of interest due to the realisation \cite{Pan00,Ok00} that certain kinds of Hurwitz numbers, called double Hurwitz numbers, can be assembled into generating functions which solve the 2D-Toda and Kadomtsev--Petviashvili (KP) integrable hierarchies. They are counts of ramified covers of $ \P^1$, with two specified, variable ramification profiles (hence \emph{double}). This realisation came from considerations in Gromov--Witten theory, and the connection between Hurwitz theory and Gromov--Witten theory has been a major pillar of research \cite{OP06,OP06a,GKLS25}. 

The KP hierarchy is an infinite series of partial differential equations in infinitely many variables, whose solution space is a semi-infinite Grassmannian \cite{SS83}, and as such has a natural action of an infinite general linear group, given by vertex operators. All equations in this hierarchy have terms with at least two derivatives, so linear functions are trivial solutions. Double Hurwitz numbers can be obtained from such trivial solutions by action of diagonal elements, called cut-and-join operators. Conversely, diagonal elements produce so-called \emph{hypergeometric solutions} \cite{orlov2001hypergeometric}. The formalism of \emph{weighted Hurwitz numbers} \cite{GPH17} extracts Hurwitz--type enumerations from solutions of hypergeometric type.

Hurwitz numbers are also an important class of examples of topological recursion. Topological recursion \cite{EO07} is a recursive way of construction a collection of symmetric multidifferentials $ \omega_{g,n}$ on powers of a (spectral) curve $ \Sigma$, i.e. $ \omega_{g,n}$ is defined on $ \Sigma^n$. The recursion is on $ 2g -2 + n$, which is the negative of the Euler characteristic of a genus $g $ curve with $n$ punctures, hence the name. Topological recursion has had strong connections to integrable hierarchies since its conception, because it encodes $\psi$-class intersection numbers (i.e. the Gromov--Witten theory of a point), which solve the Korteweg-de Vries hierarchy, and hence the KP hierarchy, by the Witten--Kontsevich theorem \cite{Wit91,Kon92}. Generating series of single simple Hurwitz numbers (with one of the two ramifications trivial, and the simplest cut-and-join operator), arranged by fixed genus $g$ of the covering surface and fixed length $n$ of the remaining ramification, give such multidifferentials $ \omega_{g,n}$. They were conjectured to satisfy topological recursion \cite{BM08} as a limit of a conjecture on the Gromov--Witten theory of toric Calabi--Yau threefolds \cite{BKMP09}. This was proved in \cite{EMS11}, and later generalised to many other kinds of Hurwitz numbers, notably the completed cycles case relevant for the Gromov--Witten/Hurwitz correspondence of \cite{OP06,OP06a} in \cite{DKPS23}. Conversely, all solutions to topological recursion fit into the framework of cohomological field theories \cite{Eyn14}.  In particular, all invariants solved by topological recursion may be expressed as Gromov--Witten invariants akin to the celebrated ELSV formula \cite{ELSV01}. A structural application is that therefore all such invariants are quasi-polynomial.

In a string of papers, Bychkov, Dunin-Barkowski, Kazarian, and Shadrin, later joined by Alexandrov, systematised the theory of topological recursion for Hurwitz numbers \cite{BDKS22,BDKS24}, and used their insights to prove many structural properties of topological recursion \cite{BDKS23,BDKS25,ABDKS24,ABDKS24a,ABDKS25universal}, extending the definition \cite{ABDKS25}, and showing that the collection of multidifferentials, expanded at any non-special point of $\Sigma$ in any local coordinate, yields a KP $\tau$-function \cite{ABDKS24KP0,ABDKS24KP+,ABDKS25blobbed}, depending on an extra formal parameter $ \hbar$ tracking the negative Euler characteristic $ 2g-2+n$.

Double Hurwitz numbers are closely related to the intersection theory of $\overline{\mathcal{M}}_{g,n}$ \cite{GV05,GJV05}. An instance of this relation is the celebrated ELSV formula  \cite{ELSV01} for \emph{single Hurwitz numbers}, i.e. the case of one ramification profile trivial. A generalisation for double Hurwitz numbers via topological recursion may be found in \cite{BDKLM20}. At the heart of many of these relations to intersection theory lies a structural polynomiality of double Hurwitz numbers in the entries of the ramification profile.

Another major point of view in Hurwitz theory is provided by the field of \emph{tropical geometry}, a relatively new field of mathematics at the crossroads of combinatorics and algebraic geometry. It provides the notion of \emph{tropicalisation} that provides a dictionary between combinatorics and algebraic geometry. In particular, maps between Riemann surfaces tropicalise to maps between graphs called \emph{tropical covers}. In \cite{CJM10} this perspective was employed to derive an expression of double Hurwitz numbers as a weighted enumeration of tropical covers. This interpretation arises naturally from tropicalising the moduli space of admissible covers \cite{cavalieri2016tropicalizing}. On the other hand, Wick's theorem allows a tropicalisation via the Fock space that produces the same result \cite{zbMATH07272609}. Based on previous calculations in \cite{HL22,HL20} the full weighted Hurwitz numbers problem was tropicalised in \cite{hahn2025tropicalising}. A major strength of the tropical approach is that the aforementioned polynomiality of double Hurwitz numbers is a consequence of the combinatorial structure of tropical covers \cite{CJM11}.

\subsection{Adding leakiness}
As outlined above, double Hurwitz numbers are intimately related to the intersection theory on $\overline{\mathcal{M}}_{g,n}$. In particular, there is a close conceptual relation to the double ramification cycle:  the locus of curves with map to $ \P^1$ with given ramification over $ 0$ and $ \infty$ \cite{GV05}. As it turns out, this relationship is quite subtle and -- relying on new advances in logarithmic enumerative geometry \cite{HPS19} -- was only recently made precise in \cite{CMR25} using a logarithmic enhancement of the double ramification cycle. More precisely, \cite{CMR25} proved that Hurwitz numbers may naturally be obtained as an intersection product of logarithmic double ramification cycle with a branch polynomial obtained from tropical combinatorics. This perspective naturally places Hurwitz numbers into a family of enumerative invariants indexed a non--negative integer parameter $k$, by replacing the double ramification cycle with its pluricanonical generalisation. These numbers were dubbed \emph{leaky Hurwitz numbers} in \cite{CMR25}. The terminology arises from their tropical interpretation. A crucial feature of tropical covers is a \emph{balancing condition} at each vertex. Leaky Hurwitz numbers are computed by leaky tropical covers, where the balancing condition fails exactly by $k$ at each vertex.

While this generalisation loses the geometric intuition of counting covers of $ \P^1$, as the degree is not even constant anymore, it turns out to be very natural from the integrability perspective. In contrast to the classical setting described above, leaky Hurwitz numbers  have \emph{non-diagonal} cut-and-join operators; and more specifically the `leakiness' is the distance from the diagonal of the cut-and-join operator. We are therefore firmly in the non--hypergeometric sector of Hurwitz theory. While \cite{CMR25} considered only a very particular subset of cut-and-join operators coming from their log double ramification analysis, from the point of view of combinatorics and integrability there is no reason for such a restriction. E.g. \cite{AKL25} introduced a leaky analogue of the important class of \emph{completed cycles Hurwitz numbers} (that appear in the stationary sector of Gromov--Witten theory \cite{OP06a}), and proved that they admit a piecewise polynomial structure akin to their non--leaky origins \cite{SSZ12}. Moreover, \cite{cavalieri2025k,CMS25} introduced a descendant version of leaky Hurwitz numbers with $\psi$--class insertions. Among others, they derived closed formulae for one--part leaky Hurwitz numbers, i.e. one ramification profile is the one--part partition, while the other is the trivial partition, in genus $0$ and genus $1$.

\subsection{Our results}
Our work may be separated into two parts. In the first part, we study completed cycles leaky Hurwitz numbers through the lens of tropical geometry. To begin with, we derive a tropical interpretation of completed cycles leaky Hurwitz numbers in \cref{sec-tropcov}. Using this interpretation, we study the piecewise polynomial structure of completed cycles leaky Hurwitz numbers in \cref{PolyWC}. In a first step, we prove a piecewise polynomiality result generalising the one found before in \cite{AKL25}. Similar to \emph{loc.cit.}, we obtain a universal polynomiality result, i.e. the polynomiality extends to the leakiness parameters. Similar considerations were found in tropical enumerative geometry of toric surfaces before \cite{ardila2012universal,hahn2024universal}. 

Moreover, we study their wall--crossing behaviour, i.e. we consider two adjacent chambers of polynomiality and express the difference of polynomials in terms of completed cycles leaky Hurwitz numbers with smaller input data. A wall--crossing result for disconnected leaky Hurwitz numbers was previously derived in \cite{AKL25} using Fock space techniques. Our tropical methods allow the derivation of different formulae for connected invariants.

Moreover, motivated by the initial cases of topological recursion and the work in \cite{CMS25}, we explore closed formulae for an orbifold version of completed cycles leaky Hurwitz numbers in the cases $(0,1)$ and $(0,2)$, i.e. genus $0$ and ramification partitions of length $1$ and $2$. We refer to \cref{ClosedFormulaeSec} for precise definitions. We obtain closed formulae for the full enumerative problem for these cases. Our formula for the $(0,1)$ case recovers the formula in the same setting obtained in \cite{CMS25}. We consider the methodology the most interesting part of this section. In particular, we observe that the recursive cut--and--join analysis on tropical covers may be phrased as a functional equation. Generatingfunctionology, in particular Lagrange inversion and similar methods, then gives tools to solve this equation. A direct calculation reveals closed formulae for $(0,1)$, while residue calculations produce formulae for $(0,2)$.

From \cref{CJSection} forward, we consider leaky Hurwitz numbers with a very general cut-and-join operator: we allow it to depend on $ \hbar$, and at first, we only require that it be lower triangular. We decompose this operator as a linear combination of the leaky completed cycles operators of \cite{AKL25}, and pack the coefficients into a two-variable generating series. We show how a particular dequantisation, i.e. leading order in a certain expansion with respect to the quantum parameter $\hbar$, of this generating series is a Hamiltonian function which, under some analyticity assumption, generates the spectral curve for the Hurwitz numbers multidifferentials, and we find closed algebraic formulae for the multidifferentials.

This gives a natural interpretation of the symplectic structure of topological recursion: we explicitly generate all data of topological recursion through Hamiltonian, hence symplectic, flow. In particular, we realise $ \omega_{0,1}$ directly as an integral of the standard symplectic form on $ \C^* \times \C$.

Our methods show how vertex operators, i.e. the group action generating the space of KP $ \tau $-functions, act on the space of spectral curves (of genus $0$) in a natural way. This is a partial converse to the results of \cite{ABDKS24KP0}: while they find $\tau$-functions from spectral curves through topological recursion, we generate spectral curves, and hence the genus-$0$ sector of topological recursion, from $\tau$-functions.

The procedure described above loses information, as it only keeps the leading order in $\hbar$. However, the Hurwitz numbers, and hence their associated multidifferentials, depend on the entire $ \hbar$-series. As topological recursion uniquely reconstructs multicorrelators from the spectral curve, there is at most one cut-and-join operator with given dequantisation whose Hurwitz numbers can be found trough topological recursion.

For a fixed positive leakiness, i.e. a cut-and-join operator with a single non-zero diagonal below the main diagonal, we write the spectral curve explicitly, and we show that topological recursion generically holds if the operator coincides with its leading order, in terms of quantisation. The most natural class of examples, with the cut-and-join operator a leaky completed cycles operator and the secondary point an orbifold ramification, all fall in this generic class. This is compatible with the $ k \to 0$ limit, even though in this case the usual generating series, the symplectic duality function $ \hat{\psi}$, does have non-trivial $\hbar$-corrections: our generating series is different, and we claim more natural.\par
Our proof for topological recursion hinges on the exact shape of the spectral curves we obtain, as we can construct these through a series of successive $x$--$y$ dualities and symplectic dualities.

\subsection{Future work}

In this work, we explore a new, but very natural, extension of Hurwitz numbers, particularly in connection with tropical geometry, integrability, and topological recursion. We have found that this extension yields new insights in these connections, that also shed light on the previously establishes relations. However, we do not prove these extended connections in full generality, and we also do not address certain other commonly related topics.

As such, we leave many open questions, including the following.

\begin{itemize}
    \item In case the leakiness is allowed to be negative, Hurwitz numbers become infinite counts. We would probably need to keep the flow parameter $ \beta$ as a (formal) parameter in the Hurwitz numbers and the spectral curve to manage this, but even then our approach runs into trouble, as such cut-and-join operators would not annihilate the covacuum of Fock space.
    \item Our proof of the topological recursion result requires iterated use of $ x$--$y$ swap and symplectic duality. We would like to see a more conceptual proof that really shows the `symplectic invariance' of topological recursion, not necessarily starting from a linear $ \tau$-function, and coming from the Hamiltonian flows.
    \item We only give a proof of topological recursion for a single non-zero diagonal, as our method is dependent on the specific shape the spectral curve has is this case. A more conceptual proof as above should hopefully also generalise to more non-zero diagonals.
    \item We expect the Hamiltonian flow from the cut-and-join operators to act similarly on all spectral curves (or at least those of genus zero), not just the trivial ones we start from for Hurwitz numbers.
    \item We would also like to see the $x$--$y$ swap itself as a particular implementation of this flow for an appropriate Hamiltonian. The first guess for such a Hamiltonian would be $ H = \frac{\pi}{2} ( \log(z)^2 + s^2 )$, but this is not a power series in $z$, so it does not readily fit our framework.
    \item As of now, the geometric interpretation of leaky Hurwitz numbers is not so clear. A more intuitive interpretation than the one through the log pluricanonical double ramification cycle would be interesting.
    \item It would be interesting to know whether a leaky version of weighted Hurwitz numbers exist. For $k=0$ this arises from the eigenvalues of the cut--and--join operator. For $k>0$, there are no more eigenvalues and the calculations become more complicated.
    \item Topological recursion is strongly linked \cite{DOSS14} to intersection numbers of Cohomological Field Theories (CohFTs) with $\psi$-classes, and especially in the case of Hurwitz numbers, these CohFTs are very natural objects -- the Hodge class and its generalisation the Chiodo class are natural examples. Because of the general theory, CohFTs certainly exist also for leaky Hurwitz numbers, but we do not know if they are `natural' examples coming from geometric constructions of independent interest.
    \item In light of the recent work on non--orientable Hurwitz numbers via Jack functions \cite{chapuy2022non}, it would be interesting to explore a $b$--refinement of leaky Hurwitz numbers as well. The tropical interpretation in \cite{fesler2025refined} may be a natural starting point.
\end{itemize}

\subsection{Organisation of the paper}

In \cref{Preliminaries}, we give background information on Hurwitz theory, and in particular the aspects that we want to cover: the expression of Hurwitz numbers in Fock space through cut-and-join equations, as well as integrability, in \cref{FockSpace}, the interpretation as counts of tropical covers in \cref{sec-tropcov}, and we give an introduction to topological recursion including modern tools in \cref{TR}.

In \cref{PolyWC} we use tropical techniques to prove chamber polynomiality and wall crossing of leaky double Hurwitz numbers in, respectively, \cref{thm-piecepoly,thm-WC}.

From \cref{ClosedFormulaeSec} on, we study double Hurwitz numbers asymmetrically: we keep one ramification variable, and have the secondary ramification profile be governed by a generating series. \Cref{ClosedFormulaeSec} considers the case of monomial generating series, i.e. fixed leakiness, completed cycles, and orbifold numbers. In this setting, we find explicit formulae for genus zero one-part and two-part leaky Hurwitz numbers obtained through tropical reasoning.

In \cref{CJSection}, we use the Fock space formalism to interpret the cut-and-join operator to leading order as a Hamiltonian function. We show in \cref{GeneralSC} that this flow, along with the flow associated to the secondary ramification generating series, generates the spectral curve of the theory.

\Cref{ExplicitSection} consist of two parts: in \cref{ExplicitMultidifferentials}, we generalise the work of \cite{BDKS22} to give explicit closed algebraic formulas for multidifferentials of leaky Hurwitz numbers, in \cref{LeakyMultidifferentials}. This section hardly contains any proofs, as nearly all proofs generalise directly from \cite{BDKS22}. Then, in \cref{CurvesForFixedk} we restrict to a fixed leakiness to write down the spectral curve explicitly in terms of the given generating functions for the problem, \cref{SpectralCurveShape}.

Finally, in \cref{sec:TR} we show that, still for fixed $k >0$, the multidifferential generating series of Hurwitz numbers are calculated by topological recursion on the spectral curves found before if the cut-and-join generating function coincides with its dequantisation, and both this and the secondary ramification generating series are rational \cref{LeakyTRThm,LeakyTRLimit}, at least generically. This generic situations contains all `monomial' cases: a leaky cut-and-join operator with an orbifold secondary ramification.
The proof identifies the spectral curve as generated by a series of four $ x$--$y$ swaps and three symplectic dualities, and shows the multidifferentials obtained from this series of operations coincide with the Hurwitz generating series.

\subsection{Acknowledgments}

We would like to thank Renzo Cavalieri, Danilo Lewański, Hannah Markwig, Dhruv Ranganathan, Johannes Schmitt, and Sergey Shadrin for stimulating discussions on various aspects of this paper. R.K. would also like to thank Maxim Kazarian for his very enlightening series of talks at the MATRIX program `Enumerative Geometry, Integrability, and Topological Recursion', and the MATRIX itself and the program organisers for making this happen.

R.K. acknowledges support from the National Science and Engineering Research Council of Canada. R.K. is partially supported by funds of the Istituto Nazionale di Fisica Nucleare, by IS-CSN4 Mathematical Methods of Nonlinear Physics. R.K. is also thankful to GNFM (Gruppo Nazionale di Fisica Matematica) for supporting activities that contributed to the research reported in this paper.

\section{Preliminaries}\label{Preliminaries}

\subsection{Notation}

We will use the notation $ \llbracket n \rrbracket \coloneqq \{ 1, \dotsc, n\}$, and let this thread as index, e.g. $ X_{\llbracket n \rrbracket} = \{ X_1, \dotsc, X_n \}$.

We also define the functions
\begin{equation}
    \varsigma(z)
    =
    2\sinh(z/2) , 
    \qquad 
    \mathcal{S}(z)
    =
    \varsigma(z)/z .
\end{equation} 
Moreover, we denote by $\mathbb{Z}' = \Z + \frac{1}{2}$ the set of half--integers.

For $\lambda$ a partition of $d$, we denote the character of the corresponding representation of $S_d$ by $\chi_\lambda$. Moreover, we denote the conjugacy class corresponding to a partition $\mu\vdash d$ by $C_\mu$ and define the central character

\begin{equation}
    f_\mu(\lambda)
    =
    \frac{|C_\mu|}{\mathrm{dim}(\lambda)} \chi_\lambda(\mu).
\end{equation}

We extend the notion of a central character to arbitrary partitions $\mu$ and $\nu$ via

\begin{equation}
    f_\mu(\lambda)
    =
    \binom{|\lambda|}{|\mu|} \frac{|C_\mu|}{\mathrm{dim}(\lambda)} \chi_\lambda(\mu).
\end{equation}

If $|\mu|<|\lambda|$, there is a natural inclusion $S_{|\mu|}\subset S_{|\lambda|}$ which allows us to define $\chi_\lambda(\mu)$. If $|\mu|>|\lambda|$ the binomial coefficient vanishes.
 We denote the shifted symmetric power sums 
 \begin{equation}
     p_k(\lambda)
     =
     \sum_{i=1}^\infty \left( (\lambda_i-i+\frac{1}{2})^k-(-i+\frac{1}{2})^k\right) + (1-2^{-k}) \zeta(-k).   
 \end{equation}

 For a partition $\mu$, we define
 \begin{equation}
     p_\mu
     =
     \prod p_{\mu_i}.
 \end{equation}

\subsection{(Leaky) Hurwitz numbers}
We begin with the definition of the ring of shifted symmetric functions. Let $n$ be a positiver integer. We define an action of $S_n$ on $\mathbb{Q}[\lambda_1,\dots,\lambda_n]$ by permutation of the shifted variables $\lambda_i-i$. We denote the ring of invariants by

\begin{equation}
    \mathbb{Q}[\lambda_1,\dots,\lambda_n]^{(S_n)}.
\end{equation}

We have natural filtration by degree and natural maps along $n$ by setting the last variable to $0$. Thus, we may define the ring of shifted symmetric functions $\Lambda^\ast$ in infinitely many variables via the following projective limit

\begin{equation}
    \Lambda^\ast
    =
    \lim_{\leftarrow} \mathbb{Q}[\lambda_1,\dots,\lambda_n]^{(S_n)}.
\end{equation}

By work of Kerov and Olshanski \cite{Ker94}, we have $f_\mu(\lambda)\in\Lambda^\ast$. In fact, the central characters $f_\mu$ form a basis when ranging over all partitions $\mu$. Moreover, by construction $p_k\in\Lambda^\ast$ and $\Lambda^\ast$ is freely generated by them.

We now define double Hurwitz numbers.

\begin{definition}
Let $g\ge0$, $d>0$ integers and $\mu,\nu$ partitions of $d$. Denote $b=2g-2+\ell(\mu)+\ell(\nu)$ and fix pairwise distinct points $q_1,\dots,q_b\in\mathbb{P}^1$. We call a holomorphic map $f\colon S\to\mathbb{P}^1$ with
\begin{itemize}
    \item $S$ is a compact Riemann surface,
    \item $f$ has ramification profile $\mu$ over $0$, $\nu$ over $\infty$ and $(2,1,\dots,1)$ over $q_i$,
\end{itemize}
a branched covering of type $(g,\mu,\nu)$. We call two such covers $f_1\colon S_1\to\mathbb{P}^1$ and $f_2\colon S_2\to\mathbb{P}^1$ equivalent if there exists an isomorphism $g\colon S_1\to S_2$ with $f_1=f_2\circ g$.

Then, we define the (not-necessarily connected) double Hurwitz number

\begin{equation}
    h^\bullet_{g;\mu,\nu}
    =
    \sum_{[f]} \frac{1}{|\mathrm{Aut}(f)|},
\end{equation}
where the sum runs over all equivalence classes of branched covers of type $(g,\mu,\nu)$. When we additionally impose that $S$ is connected, we denote the connected double Hurwitz numbers by $h^\circ_{g;\mu,\nu}$.
\end{definition}

We note that when there is there no fear of confusion or when we do not want to specify it, we drop the notation of $\circ$ and $\bullet$ to distinguish between connected and not--necessarily connected numbers.

Hurwitz numbers are closely related to the representation theory of the symmetric group. In particular, we have

\begin{equation}
    h^\bullet_{g;\mu,\nu}
    =
    \frac{1}{\prod \mu_i\prod\nu_j} \sum_{\lambda\vdash d} \chi_\lambda(\mu) \chi_\lambda(\nu) f_{(2)}(\lambda)^r.
\end{equation}

This motivates the definition of completed cycles Hurwitz numbers of \cite{OP06a}.

\begin{definition}\label{CCHNdef}
    Let $g\ge0$, $d>0$ integers and $\mu,\nu$ partitions of $d$. Further, let $r_1,\dots,r_n$ be positive integers with $\sum r_i=2g-2+\ell(\mu)+\ell(\nu)$ and denote $\underline{r}=(r_1,\dots,r_n)$. Then, we define completed cycles double Hurwitz numbers

    \begin{equation}
        h^{\underline{r},\bullet}_{g;\mu,\nu}
        =
        \frac{1}{\prod \mu_i\prod\nu_j} \sum_{\lambda\vdash d} \chi_\lambda(\mu) \chi_\lambda(\nu) \prod_{i=1}^n \frac{p_{r_i}}{(r_i-1)!}.
    \end{equation}
    Via the inclusion-exclusion principle, we obtain connected completed cycles double Hurwitz numbers $H_g^{\underline{r},\circ}$.
\end{definition}

We have $H_g^{(2^n)}(\mu,\nu) = H_g(\mu,\nu)$, since $f_{(2)}=p_2$ by the Frobenius character formula, i.e. completed cycles Hurwitz numbers do recover the classical definition. For $r_i \ge 3 $ completed cycles Hurwitz numbers are still related to classical Hurwitz numbers where we require the ramification profile of $q_i$ to be $(r_i,1\dots,1)$, although there are additional contributions. The shifted symmetric functions $p_i$ ``complete" the corresponding central characters $f_{(i)}$ in this sense. In seminal work \cite{OP06a}, Okounkov and Pandharipande gave a natural interpretation of completed cycles Hurwitz numbers in Gromov--Witten theory.

We describe the set-up briefly and refer to \cite{OP06a} for details. Fix two partitions $\mu$ and $\nu$ of a positive integer $d$. Let $\overline{\mathcal{M}}_{g,n}(\mathbb{P}^1,\mu,\nu)$ be the moduli space of relative stable maps to $\mathbb{P}^1$ with ramification $\mu$ over $0$ and $\nu$ over $\infty$. We define for $i\in[n]$ the corresponding $\psi$--class
\begin{equation}
    \psi_i
    =
    c_1(\mathbb{L}_i),
\end{equation}

where $\mathbb{L}_i$ is the $i$-th cotangent bundle on $\overline{\mathcal{M}}_{g,n}(\mathbb{P}^1,\mu,\nu)$. Moreover, let $\mathrm{ev}_i\colon\overline{\mathcal{M}}_{g,n}(\mathbb{P}^1,\mu,\nu)\to\mathbb{P}^1$ evaluating the $i$-th marked point. Let $\omega$ be the Poincaré dual of the point class. Then we define for $r_1,\dots,r_n\ge0$ as

\begin{equation}\label{GWdef}
    \left\langle \mu \mid \tau_{r_1}(\omega) \cdots \tau_{r_n}(\omega) \mid \nu \right\rangle_{g,n}^{\mathbb{P}^1}
    =
    \int_{[\overline{\mathcal{M}}_{g,n}(\mathbb{P}^1,\mu,\nu)]^{\textrm{vir}}}\prod_{i=1}^n\mathrm{ev}_i^\ast(\omega)\psi_i^{r_i}.
\end{equation}

When we restrict to connected stable maps, we add a superscript $\bullet$, where for possibly disconnected stable maps, we add $\circ$.

We have the following theorem, which is an incarnation for Gromov--Witten/Hurwitz correspondence of \cite{OP06a}.

\begin{theorem}\label{GW/H}
    We have
    \begin{equation}
        h^{\underline{r},\bullet}_{g;\mu,\nu}
        =
        \left\langle\mu\mid \tau_{r_1-1}(\omega)\cdots\tau_{r_n-1}(\omega)\mid\nu\right\rangle_{g,n}^{\mathbb{P}^1}.
    \end{equation}
\end{theorem}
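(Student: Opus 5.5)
This is the Gromov--Witten/Hurwitz correspondence of Okounkov--Pandharipande \cite{OP06a}. We would not reprove it from scratch but reduce it to their main theorem, checking that the normalisations agree. The strategy has three steps: degenerate the target, evaluate the pieces via the operator formalism, and match the resulting formula with \cref{CCHNdef}.

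\emph{Degeneration.} We first apply the degeneration formula for relative virtual classes (Li's formula) to $\mathbb{P}^1$ relative to $0$ and $\infty$. Each point insertion $\tau_{r_i-1}(\omega)$ is pushed into its own rubber or cap component. This expresses the disconnected invariant $\langle \mu \mid \prod_i \tau_{r_i-1}(\omega) \mid \nu\rangle^{\mathbb{P}^1,\bullet}$ as a sum over intermediate partitions $\eta^{(1)},\dots,\eta^{(n-1)}$ of products of one-point relative invariants $\langle \eta \mid \tau_{r-1}(\omega)\mid \eta'\rangle$. These are glued with the usual factors $\mathfrak{z}(\eta)=|\Aut\eta|\prod\eta_j$. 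In operator language, the invariant becomes a matrix element
\[
\Big\langle \mu \Big| \prod_{i=1}^n \mathsf{M}_{r_i} \Big| \nu \Big\rangle,
\]
where $\mathsf{M}_r$ is the operator on the space of class functions encoding the one-point relative invariants of $(\mathbb{P}^1,0,\infty)$.

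\emph{Key step and main obstacle.} We then show that $\mathsf{M}_r$ acts diagonally in the basis of characters $\chi_\lambda$, with eigenvalue $p_r(\lambda)/r!$ up to the factorial normalisation written in \cref{CCHNdef}. This is the real content of \cite{OP06a}. It is proved by virtual localisation on $\overline{\mathcal{M}}_{g,1}(\mathbb{P}^1,\mu,\nu)$, which reduces the one-point invariants to Hodge integrals and double Hurwitz numbers via the ELSV formula. Separately, the operator $\mathcal{E}_0$ formalism on the infinite wedge identifies $\sum_\lambda p_r(\lambda)\,|\lambda\rangle\langle\lambda|$ with the completed cycle $\overline{(r)}$. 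The constant $(1-2^{-k})\zeta(-k)$ in $p_k$ is exactly the regularisation appearing in $\mathcal{E}_0(z)$, which is what makes the completion terms match the degenerate (contracted) contributions. This is the step we expect to be hardest, and we would cite it rather than redo it.

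\emph{Conclusion.} With diagonality in hand, we insert the resolution of the identity $\sum_\lambda |\lambda\rangle\langle\lambda|$. Using $|\mu\rangle = \frac{1}{\mathfrak{z}(\mu)}\sum_\lambda \chi_\lambda(\mu)|\lambda\rangle$ (up to automorphism conventions), the matrix element becomes
\[
\frac{1}{\prod\mu_i\prod\nu_j}\sum_{\lambda\vdash d}\chi_\lambda(\mu)\chi_\lambda(\nu)\prod_i \frac{p_{r_i}(\lambda)}{(r_i-1)!},
\]
matching \cref{CCHNdef} for disconnected invariants. Both sides satisfy the same inclusion--exclusion relation between connected and disconnected counts, so the connected version follows. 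Finally we note that the superscripts $\bullet$ and $\circ$ in \eqref{GWdef} must be read consistently, with $\bullet$ meaning disconnected, for the statement to hold.
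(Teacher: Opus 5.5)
Your route is the paper's: the paper gives no argument and simply quotes the Gromov--Witten/Hurwitz correspondence of \cite{OP06a}. Beyond that citation, the real content of your proposal is the normalisation check, and that is where it breaks.

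The eigenvalue you quote, $p_r(\lambda)/r!$, is the correct one. In \cite{OP06a} one has $\tau_{r-1}(\omega)=\frac{1}{(r-1)!}\overline{(r)}$, and the completed cycle acts by $\overline{(r)}=p_r/r$. So your sentence identifying $\sum_\lambda p_r(\lambda)\,|\lambda\rangle\langle\lambda|$ with $\overline{(r)}$ already drops a factor $r$.

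Inserting the resolution of the identity therefore produces $\prod_i p_{r_i}(\lambda)/r_i!$, not the $\prod_i p_{r_i}(\lambda)/(r_i-1)!$ of \cref{CCHNdef}. The phrase ``up to the factorial normalisation'' hides a factor $\prod_i r_i$. This is not a convention, since \cref{CCHNdef} fixes $p_k$ explicitly.

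Concretely, $p_2(\lambda)=\sum_i\lambda_i(\lambda_i-2i+1)=2f_{(2)}(\lambda)$, not $f_{(2)}(\lambda)$ as the remark after \cref{CCHNdef} claims. Take $g=0$, $\mu=(2)$, $\nu=(1,1)$, $\underline{r}=(2)$. The right-hand side of \cref{CCHNdef} is $\tfrac12(2+2)=2$. By contrast, $\langle (2)\mid\tau_1(\omega)\mid(1,1)\rangle$ equals $1$ with labelled relative points, and $\tfrac12$ if one divides by $|\Aut\nu|$.

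So the statement as literally written is off by $\prod_i r_i$. What your argument, and \cite{OP06a}, actually prove is the version with $p_{r_i}/r_i!=\frac{1}{(r_i-1)!}\cdot\frac{p_{r_i}}{r_i}$. The factor $p_{r_i}/r_i$ is exactly the completed cycle, i.e. the eigenvalue of $\mathcal{F}^0_{r_i}/r_i$ appearing later in \cref{def-leaky}. You should state that correction rather than claim a match.

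A careful check should also have caught the dimension constraint. The virtual dimension of $\overline{\mathcal{M}}_{g,n}(\mathbb{P}^1,\mu,\nu)$ is $2g-2+n+\ell(\mu)+\ell(\nu)$ and each insertion has degree $r_i$. So the constraint must be $\sum_i(r_i-1)=2g-2+\ell(\mu)+\ell(\nu)$, not $\sum_i r_i$ as written in \cref{CCHNdef}.

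The rest of your bookkeeping is fine. The prefactor $1/(\prod\mu_i\prod\nu_j)$ matches \eqref{GWdef} when that is read with labelled relative preimages and no division by $|\Aut\mu||\Aut\nu|$. You are also right that $\bullet$ in \eqref{GWdef} must mean disconnected.
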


Fix non--negative integers $g$, $k$, a positive integer $n$ and $\mathbf{x}\in\mathbb{Z}^n$ an integer vector with $\sum \mathbf{x}_i=k(2g-2+n)$. We consider the double ramification locus

\begin{equation}
    \mathrm{DRL}_g(\mathbf{x})=\left\{(C,p_1,\dots,p_n)\mid (\omega_C^{\mathrm{log}})^{\otimes k}\cong\mathcal{O}\left(\sum_{i=1}^n\mathbf{x}_ip_i\right)\right\}\subset {\mathcal{M}}_{g,n},
\end{equation}

where $\omega_C^{\textrm{log}}$ denotes the logarithmic canonical bundle of $C$. We drop the dependence on $k$ in the notation since $k$ can be uniquely reconstructed from $g$ and $\mathbf{x}$.

Note that for $k=0$ the condition becomes that $\mathcal{O}\left(\sum_{i=1}^n\mathbf{x}_ip_i\right)$ is trivial, i.e. there exists a branched covering $C\to\mathbb{P}^1$ with prescribed ramification over $0$ and $\infty$. The (virtual) fundamental class of $\mathrm{DRL}_g(\mathbf{x})$ may be extended to the \textit{double ramification cycle}
\begin{equation}
    \mathrm{DR}_g(\mathbf{x})\in\mathrm{CH}_{2g-3+n}(\overline{\mathcal{M}}_{g,n}).
\end{equation}

While it seems natural that double Hurwitz numbers should appear as an intersection product of $\mathrm{DR}_g(\mathbf{x})$ with a natural class on $\overline{\mathcal{M}}_{g,n}$, there is no obvious way to do so. In a series of works in recent years \cite{Hol21,MW20}, this failure to express double Hurwitz numbers has been identified to emerge from $\mathrm{DR}_g(\mathbf{x})$ ``living in the wrong space". This insight motivated the introduction of the \textit{logarithmic double ramification cycle} $\mathrm{logDR}_g(\mathbf{x})$ obtained via a limit of pull-backs of iterated smooth log-blowups of $\overline{\mathcal{M}}_{g,n}$.

It was proved in \cite{CMR25} that double Hurwitz numbers $H_g(\mu,\nu)$ may be indeed obtained as an intersection product with $\mathrm{logDR}_g(\mu,-\nu)$ for $k=0$. Moreover, the same construction provides a whole new family of enumerative invariants $H_g^k(\mu,\nu)$ for arbitrary integers $k$ called \textit{leaky Hurwitz numbers}. We give equivalent definitions first via the Fock space formalism in \cref{FockSpace} and in terms of tropical geometry in \cref{sec-tropcov}.

\subsection{Fock space formalism}\label{FockSpace}

Fock space is an important representation of several algebras.
Most importantly for us, the Lie algebra $ \widehat{\mf{a}}_\infty$ is a central extension of the infinite general linear algebra\footnote{Defining this properly requires some care, to avoid divergences. We will require that our infinite matrices $ \{ a_{ij} \}_{i,j \in \Z'} $ have only finitely many non-zero diagonals above the main diagonal, i.e. $ a_{ij} = 0 $ for $ i \ll j$.} with generators $ \{ E_{ij} \, \mid \, i, j \in \Z ' \} \cup \{ c\}$, commutation relations
\begin{equation}
    [ E_{ij}, E_{kl} ]
    =
    \delta_{j,k} E_{il} - \delta_{i,l} E_{kj} + \delta_{i,l} \delta_{k,j} (\delta_{j > 0} - \delta_{l > 0} ) c
\end{equation}
and $c$ central. The algebra $ \widehat{\mf{a}}_\infty$ includes the following operators, for $ n \in \Z $ and a formal variable $u$,
\begin{equation}
    \mathcal{E}_n(u)
    =
    \sum_{l\in\mathbb{Z}'}e^{u(l-n/2)}E_{l-n,l}+\frac{\delta_{n,0}}{\varsigma(u)} c 
    \in
    \widehat{\mf{a}}_\infty \llbracket u \rrbracket .
\end{equation}
We recall their well-known commutators
\begin{equation}
    [\mathcal{E}_m(u),\mathcal{E}_n(v)]
    =
    \varsigma\left(\det \begin{pmatrix}
        m & u\\n & v
    \end{pmatrix}\right) 
    \mc{E}_{m+n} (u+v).
\end{equation}
In particular, the specialisations $ J_n = \mathcal{E}_n(0)$ (for $ n \neq 0$) with commutation relations\footnote{A common notation is $ \alpha_n $ instead of $J_n$.}
\begin{equation}
    [J_m , J_n]
    =
    n \delta_{m,n} c
\end{equation}
form the \emph{Heisenberg algebra}.

We will also use a two-parameter generating series
\begin{equation}
    \mc{E}(u,z)
    \coloneqq
    \sum_{n \in \Z } z^n \mc{E}_n (u)\frac{dz}{z}.
\end{equation}

\emph{Fock space} has two main models, fermionic and bosonic. We will only use bosonic Fock space, but we will say a few words on Fermionic Fock space for context.

\emph{Fermionic Fock space} (of charge zero) $ \mc{V}_0$ is based on a vector space with basis indexed by $\mathbb{Z}'$, $V=\bigoplus_{i\in\mathbb{Z}'}\mathbb{C}v_i$.  Fermionic Fock space is the specific semi--infinite wedge space, spanned by wedge products infinite in the negative direction such that nearly all negative base vectors are in the product and nearly none of the positive ones.



As fermionic Fock space is based on $ V$, it has a natural representation of $ \widehat{\mf{a}}_\infty$.


The (charge-zero) \emph{bosonic Fock space} is the space of symmetric functions $ \Lambda = \lim_n \Lambda_n$, where  $ \Lambda_n = \C [ X_1, \dotsc, X_n]^{\mf{S}_n}$, and the limit is in the category of graded algebras. It is freely generated by the power sum functions $ \{ p_k = \sum_n X_n^k \, \mid \, k > 0\} $, and has a Heisenberg action by assigning to the generators, for $ k > 0$
\begin{equation}
    J_k \mapsto k \del_k \coloneqq k \frac{\del}{\del p_k} \,,
    \qquad
    J_{-k} \mapsto p_k .
\end{equation}

It has a unit, $ 1$, often called the \emph{vacuum} and denoted $ |0 \>$, and a counit, often called the \emph{covacuum}, and denoted $ \< 0| \colon \Lambda \to \C \colon f(\underline{p}) = f(\underline{0})$.

\begin{proposition}[Boson--Fermion correspondence]
    There is an isomorphism of Heisenberg modules $ \mc{V}_0 \to \Lambda$, called the Boson--Fermion correspondence. Under this isomorphism,
    \begin{equation}
        \sum_{k,l \in\Z'} x^l y^{-k} E_{kl}
        \mapsto
        x^{\frac{1}{2}}y^{\frac{1}{2}}\frac{e^{\sum_{j=1}^\infty (y^{-j} - x^{-j})p_j/j } e^{\sum_{j=1}^\infty (x^j - y^j)\del_j} -1}{x-y} 
    \end{equation}
    and $ c \mapsto \Id $. In particular
    \begin{equation}
        \mc{E}(u,z) \mapsto \frac{e^{\sum_{j=1}^\infty \varsigma (uj) \frac{J_{-j}}{j} z^{-j}} e^{ \sum_{k=1}^\infty \varsigma (uk) \frac{J_k z^k }{k}} }{\varsigma (u)} \frac{dz}{z}.
    \end{equation}
\end{proposition}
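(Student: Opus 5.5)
The existence of the isomorphism is the classical boson--fermion correspondence. I would not reprove it from scratch. The plan is to recall the standard construction and then derive the two displayed formulae as consequences of vertex-operator calculus.

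\textbf{Step 1: the isomorphism.} Both $\mc{V}_0$ and $\Lambda$ are irreducible highest-weight modules for the Heisenberg algebra with central charge $1$: $\mc{V}_0$ via $J_n=\sum_l E_{l-n,l}$, and $\Lambda$ via the assignment $J_k\mapsto k\del_k$, $J_{-k}\mapsto p_k$. The vacuum $v_{-1/2}\wedge v_{-3/2}\wedge\cdots$ is annihilated by all $J_k$ with $k>0$, as is $1\in\Lambda$. On the bosonic side, Fock space is the free polynomial module on the $J_{-k}$ applied to $1$. Sending vacuum to vacuum therefore defines a Heisenberg map $\mc{V}_0\to\Lambda$. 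It is injective because $\mc{V}_0$ is irreducible, or alternatively by comparing graded dimensions, since both have generating function $\prod_{k\ge1}(1-q^k)^{-1}$ by charge-zero Maya diagrams versus partitions. It is surjective because it is a graded map of equal finite dimensions. Concretely, the map is $v_\lambda\mapsto s_\lambda$.

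\textbf{Step 2: the fermion bilinear.} Introduce fermion fields $\psi(x)=\sum_k x^k\psi_k$ and $\psi^*(y)=\sum_k y^{-k}\psi^*_k$, so that $E_{kl}={:}\psi_k\psi^*_l{:}$ up to the index convention. The bosonisation formulas $\psi(x)\mapsto x^{C}e^{\sum_j x^j p_j/j}e^{-\sum_j x^{-j}\del_j}\cdot(\text{shift})$, together with their adjoints, are standard. I would verify them by checking that they have the correct commutators with $J_n$, since they are then determined up to scalar. From these, the product $\psi(y)\psi^*(x)$ is computed by normal ordering. Commuting $e^{-\sum_j y^{-j}\del_j}$ past $e^{-\sum_j x^{-j}p_j/j}$ produces the factor $\exp(\sum_j (x/y)^j/j)=(1-x/y)^{-1}$, which gives the denominator $x-y$ and the prefactor $x^{1/2}y^{1/2}$ after the half-integer shifts. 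Subtracting the vacuum expectation, which is the central term, gives the $-1$ in the numerator, and this is consistent with $c\mapsto\Id$.

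\textbf{Step 3: the formula for $\mc{E}(u,z)$.} Specialise to $x=ze^{u/2}$ and $y=ze^{-u/2}$, reading the exponents with the sign conventions of the statement. Then $\sum_{k,l} x^l y^{-k}E_{kl}$ collects exactly $\sum_n z^n\sum_l e^{u(l-n/2)}E_{l-n,l}$. The exponent coefficients become $y^{-j}-x^{-j}=z^{-j}\varsigma(uj)$ and $x^j-y^j=z^j\varsigma(uj)$. The prefactor becomes $x^{1/2}y^{1/2}/(x-y)=1/\varsigma(u)$. The term $-1/\varsigma(u)$ matches the central correction $\delta_{n,0}c/\varsigma(u)$ in $\mc{E}_0(u)$. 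Multiplying by $dz/z$ yields the claimed expression.

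The main obstacle is the sign and half-integer bookkeeping in Step 2, that is, getting the exact prefactor, the orientation of $x$ versus $y$, and the central constant right. I would handle this by checking the result on the lowest matrix elements. One check is $\<0|\,\cdot\,|0\>$, which should give $1/\varsigma(u)$ from $\mc{E}_0$. Another is the $u\to0$, $n\neq0$ limit, which should recover $J_n$.
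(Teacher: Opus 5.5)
Your overall route is the right one, and Step~3 is correct. With $x=ze^{u/2}$, $y=ze^{-u/2}$ you get $x^ly^{-k}=z^{l-k}e^{u(l+k)/2}$, $y^{-j}-x^{-j}=z^{-j}\varsigma(uj)$, $x^j-y^j=z^j\varsigma(uj)$ and $x^{1/2}y^{1/2}/(x-y)=1/\varsigma(u)$, and the $-1/\varsigma(u)$ cancels the central term of $\mc{E}_0(u)$. The paper gives no proof of this proposition: it quotes the classical correspondence, and this substitution is all its ``in particular'' contains.

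In Step~1, the universal-property map runs $\Lambda\to\mc{V}_0$, $f\mapsto f(J_{-1},J_{-2},\dotsc)|0\>$. It is injective because $\Lambda$ is irreducible and surjective by the character count. Invoking irreducibility of $\mc{V}_0$ instead assumes the crux of the theorem.

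The real weak point is Step~2, exactly where you put the difficulty. As written, the exponents do not give what you claim. Commuting $e^{-\sum_j y^{-j}\del_j}$ past $e^{-\sum_j x^{-j}p_j/j}$ produces $\exp\sum_j (xy)^{-j}/j$, not $(1-x/y)^{-1}$. In the statement's conventions the bilinear is $\psi(y^{-1})\psi^*(x^{-1})$. The contraction is then between $e^{-\sum_j y^{j}\del_j}$ and $e^{-\sum_j x^{-j}p_j/j}$, giving $x/(x-y)$, and the remaining $x^{-1/2}y^{1/2}$ comes from the zero-mode factors.

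More importantly, the checks you propose cannot detect an error in this prefactor $P(x,y)$:
\begin{itemize}
\item The identity $\<\mc{E}(u,z)\>=\frac{1}{\varsigma(u)}\frac{dz}{z}$ holds for every $P$, because $\<V-1\>=0$.
\item The $u\to 0$ limit only sees $P(ze^{u/2},ze^{-u/2})=\frac1u+O(1)$. It is therefore blind to a factor $(x/y)^c=e^{cu}$, which is precisely the half-integer shift.
\end{itemize}

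A cleaner argument avoids bosonising the individual fermions.
\begin{itemize}
\item Transported to $\Lambda$, the bilinear $\Psi=\sum_{k,l}x^ly^{-k}\psi_k\psi^*_l$ satisfies $[J_n,\Psi]=(y^{-n}-x^{-n})\Psi$ for all $n\neq 0$.
\item So does $V=e^{\sum_j(y^{-j}-x^{-j})p_j/j}e^{\sum_j(x^j-y^j)\del_j}$.
\item By Schur's lemma on the irreducible module $\Lambda$, $\Psi=P(x,y)V$ for a scalar series $P$.
\item This scalar is $P=\<\Psi\>=\sum_{k\in\Z',\,k<0}(x/y)^k=\frac{x^{1/2}y^{1/2}}{x-y}$, expanded for $|y|<|x|$.
\item Since $E_{kl}=\psi_k\psi^*_l-\delta_{k,l}\delta_{l<0}$, subtracting $\<\Psi\>$ gives the first display.
\end{itemize}
Your Step~3 then finishes the proof.
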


We can formally exponentiate the Lie algebra $ \widehat{\mf{a}}_\infty$ to a Lie group\footnote{Again, we should be a bit careful with convergence here. We will comment on this when relevant.}
\begin{equation}
    G
    \coloneqq
    \big\{ e^{X_1} \dotsb e^{X_n} \, \big| \, X_i \in \widehat{\mf{a}}_\infty \big\}
\end{equation}

and this group naturally acts on Fock space. The orbit of the vacuum under this action turns out to be of extreme importance in the theory of integrable hierarchies (see \cite{MJD00} for more details). It is often written in terms of the vacuum expectation value $ \< g\> \coloneqq \< 0 | g | 0 \>$.

\begin{theorem}[{\cite{SS83}}]
    The orbit of the vacuum under the action of $ G $ on $ \Lambda$ is the space of $\tau $-functions of the Kadomtsev--Petviashvili integrable hierarchy in the variables $ t_k \coloneqq \frac{p_k}{k}$. More explicitly, for $ g \in G$, the $ \tau$-function is
    \begin{equation}
        \tau (t, g) 
        \coloneqq
        g | 0 \>
        =
        \< e^{\sum_{k=1}^\infty t_k J_k} g \>.
    \end{equation}
    Here the final expression should be seen as a vacuum expectation value on Fock space in \emph{another} set of variables, or in e.g. fermionic Fock space.
\end{theorem}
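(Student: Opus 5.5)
The approach is the classical Sato argument: show that $\tau = g|0\rangle$ satisfies the Hirota bilinear identity, and conversely that every solution of that identity lies in the orbit. We work on the fermionic side and transport the result to $\Lambda$ via the Boson--Fermion correspondence. On fermionic Fock space, let $\psi_i$ denote wedging with $v_i$ and $\psi_i^*$ the corresponding contraction, so that $E_{ij}$ acts as the normally ordered $:\psi_i\psi_j^*:$. The central object is the Casimir operator
\[
\Omega = \sum_{i\in\Z'} \psi_i\otimes\psi_i^*
\]
acting on $\mc V\otimes\mc V$.

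\textbf{Step 1.} Show that $\Omega$ commutes with $g\otimes g$ for every $g\in G$. It suffices to check $[X\otimes 1 + 1\otimes X,\Omega]=0$ for $X=E_{kl}$. This follows from the canonical anticommutation relations: the terms produced by $\psi_k\psi_l^*$ acting on either factor cancel pairwise. The central term $c$ contributes only scalars, which cancel between the two factors. Convergence is handled by the footnoted finiteness condition on the $a_{ij}$.

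\textbf{Step 2.} Observe that $\Omega(|0\>\otimes|0\>)=0$: for $i<0$ the wedge already contains $v_i$, so $\psi_i$ kills it, and for $i>0$ the contraction $\psi_i^*$ vanishes. Combined with Step 1, this gives $\Omega(\tau\otimes\tau)=0$ for $\tau=g|0\>$. This is the fermionic bilinear identity.

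\textbf{Step 3.} Translate this identity to the bosonic side. Using the vertex operator formula from the Boson--Fermion proposition, the generating series $\psi(z)=\sum_i \psi_i z^{i}$ and $\psi^*(z)$ become
\[
e^{\pm\sum_j t_jz^j}e^{\mp\sum_j z^{-j}\del_{t_j}/j},
\]
up to charge shifts. Writing $\Omega$ as a residue $\Res_z \psi(z)\otimes\psi^*(z)\,dz/z$ and pairing with $\<0|e^{\sum t_kJ_k}\otimes \<0|e^{\sum t'_kJ_k}$ turns Step 2 into the Hirota identity
\[
\Res_{z=\infty} e^{\xi(t-t',z)}\,\tau(t-[z^{-1}])\,\tau(t'+[z^{-1}])\,dz=0 .
\]
Expanding this around $t'=t$ produces the KP hierarchy in the variables $t_k=p_k/k$. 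The identity $\tau(t,g)=\<e^{\sum t_kJ_k}g\>$ is then just the statement that the covacuum pairing recovers coefficients, since $e^{\sum t_kJ_k}$ acts as a shift operator on functions of $p$.

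\textbf{Step 4 (converse, the main obstacle).} Show that every solution of the bilinear identity lies in the orbit. The plan is to identify solutions with decomposable vectors, i.e. semi-infinite wedges $w_1\wedge w_2\wedge\cdots$, using the fact that $\Omega(\tau\otimes\tau)=0$ characterises the Plücker embedding of the Sato Grassmannian. One then shows that $GL_\infty$ acts transitively on such points, adjusting the frame by finitely many elementary moves and taking limits within the allowed completion. Making the infinite-dimensional transitivity and convergence rigorous is delicate, so for this step I would rely on \cite{SS83} and \cite{MJD00}.
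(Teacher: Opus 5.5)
The paper gives no proof of its own here. The theorem is quoted from \cite{SS83}, and your outline is the classical Sato argument behind that citation: $g\otimes g$-invariance of $\Omega$, $\Omega(|0\>\otimes|0\>)=0$, and bosonisation to the Hirota identity, with the converse deferred to the same sources, so it is consistent with the paper. The one point to make explicit is in Step 3. There $\Omega$ maps $\mc{V}_0\otimes\mc{V}_0$ into the charge $(1,-1)$ sector, so fermionically the pairing must be with $\<1|e^{\sum_k t_kJ_k}\otimes\<-1|e^{\sum_k t'_kJ_k}$; this is what your $\<0|\otimes\<0|$ means only once the shift operators are stripped from the bosonised $\psi(z),\psi^*(z)$, and otherwise the pairing vanishes identically.
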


On the fermionic side, the orbit $ G | 0 \> $ is (the cone over) the image of the Plücker embedding of the Sato Grassmannian: the Grassmannian of subspaces $ W$ of $ V $ such that the composition $ W \hookrightarrow V \twoheadrightarrow V_- = \mathop{\mathrm{Span}} \{ v_i \, | \, i < 0\} $ is Fredholm of index zero; the projection is along the complementary subspace $V_+ = \mathop{\mathrm{Span}} \{ v_i \, | \, i > 0\} $.

We also consider the \emph{bosonic current}
\begin{equation}
    J(X)
    \coloneqq
    \sum_{m \in \Z } J_m X^m \frac{dX}{X} .
\end{equation}

\begin{definition}
    Following \cite{AKL25}, we define the $k$-\emph{leaky} $r$-completed cycles cut-and-join operators $\mathcal{F}_r^k$ via
    \begin{equation}\label{LeakyCompletedOperator}
        \mathcal{E}_{-k} (u)
        =
        \sum_{r=0}^\infty\mathcal{F}^k_r\frac{u^r}{r!}.
    \end{equation}
    For $k = 0$, we write $ \mc{F}_r = \mc{F}_r^0$; these are the usual (non-leaky) $r$-completed cycles operators.\footnote{These are sometimes defined without the central term from $ \mc{E}_0$, but this only slightly changes the disconnected Hurwitz numbers, while the connected ones are unaffected.}
\end{definition}

The minus sign for $k $ on the left side of \cref{LeakyCompletedOperator} comes from the fact that \emph{positive} leakiness means that that something \emph{decreases}. 
Note that $ \mc{F}_0^k = J_{-k}$.

The starting point of the relation between Fock space and integrability on the one hand and Hurwitz theory on the other hand, is the following result of Okounkov, confirming and extending a conjecture by Pandharipande \cite{Pan00}.

\begin{theorem}[{\cite{Ok00}}]
    The generating series of double simple Hurwitz numbers
    \begin{equation}
        \tau ( \beta, \underline{p}, \underline{p}')
        =
        \sum_{d, g = 0}^\infty \sum_{\mu, \nu \vdash d} \frac{\beta^{2g-2+ l(\mu) + l (\nu) }}{(2g-2+l(\mu) + l(\nu))!} h^\bullet_{g;\mu, \nu} p_{\mu} p'_\nu
    \end{equation}
    in two infinite sets of power sum variables can be expressed in Fock space as
    \begin{equation}\label{SimpeHurwitzFS}
        \tau ( \beta, \underline{p}, \underline{p}')
        =
        \Big\< e^{\sum_{k=1}^\infty \frac{p_k J_k}{k}} e^{\beta \mc{F}_2} e^{\sum_{k=1}^\infty \frac{p_k' J_{-k}}{k}}\Big\> .
    \end{equation}
    It is a $\tau$-function of the 2D-Toda hierarchy, and hence in particular a KP $\tau$-function in each of its variables.
\end{theorem}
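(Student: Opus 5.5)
Starting from the right of the vacuum expectation value in \cref{SimpeHurwitzFS}, I will identify $e^{\sum_k p'_k J_{-k}/k}|0\>$ with the Schur expansion $\sum_\lambda s_\lambda(\underline{p}')\, s_\lambda$ in bosonic Fock space. This is the Cauchy identity $\exp(\sum_k p_k p'_k/k) = \sum_\lambda s_\lambda(\underline{p}) s_\lambda(\underline{p}')$, read through $J_{-k}\mapsto p_k$. Symmetrically, $\<0| e^{\sum_k p_k J_k/k}$ pairs each $s_\lambda$ with $s_\lambda(\underline{p})$. The reason is that $J_k = k\del_k$ is adjoint to multiplication by $p_k$ for the Hall inner product, in which Schur functions are orthonormal.

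The core step is to show that the Schur functions $s_\lambda$ diagonalise $\mc{F}_2$ with eigenvalue $f_{(2)}(\lambda) = p_2(\lambda)$. On the fermionic side, $s_\lambda$ corresponds under the Boson--Fermion correspondence to the semi-infinite wedge $v_{\lambda_1-1/2}\wedge v_{\lambda_2-3/2}\wedge\dotsb$. The operator $\mc{E}_0(u)$ is diagonal, with eigenvalue $\sum_i \big(e^{u(\lambda_i-i+1/2)} - e^{u(-i+1/2)}\big) + 1/\varsigma(u)$, where the last term is the regularisation coming from the central term. Expanding in $u$ gives $\mc{F}_2 = $ coefficient of $u^2/2$, with eigenvalue $p_2(\lambda)$. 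The $\zeta$ regularisation term in $p_2$ matches the $u^2$ coefficient of $1/\varsigma(u)$. Note that $(1-2^{-2})\zeta(-2) = 0$, so no constant shift appears in this case. By the Frobenius formula, $p_2(\lambda) = \sum_i \binom{\lambda_i}{2} - \binom{\lambda'_j}{2}$ summed appropriately, which equals $f_{(2)}(\lambda)$.

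Combining the two steps, the vacuum expectation value becomes
\[
\sum_\lambda s_\lambda(\underline{p})\, s_\lambda(\underline{p}')\, e^{\beta f_{(2)}(\lambda)}.
\]
I will then expand using $s_\lambda = \sum_\mu \chi_\lambda(\mu) p_\mu/z_\mu$ and $e^{\beta f} = \sum_r \beta^r f^r/r!$. Comparing with the character formula for $h^\bullet_{g;\mu,\nu}$ stated above gives the claim, after two checks:
\begin{itemize}
    \item The normalisations $z_\mu z_\nu$ versus $\prod\mu_i\prod\nu_j$ must agree; the automorphism factors are absorbed into the disconnected count via $\frac{1}{d!}|C_\mu|$ bookkeeping.
    \item By Riemann--Hurwitz, $r = 2g-2+\ell(\mu)+\ell(\nu)$. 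Terms with $\mu,\nu$ partitions of different sizes vanish by degree.
\end{itemize}
I expect this normalisation bookkeeping to be the main, if routine, obstacle, together with justifying the central-term regularisation in the eigenvalue computation.

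For the integrability claim, $e^{\beta\mc{F}_2}\in G$, since $\mc{F}_2\in\widehat{\mf{a}}_\infty$ is diagonal. Hence, for fixed $\underline{p}'$, the vector $e^{\beta\mc{F}_2} e^{\sum_k p'_k J_{-k}/k}|0\>$ lies in the orbit $G|0\>$. By the theorem of \cite{SS83}, the function is therefore a KP $\tau$-function in $t_k = p_k/k$, and symmetrically in $p'$ after transposing. The 2D-Toda statement follows from the standard fact that $\<e^{\sum t_kJ_k}\, g\, e^{\sum s_kJ_{-k}}\>$ with $g\in G$ is a 2D-Toda $\tau$-function, via bilinear identities for the two-sided fermionic action.
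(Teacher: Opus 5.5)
The paper gives no proof of this statement; it is quoted from \cite{Ok00}. Your strategy is the standard one: Cauchy identity, diagonal action of $\mc{E}_0(u)$ on the Schur basis, character expansion, then \cite{SS83}. But the identification of the eigenvalue fails.

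You correctly find that $\mc{F}_2=\sum_{l\in\Z'}l^2E_{l,l}$ acts on $s_\lambda$ by $p_2(\lambda)$, with no central correction. However,
\[
p_2(\lambda)=\sum_{i\ge1}\Big(\big(\lambda_i-i+\tfrac12\big)^2-\big(-i+\tfrac12\big)^2\Big)=\sum_{i\ge1}\big(\lambda_i^2-(2i-1)\lambda_i\big)=2\sum_{(i,j)\in\lambda}(j-i)=2f_{(2)}(\lambda).
\]
For example, $\lambda=(2)$ gives $p_2=\tfrac94-\tfrac14=2$ but $f_{(2)}=1$. Carried out correctly, your computation therefore gives
\[
\sum_\lambda s_\lambda(\underline p)\,s_\lambda(\underline p')\,e^{2\beta f_{(2)}(\lambda)}=\tau(2\beta,\underline p,\underline p').
\]
So \cref{SimpeHurwitzFS} holds with $e^{\beta\mc{F}_2/2}$ in place of $e^{\beta\mc{F}_2}$. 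This is Okounkov's normalisation of the cut-and-join operator, $\frac12\sum_l l^2E_{l,l}$. It is also the normalisation implicit in the paper's own \cref{CCexample}, where $W=\mc{F}^0_r/r$. The equality $p_2=f_{(2)}$, also asserted after \cref{CCHNdef}, is a normalisation slip. A proof has to detect and correct it rather than rely on it.

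Your normalisation check also has to be made precise rather than waved through. After the fix, the coefficient of $\frac{\beta^r}{r!}p_\mu p'_\nu$ is $\frac{1}{z_\mu z_\nu}\sum_\lambda\chi_\lambda(\mu)\chi_\lambda(\nu)f_{(2)}(\lambda)^r$, with $z_\mu=|\Aut\mu|\prod_i\mu_i$. No bookkeeping removes the factor $|\Aut\mu|\,|\Aut\nu|$.

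This coefficient does equal $h^\bullet_{g;\mu,\nu}$ as defined, i.e.\ unlabelled covers weighted by $1/|\Aut f|$. By monodromy, $h^\bullet_{g;\mu,\nu}$ is $\frac1{d!}$ times the number of tuples $(\sigma_0,\tau_1,\dots,\tau_r,\sigma_\infty)$ in $S_d$ with $\sigma_0\in C_\mu$, $\sigma_\infty\in C_\nu$, $\tau_i$ transpositions and product $1$. Frobenius' formula with $|C_\mu|=d!/z_\mu$ then gives exactly $1/(z_\mu z_\nu)$. It does not equal the displayed character formula with $1/(\prod\mu_i\prod\nu_j)$, which counts covers with labelled preimages of $0$ and $\infty$. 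So you must compare with the definition, not with that formula.

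Finally, re-indexing by $r$ via Riemann--Hurwitz runs over all $r\ge0$. For disconnected covers this includes negative $g$: already at $r=0$, the term $\frac12p_1^2{p'_1}^2$ of $e^{\sum_kp_kp'_k/k}$ has $d=2$ and $g=-1$. So the sum over $g$ must not be restricted to $g\ge0$. The integrability part is fine as a sketch.
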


By now, this is one of the major points of view on Hurwitz numbers, so much so that many types of Hurwitz numbers are defined essentially via the Fock space formalism, again starting with Okounkov and Pandharipande \cite{OP06a}. In  \cite{OP06}, they proved the Gromov-Witten/Hurwitz correspondence, \cref{GW/H}, where in the Fock space formalism the operator $ \mc{F}_2$ in \cref{SimpeHurwitzFS} is replaced with different $ \mc{F}_r$ depending on the descendant insertions.

The origin of this paper is \cite{CMR25}, which defines \emph{leaky} Hurwitz numbers loosely by interpreting usual Hurwitz numbers as counting meromorphic sections $ f $ of $ \mc{O}_{\Sigma} $ with given zeroes and poles, instead of maps $ f \colon \Sigma \to \P^1$, and then replacing the line bundle by $ \omega_{\Sigma}^k $. More precisely, this involves intersection with the log pluricanonical double ramification cycle, but this is irrelevant for our discussion. One result of that paper, which we take as one of our starting points, is an expression of such numbers in terms of Fock space.

\begin{theorem}[{\cite[Theorem 5.2.2]{CMR25}}]\label{CMRFockSpace}
    The count of genus $g$, $k$-leaky covers of $ \P^1$ with left degree $ \mu$ and right degree $ \nu$ is given in Fock space by
    \begin{equation}
        \frac{1}{| \Aut \mu| | \Aut \nu| } \Big\< \prod_{i = 1}^{l (\mu)} \frac{J_{\mu_i}}{\mu_i} (M_k)^{2g-2+l(\mu) + l(\nu)} \prod_{j = 1}^{l(\nu)} \frac{J_{-\nu_j}}{\nu_j} \Big\>,
    \end{equation}
    where $ M_k = \mc{F}_2^k - \frac{k^2}{24} J_{-k}$, cf. \cite[Section 4]{AKL25}.
\end{theorem}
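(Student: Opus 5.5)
This statement is quoted from \cite[Theorem 5.2.2]{CMR25}, so in the paper it is simply imported. A self-contained argument would proceed as follows.

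We take as definition of the $k$-leaky count the tropical one of \cite{CMR25}: a weighted sum over $k$-leaky tropical covers of $\mathbb{R}$. These are graphs of genus $g$ with $l(\mu)$ left ends of weights $\mu_i$, $l(\nu)$ right ends of weights $\nu_j$, and $b=2g-2+l(\mu)+l(\nu)$ trivalent inner vertices over fixed points $q_1<\dots<q_b$. At each vertex the balancing fails by exactly $k$: outgoing weight $=$ incoming weight $-k$. Each cover is counted with weight $\frac{1}{|\Aut|}\prod_e \omega(e)$, the product running over the internal edges.

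The plan is to evaluate the vacuum expectation by Wick-type normal ordering. Each factor of $M_k$ is then one vertex of the tropical cover.

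The first step is to expand $\mc F_2^k$ as the $u^2/2$ coefficient of $\mc E_{-k}(u)$. Writing $\mc E_{-k}(u)$ via the bosonic vertex-operator formula (the Boson--Fermion correspondence above, shifted to the $n=-k$ mode), one finds that $\mc F_2^k$ is a quadratic expression in the $J$'s. It consists of
\begin{itemize}
\item a \emph{cut} part $\frac12\sum_{a+b=c+k}J_{-a}J_{-b}J_{c}$,
\item a \emph{join} part $\frac12\sum J_{-c}J_aJ_b$ with the same leakiness constraint,
\item a lower-order term proportional to $J_{-k}$.
\end{itemize}
The point to check here is that the scalar correction in $M_k$ is exactly $-\frac{k^2}{24}J_{-k}$. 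This follows from expanding $\varsigma(u(l+k/2))$ to second order and regularising $\sum_l l^2$ via the $\zeta$-value built into the central term. After the correction, $M_k$ consists only of cubic terms.

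In the second step one commutes the annihilators $J_{\mu_i}$ to the right and the creators $J_{-\nu_j}$ to the left, using $[J_m,J_n]=m\delta_{m+n,0}$. Every nonzero term of the expectation is then a complete pairing, and each pairing has a tropical interpretation:
\begin{itemize}
\item each $M_k$ at position $i$ is a trivalent vertex over $q_i$ with the leaky balancing;
\item each contraction $[J_a,J_{-a}]=a$ is an edge of weight $a$, contributing the factor $\omega(e)$;
\item the factors $1/\mu_i$ and $1/\nu_j$ cancel the contraction weights on the ends;
\item the $\frac12$'s and $1/|\Aut\mu||\Aut\nu|$ combine into $1/|\Aut|$ of the graph.
\end{itemize}
Counting Euler characteristic shows that exactly $b$ operators give genus $g$.

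The main obstacle is the first step: extracting $\mc F_2^k$ correctly, including the central/zeta correction. Only the precise constant $k^2/24$ makes $M_k$ purely trivalent with no degenerate leaky two-valent contributions. The automorphism bookkeeping is standard, following \cite{CJM10}.
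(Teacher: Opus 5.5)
The paper gives no proof of this statement; it is imported from \cite[Theorem 5.2.2]{CMR25}. Your Wick-theorem route is the natural one (it is how the paper proves \cref{thm-corr}). However, the step you identify as the crux is false, and with it the identification of the stated expectation with a trivalent count. For $k>0$, $\mc{E}_{-k}(u)$ has no central term, and its one-boson part is $\frac{\varsigma(ku)}{k\varsigma(u)}J_{-k}$. Indeed $\langle J_k\,\mc{E}_{-k}(u)\rangle=\varsigma(ku)\langle\mc{E}_0(u)\rangle=\varsigma(ku)/\varsigma(u)$. Equivalently, only the finitely many $E_{l+k,l}$ with $-k<l<0$ act on the vacuum, so no $\zeta$-regularisation enters. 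Expanding gives
\[
[u^2]\,\mc{E}_{-k}(u)=\frac16\sum_{j_1+j_2+j_3=-k}:J_{j_1}J_{j_2}J_{j_3}:\;+\;\frac{k^2-1}{24}\,J_{-k},
\]
where the $-\frac1{24}$ comes from the prefactor $1/\varsigma(u)$. This is the genus-one term $[z^2]\mc{S}(kz)/\mc{S}(z)$ of \cref{prop-expansion}. So the constant that makes the operator purely cubic is $\frac{k^2-1}{24}$, not $\frac{k^2}{24}$. Subtracting $\frac{k^2}{24}J_{-k}$ leaves $-\frac1{24}J_{-k}$. With the paper's literal normalisation $\mc{F}_2^k=2[u^2]\mc{E}_{-k}(u)$ it leaves $\frac{k^2-2}{24}J_{-k}$ and also doubles the cubic part. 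The leftover $-\frac1{24}J_{-k}$ is exactly the summand that, by the remark following the statement, the earlier version of \cite{CMR25} was missing. A minimal test shows the discrepancy. Take $g=1$, $\mu=(k)$, $\nu=\emptyset$, so $b=1$ and $|\mu|=|\nu|+bk$. There is no trivalent leaky cover, so your count is $0$. But $\langle \frac{J_k}{k}M_k\rangle$ equals the $J_{-k}$-coefficient of $M_k$, which is nonzero.

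What your argument actually proves is the Fock-space formula for the purely trivalent operator, i.e.\ the uncorrected version. To reach the stated $M_k$, the leaky count must also contain, via Wick's theorem, univalent vertices emitting a strand of weight $k$, weighted by $-\frac1{24}$ in the $[u^2]$ normalisation. By your own Euler-characteristic count these vertices have genus one, since $2\cdot 1-2+1=1$. Justifying that weight is exactly the geometric content of \cite{CMR25}: the log pluricanonical double ramification cycle intersected with the branch class, and its tropicalisation. Taking a trivalent tropical count as the definition bypasses this. Two smaller points. First, for $k\ge 3$ the cubic part also contains terms $J_{-a}J_{-b}J_{-c}$ with $a+b+c=k$, which your cut/join split omits; they are harmless trivalent vertices in the tropical reading. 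Second, the degenerate contributions are one-valent genus-one vertices, not two-valent ones.
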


\begin{remark}
	We first note that  \cite[Section 4]{AKL25} refers to an earlier version of \cite{CMR25}, where the operator $M_k$ was missing a summand of $\frac{-1}{24}J_{-k}$. In forthcoming work of Cavalieri, Markwig, and Ranganathan, a natural geometric interpretation of the operator $\mc{F}_2^k$ will be derived as a twisted version of the Gromov--Witten/Hurwitz correspondence.
\end{remark}

The difference between $ M_k $ and $ \mc{F}_2^k$, can be seen as a \emph{completion}, which for $ k =0$ only appears in case $ r > 2$.

This leads us to our very general definition of (leaky) Hurwitz numbers.

\begin{definition}\label{DefHNfromCJ}
    Consider an operator $ W \in \widehat{\mf{a}}_\infty \llbracket \hbar \rrbracket $, which we will call the \emph{cut-and-join operator}. The disconnected \emph{double $W$-Hurwitz number} with left degree $ \mu$ and right degree $ \nu$ is
    \begin{equation}\label{DefLH}
        \lh^{W,\bullet}_{g,\mu,\nu}
        \coloneqq
        [\hbar^{2g-2} ]\frac{1}{| \Aut \mu| | \Aut \nu| } \Big\< \prod_{i = 1}^{l (\mu)} \frac{J_{\mu_i}}{\hbar \mu_i} e^W \prod_{j = 1}^{l(\nu)} \frac{J_{-\nu_j}}{\hbar \nu_j} \Big\> .
    \end{equation}
    Now pick also a sequence of numbers (or formal variables) $ s_1, s_2, \dotsc$. The \emph{disconnected $n$-point functions} are
    \begin{equation}
        H_{n}^\bullet (X_{\llbracket n\rrbracket})
        \coloneqq
        \sum_{m_1, \dotsc, m_n = 1}^\infty \frac{X_1^{m_1} \dotsc X_n^{m_n}}{m_1 \dotsb m_n} \Big\< J_{m_1} \dotsb J_{m_n} e^W e^{\sum_{q=1}^\infty s_q \frac{J_{-q}}{q\hbar}} \Big\>.
    \end{equation}
    The \emph{connected $n$-point functions} are defined via the inclusion-exclusion relations
    \begin{equation}
        H_n (X_{\llbracket n \rrbracket})
        \coloneqq
        \sum_{I \vdash \llbracket n \rrbracket} (-1)^{|I|-1} (|I|-1)! \prod_{i=1}^{|I|} H_{|I_i|}^\bullet (X_{I_i})
    \end{equation}
    with inverse
    \begin{equation}
        H_n^\bullet (X_{\llbracket n \rrbracket})
        =
        \sum_{I \vdash \llbracket n \rrbracket} \prod_{i =1}^{|I|} H_{|I_i|} (X_{I_i}) .
    \end{equation}
\end{definition}

We will use the inclusion-exclusion relations more often to relate disconnected and connected invariants of similar types. We may also add a superscript $ \circ$ to emphasize connected invariants.

\begin{example}
    The case covered in \cref{CMRFockSpace} has cut-and-join operator $ W = \hbar M_k$.
\end{example}

We consider the $n$-point functions as generating series of double $W$-Hurwitz numbers. Note that they behave differently with respect to the two partitions $ \mu$ and $\nu$: the parts of the left partition $ \mu$ are encoded in the exponents of the $ X$s, and therefore the length of $ \mu $ is fixed to be $n$. On the other hand, the right partition $ \nu$ is encoded through the variables $ s_q$, attached to the parts of $\nu$ of size $ q$.

There exists a large literature on Hurwitz numbers that fits this definition in some way; mostly on the case of diagonal $W$, see e.g. \cite{ALS16,BDKS22,BDKS24}. The work \cite{CMR25}, and in particular \cref{CMRFockSpace} is, to our knowledge, the first interpretation of an off-diagonal cut-and-join operator in the Hurwitz framework, and it led us to consider this larger class of problems.

Cut-and-join operators in the literature are usually expressed in the following shape.
\begin{equation}
	W
	=
	\sum_{l \in \Z'} \sum_{k \in \Z} w_k (l + \frac{k}{2}, \hbar)  E_{l + k,l} 
    =
    \sum_{k \in Z} \sum_{r = 0}^\infty w_{k,r} (\hbar ) \mc{F}_r^k ,
\end{equation}
where $ w_k (s, \hbar )  = \sum_r w_{k,r} (\hbar) s^r \in \hbar^{-1} \C \llbracket s, \hbar \rrbracket$.

The exponent of $ \hbar $ is the negative of the Euler characteristic of the Riemann surfaces counted. Since the Euler characteristic is additive, this interpretation holds for any individual operator: the $ J_{\mu_i}$ and $ J_{-\nu_j}$ have an interpretation as caps and cups, i.e. both are topologically disks, and therefore require $ \hbar^{-1}$. The operators $ \mc{F}_2^k$ represent three-holed disks, and hence need $ \hbar^1$.

\begin{example}\label{CCexample}
    For fixed $k = 0$ and $r$, i.e. $W = \frac{\mc{F}_r^0}{r}$, \cref{DefLH} is related to \cref{CCHNdef} by
    \begin{equation}
        \lh^W_{g,\mu,\nu}
        =
        \frac{h^{(r, \dotsc, r)}_{g;\mu, \nu}}{(2g-2+ l(\mu) + l(\nu))!}  .
    \end{equation}
\end{example}

In order to have such a good geometric interpretation of the parameter $ \hbar$ in general, we will impose the following assumption.

\begin{assumption}\label{TopologicalAssumption}
    From now on, we assume that for all $ k$,
	\begin{equation}
		w_k (\frac{s}{\hbar}, \hbar) \in \hbar^{-1} \C \llbracket s , \hbar^2 \rrbracket .
	\end{equation}
\end{assumption}

We interpret $ \mc{F}_r^k$ as an $(r+1)$-holed Riemann sphere to glue, with correction terms representing $ (r+1-2g)$-holed, genus $g$ Riemann surfaces.
Or, in the tropical picture, a linear combination of vertices of genus $g$ with $ r+1-2g$ incident edges. We want the exponent of $ \hbar$ to be minus the Euler characteristic, i.e. $ 2g -2 + (r+1) = 2g-1+r$. The assumption encodes that $ g$ is a non-negative integer.\par
In principle, we could work with the weaker assumption that $ w_k (\frac{s}{\hbar}, \hbar ) \in \hbar^{-1} \C \llbracket s, \hbar \rrbracket$, allowing for half-integer genus. This can be useful to allow for counting non-orientable surfaces. We will not allow for half-integer genus, partially because our motivation comes from algebraic geometry, and partially because some of the techniques we use, especially in \cref{sec:TR}, have not been established in this generality.

\begin{lemma}
    With \cref{TopologicalAssumption}, the connected $n$-point functions have an expansion in powers of $ \hbar$ of the following form.
    \begin{equation}
        H_n (X_{\llbracket n \rrbracket})
        =
        \sum_{g = 0}^\infty \hbar^{2g-2+n} H_{g,n} ( X_{\llbracket n \rrbracket} ).
    \end{equation}
\end{lemma}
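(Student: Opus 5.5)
The plan is to read off the $\hbar$-dependence from a Wick-type, or graph, expansion of the vacuum expectation value defining $H_n^\bullet$, and then show that after passing to connected functions every surviving term carries exactly the power $\hbar^{2g-2+n}$ with $g\ge 0$ an integer.

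First, rescale so that every building block has a homogeneous $\hbar$-weight. Write $W=\sum_k\sum_{l} w_k(l+\tfrac k2,\hbar)E_{l+k,l}$. By \cref{TopologicalAssumption}, $w_k(s/\hbar,\hbar)=\sum_{r,j} c_{k,r,j}\,\hbar^{-1+2j}s^r$, so $w_{k,r}(\hbar)=\sum_j c_{k,r,j}\hbar^{r-1+2j}$. Hence $W=\sum_{k,r,j} c_{k,r,j}\,\hbar^{2j+r-1}\mc F^k_r$, i.e. each occurrence of $\mc F^k_r$ comes with $\hbar^{2j-2+(r+1)}$: it is a "vertex" of genus $j$ with $r+1$ legs, exactly as in the discussion following the assumption. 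The insertions $J_{-q}s_q/(q\hbar)$ carry $\hbar^{-1}$ (caps), and the $n$ operators $J_{m_i}$ on the left will be treated as carrying a factor $\hbar^{-1}$ each once we compare with $H_n$. It is easiest to multiply $H_n^\bullet$ by $\hbar^{n}$, or equivalently to note that the claimed exponent $2g-2+n$ equals $(2g-2+n+n)-n$, and to account for the left caps separately.

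Next, expand $e^W$ and $e^{\sum s_qJ_{-q}/(q\hbar)}$ and express each $\mc F^k_r$ via the boson–fermion formula for $\mc E_{-k}(u)$: the coefficient of $u^r$ is a normally ordered polynomial in the $J$'s, and its terms with $a$ annihilators and $b$ creators carry an extra power $u^{a+b-1+2h}$. This is the usual genus correction, where the $\varsigma$ series are odd up to the leading $1/u$, so degrees differ from $a+b-1$ by even amounts. Consequently, assigning $\hbar^{2j-2+\text{legs}}$ per vertex is consistent term by term, with the parity constraint coming from $\varsigma(x)$ being odd. Computing $\langle\cdots\rangle$ by commuting annihilators to the right with $[J_m,J_{-m}]=m$ yields a sum over graphs: vertices are the operator insertions, edges are contractions, and each term has $\hbar$-exponent $\sum_v(2g_v-2+\mathrm{val}_v)$, with caps having $g_v=0$ and valence $1$. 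Since every leg is half an edge, this equals $\sum_v(2g_v-2)+2|E| = 2g-2$ summed over components, where $g=\sum_v g_v+b_1$ is the arithmetic genus of the component (caps contribute $-2+1$ each and are included). The standard exponential formula then identifies the inclusion–exclusion defining $H_n$ with the sum over connected graphs. Each connected graph containing the $n$ marked left caps contributes $\hbar^{2g-2}$ times $\hbar^{n}$, once the left $J_{m_i}$ are weighted by $\hbar^{-1}$ so that they become genuine caps. In the normalisation of $H_n^\bullet$ as written, where no $\hbar^{-1}$ is attached to the left $J_{m_i}$, this gives $\hbar^{2g-2+n}$ with $g\ge0$ an integer.

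The main obstacle is making the parity statement precise, i.e. showing that the internal $u$-expansion of $\mc F^k_r$ only shifts the $\hbar$-degree by even integers so that the local genus is an integer. For this I would use that $\mc E_{-k}(u)\,\varsigma(u)$ involves only $\varsigma(uj)$, which is odd in $u$. One also has to check that the central term $\delta_{k,0}/\varsigma(u)$ only affects the disconnected numbers, and that the formal sums converge, which follows from lower triangularity and the covacuum annihilation.
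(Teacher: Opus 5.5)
Your argument is correct and is the same Euler-characteristic bookkeeping the paper relies on; the discussion around \cref{TopologicalAssumption} is the only justification the paper gives for this lemma. A term of $\mc{F}^k_r$ with $r+1-2h$ legs, times its coefficient $\hbar^{2j+r-1}$, is a vertex of genus $j+h$ (not $j$, as you wrote at one point) weighted by $\hbar^{2(j+h)-2+\mathrm{val}}$; caps carry $\hbar^{-1}$, additivity over a connected graph gives $\hbar^{2g-2}$, and accounting for the $n$ left insertions without $\hbar^{-1}$ gives $\hbar^{2g-2+n}$.

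The one step to make precise is the central term. It produces only isolated closed vertices of genus $\ge 1$, which factor out as a scalar $Z=\langle e^{W}e^{\sum_q s_q J_{-q}/(q\hbar)}\rangle\in\C\llbracket\hbar^2\rrbracket$, so the exponential formula has to be applied to the normalised correlators $H^\bullet_n/Z$. With the literal unnormalised inclusion--exclusion and $Z|_{\hbar=0}\neq 1$ (e.g.\ from an $\hbar^0$ multiple of $\mc{F}^0_1$), $H_2$ would acquire an $\hbar^{-2}$ term. Normalising by $Z$ is the sense in which the paper's footnote says the connected numbers are unaffected by the central term.
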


\begin{definition}\label{def:Multidifferentials}
    The \emph{multidifferentials} are
    \begin{align}
        \omega_{n} (X_{\llbracket n \rrbracket} )
        &\coloneqq
        d_1 \dotsb d_n H_{n} (X_{\llbracket n \rrbracket})
        =
        \sum_{m_1, \dotsc, m_n = 1}^\infty \Big( \prod_{j=1}^n X_j^{m_j} \frac{dX_j}{X_j} \Big) \Big\< J_{m_1} \dotsb J_{m_n} e^W e^{\sum_{q=1}^\infty s_q \frac{J_{-q}}{q\hbar}} \Big\>^\circ
        \\
        \omega_{g,n} (X_{\llbracket n \rrbracket} )
        &\coloneqq
        d_1 \dotsb d_n H_{g,n} (X_{\llbracket n \rrbracket}).
    \end{align}
    The \emph{completed multidifferentials} are
    \begin{equation}\label{Correlators}
        \begin{split}
            \widehat{\omega}_{n} (X_{\llbracket n \rrbracket} )
            &\coloneqq
            \sum_{m_1, \dotsc, m_n \in \Z} \Big( \prod_{j=1}^n X_j^{m_j} \frac{dX_j}{X_j} \Big) \Big\< J_{m_1} \dotsb J_{m_n} e^W e^{\sum_{q=1}^\infty s_q \frac{J_{-q}}{q\hbar}} \Big\>^\circ
            \\
            &=
            \Big\< J(X_1) \dotsb J(X_n) e^W e^{\sum_{q=1}^\infty s_q \frac{J_{-q}}{q\hbar}} \Big\>^\circ.
        \end{split}
    \end{equation}
\end{definition}

The (non-completed) multidifferentials are power series in the variables $ X_{\llbracket n \rrbracket}$ whose coefficients are Hurwitz numbers. On the other hand, the completed multidifferentials have better formal properties due to the entire current being present in the Fock space expression. They are very closely related, as the following proposition shows.

\begin{proposition}[{\cite[Proposition 4.1]{BDKS22}}]
    The non-completed and completed multidifferentials are related by
    \begin{equation}
        \widehat{\omega}_n
        =
        \omega_n + \delta_{n,2} \frac{dX_1 dX_2}{(X_1 - X_2)^2} ,
        \quad
        \text{i.e.}
        \quad
        \widehat{\omega}_{g,n}
        =
        \omega_{g,n} + \delta_{g,0} \delta_{n,2} \frac{dX_1 dX_2}{(X_1 - X_2)^2} ,
    \end{equation}
\end{proposition}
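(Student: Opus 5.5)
The difference $\widehat{\omega}_n - \omega_n$ consists exactly of the terms in \cref{Correlators} where at least one index $m_j$ is non-positive, since $\omega_n$ collects only the terms with all $m_j \ge 1$. So the plan is to show that all such terms vanish, except for one universal contribution at $n=2$, and then identify that contribution with $\frac{dX_1 dX_2}{(X_1-X_2)^2}$.

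The first step is to reduce to the disconnected correlator and move every $J_m$ with $m\le 0$ to the left. Two facts do the work. First, $\langle 0| J_m = 0$ for $m<0$: the covacuum evaluates at $\underline{p}=0$, and $J_{-k}=p_k$. Second, $J_0$ acts as zero on the charge-zero bosonic Fock space. Consider the rightmost non-positive index $m_j$ and commute $J_{m_j}$ leftwards through the $J_{m_i}$ with $i<j$. Each such commutator produces $m_i\delta_{m_i+m_j,0}\,c$. When $J_{m_j}$ reaches the covacuum it is annihilated.

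Since $m_j$ is the rightmost non-positive index, it never has to pass $e^W$. This matters, because $W$ is lower triangular and does not preserve the annihilation property in any simple way; arranging the argument so that negative modes never cross $e^W$ is the key choice. If $m_j$ does not exist we are in the $\omega_n$ case. Note, however, that $J_{m_j}$ does cross the positive modes $J_{m_i}$ with $i<j$, and only these commutators survive. Each one contracts the pair $(i,j)$, with $m_i = -m_j >0$, into the scalar $m_i$. Iterating, the disconnected completed correlator equals a sum over partial matchings: matched pairs contribute $\sum_{m\ge1} m X_a^{m} X_b^{-m}\frac{dX_a dX_b}{X_aX_b}$ (with the appropriate ordering), and the unmatched indices form a non-completed disconnected correlator.

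The second step is to pass to connected parts. Under inclusion--exclusion each matched pair is a connected component of its own, so its contribution appears only in the connected correlator with $n=2$ and $g=0$ by $\hbar$-counting. The two $J$'s carry no $\hbar^{-1}$ factors, so this term has $\hbar^0 = \hbar^{2g-2+n}$ with $g=0,n=2$. For the full cumulants one checks that the matching decomposition is compatible with the set-partition formula for connected parts; this holds because the contraction factor is a scalar. Hence $\widehat{\omega}_n = \omega_n$ for $n\neq 2$, and for $n=2$ the correction is $\sum_{m\ge1} m X_1^m X_2^{-m}\frac{dX_1dX_2}{X_1X_2}$.

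Finally, this series is the expansion of $\frac{dX_1dX_2}{(X_1-X_2)^2}$ in the region $|X_1|<|X_2|$, since
\begin{equation}
\frac{1}{(X_1-X_2)^2} = \sum_{m\ge1} m X_1^{m-1}X_2^{-m-1}.
\end{equation}
The main point needing care is the orientation of this expansion, that is, which region is meant given the operator ordering $J(X_1)J(X_2)$, together with the bookkeeping that the contractions never interact with $e^W$.
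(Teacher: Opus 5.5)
Your argument is correct and is essentially the proof the paper relies on. The paper only cites \cite[Proposition 4.1]{BDKS22} and remarks that the diagonal shape of $W$ plays no role; that proof is exactly yours: non-positive modes are moved left onto the covacuum, and the only surviving commutators $[J_m,J_{-m}]=m$ produce the $(0,2)$ term $\sum_{m\ge1} m X_1^{m}X_2^{-m}\frac{dX_1\,dX_2}{X_1X_2}$, which is the expansion of $\frac{dX_1\,dX_2}{(X_1-X_2)^2}$ in $|X_1|<|X_2|$. Your observation that these modes never cross $e^W$ is precisely why the restriction on $W$ is irrelevant; the only detail to add is that $\langle 0|e^W=\langle 0|$ (since $W$ is built from $E_{l+k,l}$ with $k\ge 0$), so $\langle e^W e^{\sum_q s_qJ_{-q}/(q\hbar)}\rangle=1$ and a contracted pair is not rescaled by this factor when you pass to connected correlators.
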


The proof of \cite[Proposition 4.1]{BDKS22} is given in a more restricted context, requiring that $W$ be diagonal of a certain shape, but this restriction plays no role in the proof. So it holds in our larger generality.

\subsection{Tropical covers}
\label{sec-tropcov}

In this section, we introduce some necessary basics of tropical geometry.
We note that we allow our graphs to have multiple edges between vertices.

\begin{definition}
    An abstract tropical curve is a graph $\Gamma$ endowed with the following:
    \begin{enumerate}
        \item The vertex set of $\Gamma$ is denoted by $V(\Gamma)$, while its edge set is denoted by $E(\Gamma)$.
        \item We call the one-valent vertices of $\Gamma$ \emph{leaves} and their adjacent edges \emph{ends}.
        \item We denote the set of leaves by $V^\infty(\Gamma)$ and its complement the set of \emph{inner vertices} by $V^0(\Gamma)$.
        \item The set $E(\Gamma)$ is decomposed into set of ends $E^\infty(\Gamma)$ and its complement $E^0(\Gamma)$ called the set of \emph{internal edges}.
        \item We have a lengths function
        \begin{equation}
            \ell \colon E(\Gamma)\to\mathbb{R}\cup\{\infty\}
        \end{equation}
        with $\ell^{-1}(\infty)=E^\infty(\Gamma)$, turning $\Gamma$ into a metric graph.
        \item We have a function
        \begin{equation}
            g \colon V^0(\Gamma)\to\mathbb{Z}_{\ge0}.
        \end{equation}
        For a vertex $v\in V^0(\Gamma)$, we call $g(v)$ its \emph{genus}.
    \end{enumerate}
    We define the genus of an abstract tropical curve as
    \begin{equation}
        g(\Gamma)
        =
        b^1(\Gamma)+\sum_{v\in V^0(\Gamma)}g(v),
    \end{equation}
    where $b^1(\Gamma)$ denotes the first Betti number of $\Gamma$.

\end{definition}

\begin{definition}
\label{def:tropcov}
    A tropical cover is a surjective harmonic map between the geometric realisations of two abstract tropical curves $\pi\colon\Gamma_1\to\Gamma_2$, i.e.:
    \begin{enumerate}
        \item $\pi(V(\Gamma_1))=V(\Gamma_2)$, $\pi(E(\Gamma_1))=E(\Gamma_2)$
        \item We interpret each edge $e$ as an interval $[0,\ell(e)]$. For $e\in E(\Gamma_1)$ with $\pi(e)\in E(\Gamma_2)$, we require that there exists a positive integer $\omega(e)$, such that $\pi_{|e}\colon [0,\ell(e)]\to[0,\ell(\pi(e))]$ is given by $t\mapsto \omega(e)\cdot t$. We call $\omega(e)$ the \emph{weight} of $e$.
        \item Let $v\in V(\Gamma_1)$ and $v'=\pi(v)$. Let $e'$ adjacent to $v'$. Then, we define the \emph{local degree} at $v$ as
        \begin{equation}
            d_v=\sum_{\substack{e\in E(\Gamma_1)\colon,\, v\in\partial e\\\pi^{-1}((e')^\circ)\cap e\neq\emptyset}}\omega(e),
        \end{equation}
        where $(e')^\circ$ denotes the interior of the edge $e'$. We require $d_v$ to be independent of the choice of $e'$ adjacent to $v'$. This is the \emph{balancing} or \emph{harmonicity} condition.
    \end{enumerate}

  Let $v'\in V(\Gamma_2)$, we define the \emph{degree} $\mathrm{deg}(\pi)$ of $\pi$ as $\sum_{\pi(v)=v'}d_v$. Due to harmonicity, the degree is independent of the choice of $v'$.

  For any end $e$ in $\Gamma_2$, we define a partition $\mu_e$ of $\mathrm{deg}(\pi)$ as the tuple of weights of ends mapping to $e$. We call $\mu_e$ the \emph{ramification profile} of $e$.

  Finally, we call two covers $\pi_1\colon \Gamma_1\to\Gamma$ and $\pi_2\colon\Gamma_2\to\Gamma$ \emph{equivalent} if there is an isomorphism $g\colon\Gamma_1\to\Gamma_2$ of metric graphs with $\pi_1=\pi_2\circ g$.
\end{definition}

In this paper, we focus on the case of tropical covers $\pi\colon\Gamma_1 \to \Gamma_2$ with target $\Gamma_2$ the \emph{tropical projective line} $\mathbb{P}^1_{\textrm{trop}}$, i.e. $\mathbb{P}^1_{\textrm{trop}}=\mathbb{R}\cup\{\pm\infty\}$ possibly subdivided by two-valent vertices $p_1,\dots,p_r\in\mathbb{R}$, e.g.:\vspace{\baselineskip}

\begin{center}
\tikzset{every picture/.style={line width=0.75pt}} 

\begin{tikzpicture}[x=0.75pt,y=0.75pt,yscale=-1,xscale=1]

\draw    (30.33,40) -- (260.08,40.08) ;
\draw  [fill={rgb, 255:red, 0; green, 0; blue, 0 }  ,fill opacity=1 ] (29.13,40) .. controls (29.13,39.33) and (29.67,38.79) .. (30.33,38.79) .. controls (31,38.79) and (31.54,39.33) .. (31.54,40) .. controls (31.54,40.67) and (31,41.21) .. (30.33,41.21) .. controls (29.67,41.21) and (29.13,40.67) .. (29.13,40) -- cycle ;
\draw  [fill={rgb, 255:red, 0; green, 0; blue, 0 }  ,fill opacity=1 ] (48.63,39.75) .. controls (48.63,39.08) and (49.17,38.54) .. (49.83,38.54) .. controls (50.5,38.54) and (51.04,39.08) .. (51.04,39.75) .. controls (51.04,40.42) and (50.5,40.96) .. (49.83,40.96) .. controls (49.17,40.96) and (48.63,40.42) .. (48.63,39.75) -- cycle ;
\draw  [fill={rgb, 255:red, 0; green, 0; blue, 0 }  ,fill opacity=1 ] (69.13,40) .. controls (69.13,39.33) and (69.67,38.79) .. (70.33,38.79) .. controls (71,38.79) and (71.54,39.33) .. (71.54,40) .. controls (71.54,40.67) and (71,41.21) .. (70.33,41.21) .. controls (69.67,41.21) and (69.13,40.67) .. (69.13,40) -- cycle ;
\draw  [fill={rgb, 255:red, 0; green, 0; blue, 0 }  ,fill opacity=1 ] (99.13,40) .. controls (99.13,39.33) and (99.67,38.79) .. (100.33,38.79) .. controls (101,38.79) and (101.54,39.33) .. (101.54,40) .. controls (101.54,40.67) and (101,41.21) .. (100.33,41.21) .. controls (99.67,41.21) and (99.13,40.67) .. (99.13,40) -- cycle ;
\draw  [fill={rgb, 255:red, 0; green, 0; blue, 0 }  ,fill opacity=1 ] (128.88,40) .. controls (128.88,39.33) and (129.42,38.79) .. (130.08,38.79) .. controls (130.75,38.79) and (131.29,39.33) .. (131.29,40) .. controls (131.29,40.67) and (130.75,41.21) .. (130.08,41.21) .. controls (129.42,41.21) and (128.88,40.67) .. (128.88,40) -- cycle ;
\draw  [fill={rgb, 255:red, 0; green, 0; blue, 0 }  ,fill opacity=1 ] (159.13,40) .. controls (159.13,39.33) and (159.67,38.79) .. (160.33,38.79) .. controls (161,38.79) and (161.54,39.33) .. (161.54,40) .. controls (161.54,40.67) and (161,41.21) .. (160.33,41.21) .. controls (159.67,41.21) and (159.13,40.67) .. (159.13,40) -- cycle ;
\draw  [fill={rgb, 255:red, 0; green, 0; blue, 0 }  ,fill opacity=1 ] (189.13,40) .. controls (189.13,39.33) and (189.67,38.79) .. (190.33,38.79) .. controls (191,38.79) and (191.54,39.33) .. (191.54,40) .. controls (191.54,40.67) and (191,41.21) .. (190.33,41.21) .. controls (189.67,41.21) and (189.13,40.67) .. (189.13,40) -- cycle ;
\draw  [fill={rgb, 255:red, 0; green, 0; blue, 0 }  ,fill opacity=1 ] (258.87,40.08) .. controls (258.87,39.42) and (259.42,38.88) .. (260.08,38.88) .. controls (260.75,38.88) and (261.29,39.42) .. (261.29,40.08) .. controls (261.29,40.75) and (260.75,41.29) .. (260.08,41.29) .. controls (259.42,41.29) and (258.87,40.75) .. (258.87,40.08) -- cycle ;
\draw  [fill={rgb, 255:red, 0; green, 0; blue, 0 }  ,fill opacity=1 ] (239.08,40.08) .. controls (239.08,39.42) and (239.62,38.88) .. (240.29,38.88) .. controls (240.96,38.88) and (241.5,39.42) .. (241.5,40.08) .. controls (241.5,40.75) and (240.96,41.29) .. (240.29,41.29) .. controls (239.62,41.29) and (239.08,40.75) .. (239.08,40.08) -- cycle ;
\draw  [fill={rgb, 255:red, 0; green, 0; blue, 0 }  ,fill opacity=1 ] (219.08,40.08) .. controls (219.08,39.42) and (219.62,38.88) .. (220.29,38.88) .. controls (220.96,38.88) and (221.5,39.42) .. (221.5,40.08) .. controls (221.5,40.75) and (220.96,41.29) .. (220.29,41.29) .. controls (219.62,41.29) and (219.08,40.75) .. (219.08,40.08) -- cycle ;
\end{tikzpicture}    
\end{center}
\vspace{\baselineskip}

For target the tropical projective line, we now define the notion of leaky tropical covers.

\begin{definition}
    Let $\pi\colon\Gamma\to\mathbb{P}^1_{\textrm{trop}}$ a map between abstract tropical curves satisfying conditions (1) and (2) of \cref{def:tropcov} with $-\infty<p_1<\cdots<p_n<\infty$ the vertices of $\mathbb{P}^1_{\textrm{trop}}$. Let $w$ be an inner vertex of $\Gamma$ with $\pi(w)=p_i$. Let $e_1$  be the edge adjacent to $p_i$ pointing in negative direction and $e_2$ the one pointing in positive direction. Consider
    \begin{equation}
        d_w^j=\sum_{\substack{e\in E(\Gamma)\colon,\, w\in\partial e\\\pi^{-1}((e_j)^\circ)\cap e\neq\emptyset}}\omega(e)
    \end{equation}
    and define $k_w=d_w^1-d_w^2$. We call $k_w$ the \emph{leakyness} of $w$ and $\pi$ a \emph{leaky tropical cover} with leakyness $\underline{k}=(k_w)_{w\in V^0(\Gamma)}$.

    The notions of ramification profiles and isomorphisms of \cref{def:tropcov} are adopted verbatim.
\end{definition}

A main feature of tropical covers it their use in computing vacuum expectations in the bosonic Fock space via Wick's theorem. For this, we consider the following set-up: Let $\mathbf{x}_1,\dots,\mathbf{x}_n$ be vectors of non-zero integers of any finite length with $\sum_{j=1}^{\ell(x_i)} (x_i)_j=k_i\in\mathbb{Z}$. We denote for $\mathbf{x}_i$ the vector only containing its positive (negative) entries by $\mathbf{x}_i^+$ (resp. $\mathbf{x}_i^-$). Moreover, consider vectors $\mathbf{x}_{-\infty} \in\mathbb{Z}_{>0}^m,\mathbf{x}_\infty \in \mathbb{Z}_{<0}^n$, then we aim to compute the following vacuum expectation:

\begin{equation}
\label{equ:vacc}
    \langle J_{\mathbf{x}_{-\infty}} \mid \prod_{i\colon (x_1)_i<0} J_{(x_1)_i} \prod_{i\colon (x_1)_i>0} J_{(x_1)_i} \cdots \prod_{i\colon (x_n)_i<0} J_{(x_n)_i} \prod_{i\colon (x_n)_i>0} J_{(x_n)_i} \mid J_{\mathbf{x}_\infty}\rangle.
\end{equation}

\begin{definition}
   Let $\mathbf{x}_{-\infty},\mathbf{x}_\infty,\mathbf{x}_1,\dots,\mathbf{x}_n$ vectors as above. Fix $n$ points $-\infty<p_1<\cdots<p_n<\infty$ in $\mathbb{P}^1_{\textrm{trop}}$ and $\pi\colon\Gamma\to\mathbb{P}^1_{\textrm{trop}}$ a leaky tropical cover, such that
   \begin{enumerate}
       \item The ramification profile of $-\infty$ is $\mathbf{x}_{-\infty}$ and the ramification profile of $\infty$ is $\mathbf{x}_\infty$.
       \item The vertex $p_i$ of $\mathbb{P}^1_{\textrm{trop}}$ has exactly one vertex $v_i$ in its pre-image,
       \item The vertex $v_i$ has valency $\ell(\mathbf{x}_i)$. Its adjacent edges are in bijection with the components of $\mathbf{x}_i$. The edge corresponding to the $j$--th component has weight $|(\mathbf{x}_i)_j|$. It maps into $(-\infty,p_i)$ if $(\mathbf{x}_i)_j>0$ and into $(p_i,\infty)$ if $(\mathbf{x}_i)_j<0$.
       \item The genus of each vertex is $0$.
   \end{enumerate}

   We call such a leaky tropical cover of type $(\mathbf{x}_{-\infty},\mathbf{x}_1,\dots,\mathbf{x}_n,\mathbf{x}_\infty)$. Moreover, we define its multiplicity as
   \begin{equation}
       \mathrm{mult}(\pi)
       =
       \frac{1}{|\mathrm{Aut}(\pi)|} \prod_{i=1}^n |\mathrm{Aut}(\mathbf{x}_i^+)| |\mathrm{Aut}(\mathbf{x}_i^-)| \prod_{e\in E(\Gamma)} \omega(e).
   \end{equation}
\end{definition}

\begin{remark}
    We note that a leaf adjacent to an end of weight $\omega(e)$ may be viewed as vertex with leakiness $\pm \omega(e)$.
\end{remark}

The following result is a version of Wick's theorem, see e.g. \cite{Wic50,BG16}.

\begin{theorem}[{Wick's theorem}]
    The vacuum expectation in \cref{equ:vacc} is equal to
    \begin{equation}
        \sum_\pi\mathrm{mult}(\pi),
    \end{equation}
    where $\pi$ is a leaky tropical cover of type $(\mathbf{x}_{-\infty},\mathbf{x}_1,\dots,\mathbf{x}_n,\mathbf{x}_\infty)$.
\end{theorem}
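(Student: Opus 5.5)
We prove the formula by Wick's theorem: expand the vacuum expectation into pairings of annihilation operators $J_m$ ($m>0$) with creation operators $J_{-m}$ to their right, and show these pairings are in bijection with leaky tropical covers of the given type, with the right weights. The only input is the Heisenberg relation $[J_m,J_n]=m\delta_{m+n,0}$ (with $c\mapsto\Id$). From it we get $J_m|0\>=0$ for $m>0$ and $\<0|J_{-m}=0$ for $m>0$, so every nonzero contribution is a complete matching.

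\textbf{Step 1 (reduce to Wick sum).} Commute each positive-index $J_m$ to the right until it hits the vacuum. Each time it passes a $J_{-m}$ it produces the scalar $m$. Inducting on the number of operators, the vacuum expectation equals the sum over perfect matchings in which every $J_m$ with $m>0$ is paired with some $J_{-m}$ located to its right, each pair contributing the factor $m$.

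Here $\mathbf{x}_{-\infty}$ is read as positive modes on the left and $\mathbf{x}_\infty$ as negative modes on the right. Within each block $\mathbf{x}_i$, the negative modes are placed before the positive ones. Hence no pairing occurs inside a single block, and a positive mode of block $i$ can only pair with a negative mode of some block $j>i$ or of $\mathbf{x}_\infty$.

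\textbf{Step 2 (matchings to covers).} Given a matching, build a graph as follows:
\begin{itemize}
\item one genus-$0$ vertex $v_i$ over $p_i$ for each block $\mathbf{x}_i$, and leaves at $\pm\infty$ for the entries of $\mathbf{x}_{\mp\infty}$;
\item for each matched pair of modes $\pm m$, an edge of weight $m$ joining the corresponding vertices.
\end{itemize}
The edge runs from the block containing $J_{-m}$ (on the left) to the block containing $J_m$ (on the right). So a component $(\mathbf{x}_i)_j>0$ gives an edge going left of $p_i$, and a negative component gives an edge going right, exactly as condition (3) requires. Edge lengths are determined by the positions $p_i$, and the leakiness at $v_i$ is automatically $\sum_j(\mathbf{x}_i)_j=k_i$. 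Conversely, a leaky cover of this type, with its edges labelled by components of the $\mathbf{x}_i$, determines the matching. So matchings correspond to labelled covers, and each contributes $\prod_e\omega(e)$.

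\textbf{Step 3 (count the labellings).} This is the main bookkeeping obstacle. We need the number of labellings (identifications of the edges at $v_i$ with entries of $\mathbf{x}_i^\pm$) giving a fixed isomorphism class of cover. The group $\prod_i\Aut(\mathbf{x}_i^+)\times\Aut(\mathbf{x}_i^-)$ acts on labellings by permuting equal entries, and this action is transitive on the labellings of a given cover. The stabiliser of a labelling is identified with $\Aut(\pi)$. Therefore the number of matchings producing $\pi$ is
\[\prod_i|\Aut(\mathbf{x}_i^+)||\Aut(\mathbf{x}_i^-)|/|\Aut(\pi)|.\]
Multiplying by $\prod_e\omega(e)$ gives $\mathrm{mult}(\pi)$.

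The care needed is at the ends. Entries of $\mathbf{x}_{\pm\infty}$ are distinguishable operator positions, and they are not quotiented in the multiplicity. So the leaves should be treated as labelled, meaning $\Aut(\pi)$ fixes them. Automorphisms then come only from permuting parallel edges of equal weight between the same two vertices, and the orbit–stabiliser count above goes through. Summing over $\pi$ gives the claim.
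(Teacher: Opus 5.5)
\textbf{There is a genuine gap in Step 2.} The orientation you assign to edges contradicts your own Step 1. By Step 1, a nonzero contraction pairs an annihilator $J_m$ ($m>0$) with a creator $J_{-m}$ standing to its \emph{right}. This has three consequences:
\begin{enumerate}
\item A positive entry $(\mathbf{x}_i)_j=m$, i.e.\ the operator $J_m$ in block $i$, is joined to a $J_{-m}$ in a later block or in $\mathbf{x}_\infty$, so its edge lies over $(p_i,\infty)$.
\item A negative entry is joined to an annihilator on its left.
\item The entries of $\mathbf{x}_{-\infty}$, being annihilators, attach to \emph{negative} entries of the blocks.
\end{enumerate}
Your sentence ``the edge runs from the block containing $J_{-m}$ (on the left) to the block containing $J_m$ (on the right)'' reverses this. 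That reversal is exactly what makes the contraction patterns look like covers satisfying condition (3). With the correct orientation, the patterns of \eqref{equ:vacc} are covers obeying the mirror image of (3), and the leakiness at $v_i$ is $-\sum_j(\mathbf{x}_i)_j=-k_i$.

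This is not cosmetic: with the conventions exactly as written, the identity is false. Take $\mathbf{x}_{-\infty}=(2)$, $\mathbf{x}_1=(-2,1)$, $\mathbf{x}_\infty=(-1)$. Then $\langle 0|J_2J_{-2}J_1J_{-1}|0\rangle=2$, but (3) forces the left edge at $v_1$ to have weight $1\neq 2$, so there is no cover at all. Conversely, for $\mathbf{x}_1=(2,-1)$ there is a cover of multiplicity $2$, while $\langle 0|J_2J_{-1}J_2J_{-1}|0\rangle=0$.

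\textbf{How to fix it.} Before the bijection can be claimed, you must fix a consistent sign convention; the statement in the paper, which only cites Wick's theorem and gives no proof, has the same slip. There are two options.
\begin{itemize}
\item Swap the two cases of condition (3), so positive entries go to the right of $p_i$ and the vertex leakiness is $-k_i$.
\item Replace each $J_{(\mathbf{x}_i)_j}$ in \eqref{equ:vacc} by $J_{-(\mathbf{x}_i)_j}$, and put the positive entries (now creation operators) first in each block so that it stays normal ordered. This second reading is the one compatible with how the theorem is used for $\mathcal{F}^k_r$: its modes sum to $-k$ while its vertex has leakiness $k$.
\end{itemize}

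After either correction your argument is the standard one and goes through. The factor $m$ per contraction gives $\prod_{e}\omega(e)$, ends included. Step 3 is right, including the point that the ends must be labelled: with unlabelled ends the count would be off by $|\Aut(\mathbf{x}_{-\infty})||\Aut(\mathbf{x}_\infty)|$. One small wording fix there: $\prod_i\Aut(\mathbf{x}_i^+)\times\Aut(\mathbf{x}_i^-)$ acts \emph{freely} on labellings. What is isomorphic to $\Aut(\pi)$ is the stabiliser of the matching that a labelling induces.
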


We may use Wick's theorem to obtain a tropical interpretation of completed cycles. To begin with, we have the following expansion (see e.g. \cite{AKL25})

\begin{equation}
    \mathcal{E}_{k}(z)
    =
    \frac{1}{\varsigma(z)} \sum_{n=0}^\infty\frac{1}{n!} \sum_{\substack{j_1,\dots,j_n\\j_1+\dots+j_n=k}} \prod_{i=1}^n \varsigma (j_iz)\colon\prod_{i=1}^n\frac{J_{j_i}}{j_i}\colon+\frac{\delta_{k,0}}{\varsigma(z)} c.
\end{equation}

From this, we can obtain an explicit bosonic expression of $\mathcal{F}_r^k$. For this, we define the following set

\begin{equation}
    S\mathbb{Z}^{r+1-2g;k}
    =
    \{ \textbf{x}\in\mathbb{Z}^{r+1-2g}\mid x_1\le \dots\le x_l<0<x_{l+1}\le\dots\le x_{r+1-2g},\, l>0,\,\sum x_i=-k \}.
\end{equation}

Moreover, for two partitions $\mu$ and $\nu$ not necessarily of the same size and a non--negative integer $g$, we define quantities

\begin{equation}
    m_g(\mu,\nu)
    =
    [z^{2g}] \frac{\prod_{i=1}^{\ell(\mu)} \mathcal{S}(\mu_iz) \prod_{j=1}^{\ell(\nu)} \mathcal{S}(\nu_jz)}{\mathcal{S}(z)}.
\end{equation}

Analogous to \cite[Lemma 3.3]{HL22}, we obtain the following result.

\begin{lemma}\label{prop-expansion}
    We have
    \begin{equation}
        \mathcal{F}_r^k
        =
        r!\sum_{g=0}^\infty \sum_{\textbf{x}\in S\mathbb{Z}^{r+1-2g;k}} m_g(\textbf{x}^+,\textbf{x}^-) \prod_{x_i\in\textbf{x}^-} J_{-x_i} \prod_{x_i\in\textbf{x}^+} J_{x_i}.
    \end{equation}
\end{lemma}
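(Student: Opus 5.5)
Starting from the bosonic expansion of $\mathcal{E}_k(z)$ stated just before the lemma, I apply it with $k$ replaced by $-k$ (since $\mathcal{F}^k_r$ is defined through $\mathcal{E}_{-k}(u)$) and read off the coefficient of $u^r/r!$. In that expansion the central term $\delta_{k,0}c/\varsigma(u)$ and the $n=0$ summand give only constants; these belong to the disconnected correction and are dropped, or absorbed into a convention, exactly as in the footnote on $\mathcal{F}_r$. For the remaining terms with $n\ge 1$, I pull the $u$-dependence into the normalised function $\mathcal{S}$ via $\varsigma(j u)=j u\,\mathcal{S}(j u)$ and $\varsigma(u)=u\,\mathcal{S}(u)$. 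This gives
\begin{equation}
\frac{\prod_{i=1}^n\varsigma(j_i u)}{\varsigma(u)}\prod_{i=1}^n\frac{1}{j_i}
=
u^{n-1}\,\frac{\prod_{i=1}^n \mathcal{S}(j_i u)}{\mathcal{S}(u)}.
\end{equation}
The factors $j_i$ cancel exactly against the $1/j_i$ in $\colon\prod J_{j_i}/j_i\colon$, which leaves the operator $\colon\prod J_{j_i}\colon$.

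Next, $\mathcal{S}$ is even and $\mathcal{S}(-x)=\mathcal{S}(x)$, so the quotient above is a power series in $u^2$ that depends only on the absolute values $|j_i|$. The coefficient of $u^r$ therefore comes from $n-1+2g=r$, that is $n=r+1-2g$, and it contributes $[u^{2g}]$ of that quotient. Splitting the $j_i$ into negative and positive parts, this coefficient is exactly $m_g(\mathbf{x}^+,\mathbf{x}^-)$ with $\mathbf{x}^-$ taken in absolute value. Multiplying by $r!$ to pass from the $u^r$ coefficient to $\mathcal{F}^k_r$ produces the prefactor $r!$.

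Then I convert the sum over ordered tuples $(j_1,\dots,j_n)$ with $\sum j_i=-k$, weighted by $1/n!$, into a sum over sorted vectors $\mathbf{x}\in S\mathbb{Z}^{r+1-2g;k}$. Normal ordering places annihilators $J_{>0}$ to the right and creators $J_{<0}$ to the left, and the modes commute within each group. So every ordered tuple gives the same normally ordered monomial as its sorted version, and the multiplicity equals $n!/|\mathrm{Aut}(\mathbf{x})|$. The main obstacle is the bookkeeping of this symmetry factor together with the index and sign conventions of the displayed formula. There the $\mathbf{x}^-$ and $\mathbf{x}^+$ labels and the writing $J_{-x_i}$ must be matched against the sign in $\sum x_i=-k$, and I would check them by comparing with $\mathcal{F}_0^k=J_{-k}$ and with $\mathcal{F}_2^0$.

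If the $|\mathrm{Aut}|$ factor is not visibly present in the statement, I would interpret $m_g$ or the sum as running with that normalisation, as in \cite[Lemma 3.3]{HL22}, and verify the normalisation on these small cases. The constraint $l>0$, meaning at least one negative entry, should follow from the same check. For $k\ge 0$ and $n\ge1$ at least one mode must be negative, except in constant cases, which are the ones dropped above.
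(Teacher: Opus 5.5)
Your derivation is the right one, and it is the computation that the paper's one-line appeal to \cite[Lemma 3.3]{HL22} stands for; the paper itself gives no further argument. The following steps are all correct:
\begin{itemize}
\item extracting $r!\,[u^r]$ from the expansion of $\mathcal{E}_{-k}(u)$;
\item cancelling the $1/j_i$ via $\varsigma(ju)=ju\,\mathcal{S}(ju)$;
\item using evenness of $\mathcal{S}$ to force $n=r+1-2g$ with coefficient $m_g$;
\item collapsing ordered tuples onto sorted ones by normal ordering.
\end{itemize}
Your remarks on the constants for $k=0$ and on $l>0$ being automatic for $k\ge 0$ are also right. Your restriction to $k\ge 0$ is in fact necessary: for $k<0$ the condition $l>0$ discards genuine terms, e.g.\ $\mathcal{F}^{-1}_0=J_1$.

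The one thing to change is your last paragraph. Since $m_g$ and $S\mathbb{Z}^{r+1-2g;k}$ are explicitly defined, there is nothing to reinterpret. What your argument proves is
\begin{equation*}
    \mathcal{F}_r^k
    =
    r!\sum_{g\ge 0}\sum_{\mathbf{x}\in S\mathbb{Z}^{r+1-2g;k}}\frac{m_g(\mathbf{x}^+,\mathbf{x}^-)}{|\Aut(\mathbf{x}^+)|\,|\Aut(\mathbf{x}^-)|}\prod_{x_i<0}J_{x_i}\prod_{x_i>0}J_{x_i}
\end{equation*}
(up to constants when $k=0$). The two checks you propose show that the statement as printed is off:
\begin{itemize}
\item For $r=0$ the only term is $\mathbf{x}=(-k)$, so $\mathcal{F}_0^k=J_{-k}$ forces $\mathbf{x}^-$ to denote the absolute values of the negative entries. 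Read literally, $\prod_{x_i\in\mathbf{x}^-}J_{-x_i}$ would give $J_k$.
\item For $\mathcal{F}_2^0=\sum_l l^2E_{ll}=\sum_{a,b\ge1}(J_{-a-b}J_aJ_b+J_{-a}J_{-b}J_{a+b})$, your own count gives the coefficient of $J_{-2}J_1J_1$ as $2!\cdot\frac{1}{3!}\cdot 3=1$. The displayed formula gives $2!\,m_0=2$.
\end{itemize}
So commit to the version with the factor $1/(|\Aut(\mathbf{x}^+)|\,|\Aut(\mathbf{x}^-)|)$ instead of hedging. This is also the version the rest of the paper needs. In \cref{thm-corr}, that factor is what cancels the $\prod_i|\Aut(\mathbf{x}_i^+)|\,|\Aut(\mathbf{x}_i^-)|$ in the Wick multiplicity, leaving the automorphism-free vertex weights $(r_i-1)!\,m_{g(v_i)}$.
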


    Let $g$ a non-negative integer, $\mu$ and $\nu$ partitions of not necessarily the same size. Moreover, let $\underline{k}=(k_1,\dots,k_s)$ a tuple of integers and $\underline{r}=(r_1,\dots,r_s)$ a tuple of positive integers for $s>0$ satisfying
    \begin{equation}
        \sum (r_i-1)
        =
        2g-2+\ell(\mu)+\ell(\nu)
        \quad
        \text{and}
        \quad
        \sum \mu_i
        =
        \sum\nu_j+\sum k_l.
    \end{equation}

\begin{definition}
    We denote completed cycles $(\underline{k},\underline{r})$\emph{-leaky Hurwitz numbers} as
    \begin{equation}
    \label{def-leaky}
        h^{\underline{k},\underline{r}}_{g;\mu,\nu}
        =
        \frac{1}{\prod\mu_i\prod\nu_j} \left\langle \prod J_{\mu_i} \prod_{i=1}^s \frac{\mathcal{F}_{r_i}^{k_i}}{r_i} \prod J_{-\nu_j} \right\rangle.
    \end{equation}
    The case of $k_i=0$ recovers completed cycles Hurwitz numbers of \cite{OP06a}.
  
    When $\underline{k}=(k^s)$ and $\underline{r}=(r^s)$, we denote the resulting numbers by  $h_g^{k,r}(\mu,\nu)$. Moreover, we will also be interested in studying the special case of $\nu=(q,\dots,q)$, in which case we denote the resulting number by $h^{k,r,q}_{g,\mu}$. We call these \emph{orbifold} leaky Hurwitz numbers.
\end{definition}

    We may now use Wick's theorem to obtain an interpretation of $h_g^{\underline{k},\underline{r}}(\mu,\nu)$ via leaky tropical covers.

\begin{theorem}
\label{thm-corr}
    Let $g$ a non-negative integer, $\mu$ and $\nu$ partitions of not necessarily the same size. Moreover, let $\underline{k}=(k_1,\dots,k_s)$ a tuple of integers and $\underline{r}=(r_1,\dots,r_s)$ a tuple of positive integers for $s>0$ satisfying
    \begin{equation}
        \sum_{i=1}^s (r_i - 1) 
        =
        2g-2 + \ell(\mu) + \ell(\nu)
        \quad
        \text{and}
        \quad\sum \mu_i
        =
        \sum\nu_j+\sum k_l.
    \end{equation}
    We define the $C(\mu,\nu,\underline{k},\underline{r})$ as the set of $\underline{k}$-leaky tropical covers $\pi\colon\Gamma_1\to \P^1_{\text{trop}}$ satisfying:
    \begin{enumerate}
        \item The ramification profile of $(-\infty,p_1)$ is $\mu$ and of $(p_s,\infty)$ is $\nu$.
        \item For each $p_i\in \Gamma_2$, there is a unique vertex $v_i$ with $\pi(v)=p_i$. We denote $w_i=\mathrm{val}(v_i)$.
        \item We have $r_i-1 = 2g(v_i)-2+w_i$ for all $i=1,\dots,s$ and $g=g(\Gamma_1)$.
        \item For each vertex $v_i$, we denote by $\textbf{x}^-_{i}$ ($\textbf{x}^+_{i}$) the set of weights of incoming (resp. outgoing) edges adjacent to $v_i$. We define the vertex multiplicity of $v_i$ to be
        \begin{equation}
            m_{v_i}
            =
            (r_i-1)! m_{g(v_i)}(\textbf{x}^+_i,\textbf{x}^-_i).
        \end{equation}
    \end{enumerate}
    Then, we have
    \begin{equation}
        h^{\underline{k},\underline{r}}_{g;\mu,\nu}
        =
        \sum_{\substack{\pi\colon\Gamma_1\to\Gamma_2\\\pi\in C(\mu,\nu,\underline{k},\underline{r})}} \frac{1}{|\mathrm{Aut}(\pi)|} \prod_{i=1}^s m_{v_i} \prod_{e\in E^0(\Gamma_1)} \omega(e).
    \end{equation}
    \end{theorem}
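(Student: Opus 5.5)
The plan is to substitute the bosonic expansion of \cref{prop-expansion} for each operator $\mathcal{F}^{k_i}_{r_i}/r_i$ in the vacuum expectation \cref{def-leaky}. Then we expand multilinearly and apply Wick's theorem to each resulting summand. Concretely, \cref{prop-expansion} gives
\begin{equation*}
\frac{\mathcal{F}^{k_i}_{r_i}}{r_i} = (r_i-1)!\sum_{g_i\ge 0}\sum_{\mathbf{x}_i\in S\mathbb{Z}^{r_i+1-2g_i;k_i}} m_{g_i}(\mathbf{x}_i^+,\mathbf{x}_i^-)\prod_{x\in\mathbf{x}_i^-}J_{-x}\prod_{x\in\mathbf{x}_i^+}J_x .
\end{equation*}
Inserting this for $i=1,\dots,s$ writes $h^{\underline{k},\underline{r}}_{g;\mu,\nu}$ as a sum over choices of genera $g_i$ and vectors $\mathbf{x}_i$. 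Each term is a coefficient $\frac{1}{\prod\mu_i\prod\nu_j}\prod_i (r_i-1)!\,m_{g_i}(\mathbf{x}_i^+,\mathbf{x}_i^-)$ times a vacuum expectation of exactly the shape \cref{equ:vacc}, with $\mathbf{x}_{-\infty}=\mu$ and $\mathbf{x}_\infty=-\nu$. The sign conventions have to be matched so that positive entries correspond to annihilators $J_{>0}$, that is, edges coming from the left.

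Next, Wick's theorem evaluates each such expectation as $\sum_\pi \mathrm{mult}(\pi)$ over leaky tropical covers of type $(\mu,\mathbf{x}_1,\dots,\mathbf{x}_s,-\nu)$ whose vertices all have genus $0$. We then reinterpret the data $g_i$ as the vertex genus $g(v_i)$. The valency of $v_i$ is $\ell(\mathbf{x}_i)=r_i+1-2g_i$, which is exactly condition (3): $r_i-1=2g(v_i)-2+w_i$. The leakiness at $v_i$ is $-\sum(\mathbf{x}_i)_j=k_i$, matching the defining sum condition of $S\mathbb{Z}^{r_i+1-2g_i;k_i}$. An Euler characteristic count shows that the total genus $b^1(\Gamma)+\sum g(v_i)$ equals $g$. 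It uses $\sum(r_i-1)=2g-2+\ell(\mu)+\ell(\nu)$, with leaves counted via $\ell(\mu)+\ell(\nu)$. Reassembling the double sum over $(g_i,\mathbf{x}_i)$ and $\pi$ gives a single sum over $\pi\in C(\mu,\nu,\underline{k},\underline{r})$. The restriction $l>0$ in $S\mathbb{Z}$ (at least one negative entry) is harmless: a term with no creation operator to the right of the annihilators would vanish anyway, or it reflects the normal ordering. I would check that it loses no covers.

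The multiplicities must then be compared. Wick's multiplicity carries $\prod_e\omega(e)$ over all edges, including ends. The end weights give $\prod\mu_i\prod\nu_j$, which cancels the prefactor and leaves $\prod_{e\in E^0}\omega(e)$.

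The main obstacle is the automorphism bookkeeping. The sum in \cref{prop-expansion} runs over sorted vectors $\mathbf{x}_i$, that is, multisets. Meanwhile $\mathrm{mult}(\pi)$ includes the factor $|\mathrm{Aut}(\mathbf{x}_i^+)||\mathrm{Aut}(\mathbf{x}_i^-)|/|\mathrm{Aut}(\pi)|$, where $\pi$ has edges labelled by components of $\mathbf{x}_i$. I would handle this by an orbit–stabiliser argument. The group $\prod_i \mathrm{Aut}(\mathbf{x}_i^+)\times\mathrm{Aut}(\mathbf{x}_i^-)$, together with relabellings of the ends in $\mu,\nu$ (which produce $|\mathrm{Aut}\,\mu||\mathrm{Aut}\,\nu|$ factors via $\prod J_{\mu_i}$ versus the unlabelled cover), acts on labelled covers. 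Unlabelled covers correspond to orbits, and the stabilisers are the automorphisms of the unlabelled cover. This converts $\sum_{\text{labelled}}\prod|\mathrm{Aut}(\mathbf{x}_i^\pm)|/|\mathrm{Aut}(\pi)|$ into $\sum_{\text{unlabelled}}1/|\mathrm{Aut}(\pi)|$.

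I expect the delicate point to be making sure the $m_{g_i}$ symmetrisation and the ordering within the normally ordered product do not introduce extra factorials. I would first verify this on a single-vertex example with repeated weights, then argue in general, following the parallel computation of \cite[Lemma 3.3]{HL22}.
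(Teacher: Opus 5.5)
Your outline is the paper's own proof: substitute \cref{prop-expansion}, apply Wick's theorem, read off $g(v_i)$ from $r_i-1=2g(v_i)-2+w_i$, and recover $g=g(\Gamma_1)$ by the Euler characteristic count. Your cancellation of the end weights against $1/\prod\mu_i\prod\nu_j$ is also correct; the paper leaves it implicit. The gap is in the step you yourself single out as the main obstacle, the automorphism bookkeeping: the orbit--stabiliser mechanism you propose cannot remove the factor. Orbit--stabiliser is exactly what \emph{produces} Wick's multiplicity. Take $G=\prod_i\mathrm{Aut}(\mathbf{x}_i^+)\times\mathrm{Aut}(\mathbf{x}_i^-)$. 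The labelled contractions lying over a given cover $\pi$ form one $G$-orbit of size $|G|/|\mathrm{Aut}(\pi)|$, and that is where $\prod_i|\mathrm{Aut}(\mathbf{x}_i^+)||\mathrm{Aut}(\mathbf{x}_i^-)|/|\mathrm{Aut}(\pi)|$ comes from. It cannot then also make $|G|$ disappear. With the expansion exactly as you display it, your computation returns the theorem's right-hand side multiplied by $\prod_i|\mathrm{Aut}(\mathbf{x}_i^+)||\mathrm{Aut}(\mathbf{x}_i^-)|$. For instance, take $\mu=(2)$, $\nu=(1,1)$, $\underline{k}=(0)$, $\underline{r}=(2)$. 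Only $\mathbf{x}_1=(-2,1,1)$ contributes, and $\langle J_2\,J_{-2}J_1J_1\,J_{-1}J_{-1}\rangle=4$, so you would get $h=2$. But $\mathcal{F}^0_2/2$ is the standard cut-and-join operator, whose coefficient of $J_{-2}J_1J_1$ is $\tfrac12$, so in fact $h=1$, which is also the value of the tropical sum.

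The missing ingredient is a symmetry factor in the operator expansion, not in the graph count. Bosonising $\mathcal{E}_{-k}(u)$ gives a sum over \emph{ordered} tuples $(j_1,\dots,j_n)$ with $\sum j_i=-k$, weighted by $1/n!$. Collecting these into sorted vectors gives the coefficient $m_g(\mathbf{x}^+,\mathbf{x}^-)/(|\mathrm{Aut}(\mathbf{x}^+)||\mathrm{Aut}(\mathbf{x}^-)|)$, which is omitted from \cref{prop-expansion} as printed and from your display. This factor is what cancels Wick's $\prod_i|\mathrm{Aut}(\mathbf{x}_i^\pm)|$. The paper's proof relies on it tacitly when it simply moves $(r_i-1)!\,m_{g_i}$ into the vertex multiplicities.

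Two further corrections. First, do not include relabellings of the ends in $G$. The $J_{\mu_i}$ and $J_{-\nu_j}$ are distinct operators, and $h$ is normalised by $1/\prod\mu_i\prod\nu_j$ rather than by $1/(|\mathrm{Aut}\,\mu||\mathrm{Aut}\,\nu|\prod\mu_i\prod\nu_j)$. So the ends must stay labelled and $\mathrm{Aut}(\pi)$ must fix them; with unlabelled ends the example above would give $\tfrac12$. Second, an annihilator $J_x$ with $x>0$ contracts with a creator to its \emph{right}. So positive entries of $\mathbf{x}_i$ are edges leaving $v_i$ to the right, and negative entries are edges arriving from the left. Only with this orientation is the leakiness equal to $-\sum_j(\mathbf{x}_i)_j=k_i$, as you claim; with the orientation you state, it would come out as $-k_i$.
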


\begin{proof}
    We consider the equality in \cref{def-leaky}. By employing \cref{prop-expansion}, we see that a summand of the right hand side has the form
    \begin{equation}
        \frac{1}{\prod\mu_i\prod\nu_j}\prod_{i=1}^s (r_i-1)! m_{g_i}(\textbf{x}^+_i,\textbf{x}^-_i)\left\langle \prod_{i=1}^{\ell(\mu)} J_{\mu_i}\prod_{i=1}^s\prod_j  J_{(\textbf{x}_i)_j}\prod_{l=1}^{\ell(\nu)} J_{-\nu_j}\right\rangle
    \end{equation}
    for $\textbf{x}\in S\mathbb{Z}^{r_i+1-2g_i}$. We apply Wick's theorem, we obtain a weighted enumeration of $\underline{k}$-leaky tropical covers $\pi\colon\Gamma_1\to\Gamma_2$, by moving the factors $(r_i-1)! m_{g_i}(\textbf{x}^+_i,\textbf{x}^-_i)$ into vertex multiplicities. The only thing that remains is that Wick's theorem produces tropical covers with all zero vertex genera. We define the genus $g(v_i)$ of $i$-th vertex of the obtained tropical cover via $r_i=2g_(v_i)-1+w_i$. The only thing that remains to show to see that the thus obtained set is $C(\mu,\nu,\underline{k},\underline{r})$, is to see that $g=g(\Gamma_1)$.

    This is done by an Euler characteristic computation. First, we observe that
    \begin{align}
    \begin{split}
        |E(\Gamma_1)|&=\frac{1}{2}\sum_{v\in V(\Gamma_1)}\mathrm{val}(v)=\frac{1}{2}(\ell(\mu)+\ell(\nu)+\sum_{i=1}^s\ell(\textbf{x}_i))\\
        &=\frac{1}{2}(\ell(\mu)+\ell(\nu)+\sum_{i=1}^s(r_i+1-2g(v_i)))
    \end{split}
    \end{align}
    We have imposed that
    \begin{equation}
        \sum (r_i - 1)
        =
        2g-2+\ell(\mu)+\ell(\nu)
    \end{equation}
    and thus we obtain

    \begin{equation}
    \label{euler-edges}
        |E(\Gamma_1)|=\frac{1}{2}(2\ell(\mu)+2\ell(\nu))+2g-2+2s-\sum_{i=1}^s 2g(v_i)=\ell(\mu)+\ell(\nu)+g-1+s-\sum_i g(v_i)
    \end{equation}
    Moreover, we have that
    \begin{equation}
        \begin{split}
        |V(\Gamma_1)|=s+\ell(\mu)+\ell(\nu).
        \end{split}
    \end{equation}
    Putting things together, we derive
    \begin{equation}
        1-h^1(\Gamma_1)=|V(\Gamma_1)|-|E(\Gamma_1)|=s+\ell(\mu)+\ell(\nu)-(\ell(\mu)+\ell(\nu)+g-1+s-\sum g_i)=1-(g-\sum g_i)
    \end{equation}
    and therefore
    \begin{equation}
        g=h^1(\Gamma_1)+\sum g_i=g(\Gamma_1).
    \end{equation}
    This completes the proof.
\end{proof}

\subsection{Topological recursion} \label{TR}

In this section, we introduce the basic background on topological recursion. We start with the original formulation of \cite{CEO06,EO07} and then discuss the relevant aspects of the more recent Alexandrov--Bychkov--Dunin-Barkowski--Kazarian--Shadrin (ABDKS)-programme \cite{BDKS22,BDKS23,BDKS25,BDKS24,ABDKS24TR-SD-FSmaps,ABDKS25universal,ABDKS24KP0,ABDKS25,ABDKS24KP+,ABDKS25blobbed}.

Eynard--Orantin topological recursion is a recursion of multi--differentials depending on a \emph{spectral curve} $( \Sigma, x, y, B)$, where
\begin{itemize}
    \item $\Sigma$ a Riemann surface;
    \item $x$ and $y$ are meromorphic functions on $\Sigma$, where    
    \begin{itemize}
        \item all ramification points of $x$ are simple, and disjoint from those of $ y$. We write $ R $ for the set of these ramification points of $x$;
        \item $y$ is regular at all critical points.
    \end{itemize}  
    \item $B$ a symmetric bidifferential on $\Sigma$ with a double pole on the diagonal, biresidue $1$ and no other singularities.
\end{itemize}
In case $ \Sigma$ is compact, $B$ can be determined by a choice of Torelli marking, i.e. a symplectic basis of $ H_1(\Sigma; \Z)$, by requiring that the $ \mf{A}$-periods of $B$ vanish.

From this, we define the unstable cases
\begin{equation}
    \omega_{0,1}(z_1)=y(z_1)dx(z_1)\quad\textrm{and}\quad \omega_{0,2}(z_1,z_2)=B(z_1,z_2).
\end{equation}

We then recursively define for $2g-2+n+1>0$  multidifferentials

\begin{equation}
    \omega_{g,n+1}(z_0, z_{\llbracket n \rrbracket})
    =
    \sum_{a \in R} \Res_{z=a} K^a(z_0,z) \bigg( \omega_{g-1,n+2}(z,\sigma_{a}(z),z_{\llbracket n \rrbracket}) 
    + 
    \!\!\!\! \sum_{\substack{g_1+g_2 = g \\ I \sqcup J = \llbracket n \rrbracket}}' \!\!\!\! \omega_{g_1,I}(z,z_{I})\omega_{g_2,J}(\sigma_{a}(z),z_{J})\bigg),
\end{equation}

where $\sigma_a$ is the local deck transformation for $ x$ near $ a$, the prime on the summation means excluding the terms with $ (g,n) = (0,1)$, and 

\begin{equation}
    K^a(z_1,z)
    =
    \frac{\int_{z}^{\sigma_a(z)}\omega_{0,2}(z_1,\cdot)}{(\omega_{0,1}(\sigma_a(z)-\omega_{0,1}(z))dx(z)}.
\end{equation}

We say that the differentials $\omega_{g,n}$ \emph{satisfy topological recursion}.

Topological recursion was developed to solve loop equations, and these loop equations are still a major tool for understanding the formalism. 
\begin{proposition}\label{LoopEquations}
    On a compact spectral curve, the correlation differentials $\omega_{g,n}$ satisfy topological recursion if and only if they satisfy the following four properties:
    \begin{enumerate}
        \item \emph{Initial cases}:
        \begin{equation}
            \omega_{0,1}=ydx\quad\textrm{and}\quad \omega_{0,2}=B.
        \end{equation}
        \item The differentials satisfy the \emph{linear loop equations}: for each ramification point $a$,
        \begin{equation}
            \omega_{g,n+1}(z,z_{\llbracket n \rrbracket})+\omega_{g,n+1}(\sigma_a(z),z_{\llbracket n \rrbracket})
        \end{equation}
        is holomorphic at $z\to a$ and has at least a simple zero at $z=a$.
        \item The differentials satisfy the \emph{quadratic loop equations}: for each ramification point $a$,
        \begin{equation}
            \omega_{g-1,n+2}(z,\sigma_a(z),z_{\llbracket n \rrbracket}) + \sum_{\substack{g_1+g_2=g\\I\sqcup J = \llbracket n \rrbracket}} \omega_{g_1,|I|+1}(z,z_{I})\omega_{g_2,|J|+1}(\sigma_a(z),z_{J})
        \end{equation}
        has a double zero at $ z = a$.
        \item The \emph{projection property}: the differentials $\omega_{g,n}$ for $2g-2+n>0$ have no poles other than the critical points of $x(z)$ and all their $\mathfrak{A}$-periods vanish.\label{ProjectionProperty}
    \end{enumerate}
\end{proposition}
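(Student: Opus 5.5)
The plan is to prove both directions separately, using the standard residue argument relating the recursion kernel to the quadratic loop equations. Throughout, I use that on a compact spectral curve with a Torelli marking, $B$ reproduces differentials. Concretely, if $\omega$ is a meromorphic differential with vanishing $\mathfrak{A}$-periods, then
\begin{equation}
    \omega(z_0)
    =
    \sum_{p} \Res_{z=p} \Big( \int^{z} B(z_0,\cdot) \Big) \omega(z),
\end{equation}
where $p$ runs over the poles of $\omega$. This holds because the integrand, viewed as a function of $z$, is single-valued on the cut surface; the $\mathfrak{B}$-period contributions vanish by the $\mathfrak{A}$-normalisation of both $B$ and $\omega$. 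The sum of all residues is zero, and the residue at $z=z_0$ returns $-\omega(z_0)$.

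\emph{Recursion implies the four properties.} Property (1) is the definition. For (4), note that the recursion formula for $\omega_{g,n+1}$, as a function of $z_0$, is a sum of residues of $\int_z^{\sigma_a(z)} B(z_0,\cdot)$ against local data. It therefore has poles in $z_0$ only at the points $a \in R$, and its $\mathfrak{A}$-periods in $z_0$ vanish. Symmetry of the $\omega_{g,n}$ is assumed, as is standard, so the same holds in every variable. For (2) and (3), I argue by induction on $2g-2+n$. Rewrite the kernel using the antisymmetry of $\int_z^{\sigma_a(z)}$, and decompose each $\omega_{g,n+1}$ near $a$ into an even part and an odd part under $\sigma_a$. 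The recursion only determines the polar part at $a$ that is odd under $\sigma_a$. Hence $\omega(z)+\omega(\sigma_a(z))$ is holomorphic at $a$, and the vanishing at $a$ follows from the form of $dx$ there.

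The quadratic loop equation is the delicate step. Define $Q(z)$ to be the quadratic expression in (3) together with the two terms involving $\omega_{0,1}$. Then $Q(z)/dx(z)^2$ is the symmetric combination $\mathcal{P}(x)$, and it must be shown to be holomorphic at $a$. I would follow the Eynard--Orantin argument: express $\omega_{g,n+1}(z_0)$ via the reproducing formula as residues at the points of $R$. Then use the recursion to identify the $\sigma_a$-symmetrised quadratic combination as $y(z)-y(\sigma_a(z))$ times the combination $\omega_{g,n+1}(z)+\omega_{g,n+1}(\sigma_a(z))$, up to terms regular at $a$. Since $y(z)-y(\sigma_a(z))$ has a simple zero at $a$ and the other factor is regular, the required double zero of the quadratic expression follows.

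\emph{The four properties imply the recursion.} By (4), apply the reproducing formula to $\omega_{g,n+1}(\cdot, z_{\llbracket n \rrbracket})$; only the residues at $a \in R$ contribute. Near $a$, replace $z$ by $\sigma_a(z)$ and average, which writes the residue as
\begin{equation}
    \tfrac12 \Res_{z=a} \int_{\sigma_a(z)}^{z} B(z_0,\cdot)\, \omega_{g,n+1}(z).
\end{equation}
By (2), this equals a residue of $\int_{\sigma_a(z)}^{z} B \cdot \tfrac12(\omega(z)-\omega(\sigma_a(z)))$. Now isolate the terms of the quadratic loop equation in (3) that contain $\omega_{0,1}$; they give exactly $(\omega_{0,1}(\sigma_a z)-\omega_{0,1}(z))\,\omega_{g,n+1}(z)$ plus its image under $\sigma_a$. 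Dividing by this factor produces $K^a$. The leftover expression has a double zero, while the kernel has only a simple pole from the denominator, so the residue of the remainder vanishes. This yields exactly the recursion formula.

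The main obstacle I expect is the bookkeeping of parity and pole orders at $a$ in this final residue argument. In particular, I need to check that $\int_{\sigma_a(z)}^{z} B$ vanishes to first order at $a$, and that $dx$ and $\omega_{0,1}(\sigma_a z)-\omega_{0,1}(z)$ each contribute simple zeros. These orders determine whether (3) with a double zero exactly suffices for the remainder residue to vanish. I would verify them in the local coordinate $\zeta$ with $x = x(a)+\zeta^2$ and $\sigma_a(\zeta)=-\zeta$.
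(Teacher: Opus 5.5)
\textbf{Gap: the quadratic loop equation in the forward direction (recursion $\Rightarrow$ loop equations).} Your proposed identification cannot hold. $Q$ is $\sigma_a$-invariant, but $(y-y\circ\sigma_a)\,\bigl(\omega_{g,n+1}(z)+\omega_{g,n+1}(\sigma_a z)\bigr)$ is anti-invariant. Also, an error that is only ``regular at $a$'' can never give a double zero, and ``simple zero of $y-y\circ\sigma_a$ times a regular factor'' does not produce one either. So this step proves nothing as written. Your linear-loop-equation sketch (``the recursion only determines the odd polar part'') points at the right fact but does not prove it. The paper gives no proof of this proposition. It quotes it as standard, with only the remark that the loop equations fix the principal parts at $R$ and that the kernel residue rebuilds the unique $\mathfrak{A}$-normalised differential with those principal parts.

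\textbf{The missing local statement.} Both (2) and (3) follow from one fact. Write:
\begin{itemize}
\item $\mathcal R$ for the quadratic expression of (3) with its two $\omega_{0,1}$-terms removed (the recursion integrand);
\item $\omega=\omega_{g,n+1}(\cdot,z_{\llbracket n\rrbracket})$ and $\omega^\pm=\omega\pm\sigma_a^*\omega$;
\item $\Omega_a=\mathcal R/\bigl((y-y\circ\sigma_a)\,dx\bigr)$.
\end{itemize}
Since $\mathcal R$ is invariant and $(y-y\circ\sigma_a)\,dx$ is anti-invariant, $\Omega_a$ is an anti-invariant one-form near $a$. Therefore
\[
\Res_{z=a}\tfrac12\int_{\sigma_a z}^z B(z_0,\cdot)\,\Omega_a(z)=\Res_{z=a}\int^z B(z_0,\cdot)\,\Omega_a(z).
\]
Using the local form $\int^z B(z_0,\cdot)=\frac{d\zeta_0}{\zeta_0-\zeta}+(\text{holomorphic})$, the right side is the principal part of $\Omega_a$ at $a$ plus a holomorphic term. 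Residues at $a'\neq a$ are holomorphic near $a$. Hence the recursion says exactly that $\omega=\Omega_a+h$ with $h$ holomorphic at $a$.

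\textbf{How (2) and (3) follow.} From $\omega=\Omega_a+h$ you get $\omega^+=h+\sigma_a^*h$, which is (2). For (3), decompose
\[
Q=\Bigl(\mathcal R-\tfrac12(y-y\circ\sigma_a)\,dx\;\omega^-\Bigr)+\tfrac12(y+y\circ\sigma_a)\,dx\;\omega^+ .
\]
The second term has a double zero by (2) and $dx(a)=0$. The first term equals $-\tfrac12(y-y\circ\sigma_a)\,dx\,(h-\sigma_a^*h)$. It has a double zero because $(y-y\circ\sigma_a)\,dx$ already vanishes to second order at $a$ (using $dy(a)\neq0$). The correct pairing is difference with difference and sum with sum. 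It is this second-order vanishing that absorbs the merely holomorphic error $h$; your sketch has neither ingredient.

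\textbf{Converse direction.} This is essentially the same argument run backwards, and it is fine, with one correction. The $\omega_{0,1}$-terms are not ``exactly'' what you claim. The correct identity is
\[
\omega_{0,1}(z)\,\omega(\sigma_a z)+\omega_{0,1}(\sigma_a z)\,\omega(z)=\bigl(\omega_{0,1}(\sigma_a z)-\omega_{0,1}(z)\bigr)\,\omega(z)+\omega_{0,1}(z)\,\omega^+(z),
\]
and its last term has a double zero by (2). With this, (3) gives that $\omega-\Omega_a$ is holomorphic at $a$, and your residue step goes through.
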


Slightly more abstractly, the loop equations exist at any point $ a$ in $ \Sigma$, and there are $ r$ independent ones, where $r$ is the ramification order of $ a$. If $ r = 1$, i.e. the unramified case, the single loop equation requires that the $ \omega_{g,n}$ for $ 2g-2+n > 0$ are holomorphic at $ a$. From this point of view, the projection property only adds the vanishing of $ \mf{A}$-periods. If the spectral curve is not compact, though, the projection property contains a lot more information.

The explicit formula for topological recursion can be interpreted from this perspective as follows: at every step of the recursion, the loop equations determine uniquely the principal parts of the differential, and the residue computation with the recursion kernel $K$ constructs the unique global differential with these principal parts and vanishing $ \mf{A}$-periods, using that $ \int_\mf{A} B = 0$.

A  strong point of the topological recursion formula is that it is an explicit recursive construction of the unique solution to these loop equations. This is one of the main motivations of the original development in  \cite{CEO06,EO07} -- the loop equations in these works coming from matrix models.

But topological recursion has more uses than just the explicit formula. In the context of enumerative invariants, including Hurwitz numbers, the multidifferentials are usually defined formally as generating series, cf. \cref{def:Multidifferentials}. Part of the statement of topological recursion for such multidifferentials is that they are all convergent power series on (cartesian powers of) one spectral curve, and that their poles are of bounded order and may only lie at ramification points. This already restricts each $ \omega_{g,n}$ to a finite-dimensional space, and implies some quasi-polynomiality property of the generating series. More explicitly, we have the following result.

\begin{proposition}
    The multidifferentials $ \omega_{g,n}$ are symmetric meromorphic multidifferentials on $ \Sigma^n$, with poles only at $R$, of order $ 6g-6+2n$.
\end{proposition}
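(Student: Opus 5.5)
The plan is an induction on $2g-2+n$ that follows the recursion formula directly. The base of the induction is the stable cases $(g,n)=(0,3)$ and $(1,1)$, but the same argument handles every step. Throughout we assume $2g-2+n>0$, and we take $\Sigma$ compact with simple ramification points, so that near each $a\in R$ there is a local coordinate $\zeta$ with $x=x(a)+\zeta^2$ and $\sigma_a(\zeta)=-\zeta$.

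\emph{Symmetry.} Since $\omega_{0,2}=B$ is symmetric, the recursion formula is manifestly symmetric in $z_{\llbracket n\rrbracket}$. Symmetry under exchanging $z_0$ with the other variables is not visible from the formula; it is the standard Eynard--Orantin theorem, which we would take as given, or derive from the loop-equation characterisation in \cref{LoopEquations}. The cleanest route is to show that the output $\omega_{g,n+1}$ agrees with the unique solution of the loop equations in every variable.

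\emph{Poles only at $R$.} The dependence on $z_0$ enters only through $K^a(z_0,z)$, via $\int_z^{\sigma_a(z)}B(z_0,\cdot)$. As a function of $z_0$ this has poles only when $z_0$ meets the integration path. After taking $\Res_{z=a}$, the result is a combination of $B(z_0,\cdot)$ and its derivatives evaluated at $a$, so it has poles only at $z_0=a$. By symmetry the same holds in every variable. Meromorphy then follows because residues of meromorphic families are meromorphic.

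\emph{Pole order.} This is the main computational step. By the inductive hypothesis, each $\omega_{g',n'}$ with $2g'-2+n'<2g-2+n+1$ has pole order at most $6g'-6+2n'$ at $a$ in each variable. In the local coordinate $\zeta$:
\begin{itemize}
\item $K^a$ has a pole of order $2$ from $dx\,(y(-\zeta)-y(\zeta))\sim \zeta^2\,d\zeta$. This uses that $y$ is regular at $a$ and that $dy(a)\neq0$, which holds because the ramification points of $y$ are disjoint from those of $x$. The numerator contributes $\zeta\,d\zeta^{-1}$-type behaviour, so the kernel behaves like $\zeta^{-2}$ times a differential in $z_0$.
\item The bracket is a quadratic differential in $\zeta$. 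In it, $\omega_{g-1,n+2}$ has total order at most $2(6g-6+2n-2)/2$ in the two arguments; more carefully, one bounds each term's pole order in $\zeta$ by $6g-6+2(n+1)-2-\dots$.
\end{itemize}
Tracking these orders shows that the residue picks up at most $\zeta^{-(6g-6+2(n+1))}$ in $z_0$. Equivalently, the output is a combination of the derivatives $d_{z_0}^j\bigl(B(z_0,\cdot)/d\zeta\bigr)|_{a}$ with $j$ ranging up to $6g-6+2(n+1)-2$, and each such term has order $j+2$.

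The expected obstacle is this order bookkeeping, especially for the terms that contain $\omega_{0,2}$, where the double pole of $B$ on the diagonal interacts with $\sigma_a$. For those terms I would first check that $B(\zeta,-\zeta)$ is regular apart from the $1/(2\zeta)^2$ piece, and that the contributions involving $\omega_{0,1}$ cancel against the denominator.
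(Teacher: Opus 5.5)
The paper gives no proof of this proposition. It is the Eynard--Orantin pole-structure theorem, quoted as background from \cite{EO07}, so your sketch has to stand on its own. Your overall frame is fine: induction on $2g-2+n$, and poles in $z_0$ only at $R$ via the kernel. The central step, the pole-order count, has a genuine gap.

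\textbf{The per-variable induction does not close.} The term $\omega_{g-1,n+2}(z,\sigma_a(z),z_{\llbracket n\rrbracket})$ restricts a multidifferential to the anti-diagonal $(\zeta,-\zeta)$, where its pole orders in the two slots \emph{add}. A bound on the order in each variable separately does not exclude terms like $\zeta_1^{-N}\zeta_2^{-N}$. It therefore only bounds the bracket by roughly twice what is needed, and the output bound overshoots already for $\omega_{2,1}$, computed from $\omega_{1,2}(\zeta,-\zeta)$. On the Airy curve, $\omega_{1,2}$ has order $6$ in each variable but only order $8$, not $12$, on the anti-diagonal. That is exactly the information your hypothesis discards.

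The standard fix is to induct on a \emph{joint} statement: $\omega_{g,n}$ lies in the span of $\prod_i\xi_{a_i,d_i}(z_i)$ with $\sum_i d_i\le 3g-3+n$. Here $\xi_{a,d}$ is the basis in the corollary after the proposition, with a pole of order $2d+2$.

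The local mechanism is also not what you wrote. Write $B(z_0,a)$ for $B(z_0,\zeta')/d\zeta'$ at $\zeta'=0$.
\begin{itemize}
\item The numerator $\int_z^{\sigma_a(z)}B(z_0,\cdot)=-2\zeta\,B(z_0,a)+O(\zeta^3)$ is odd in $\zeta$, while $(y(\sigma_a(z))-y(z))\,dx(z)$ is of order $\zeta^2\,d\zeta$. So $K^a$ has only a \emph{simple} pole, not $\zeta^{-2}$.
\item $K^a\,d\zeta$ contains only the powers $\zeta^{2j-1}$, $j\ge0$. Their coefficients are built from $\partial^{2i}B(z_0,\cdot)|_a$ with $i\le j$, which have pole order $\le 2j+2$ at $z_0=a$.
\item Hence a bracket pole of order $P$ gives a pole of order $\le P+2$ in $z_0$.
\item For terms without $\omega_{0,2}$, the joint hypothesis gives $P\le 2(d_1+d_2)+4$ with $d_1+d_2+\sum_i e_i\le 3g-4+n$, where $e_i$ is the degree in $z_i$.
\item Terms containing $\omega_{0,2}(z,z_i)$ only trade degree between $z_0$ and $z_i$, raising the total by one.
\end{itemize}
This closes the induction.

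\textbf{The target number.} Done correctly, the argument gives order $6g-4+2n$ in each variable, not $6g-6+2n$. Already $\xi_{a,3g-3+n}$ has order $6g-4+2n$. Also $\omega_{0,3}=\sum_a\Res_{z=a}\frac{B(z,z_0)B(z,z_1)B(z,z_2)}{dx(z)\,dy(z)}$ has double poles at each $a$, whereas $6g-6+2n=0$ there.

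Your own count shows the problem. For $(g,n+1)=(0,3)$ your range $j\le 6g-6+2(n+1)-2=-2$ is empty, which would force $\omega_{0,3}=0$. So the claim that ``the same argument handles every step'', including the base case, is false as written. Either read the statement as $\sum_i 2d_i\le 6g-6+2n$ in the $\xi$-basis, or correct it to $6g-4+2n$.

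Finally, deferring symmetry to Eynard--Orantin matches how the paper treats this statement. Deriving it from \cref{LoopEquations} would be circular, though, since that characterisation presupposes symmetric $\omega_{g,n}$. The standard proof is a direct induction exchanging the residues in $z_0$ and $z_1$. Your ``poles only at $R$'' in the variables $z_1,\dots,z_n$ depends on this symmetry.
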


Combining this with the linear loop equations, we find the following.

\begin{corollary}
    For a genus zero compact spectral curve, the $ \omega_{g,n}$ can be expressed in terms of a  basis $ \{ \xi_{a,k} (z) = d (\frac{d}{dx})^k \frac{1}{z -a} \, | \, k \geq 0, a \in R\} $, where for a particular $ \omega_{g,n}$, $ k \leq 3g-3+n$.\par
    Expanding at $ x= 0$, they are given by the (non-polynomial) expansion of the $ \xi_{a,0}$, times a polynomial of degree $ 3g-3+n$.
\end{corollary}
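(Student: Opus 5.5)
The plan is to prove the two assertions of the corollary separately: first the expansion in the basis $\xi_{a,k}$, then the behaviour of that expansion at $x=0$.

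For the first assertion, uniformise the genus zero compact curve as $\Sigma \cong \P^1$ with coordinate $z$. For $2g-2+n>0$, the preceding proposition shows that $\omega_{g,n}$ is meromorphic in each variable with poles only at $R$, of order at most $6g-6+2n$. Since $\Sigma$ has genus zero, there are no $\mf{A}$-periods, so a meromorphic differential on $\P^1$ is determined by its principal parts. Its residues also sum to zero.

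Fix one variable, say $z_1$, and a point $a\in R$. Take $\zeta$ with $\zeta^2 = x - x(a)$ as local coordinate near the simple ramification point $a$, so that $\sigma_a(\zeta)=-\zeta$. The linear loop equation of \cref{LoopEquations} says $\omega_{g,n}+\sigma_a^*\omega_{g,n}$ is holomorphic at $a$. Hence the principal part of $\omega_{g,n}$ at $a$ is anti-invariant under $\sigma_a$. Anti-invariant differentials in $\zeta$ have the form $\zeta^{-2j}d\zeta$, so they have even pole order and no residue.

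Next I would show that $\xi_{a,k}$ span exactly this space. Each $\xi_{a,k}$ has a pole at $a$ only, of order $2k+2$. This is because $\frac{d}{dx} = \frac{1}{2\zeta}\frac{d}{d\zeta}$, and each application raises the pole order by $2$ in $\zeta$ starting from the double pole of $d\frac{1}{z-a}$. The odd-parity part of $\xi_{a,k}$ spans the anti-invariant principal parts by a triangularity argument in $k$. The residue-free property ensures no extra $\frac{d\zeta}{\zeta}$ term is needed.

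So subtracting a suitable combination of the $\xi_{a,k}$ with $2k+2\le 6g-6+2n$, i.e. $k\le 3g-4+n$, leaves a differential in $z_1$ with no poles at $a$. Repeating over all $a\in R$ leaves a holomorphic differential on $\P^1$, which is zero. I will need to recheck whether the stated bound is $3g-3+n$ or $3g-4+n$; the stated bound is weaker in any case, so it holds.

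There is one subtlety. The coefficients may depend on the invariant (even) part as well. I would argue that the anti-invariant projection equals the full principal part, because the invariant part $\omega+\sigma_a^*\omega$ is holomorphic. Doing this in all variables simultaneously, using symmetry, gives the multilinear expansion $\omega_{g,n} = \sum c\,\prod_i \xi_{a_i,k_i}(z_i)$ with $\sum k_i \le 3g-3+n$. The total-degree bound comes from the pole-order count, which I expect to require a dimension count along the lines of DOSS. That total-degree bound is the main obstacle; if it fails, I would settle for the per-variable bound.

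For the second assertion, expand at $x=0$. Each $\xi_{a,k} = d\big((\frac{d}{dx})^k \frac{1}{z-a}\big)$ is obtained from $\xi_{a,0}$ by repeatedly applying $d/dx$. In terms of the coefficients $X^m$ of the expansion of $\frac{1}{z-a}$, the operator $X\frac{d}{dX}$ acts as multiplication by $m$. Therefore the coefficient of $X^m$ in $\xi_{a,k}$ is a polynomial of degree $k$ in $m$ times the coefficient in $\xi_{a,0}$. This holds up to the change between $d/dx$ and $X\,d/dX$, which I would fix by taking $x=\log X$, as in the Hurwitz setting.

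Summing over the basis expansion then gives the claimed form: the expansion of the $\xi_{a,0}$ times a polynomial of degree at most $3g-3+n$.
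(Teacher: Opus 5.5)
You follow the route the paper intends. Its only justification is that the pole-order proposition combined with the linear loop equations gives the result. Your treatment of the second assertion via $x=\log X$, where $\frac{d}{dx}=X\frac{d}{dX}$ multiplies the coefficient of $X^m$ by $m$, is fine.

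\textbf{The basis step contains a false claim.} For $k\ge 1$, $\xi_{a,k}$ does \emph{not} have a pole only at $a$, because $\frac{d}{dx}=\frac{1}{x'(z)}\frac{d}{dz}$ creates poles at every zero of $dx$. For example, with $x=\frac{z^3}{3}-z$ one gets $\xi_{1,1}=d\big(\frac{-1}{(z+1)(z-1)^3}\big)$, which has a double pole at $z=-1$. So your point-by-point subtraction does not close up: clearing the poles at $b$ reintroduces poles at $a$.

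The repair goes as follows. The operator $\frac{1}{2\zeta_b}\frac{d}{d\zeta_b}$ preserves the parity decomposition in $\zeta_b$, and $\frac{1}{z-a}$ has regular even part at every $b\in R$. Hence every $\xi_{a,k}$ has anti-invariant principal parts at all of $R$, of order exactly $2k+2$ at $a$ and at most $2k$ at $b\neq a$. You then kill the top-order poles at all points of $R$ simultaneously and induct downward on the maximal order. No ``odd-parity projection'' of $\xi_{a,k}$ is needed.

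\textbf{The pole order you used is a misprint.} Do not take the order $6g-6+2n$ in the preceding proposition at face value: it should be $6g-4+2n$. As stated, it would make $\omega_{0,3}$ holomorphic on $\mathbb{P}^1$ and hence zero. In fact $\omega_{0,3}$ is a nonzero combination of the products $\prod_i\xi_{a,0}(z_i)$, and $\omega_{1,1}$ has fourth-order poles. Your derived bound $k\le 3g-4+n$ is therefore false; it equals $-1$ for $(g,n)=(0,3)$. The correct inequality $2k+2\le 6g-4+2n$ gives exactly $k\le 3g-3+n$, which is the claimed bound and is sharp.

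\textbf{The total-degree bound does not follow from pole orders.} The bound $\sum_i k_i\le 3g-3+n$ is not a consequence of any pole-order count. Per-variable pole bounds plus linear loop equations only place $\omega_{g,n}$ in the $n$-fold tensor power of $\mathrm{span}\{\xi_{a,k}: k\le 3g-3+n\}$. That space contains the symmetric element $\prod_i\xi_{a,3g-3+n}(z_i)$, of total degree $n(3g-3+n)$.

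So your argument, like the paper's one-line justification, gives a polynomial of degree at most $3g-3+n$ in each $m_i$ separately; that is the only reading it supports. If total degree is intended, you need the quadratic loop equations, i.e.\ the recursion itself. At $a\in R$ the kernel contributes $\zeta^{2d-1}$ against the $d$-th basis differential in $z_0$: the numerator is $\sim\zeta^{2d+1}$ and the denominator $\sim\zeta^2\,d\zeta$. A residue against a quadratic term of order $\zeta^{-2k-2}\zeta^{-2k'-2}$ therefore forces $d\le k+k'+2$, and induction on $2g-2+n$ gives the total bound. Alternatively, use the DOSS expression through a CohFT on $\overline{\mathcal{M}}_{g,n}$, whose dimension is $3g-3+n$.
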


\subsubsection{Generalisations of topological recursion}

The setup of topological recursion has been generalised in many steps, mostly weakening the restrictions on the functions $ x$ and $ y$. Historically, the first step was to allow higher-order ramification \cite{BE13}. For the simply-ramified case, regularity conditions on $y$ were relaxed in \cite{DN18,CN19}. In a third direction, we can generalise to meromorphic one-forms $ dx$ and $ dy$, which may not integrate to actual functions $ x$ and $y$; in case they have residues we need a generalisation called \emph{log topological recursion} or \emph{logTR} for short \cite{ABDKS24a}.

Topological recursion with higher ramification of $ x$ and irregular $y$ -- with $ \omega_{0,1} = y \, dx$ still holomorphic -- was first constructed by \cite{BBCCN24} through Airy structures of \cite{KS18}. This generalisation surprisingly required an extra admissibility condition: unless a certain relation between the leading orders of $ x$ and $ y$ is satisfied at every point, the formula does not yield symmetric correlators. In \cite{BKS24}, this method was pushed to singular spectral curves. Moreover, \cite{BBCKS25} studied how this generalised topological recursion behaves in families when the ramification behaviour changes. 

Very recently, \cite{ABDKS25} constructed another generalisation of topological recursion for general $x$ and $ y$, called \emph{degenerate topological recursion}. This only requires two arbitrary non-zero differentials $ dx$ and $ dy$, and a choice of the subset of `special points', i.e. those points that could possibly contribute. This is compatible with logTR, also behaves well in families, and has no admissibility conditions. Interestingly, in case the generalised topological recursion of \cite{BBCCN24} is well-defined, it does not always coincide with this degenerate topological recursion, and the two can be obtained as limits of different families.

The reason that these two generalisations differ, is that they construct different versions of higher loop equations. In this paper, we will use the version of Alexandrov--Bychkov--Dunin-Barkowski--Kazarian--Shadrin, and we will recall it here.
 
This string of generalisations was driven by several factors: for one, the theory was needed to understand particular examples, in particular in the context of the relation with cohomological field theories \cite{DOSS14,BCGS25}. As mentioned, topological recursion should also behave well with respect to deformations, or dually collapsing several special points on the spectral curve. Finally, it should behave well with respect to KP integrability (explained below) and symplectic transformations: transformations of the spectral curve that leave the symplectic form $ dx \wedge dy$ (in some ambient space) invariant.

\subsubsection{Degenerate topological recursion}

Here, we will introduce the (degenerate, generalised) topological recursion of \cite{ABDKS25}, emphasising the parts we will need.

Let $ \Sigma$ be a smooth curve and $ dx$ and $ dy $ two meromorphic differentials on $ \Sigma $ -- they need not integrate to functions $ x$ and $ y$.

\begin{definition}\label{DefSpecialPoints}
    Let $ q \in \Sigma$ and $ z $ a local coordinate on $ \Sigma $ at $q$. Expand $ dx$ and $ dy $ in $z$ as
    \begin{equation}
        dx 
        =
        a z^{r-1} ( 1 + \mc{O}(z)) dz ,
        \qquad
        dy
        =
        b z^{s-1} (1 + \mc{O}(z)) dz,
        \qquad
        \text{with $ a, b \in \C^* $ and $ r, s \in \Z$.}
    \end{equation}
    Then $ q$ is called \emph{non-special} if $ r = s = 1$ or $ r + s \leq 0$, and \emph{special} otherwise. We write $ \mathsf{S} $ for the set of special points.
\end{definition}

\begin{definition}
    The \emph{initial data} of generalised topological recursion, or \emph{spectral curve} is a tuple $(	\Sigma, dx, dy, B, \mc{P})$, where
    \begin{itemize}
        \item $ \Sigma $ is a smooth complex curve.
        \item $B$ is a symmetric bi-differential on $\Sigma^2$ with a second order pole on the diagonal with biresidue $1$ and no other poles.
        \item $dx$, $dy$ are two arbitrary non-zero meromorphic differentials on $\Sigma$.
        \item $\mc{P} = \{q_1,\dotsc , q_N \}$ is a finite subset of $\mathsf{S}$.
    \end{itemize}
\end{definition}

For the rest of this section, we shall consider these initial data as a given.

\begin{definition}\label{DefExtendedDifferentials}
    Let $ \{ \omega_{g,n} \}_{g \geq 0, n \geq 1} $ be a collection of symmetric differentials on $ \Sigma$, i.e. $ \omega_{g,n} $ is an $n$-differential on $ \Sigma^n$. Assume that $ \omega_{g,n} $ is meromorphic for $ (g,n) \neq (0,1)$ and $ \omega_{0,1} = y \,dx$.\footnote{This need not be well-defined globally; all definitions from here only involve derivatives of $ y$.}.\par
    We first define
    \begin{equation}
        \omega_n
        =
        \sum_{g \geq 0} \hbar^{2g-2+n} \omega_{g,n}.
    \end{equation}
    The \emph{extended differentials} are defined by
    \begin{align}
        \mb{W}_n (z_{\llbracket n \rrbracket}, u_{\llbracket n \rrbracket})
        =
        \prod_{i=1}^n \frac{dx_i}{\hbar u_i} e^{u_i ( \mc{S} (u_i \hbar \del_{x_i}) - 1) y_i} 
        \sum_\Gamma \frac{1}{| \Aut \Gamma |} \prod_{e \in E (\Gamma )} \prod_{j=1}^{|e|} \Big\lfloor_{\substack{\tilde{u}_j \to u_{e(j)} \\ \tilde{x_j} \to x_{e(j)}}} \tilde{u}_j \hbar \mc{S} ( \tilde{u}_j \hbar \del_{\tilde{x}_j} ) \frac{ \tilde{\omega}_{|e|}(\tilde{z}_{\llbracket|e|\rrbracket}) }{\prod_{j=1}^{|e|} d\tilde{x}_j},
    \end{align}
    where $ \tilde{\omega}_n = \omega_n - \delta_{n,1} \hbar^{-1} \omega_{0,1}$ unless $ n =2 $ and $ e(1) = e(2)$, in which case $ \tilde{\omega}_2 = \omega_2 - \frac{dx_1 dx_2}{(x_1 - x_2)^2}$, and the sum is taken over all connected multigraphs with $ n $ labelled vertices.
\end{definition}

\begin{lemma}\label{DegenerateLoopEquations}
    For any $ (g,n) \neq (0,0)$,
    \begin{equation}
        \overline{\mc{W}}_{g,n} (z, u; z_{\llbracket n \rrbracket} )
        \coloneqq
        [\hbar^{2g-1+n}] \bigg\lfloor_{u_{\llbracket n \rrbracket} \to 0} \mb{W}_{n+1} ( z, z_{\llbracket n \rrbracket}, u, u_{\llbracket n \rrbracket}) - \omega_{g,n+1}(z, z_{\llbracket n \rrbracket})
    \end{equation}
    involves only positive powers of $ [u]$, and only $ \omega_{g',n'}$ with $ 2g' - 2 + n' < 2g-1+n$.
\end{lemma}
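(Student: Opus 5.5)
We expand $\mb{W}_{n+1}$ term by term in $\hbar$ and in the $u$-variables, and then track powers of $\hbar$ against the Euler characteristics of the $\omega_{g',n'}$ that occur. The first step is to specialise to $u_{\llbracket n\rrbracket}\to 0$. For each $i\geq 1$ the prefactor $\frac{dx_i}{\hbar u_i}e^{u_i(\mc{S}(u_i\hbar\del_{x_i})-1)y_i}$ equals $\frac{dx_i}{\hbar u_i}(1+\mc{O}(u_i^2))$, and every edge factor carries a factor $\tilde u_j\hbar\,\mc{S}(\cdots)$ with $\mc{S}(0)=1$. So the $u_i^{-1}$ poles must be cancelled exactly by one half-edge factor $u_i$ at vertex $i$. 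Vertices $i\geq1$ with higher degree give positive powers of $u_i$ and drop out in the limit. In the limit, the graph sum therefore reduces to graphs in which each vertex $i\in\llbracket n\rrbracket$ has degree one; at those vertices only $\mc{S}=1$ survives, and the $dx_i/\hbar$ combines with $\hbar/dx_i$ to give the plain differential at $z_i$. This is the standard reduction from \cite{ABDKS25}.

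The second step is to isolate the term that is subtracted. The graphs that remain consist of a collection of hyperedges, each containing the vertex $z$ (with some multiplicity) and a subset of the $z_i$. The term in which no $u$-dependence appears, i.e. $u^0$ after multiplying by $\frac{dx}{\hbar u}$ and reading off the $u^{1}$ part of $e^{u(\mc{S}-1)y}\prod(u\hbar\mc{S}\cdots)$, comes from a single hyperedge containing $z$ once and all of $z_{\llbracket n\rrbracket}$. Its coefficient at $\hbar^{2g-1+n}$ is exactly $\omega_{g,n+1}$. I would also check the exceptional cases with $\tilde\omega$: if $\tilde\omega_1$ occurs at $n=0$, the $\hbar^{-1}\omega_{0,1}$ part is removed, and that part is absorbed in the exponential prefactor in $y$. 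Every other contribution carries a factor $u^m$ with $m\geq1$. This holds because each extra hyperedge incident to $z$ contributes a net factor $u\hbar/\hbar$-balanced $u$, each $\mc{S}$ or exponential correction is even and of positive order in $u$, and the $\frac{1}{u}$ prefactor is compensated exactly once. This proves the first claim.

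The third step is the Euler-characteristic count, which I expect to be the main obstacle because of bookkeeping. A hyperedge $e$ of size $|e|$ carrying $\omega_{g_e,|e|}$ contributes $\hbar^{2g_e-2+|e|}\cdot\hbar^{|e|}$ from the $u\hbar$ factors, and each power $(u\hbar\del_x)^{2l}$ in $\mc{S}$, or in the $y$-exponential, adds $\hbar^{2l}$ with $u^{2l}$. Collecting terms, the total $\hbar$-degree equals $2g-1+n$ plus a correction, and a single factor can reach degree $2g'-2+n' = 2g-1+n$ only when it is the unique hyperedge with no $u$-corrections, which is the subtracted term. In any other configuration, either there are several hyperedges, so each one has strictly smaller degree because $\sum(2g_e-2+|e|)<2g-1+n$ once the extra $\hbar$ from the additional legs is accounted for, or there are positive $u$-powers, and each $u^{2l}$ comes with $\hbar^{2l}$, which lowers the degree available to the $\omega$'s. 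The remaining care concerns $\omega_{0,1}$ and $\omega_{0,2}$ on self-loops, which appear through $y$ and the regularised $\tilde\omega_2$; these are unstable and satisfy the bound trivially. They are excluded whenever $(g,n)\neq(0,0)$.
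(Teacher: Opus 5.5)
The paper gives no proof of this lemma; it is recalled from \cite{ABDKS25} as background. Your direct power count in $u$ and $\hbar$ is the natural proof, and its three-step structure is right: reduce to graphs in which each $z_i$ has degree one, identify the $u^0$ part, then count powers of $\hbar$. Two steps need repair.

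\textbf{The edgeless graph at $n=0$.} For $n=0$ the one-vertex graph with no edges is connected. It contributes $\frac{dx}{\hbar u}e^{u(\mc{S}(u\hbar\del_x)-1)y}$, in which the $1/u$ is compensated by no half-edge at all. So your claim that the $\frac1u$ is ``compensated exactly once'' is false for this term. The term is harmless only because of the hypothesis.
\begin{itemize}
\item Its $u^{-1}$ part sits at $\hbar^{-1}$, i.e.\ at $(g,n)=(0,0)$. There the lemma genuinely fails: $\overline{\mc{W}}_{0,0}=\frac{dx}{u}-y\,dx$. Indeed $\tilde\omega_1$ contains no $\omega_{0,1}$, and the prefactor only involves $y''$ and higher derivatives, so nothing is ``absorbed'' there.
\item At order $\hbar^{2g-1}$ with $g\geq1$, the term needs factors from the exponent. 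Each such factor has the form $u^{2l+1}\hbar^{2l}$ with $l\geq1$ (odd, not even as you wrote), so the $u$-degree is at least $2$.
\end{itemize}
This is exactly where $(g,n)\neq(0,0)$ enters. Your last sentence misplaces it: derivatives of $y$ are never excluded. They occur in every $\mb{W}_{n+1}$ and satisfy the bound simply because $2g-1+n\geq 0>-1$.

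\textbf{Nonnegativity of the summands.} In the $\hbar$-count, a bound on $\sum_e(2g_e-2+|e|)$ bounds each summand only if every summand is non-negative, and you do not check this. Make the bookkeeping explicit. After $u_{\llbracket n\rrbracket}\to0$, let $d_0$ be the number of half-edges at $z$, and let $2L$ be the total $\hbar$-degree of the $\mc{S}$- and exponential corrections. Then $\sum_e|e|=d_0+n$, and matching the order $\hbar^{2g-1+n}$ gives
\[
\sum_e(2g_e-2+|e|)=2g-1+n-(d_0-1)-2L .
\]
The $u$-degree is at least $d_0-1+2L$.

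Every summand is $\geq 0$, for two reasons:
\begin{itemize}
\item $\tilde\omega_1$ strips $\omega_{0,1}$ from \emph{every} size-one hyperedge. These can hang off $z$ for any $n$, not only $n=0$ as your remark suggests.
\item A regularised self-loop has $2g_e-2+|e|=0$.
\end{itemize}
Note also that $(d_0,L)=(0,0)$ occurs only for $(g,n)=(0,0)$; for $d_0=0$ one has $n=0$ and $L=g$.

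Hence, for $(g,n)\neq(0,0)$, every configuration with $(d_0,L)\neq(1,0)$ has positive $u$-degree. It also involves only $\omega_{g_e,|e|}$ with $2g_e-2+|e|<2g-1+n$, together with derivatives of $y$. The case $(d_0,L)=(1,0)$ forces, by connectivity, a single hyperedge containing each vertex exactly once, which is precisely the subtracted $\omega_{g,n+1}$. With these two repairs your argument is complete.
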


\begin{definition}\label{DegenerateTR}
    The \emph{differentials of (degenerate, generalised) topological recursion} are defined by the base steps $ \omega_{0,1} = y \, dx$, $ \omega_{0,2} = B$ and the induction step
    \begin{equation}
        \omega_{g,n+1}(z_0, z_{\llbracket n \rrbracket})
        =
        - \sum_{q \in \mc{P}} \Res_{z = q} 
        \Big( \int^z B( \cdot, z_0) \Big)
        \sum_{r = 1}^\infty ( - d \frac{1}{dy} )^r [u^r] \overline{\mc{W}}_{g,n} (z, u; z_{\llbracket n \rrbracket}).
    \end{equation}
\end{definition}

\begin{theorem}[{\cite{ABDKS25}}]
    \Cref{DegenerateTR} recursively produces symmetric meromorphic multidifferentials satisfying \emph{loop equations}:
    \begin{equation}
       \sum_{r = 1}^\infty ( - d \frac{1}{dy} )^r [u^r] \bigg\lfloor_{u_{\llbracket n \rrbracket} \to 0} \mb{W}_{n+1} ( z, z_{\llbracket n \rrbracket}, u, u_{\llbracket n \rrbracket})
       \qquad
       \text{is holomorphic at any $ z \in \mc{P}$.}
    \end{equation}
    Moreover, they satisfy the projection property, \cref{LoopEquations} \cref{ProjectionProperty}.\par
    If all special points are in $ \mc{P}$ and have $ (r,s) = (2,1)$, this definition reduces to the Eynard--Orantin definition.
\end{theorem}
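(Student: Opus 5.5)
The plan is to argue by induction on $2g-2+n$, treating the three assertions (loop equations with the projection property, symmetry, and reduction to Eynard--Orantin) separately. Symmetry is the real difficulty.

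\textbf{Step 1: loop equations and projection property.} By \cref{DegenerateLoopEquations}, for $u$-degree $r\ge 1$ the quantity $\overline{\mc{W}}_{g,n}$ involves only $\omega_{g',n'}$ with $2g'-2+n'<2g-1+n$. These are already constructed, so \cref{DegenerateTR} is a well-defined explicit formula. Next I would split
\begin{equation}
  \sum_{r\ge1}(-d\tfrac{1}{dy})^r[u^r]\,\mb{W}_{n+1}\big|_{u_{\llbracket n\rrbracket}\to0}
  \;=\; \omega_{g,n+1}(z,z_{\llbracket n\rrbracket}) \;+\; \Phi_{g,n}(z;z_{\llbracket n\rrbracket}),
\end{equation}
where $\Phi_{g,n}=\sum_{r\ge1}(-d\frac1{dy})^r[u^r]\overline{\mc{W}}_{g,n}$ is known from lower data. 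The $r=0$ contribution, after extracting $[\hbar^{2g-1+n}]$, is exactly $\omega_{g,n+1}$, using $\mb{W}$'s normalisation $dx/(\hbar u)$.

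The key identity is the Cauchy-type formula. For any meromorphic differential $\eta$,
\begin{equation}
  -\sum_{q\in\mc P}\Res_{z=q}\Big(\int^z B(\cdot,z_0)\Big)\eta(z)
\end{equation}
is the unique differential in $z_0$ that has the same principal parts as $\eta$ at $\mc P$, is holomorphic elsewhere, and has vanishing $\mf A$-periods. This follows from the residue theorem together with the normalisation of $B$.

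Applying this with $\eta=-\Phi_{g,n}$ shows two things. First, $\omega_{g,n+1}+\Phi_{g,n}$ is holomorphic at every $q\in\mc P$, which is the loop equation. Second, $\omega_{g,n+1}$ has poles only in $\mc P$ and vanishing $\mf A$-periods, which is the projection property. One point needs checking: that $\Phi_{g,n}$ has no poles at non-special points or at the diagonals $z=z_i$ that would spoil this. I would verify this by the local expansion of \cref{DefSpecialPoints}. For $r=s=1$ or $r+s\le0$, the operator $d\frac1{dy}$ preserves holomorphicity of the relevant expressions, and the diagonal singularities cancel against the $\frac{dx_1dx_2}{(x_1-x_2)^2}$ subtraction in $\tilde\omega_2$.

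\textbf{Step 2: symmetry (main obstacle).} The formula visibly singles out $z_0$, so symmetry of $\omega_{g,n+1}$ under exchange of $z_0$ with some $z_i$ is not automatic. My first approach is a deformation argument. Embed the spectral curve in a family $dx_t,dy_t$ in which, for $t\neq0$, every special point splits into simple ramification points of $x$ with $y$ regular. For $t\ne0$, Step 3 shows the construction equals Eynard--Orantin recursion, whose symmetry is classical (via the symmetric form of the quadratic loop equations). I would then show that the residue formula of \cref{DegenerateTR} is continuous in $t$. The reason is that the residues at the colliding points sum to a contour integral around a fixed small disc, and the integrand depends analytically on $t$ there. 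Symmetry then passes to the limit $t\to0$.

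The delicate part is that $\mc P$ may be a proper subset of $\mathsf S$. The family must therefore be chosen so that points outside $\mc P$ stay non-contributing. I would handle this by deforming only near points of $\mc P$.

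As a fallback, I would prove symmetry directly by an $x$--$y$ swap argument. The extended differentials $\mb W$ are built to be compatible with the swap, so one can identify $\omega_{g,n}$ with the swapped side, where the roles of the variables are symmetric.

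\textbf{Step 3: reduction to Eynard--Orantin.} Assume every $q\in\mc P$ has $(r,s)=(2,1)$, so $q$ is a simple ramification point of $x$ with $y$ regular there. I would expand $\mb W_{n+1}$ in $u$ and count pole orders.

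The terms with $r\ge3$, and the $e^{u(\mc S(u\hbar\del_x)-1)y}$ corrections, contribute expressions whose $(-d\frac1{dy})^r$-image is holomorphic at $q$. So only the $r=1$ and $r=2$ parts matter. The $r=1$ part is linear in the $\omega$'s and reduces to the linear loop equation, that is, to the invariance of $\omega+\sigma_q^*\omega$. The $r=2$ part is, up to that linear combination, exactly
\begin{equation}
  \omega_{g-1,n+2}(z,z,\dots)+\textstyle\sum'\omega\,\omega .
\end{equation}
Symmetrising under $\sigma_q$ turns this into the quadratic loop equation of \cref{LoopEquations}.

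Both recursions produce the unique solution of the same linear and quadratic loop equations with the projection property. By the uniqueness part of \cref{LoopEquations}, the two definitions therefore coincide.
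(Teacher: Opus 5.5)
The paper gives no proof of this statement; it is quoted from \cite{ABDKS25}, so your proposal has to stand on its own.

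Step 1 is fine. The loop equation has to be read with the sum starting at $r=0$, as your own splitting presupposes; with $r\ge1$ it would not involve $\omega_{g,n+1}$ at all. With that reading, the loop equation and the projection property in the first variable follow directly from the Cauchy-kernel property of $-\Res_{z=q}\big(\int^z B(\cdot,z_0)\big)$. In the other variables they follow once symmetry is known.

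The genuine gaps are in Steps 3 and 2, which carry all the content.

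\textbf{Step 3.} This step rests on a false pole count. At a point of type $(2,1)$, the terms with three or more legs at the first vertex, and the corrections from $e^{u(\mc{S}(u\hbar\del_x)-1)y}$, are \emph{not} holomorphic after applying $(-d\frac1{dy})^r$. They have poles of the same order as $\omega_{g,n+1}$ and enter its principal part.

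Take the Airy curve $x=\zeta^2/2$, $y=c\zeta$ and $(g,n)=(1,0)$.
\begin{itemize}
\item[(i)] The quadratic term is $[u^1]\overline{\mc{W}}_{1,0}=\frac{1}{2\,dx}\lim_{z'\to z}\big(B(z,z')-\frac{dx\,dx'}{(x-x')^2}\big)=\frac{d\zeta}{8\zeta^3}$. After applying $-d\frac1{dy}$ it contributes $\frac{3}{8c}\frac{d\zeta}{\zeta^4}$.
\item[(ii)] The term you discard is $[u^2]\overline{\mc{W}}_{1,0}=\frac{1}{24}\,dx\,\del_x^2y=-\frac{c}{24}\frac{d\zeta}{\zeta^2}$. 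After applying $(-d\frac1{dy})^2$ it contributes $-\frac{1}{4c}\frac{d\zeta}{\zeta^4}$.
\item[(iii)] Only their sum gives $\omega_{1,1}=-\frac{1}{8c}\frac{d\zeta}{\zeta^4}$, which is the Eynard--Orantin value for this curve. Dropping the second term triples the answer.
\end{itemize}
Likewise, $\big(d\frac{1}{dy}\big)^2$ applied to $\omega_{0,2}(z,z_1)\omega_{0,2}(z,z_2)\omega_{0,2}(z,z_3)/dx^2$ has a pole of order $4$, which is the pole order of $\omega_{0,4}$.

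Also, the $u^0$-term is just $\omega_{g,n+1}$, so no linear loop equation can be read off from it.

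What is actually needed is an inductive identity. If the lower $\omega$'s are symmetric and satisfy the linear and quadratic loop equations at $q$, then the principal part at $q$ of the \emph{whole} sum $\sum_{r\ge1}(-d\frac1{dy})^r[u^r]\overline{\mc{W}}_{g,n}$ must equal minus the Eynard--Orantin output, with cancellations between different powers of $u$. Your proposal does not supply this, so the reduction to Eynard--Orantin is not proved.

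\textbf{Step 2.} This step falls with Step 3, since for $t\neq0$ it identifies the construction with Eynard--Orantin through Step 3. The undeformed points of $\mathsf{S}\setminus\mc{P}$ also violate Step 3's hypothesis that all special points lie in $\mc{P}$; this is repairable but has to be addressed.

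The deformation is also not available as described. The number of zeros minus poles of $dx$, and of $dy$, in a small disc is invariant under deformation. So a point of $\mc{P}$ where $dx$ has a pole (e.g.\ $z=0$ for $x=\log z$, if put in $\mc{P}$) cannot split into simple zeros of $dx$ alone. The same holds for a point where $dx$ is regular and nonzero but $dy$ vanishes or has a simple pole. The family must create auxiliary points as well: a pole of $dx$ of order at least $s+1$ together with nearby simple zeros. For $dy(q)\neq0$, an example is $\frac{dz}{z}\rightsquigarrow\frac{(z-\epsilon)\,dz}{z^2}$.

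The contour-integral limit argument then needs the lemma that the integrand is holomorphic at non-special points. Your justification, that $d\frac{1}{dy}$ preserves holomorphicity, is wrong at non-special points of type $(r,s)$ with $s\ge2$ and $r+s\le0$. There $\frac1{dy}$ has a pole. Holomorphicity must come from the pole of order $1-r$ of $dx$: every $\del_x$ or $1/dx$ raises the order of vanishing by at least $-r\ge s$. This has to be balanced against the $s$ orders lost in each application of $-d\frac1{dy}$.

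Finally, the $x$--$y$ swap fallback is not an argument as stated. The dual differentials come from the same asymmetric recursion, so nothing is gained unless symmetry is already known on one side.
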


We will use the following example.

\begin{example}[{\cite[(15)]{ABDKS24}}]\label{TrivialTR}
    For the spectral curve $\mc{S}_0 = (\P^1, x_0 = \log z, y_0 = S(z), B = \frac{dz_1 dz_2}{(z_1 - z_2)^2}, \mc{P} = \emptyset)$, topological recursion is trivial, in the sense that $ \omega_{g,n} = 0$ for all $ 2g - 2 + n > 0$. The extended differentials, in the sense of \cref{DefExtendedDifferentials}, are given by
    \begin{equation}
	   \mb{W}_n^{\mc{S}_0} (z_{\llbracket n\rrbracket}, u_{\llbracket n\rrbracket}) 
	   =
	   \prod_{i=1}^n \Big( dz_i \, e^{u_i (\mc{S}(u_i \hbar z_i \del_{z_i}) - 1) y_i} \Big) (-1)^{n-1} \!\!\! \sum_{\sigma \in n\text{-cycles}} \prod_{i=1}^n \frac{1}{e^{\frac{u_i \hbar}{2}} z_i - e^{-\frac{u_{\sigma (i)} \hbar }{2}} z_{\sigma (i)}} .
    \end{equation}
\end{example}

Moreover, by construction this topological recursion behaves well with respect to a large class of transformation and deformations.

\begin{theorem}[{\cite[Theorem 1.14]{ABDKS25universal}, \cite[Theorem 3.6]{ABDKS25}}]\label{Thmx-y}
    Assume that $ \Sigma $ is compact and $ B $ is the canonical Bergman kernel with vanishing $ \mf{A}$-periods. Then the \emph{$ x$-$y$ dual differentials}, i.e. the differentials computed by topological recursion on the \emph{dual} spectral curve $ (\Sigma, dy, dx, B, \mathsf{S} \setminus \mc{P})$, are given by
    \begin{equation}\label{NonExtendedx-y}
        \omega^\vee_{n} (z_{\llbracket n \rrbracket}
        =
        (-1)^n \Bigg( \prod_{i=1}^n \sum_{r = 0}^\infty ( - d\frac{1}{dy_i})^r [u_i^r] \Bigg) \mb{W}_n (z_{\llbracket n \rrbracket, u_{\llbracket n \rrbracket }}).
    \end{equation}
    A useful reformulation, cf. \cite[Theorem 3.10.3]{ABDKS24} is the following: the dual extended differentials can be expressed in terms of the original extended differentials as
    \begin{equation}\label{Extendedx-y}
        \mb{W}^\vee_n (z_{\llbracket n \rrbracket}, v_{\llbracket n \rrbracket})
        =
        (-1)^n \prod_{i=1}^n \Big( e^{-v_i x_i} \sum_{r=0}^\infty (- d_i \frac{1}{dy_i} )^r e^{v_i x_i} [u_i^r] \Big) \mb{W}_n (z_{\llbracket n \rrbracket}, u_{\llbracket n \rrbracket}) + \delta_{n,1} \frac{dy_1}{v_1 \hbar}.
    \end{equation}
\end{theorem}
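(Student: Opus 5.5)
The plan is to prove the first formula \cref{NonExtendedx-y} by a uniqueness argument. Define $\omega^\vee_n$ by the right hand side of \cref{NonExtendedx-y}, with $\mb{W}_n$ the extended differentials of the topological recursion differentials on $(\Sigma, dx, dy, B, \mc{P})$. Then show that these $\omega^\vee_n$ satisfy the defining properties of topological recursion on the dual curve $(\Sigma, dy, dx, B, \mathsf{S}\setminus\mc{P})$:
\begin{itemize}
\item the initial cases;
\item the loop equations at every point of $\mathsf{S}\setminus\mc{P}$ in the sense of \cref{DegenerateLoopEquations} and \cref{DegenerateTR}, with the roles of $x$ and $y$ exchanged;
\item holomorphy at all remaining points;
\item the projection property.
\end{itemize}
Since $\Sigma$ is compact and $B$ is the normalised Bergman kernel, the argument after \cref{LoopEquations} applies: at each step the loop equations fix the principal parts, and the residue formula is the unique global differential with those parts and vanishing $\mf{A}$-periods. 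Hence these properties characterise the dual differentials uniquely, and the first formula follows.

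\textbf{Step 1: symmetry and the initial cases.} Symmetry of $\omega^\vee_n$ in its arguments is clear from the graph sum. For the initial cases I would expand the operator $\prod_i\sum_r(-d\frac{1}{dy_i})^r[u_i^r]$ at low order in $\hbar$.
\begin{itemize}
\item For $(0,1)$, the leading term of $e^{u(\mc{S}(u\hbar\del_x)-1)y}dx/(\hbar u)$ resums to give $-x\,dy$ up to a total derivative. This matches $\omega^\vee_{0,1}=x\,dy$ once the sign $(-1)^n$ and the convention that only derivatives of $\omega_{0,1}$ matter are taken into account.
\item For $(0,2)$, the graph sum contributes $B-\frac{dx_1dx_2}{(x_1-x_2)^2}$ plus the disconnected correction. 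The $\frac{dx_1dx_2}{(x_1-x_2)^2}$ pieces must cancel so that $\omega^\vee_{0,2}=B$. This is the familiar fact that $B$ is invariant under the swap.
\end{itemize}

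\textbf{Step 2: loop equations, holomorphy and projection.} The key local statement is this: at a point where $(r,s)$ are the local orders of $dx,dy$, the loop equations for $\omega$ with respect to $x$ at points of $\mc{P}$ are equivalent to holomorphy of $\omega^\vee$ there. Conversely, at points of $\mathsf{S}\setminus\mc{P}$, holomorphy of $\omega$ is equivalent to the loop equations of $\omega^\vee$ with respect to $y$. I would prove this by writing both sides through the generating functions $\mb{W}$ and $\mb{W}^\vee$. The operator $e^{-vx}\sum_r(-d\frac1{dy})^re^{vx}[u^r]$ is a formal Laplace-type transform exchanging the $u$-expansion along $dy$ with the $v$-expansion along $dx$. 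This is exactly why the second formula \cref{Extendedx-y} is the natural intermediate step.

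So I would first establish \cref{Extendedx-y} as a formal identity of generating series. The route is to verify it on the trivial curve of \cref{TrivialTR}, where both sides are explicit cycle sums. I would then argue that both sides transform identically under the deformations that generate all initial data, namely changing $y$ by a rational function and moving $B$; this is the strategy of \cite{ABDKS24}. \cref{NonExtendedx-y} then follows by setting $v_i\to0$ and taking the relevant $\hbar$-coefficient. Finally, the projection property holds because the operator only introduces poles at zeros of $dy$, i.e. at special points.

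\textbf{Main obstacle.} The hardest step is the local equivalence between loop equations on one side and holomorphy on the other at degenerate special points, where $r+s\le 0$ or $r,s$ are general. There the resummation in $u$ must be controlled by order-counting in the local coordinate. I would first treat the simply-ramified case $(r,s)=(2,1)$ and then extend by a deformation/continuity argument in families, using that degenerate TR behaves well in families.
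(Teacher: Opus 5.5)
The paper does not prove this statement; it quotes it from \cite{ABDKS25universal,ABDKS25}, with the extended form taken from \cite{ABDKS24}. Your overall architecture is the natural one: define $\omega^\vee$ by the formula, then identify it with the dual topological recursion differentials through uniqueness for the loop equations plus the projection property. The gap is that the step carrying all the weight is not established.

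\textbf{Main gap.} To get the loop equations of $\omega^\vee$ at $\mathsf{S}\setminus\mc{P}$ from the holomorphy of $\omega$ there, you need \cref{Extendedx-y}, i.e.\ that the transform can be inverted at the level of extended differentials. This is a formal identity about an \emph{arbitrary} collection $\{\omega_{g,n}\}$ (the paper calls it a reformulation), so it has to be proved at that level by manipulating the generating series directly. Your route cannot do this, for three reasons.
\begin{enumerate}
\item On $\mc{S}_0$ all stable $\omega_{g,n}$ vanish, so checking the identity there tests almost nothing.
\item Starting from $\mc{S}_0$, swaps and shifts $y\mapsto y+\psi(x)$ only reach special genus-zero curves. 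You never reach a compact $\Sigma$ of positive genus, and on $\P^1$ the hypotheses leave no freedom to ``move'' $B$.
\item The argument is circular. To check that both sides transform alike under $y\mapsto y+\psi(x)$, you must know how topological recursion on the dual curve responds to $x^\vee\mapsto x^\vee+\psi(y^\vee)$. That is not an elementary operation on topological recursion; it is exactly what the swap, combined with \cref{ThmSD}, delivers. This is how the paper itself uses the theorem in \cref{DualitiesPreserveTR}: as an input for generating spectral curves from $\mc{S}_0$, not as something such chains could prove.
\end{enumerate}

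\textbf{Secondary issues.}
\begin{enumerate}
\item The local equivalence ``loop equations of $\omega$ at $\mc{P}$ $\Leftrightarrow$ holomorphy of $\omega^\vee$ at $\mc{P}$'' is only asserted. In the degenerate formalism of \cref{DegenerateTR} it is essentially the definition of the loop equations. If instead you start, as you propose, from simple ramification with the Eynard--Orantin linear and quadratic loop equations, you must show that these already force holomorphy of the full transform, including all terms with $r\geq 2$. That is a substantive statement, and you do not supply it.
\item Your closing projection sentence is not correct as stated. Individual terms do have poles at $\mc{P}$ that must cancel. Also, zeros of $dy$ need not be special: a simple zero of $dy$ at a triple pole of $dx$ has $r+s=0$. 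Holomorphy there therefore needs order counting, or a family argument in which special points merge with poles.
\item The $\mf{A}$-periods are the easy part, and you could have said so. In each variable, a term is either exact (when $r_i\geq 1$) or linear in a single $\omega_{g',n'}$ (when $r_i=0$).
\item Your $(0,1)$ computation is wrong. With $r\geq 0$, $[\hbar^{-1}]\mb{W}_1=\frac{dx}{\hbar u}$ contains no non-negative power of $u$, so the formula gives $0$ at this order. Obtaining $x\,dy$ would need an $r=-1$ term, and this is exactly what the correction $\frac{dy_1}{v_1\hbar}$ in \cref{Extendedx-y} records. So \cref{NonExtendedx-y} is meant for $2g-2+n>0$ and for $(0,2)$, with $\omega^\vee_{0,1}=x\,dy$ part of the dual initial data.
\item For $(0,2)$, edges between distinct vertices carry $\tilde\omega_2=\omega_2$. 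So $B$ comes out directly and there is nothing to cancel.
\end{enumerate}
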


There is another, large, group of transformations which preserve topological recursion. Namely, one can add to $ y $ a function of $x$, or dually add to $ x$ a function of $y$. In principle, this first version is straightforward, as it only changes $ \omega_{0,1}$, and the second can be obtained from the first by $ x$-$y$ duality. However, in the general situation where $ dx$ and $ dy$ do not integrate to univalued functions, but have residues, one has to be very careful with these modifications, and they affect all $ \omega_{g,1}$ in a specific way. This analysis was performed in a large class of cases in \cite{ABDKS24}.

\begin{theorem}[{\cite[Section 6]{ABDKS24}}]\label{ThmSD}
    Let $ \mc{S} = ( \Sigma, dx, dy, B, \mc{P}) $ be a spectral curve, and $ \tilde{\psi} (\theta ;\hbar ) $ a function depending on $ \theta \in \P^1$ and formal $ \hbar$ Set $ \psi (\theta ) = \tilde{\psi}(\theta; 0)$, and let $ \mc{S}^\dagger = ( \Sigma^\dagger = \Sigma, dx^\dagger = dx , dy^\dagger = d( y + \psi \circ x), B^\dagger = B, \mc{P}^\dagger = \mc{P}) $. If $ \{ \omega _{g,n} \} $ are the multidifferentials obtained by topological recursion on $ \mc{S}$ then
    \begin{align}
        \omega_{g,n}^\dagger
        &=
        \omega_{g,n} + \delta_{n,1} [\hbar^{2g} ] \tilde{\psi} (x (z); \hbar ) dx
        \\
        \mb{W}^\dagger_n (z_{\llbracket n \rrbracket}, u_{\llbracket n \rrbracket })
        &=
        \bigg( \prod_{i=1}^n e^{u_i ( \mc{S}(u_i \hbar \del_{x_i}) \tilde{\psi}(x_i ) - \psi(x_i))}\bigg) \mb{W}_n (z_{\llbracket n \rrbracket}, u_{\llbracket n \rrbracket}) \label{ExtendedSD}
    \end{align}
    satisfies topological recursion on $ \mc{S}^\dagger$ in the following cases:
    \begin{itemize}
        \item $x$ and $y$ are meromorphic, $ \psi = \tilde{\psi} $ is rational and regular at critical values of $ x$, all zeroes of $ dx$, $dy$, and $ dy^\dagger$ are simple, and the zeroes of $ dx $ are disjoint from the zeroes of $ dy$.
        \item $ \Sigma = \P^1$, 
        \begin{equation}
            x 
            = R(z), 
            \qquad
            \tilde{\psi} 
            = 
            T (\theta ) + \sum_i \frac{1}{\beta_i \mc{S} (\hbar \beta_i \del_\theta)} \log (\theta - b_i)
        \end{equation}
        are not constant with $ R $ and $ T $ rational, all $ b_i $ distinct, such that $ R(z) - b_i$ have only simple zeroes, $ \psi $ is regular at critical values of $ x$, $T$ is regular at the $b_i$, and the poles of $ d( \psi \circ x) $ which are not poles of $ dx $ are also not poles of $ dy$.
        \item $ \Sigma = \P^1$,
        \begin{equation}
            x 
            =
            R(z) + \sum_i \alpha_i^{-1} \log (z - a_i),
            \qquad
            \tilde{\psi} 
            = 
            \kappa_1 \theta + \kappa_0
        \end{equation} 
        are not constant with all $ a_i$ distinct and $ R $ a rational function which is regular at the $ a_i$. 
        \item $ \Sigma = \P^1$,
        \begin{equation}
            x
            = 
            \sum_i \alpha_i^{-1} \log (z - a_i),
            \qquad
            \tilde{\psi}  
            = 
            T ( e^{\gamma_1 \theta}, \dotsc, e^{\gamma_K \theta}) + \kappa \theta 
        \end{equation} 
        are not constant with $T $ a rational function, all $ a_i $ distinct, $ \psi $ regular at critical values of $ x$, and $ \frac{\gamma_j}{\alpha_i} \in \Z $ for all $ i,j$.
    \end{itemize}
\end{theorem}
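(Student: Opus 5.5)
The plan is to verify the characterisation of topological recursion by loop equations plus the projection property, rather than to manipulate the recursion formula of \cref{DegenerateTR} directly. The first step is to rewrite the loop equations of \cite{ABDKS25} in the form I expect to be used in \cite[Section 6]{ABDKS24}. For each $q\in\mc{P}$ and each $r\ge1$, the coefficient of $u^r$ in
\[
\frac{e^{u y(z)}}{dx(z)}\,\mb{W}_{n+1}(z,z_{\llbracket n\rrbracket},u,0)
\]
should be holomorphic at $q$ modulo the image of $\del_x$, up to the explicit unstable contributions. If this reformulation does not hold as stated, I would instead work with the equivalent statement that $\sum_r(-d\tfrac1{dy})^r[u^r]\mb{W}_{n+1}$ is holomorphic at $q$.

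The second step is to show that the ansatz \eqref{ExtendedSD} is consistent with \cref{DefExtendedDifferentials}. The only $\omega^\dagger_{g,n}$ that change are the $\omega^\dagger_{g,1}$, and they change by $[\hbar^{2g}]\tilde\psi(x)\,dx$. Substituting into the prefactor $e^{u(\mc{S}(u\hbar\del_x)-1)y}$ and into the one-vertex $\tilde\omega_1$ insertions, these shifts exponentiate to the multiplicative factor
\[
e^{u(\mc{S}(u\hbar\del_x)\tilde\psi(x)-\psi(x))}.
\]
Here the term $\hbar^{-1}\omega_{0,1}$ has been removed and $\psi$ absorbed into $y^\dagger$. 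The factor depends on $z$ only through $x$, and I would verify the identity by a direct generating-function computation.

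The third step is to check the loop equations on $\mc{S}^\dagger$. Multiplying $e^{u y}\mb{W}/dx$ by $e^{u\psi(x)}$ turns $y$ into $y^\dagger$. The remaining factor $e^{u\mc{S}(u\hbar\del_x)\tilde\psi}$ is a power series in $u$ whose coefficients are functions of $x$ alone. Provided these coefficients are regular at the critical values $x(q)$, multiplication by them preserves the property of being holomorphic modulo $\del_x$-exact terms, because $x$-derivatives commute with multiplication by functions of $x$ up to further $\del_x$-exact terms. For the projection property, note that $\omega^\dagger_{g,n}=\omega_{g,n}$ for $n\ge2$, and for $n=1$ the added term $[\hbar^{2g}]\tilde\psi(x)\,dx$ with $g\ge1$ must either be absent or have poles only at $\mc{P}$. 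The hypotheses in each of the four cases are tailored exactly to this: regularity of $\psi$ at critical values, simple zeroes, and no new common poles of $dy$ and $d(\psi\circ x)$.

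The main obstacle will be the logarithmic cases, where $\tilde\psi$ contains $\frac{1}{\beta\mc{S}(\hbar\beta\del_\theta)}\log(\theta-b)$ or where $x$ has logarithmic terms. There one has to show two things. First, $u\mc{S}(u\hbar\del_\theta)\tilde\psi$ exponentiates to a single-valued, Gamma-ratio-type expression, so the multiplicative factor introduces no monodromy. Second, the new poles at the preimages of $b_i$ or at the $a_i$ are either non-special in the sense of \cref{DefSpecialPoints} or cancel. I would handle this by explicit expansion: $\mc{S}(u\hbar\del_\theta)/\mc{S}(\hbar\beta\del_\theta)$ applied to $\log$ gives a finite-difference expression in $\theta$, and the integrality condition $\gamma_j/\alpha_i\in\Z$ ensures the shifts $x\mapsto x\pm u\hbar/2$ act compatibly on $e^{\gamma\theta}$ and on the branches of $\log(z-a_i)$.
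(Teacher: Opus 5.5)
The paper does not prove this statement; it quotes it from \cite[Section 6]{ABDKS24}, so your outline has to stand on its own. Your Step 2 is correct. The prefactor contributes $e^{u(\mc{S}(u\hbar\del_x)-1)\psi}$, and the size-one hyperedges, which do not affect connectivity, contribute $e^{u\mc{S}(u\hbar\del_x)(\tilde\psi-\psi)}$; together these give the stated factor. Steps 1 and 3, however, contain two genuine gaps.

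\textbf{The loop-equation reformulation and the invariance argument.} ``Holomorphic at $q$ modulo the image of $\del_x$'' is not a form of the loop equations, and the invariance argument built on it fails.
If $x-x(q)=z^d$ locally, then $\del_x z^m=\frac{m}{d}z^{m-d}$. So holomorphic germs plus $\del_x$ of meromorphic germs are exactly the germs $f$ with $[z^{-d}]f=0$. That is the single condition $\Res_q f\,dx=0$. If you instead take $\del_x$ of holomorphic germs, the condition becomes strictly stronger than the loop equations.
This property is also not stable under multiplication by functions of $x$: take $F=z^d$ and $f=z^{-2d}$. In $F\,\del_x g=\del_x(Fg)-(\del_x F)\,g$, the last term is neither holomorphic nor exact when $g$ has a pole.
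Your fallback, holomorphicity of $\sum_r(-d\frac{1}{dy})^r[u^r]\mb{W}_{n+1}$ on $\mc{P}$, is the correct formulation in this framework. But for $\mc{S}^\dagger$ the operator becomes $(-d\frac{1}{dy^\dagger})^r$ with $dy^\dagger=dy+\psi'(x)\,dx$. This does not commute with multiplication by $e^{u\mc{S}(u\hbar\del_x)\tilde\psi(x)}$, so relating the two requires an argument you do not give.
What makes invariance at $\mc{P}$ easy is a formulation via sums over the local preimages of $x$ near $q$. For simple ramification this is symmetrisation under $\sigma_a$; equivalently, the Eynard--Orantin kernel only sees $\omega_{0,1}(\sigma_a(z))-\omega_{0,1}(z)$, which $y\mapsto y+\psi(x)$ does not change. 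Such sums commute with multiplication by functions of $x$ regular at $x(q)$, and this is where regularity of $\psi$ at critical values enters.

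\textbf{The projection property fails in the second case.} Your claim that the added term is ``absent or has poles only at $\mc{P}$'' is false there, and this is the more serious gap.
Since $1/\mc{S}(t)=1-t^2/24+\dotsb$, one has $[\hbar^2]\tilde\psi(x)\,dx=\sum_i\frac{\beta_i\,dx}{24(x-b_i)^2}$, with genuine double poles on $x^{-1}(b_i)$.
At these points $dx\neq 0$, because the zeroes of $R-b_i$ are simple, so they are not in $\mc{P}$. There $dy^\dagger$ has a simple pole with residue $1/\beta_i$. By \cref{DefSpecialPoints} these are new special points with $(r,s)=(1,0)$.
With the plain projection property relative to $\mc{P}$, the statement would be false. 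It holds because the recursion on $\mc{S}^\dagger$ is logarithmic topological recursion. There each such log point contributes to $\omega_{g,1}$ exactly $[\hbar^{2g}]\frac{1}{\beta_i\mc{S}(\hbar\beta_i\del_x)}\log(x-b_i)\,dx$, and the $1/\mc{S}$-deformation in $\tilde\psi$ is there to reproduce this.
The real content of the logarithmic cases is proving that match, and checking that the remaining hypotheses create no other contributing points. Those hypotheses are: $T$ regular at the $b_i$, and poles of $d(\psi\circ x)$ away from the poles of $dx$ not being poles of $dy$. Your proposal does not address any of this.

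By contrast, the monodromy problem you anticipate does not arise. In $u(\mc{S}(u\hbar\del_x)\tilde\psi-\psi)$ the operator $\frac{u}{\beta_i}\bigl(\frac{\mc{S}(u\hbar\del_x)}{\mc{S}(\beta_i\hbar\del_x)}-1\bigr)$ has no constant term. So $\log(x-b_i)$ is differentiated away, and every coefficient is rational in $x$.
In the third and fourth cases $\tilde\psi$ is $\hbar$-independent, so no $\omega_{g,1}$ with $g\geq1$ changes at all.
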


For the main deformation result, we will not quite give all the intricate details.

\begin{theorem}\label{TRLimits}
    Suppose that $ \Sigma \to T $ is an analytic family of smooth curves with meromorphic relative differentials $ dx$ and $ dy$ and a symmetric relative fundamental bidifferential $ B$, along with an open subset $ U \subset \Sigma$\footnote{With some conditions.}, such that $ dx$ and $ dy $ are regular and non-vanishing on $ \del U$. Let $ \mc{P} = \mc{S} \cap U$. Then the fibre-wise defined topological recursion correlators depend analytically on the base.
\end{theorem}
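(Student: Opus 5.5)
The plan is to prove the statement by induction on $2g-2+n$. The key step is to rewrite the residue formula of \cref{DegenerateTR} as a single contour integral over $\del U$, where everything is manifestly analytic in the base parameter $t \in T$. The base cases are immediate: $\omega_{0,1} = y\,dx$ and $\omega_{0,2} = B$ depend analytically on $t$ by hypothesis.

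For the induction step, fix $(g,n)$ and assume $\omega_{g',n'}$ depends analytically on $t$ for all $2g'-2+n' < 2g-1+n$.
\begin{itemize}
\item By \cref{DegenerateLoopEquations}, the expression $\overline{\mc{W}}_{g,n}(z,u;z_{\llbracket n \rrbracket})$ is built only from these lower correlators, from $dx$, $dy$ and $y$-derivatives, and from the operators $\mc{S}(u\hbar\del_x)$. So, for $z$ near a point where $dx$ and $dy$ are regular and non-vanishing, each coefficient $[u^r]\overline{\mc{W}}_{g,n}$ is analytic in $(t,z,z_{\llbracket n \rrbracket})$.
\item The same holds after applying $(-d\tfrac{1}{dy})^r$: near $\del U$ the operator $\tfrac{1}{dy}$ introduces no singularities because $dy$ does not vanish there.
\item The sum over $r$ is finite at each order in $\hbar$, since only finitely many $r$ contribute to a given $[\hbar^{2g-1+n}]$.
\end{itemize}
Hence the integrand
\begin{equation*}
    \Big(\int^z B(\cdot,z_0)\Big)\sum_{r\ge1}\Big(-d\tfrac{1}{dy}\Big)^r[u^r]\overline{\mc{W}}_{g,n}(z,u;z_{\llbracket n \rrbracket})
\end{equation*}
is analytic in $t$ for $z$ in a neighbourhood of $\del U$, as long as $z_0,z_1,\dots,z_n \notin \del U$. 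The primitive of $B$ is only defined up to a $z_0$-dependent constant, which does not affect residues, and it can be chosen locally analytically in $t$.

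Next I will show that, for $z_0,z_{\llbracket n \rrbracket}$ outside $\overline U$,
\begin{equation*}
    \sum_{q\in\mc{P}}\Res_{z=q}(\cdots) = \frac{1}{2\pi i}\oint_{\del U}(\cdots).
\end{equation*}
This requires that the integrand has no poles in $U$ apart from the points of $\mc{P} = \mathsf{S}\cap U$, and I would check this case by case.
\begin{itemize}
\item \emph{Generic points of $U$.} At points where $dx,dy$ are regular and non-vanishing, the lower correlators are holomorphic: by the projection property of \cite{ABDKS25} they have poles only at $\mc{P}$ and on diagonals. The diagonal poles with $z_i$ do not occur, since $z_i\notin\overline U$.
\item \emph{Non-special points with $r=s=1$.} These are generic points and are covered by the previous case.
\item \emph{Non-special points with $r+s\le 0$.} At these points $dx$ or $dy$ has a pole or zero. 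I would invoke the loop equations of \cite{ABDKS25}, which say the combination $\sum_r(-d\tfrac{1}{dy})^r[u^r]\mb{W}_{n+1}$ is holomorphic there. Equivalently, I would verify directly from the local expansions of \cref{DefSpecialPoints} that the orders of vanishing of $\tfrac{1}{dy}$ compensate the poles generated by $\mc{S}(u\hbar\del_x)$ acting on $y$.
\end{itemize}

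Combining the two steps, $\omega_{g,n+1}(z_0,z_{\llbracket n \rrbracket})$ is analytic in $t$ on the open set where $z_0,\dots,z_n\notin\overline U$. The same argument with $U$ replaced by a slightly smaller or larger admissible region $U'$ (containing the same special points, with $dx,dy$ still regular and non-vanishing on $\del U'$) covers all positions of the $z_i$ away from $\mc{P}$. Moreover, $\omega_{g,n+1}$ is meromorphic with poles only at $\mc{P}$, whose location moves analytically since $\mc{P}$ stays inside $U$. So analytic dependence holds as meromorphic multidifferentials on the family, which closes the induction.

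The main obstacle is the third case above: showing that non-special points inside $U$, and in particular points where special points collide in the family or where $dx$ and $dy$ have cancelling orders, contribute nothing to the contour integral. This is exactly where the precise definition of $\mathsf{S}$ (the condition $r+s\le0$) and the holomorphicity statement of the loop equations in \cite{ABDKS25} are needed. I would first try to deduce it from the loop equations together with the projection property, rather than by direct local computation.
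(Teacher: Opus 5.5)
The paper gives no proof of this statement; it is imported from \cite{ABDKS25}. Your strategy is the natural one: induction on $2g-2+n$, together with replacing $-\sum_{q\in\mc P}\Res_{z=q}$ by a contour integral over $\del U$, which is manifestly analytic in $t$. However, the step that carries all the content is left open, and the first route you propose for it does not work.

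The contour rewriting requires that the integrand have zero residue at every non-special point of $U$, in particular at points with $r+s\le 0$. The loop equations quoted in the paper say nothing there. They assert holomorphicity only at $z\in\mc P$, and $\mc P\subset\mathsf S$ contains no non-special point. Adding the projection property still gives no local information at a point $q\notin\mc P$.

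This case is not marginal. Take $dx=(z-t)\,dz$, $dy=z^{-3}dz$ near $z=0$. The simple zero of $dx$ at $z=t$ is special for $t\neq0$. At $t=0$ it merges into the non-special point $z=0$, with $(r,s)=(2,-2)$. The contour argument can only give continuity at $t=0$ if the $t=0$ integrand has no residue at that point.

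Your fallback, a direct local computation, is the right route. It amounts to the following lemma: if all $\omega_{g',n'}$ with $2g'-2+n'<2g-1+n$ are holomorphic at a non-special point $q$, then $\sum_{r\ge1}(-d\frac{1}{dy})^r[u^r]\overline{\mc W}_{g,n}$ is holomorphic at $q$. This is a pole-order count, and it is exactly where the condition $r+s\le 0$ of \cref{DefSpecialPoints} enters.

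Write $dx\sim z^{\alpha-1}dz$ and $dy\sim z^{\beta-1}dz$ with $\alpha+\beta\le0$; I use $\alpha,\beta$ to avoid a clash with the summation index $r$. Then $\del_x$ and $-d\frac{1}{dy}$ shift orders by at least $-\alpha$ and $-\beta$ respectively. Consider the factors of $\overline{\mc W}_{g,n}$ carrying $u^m$: the edge factors $u\hbar\mc S(u\hbar\del_x)\frac{\tilde\omega}{dx}$, and the factors $u^{2j+1}\del_x^{2j}y$ coming from $e^{u(\mc S(u\hbar\del_x)-1)y}$. Each has order at least $-m\alpha$. For the $N$ factors of the second kind the bound is $-m\alpha+(\alpha+\beta)$.

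Multiply by $\frac{dx}{\hbar u}$ and apply $(-d\frac{1}{dy})^r$ to the $[u^r]$-term, whose factors carry $u^{r+1}$ in total. The result has order at least $(\alpha-1)-(r+1)\alpha+N(\alpha+\beta)-r\beta=-1+(r-N)|\alpha+\beta|\ge-1$, since $N\le\frac{r+1}{3}\le r$. For $r\ge1$ this form is exact, so it cannot have a simple pole, and hence it is holomorphic at $q$. The factor $\int^zB(\cdot,z_0)$ is holomorphic there as well.

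The regularised loop term $\omega_{0,2}-\frac{dx_1dx_2}{(x_1-x_2)^2}$ on the diagonal needs a short separate check. Homogeneity in the model $x=z^\alpha$ suffices. With this lemma, your argument closes.

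Two minor repairs are also needed.
\begin{enumerate}
\item On fibres of positive genus, $\int^zB(\cdot,z_0)$ need not be single-valued along $\del U$. You can avoid choosing a primitive by integrating by parts, using the same exactness, so that the integrand becomes $H(z)\,B(z,z_0)$ with $H$ single-valued.
\item Shrinking $U$ gives analyticity only for $z_i$ away from limit points of $\mc P_t$, not for all $z_i\notin\mc P_{t_0}$. In the example above, analyticity cannot be expected at $z_i=0$.
\end{enumerate}
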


This topological recursion constructs KP $\tau$-functions. 

\begin{theorem}[{\cite{ABDKS24KP0}, \cite[Theorem 6.4]{ABDKS25}}]
    Consider a spectral curve of genus zero, $ \Sigma = \P^1$, with standard bidifferential $ B = \frac{dz_1 dz_2}{(z_1 - z_2)^2}$. Let $ o \in \Sigma \setminus \mc{P}$, and $ \xi$ a local coordinate at $ o$. Expand
    \begin{equation}
        \omega_n (z_{\llbracket n \rrbracket} )
        =
        \delta_{n,2} \frac{d\xi_1 d\xi_1}{(\xi_1 - \xi_2)^2}
        +
        \sum_{k_1, \dotsc, k_n = 1}^\infty f_{k_1, \dotsc, k_n} \prod_{i=1}^n k_i \xi_i^{k_i-1} d\xi_i.
    \end{equation}
    Then
    \begin{equation}
        Z_{o,\xi}
        \coloneqq
        \exp \sum_{n=1}^\infty \frac{1}{n!} \sum_{k_1, \dotsc, k_n = 1}^\infty f_{k_1, \dotsc, k_n} \prod_{i=1}^n p_{k_i}
    \end{equation}
    is a KP $\tau$-function.
\end{theorem}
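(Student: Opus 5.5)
The approach is the one behind \cite{ABDKS24KP0}: reduce KP integrability to a \emph{determinantal formula} for the correlators, then prove that formula from the structure of topological recursion. A formal series $Z=\exp(F)$ with $F=\sum_n\frac1{n!}\sum f_{k_{\llbracket n\rrbracket}}\prod p_{k_i}$ is a KP $\tau$-function exactly when there is a kernel $K(\xi_1,\xi_2)$, defined near $(o,o)$, such that:
\begin{itemize}
\item its leading part is $\frac{\sqrt{d\xi_1d\xi_2}}{\xi_1-\xi_2}$;
\item the connected generating differentials $\omega_n$ equal $(-1)^{n-1}\sum_{\sigma\in n\text{-cycles}}\prod_i K(\xi_i,\xi_{\sigma(i)})$ for $n\ge 2$, with the diagonal $n=1$ term suitably regularised.
\end{itemize}
This is the Sato-Grassmannian/Plücker characterisation, and the trivial curve of \cref{TrivialTR} already shows its shape. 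The first step is therefore to recall this criterion and to note that it depends only on the germ of $\{\omega_n\}$ at $o$ in the coordinate $\xi$. It is also invariant under changing $\xi$ and $o$, since such changes act on $Z$ through the $\widehat{\mf{a}}_\infty$-group, which preserves the KP orbit.

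The second step is to produce the kernel. I would define it as a regularised exponential of integrated correlators:
\begin{equation*}
K(z_1,z_2)=\frac{\sqrt{d\xi_1 d\xi_2}}{\xi_1-\xi_2}\exp\Big(\sum_{n\ge1}\frac{1}{n!}\int_{z_2}^{z_1}\!\!\cdots\int_{z_2}^{z_1}\big(\omega_n-\delta_{n,2}\tfrac{d\xi d\xi'}{(\xi-\xi')^2}\big)\Big).
\end{equation*}
The determinantal formula is then equivalent to the Hirota bilinear identity
\begin{equation*}
\Res_{z=o}\,\Psi(z;\cdot)\,\Psi^*(z;\cdot)=0
\end{equation*}
for the Baker--Akhiezer functions built from $K$.

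The third step, which is the core, is to verify this bilinear identity. On $\Sigma=\P^1$ with standard $B$, the integrand is a global meromorphic object in $z$. Its singularities outside $o$ lie only at the points of $\mc P$, by the projection property, \cref{LoopEquations} \cref{ProjectionProperty}. So the residue at $o$ equals minus the sum of residues at $\mc P$. I would then show that each of these vanishes using the loop equations of \cref{DegenerateTR}. In the extended-differential language, $\mb W_n$ is precisely the object whose $(-d\frac1{dy})^r[u^r]$-projections are holomorphic at $\mc P$, and $\Psi\Psi^*$ should be expressible through $\mb W_{n+1}$ at coinciding points. This identification is the step I expect to be the main obstacle. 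The combinatorics of the graph sum in \cref{DefExtendedDifferentials} has to match the cycle expansion of the determinant exactly, including the treatment of repeated $\tilde\omega_2$ edges.

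If the direct identification becomes unmanageable, I would fall back on a deformation argument, structured as follows:
\begin{itemize}
\item First treat the generic case, where all special points are simple ramification points, so the correlators are Eynard--Orantin.
\item In that case, use \cref{Thmx-y} together with \cref{ThmSD} to connect the curve to the trivial curve $\mc S_0$, whose $\tau$-function is trivially KP.
\item Check that each $x$--$y$ swap and each symplectic shift acts on the germ at $o$ by an element of the KP symmetry group.
\item Finally, extend to degenerate $\mc P$ by \cref{TRLimits}, since the KP equations are closed conditions on the coefficients $f_{k_{\llbracket n\rrbracket}}$, and analyticity in the family parameter preserves them.
\end{itemize}
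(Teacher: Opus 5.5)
The paper gives no proof of this statement. It quotes \cite{ABDKS24KP0} and \cite[Theorem 6.4]{ABDKS25}, whose engine is the result that KP integrability, in exactly the determinantal form of your first step, is preserved under the $x$--$y$ swap relation. Your outer frame is fine: the determinantal criterion, closedness of the KP equations, and passage to degenerate $\mc{P}$ via \cref{TRLimits}. But in both of your routes the step that carries the content is missing.

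In the main route, everything rests on the residues of $\Psi\Psi^*$ at the points of $\mc{P}$ vanishing, and the loop equations do not give this. Those equations are local statements about the $\omega_{g,n}$ at $z$ and $\sigma_a(z)$, with the other arguments free. The same holds in the degenerate form: holomorphicity of $\sum_r(-d\frac{1}{dy})^r[u^r]\mb{W}_{n+1}$ at $\mc{P}$. The Hirota integrand, by contrast, is an exponential of iterated integrals $\int_o^z\cdots\int_o^z\omega_n$ that depends on two independent sets of times. Already for a single integral $I(z)=\int_o^z\omega$, control of $\omega(z)+\omega(\sigma_a z)$ and of the quadratic combinations at $(z,\sigma_a(z))$ says nothing about, e.g., $I(z)I(\sigma_a z)$, whose derivative is $\omega(z)I(\sigma_a z)+I(z)\,\sigma_a^*\omega(z)$. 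So the identification of $\Psi\Psi^*$ with $\mb{W}_{n+1}$ that you flag is not bookkeeping. Proving that TR correlators satisfy the Hirota (equivalently, determinantal) identities is the theorem itself, and your proposal supplies no mechanism for it.

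Your fallback has the same hole in a different place. An $x$--$y$ swap is not the action of a fixed element of the KP symmetry group on $Z_{o,\xi}$. Formula \eqref{NonExtendedx-y} is a curve-dependent, nonlinear transformation of the whole tower $\{\omega_n\}$, acting through the exponentials inside $\mb{W}_n$ and the operators $(-d\frac{1}{dy})^r$. There is therefore nothing routine to check. Showing that the dual correlators are again determinantal, with a kernel built from the old one, is precisely the main result of \cite{ABDKS24KP0}. Only the symplectic shifts are harmless: by \cref{ThmSD} they change only the $\omega_{g,1}$, by $[\hbar^{2g}]\tilde\psi(x)\,dx$, which adds a linear function of the $p_k$ to $\log Z_{o,\xi}$, i.e. a time shift.

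You also give no reason why an arbitrary genus-zero curve, with arbitrary $\mc{P}$, should be linked to $\mc{S}_0$ of \cref{TrivialTR} by finitely many swaps and shifts lying in the restricted classes covered by \cref{ThmSD}. The paper builds such a chain only for the specific curve \eqref{SpectralCurveParam}, and stresses that its method depends on that shape. Your final deformation step is sound, but it needs the generic case as input, and neither route establishes it.
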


For higher-genus spectral curve, a similar result (first conjectured by \cite{BE12}) holds, but only for a modified, non-perturbative, version of the differentials \cite{ABDKS24KP+}. As we will only work with genus zero spectral curves, we will not recall the full statement here.

\begin{assumption}
    In the rest of this paper, we will only consider compact spectral curves of genus zero, i.e. $ \Sigma = \P^1$ and $ B = \frac{dz_1 dz_2}{(z_1 - z_2)^2}$. Moreover, we will always take $ \mc{P}$ to be the set of zeroes of $ dx$.
\end{assumption}

\section{Polynomiality and wall--crossing}
\label{PolyWC}

Let $g$ be a non--negative integer, fix positive integers $n,s>0$ and a tuple of positive integers $\underline{r}=(r_1,\dots,r_s)$ with

\begin{equation}
    \sum_{i=1}^s (r_i - 1)
    =
    2g-2+n.
\end{equation}

Moreover, define the hyperplane

\begin{equation}
    \mathcal{H}_{n,s}=\{(\mathbf{x},\underline{k})\in\mathbb{Z}^n\times\mathbb{Z}^s\mid \sum_{i=1}^nx_i=\sum_{i=1}^sk_i\}.
\end{equation}

Furthermore, we define the leaky resonance arrangement inside $\mathcal{H}_{n,s}$ given by the hyperplanes

\begin{equation}
    W_{IJ}=\left\{\sum_{i\in I}x_i=\sum_{j\in J}k_j\right\}
\end{equation}

for $I\subset[n],J\subset[s]$. We call the collection $(W_{IJ})_{I,J}$ the \emph{leaky resonance arrangement}. The connected components of its complement in $\mathcal{H}_{n,s}$ will be referred to as \emph{chambers}.

We are interested in the map

\begin{equation}
    h_g^{-,\underline{r}}
    \colon
    \mathcal{H}_{n,s}\to\mathbb{Q}
    \colon
    (\mathbf{x},\underline{k})\mapsto h^{\underline{k},\underline{r}}_{g;\mathbf{x}^+,\mathbf{x}^-}
\end{equation}

parametrising $(-,\underline{r})$--leaky Hurwitz numbers.

This map was studied in \cite{AKL25} for $k_1=\cdots=k_s$ and $r_1=\cdots=r_s$ and it was proved that it is piecewise polynomial with respect to the leaky hyperplane arrangement restricted to this subspace. Moreover, the authors of \emph{loc.cit.} derived wall crossing formulas w.r.t. this restriction, i.e. they expressed the difference of polynomiality in adjacent chambers in terms of leaky Hurwitz numbers with smaller input data. 

In this section, we provide a more general piecewise polynomiality result without the conditions of $k_1=\cdots=k_s$ and $r_1=\cdots=r_s$. In the same setting as \cite{AKL25}, i.e. $k_1=\cdots=k_s$ and $r_1=\cdots=r_s$, we derive wall--crossing formulae for connected completed cycles leaky Hurwitz numbers.
In order to achieve this, we adopt methods from tropical geometry previously employed to study the piecewise polynomiality of various Hurwitz numbers in \cite{CJM11,HL20}.

\subsection{Formal set--up}
We begin with rephrasing \cref{thm-corr} in terms of ordered graphs which we call leaky monodromy graphs.

\begin{definition}
    Let $g$ be a non-negative integer, $\textbf{x}\in\mathbb{Z}\backslash\{\textbf{0}\}$, $\underline{k}$, $\underline{r}=(r_1,\dots,r_s)$ tuples of non-negative integers. Then, we call a directed graph $\Gamma$ a \emph{leaky combinatorial cover of type $(g,\textbf{x},\underline{k},\underline{r})$} if:
    \begin{enumerate}
        \item $\Gamma$ has $s+n$ vertices
        \item $\Gamma$ has $n$ leaves with adjacent edges called ends. We label the ends by $x_1,\dots,x_n$. For $x_i>0$, we call the corresponding edge an in-end and for $x_i<0$, we call it an out-end.
        \item We denote the set of edges that are not ends by $E^0(\Gamma)$.
        \item We label the set of inner vertices, i.e. the vertices that are not leaves, by $v_1,\dots,v_s$ and assign a non-negative integer $g(v_i)$ to $v_i$ called its genus. Moreover, we require $r_i-1=2g(v_i)-2+\mathrm{val}(v_i)$.
        \item The internal vertices are ordered compatibly with the orientation of the edges.
        \item To each edge, we associate a positive integer $\omega(e)\in\mathbb{N}$. At $v_i$ the weights satisfy $\sum_{e\to v} \omega(e)-\sum_{v\to e}\omega(e)=k_i$, i.e. the difference of the sum of incoming edges and outgoing edges is $k_i$.
    \end{enumerate}
    Moreover, we call a leaky tropical cover \emph{$\underline{k}$-compatible} if the ordering of vertices is exactly $v_1<\cdots<v_n$.
\end{definition}

Then, we immediately obtain from \cref{thm-corr} that
\begin{equation}
    h^{\underline{k},\underline{r}}_{g,\mathbf{x}}
    =
    \sum_\Gamma\frac{1}{|\mathrm{Aut}(\Gamma)|}\phi_\Gamma,
\end{equation}
where the sum runs of all $\underline{k}$-compatible leaky covers of type $(g,\textbf{x},\underline{k},\underline{r})$ and we have
\begin{equation}
    \phi_\Gamma=\prod m_{v_i}\prod_{e\in E^0(\Gamma)}\omega(e)
\end{equation}
for
\begin{equation}
    m_{v_i}
    =
    (r_i-1)! m_{g(v_i)}(\textbf{x}_i^+,\textbf{x}_i^-),
\end{equation}
where $\textbf{x}_i^-$ ($\textbf{x}_i^+$) is the set of weights of incoming (resp. outgoing) edges at $v_i$.

\begin{definition}
    We call a leaky combinatorial cover of type $(g,\mathbf{x},\underline{k},\underline{r})$ an \emph{$\mathbf{x}$-graph} if we forget the edge directions and the vertex orderings.
\end{definition}

Now, we fix an $\textbf{x}$-graph $\Gamma$ and consider some reference orientation. With respect to this orientation, we define the differential $d\colon \mathbb{R}E(\Gamma)\to\mathbb{R}V(\Gamma)$ that sends an edge to its head minus its tail. Thus, we obtain the following short exact sequence

\begin{equation}
    0\to\mathrm{ker}(d)\to\mathbb{R}E(\Gamma)\to \im (d)\to 0.
\end{equation}

We decompose $\mathbb{R}V(\Gamma)=\mathbb{R}^n\oplus\mathbb{R}V^0(\Gamma)$ into the contribution of ends and internal vertices. Then, we have that $(\mathbf{x},\underline{k})\in\im (d)$, when $\sum x_i=\sum k_i$.

\begin{definition}
    We define the space of \emph{flows} as
    \begin{equation}
        \mathcal{F}_\Gamma(\textbf{x},\underline{k})=d^{-1}(\textbf{x},\underline{k})
    \end{equation}

    and a hyperplane arrangement $\mathcal{A}_\Gamma(\textbf{x},\underline{k})$ in $\mathcal{F}_\Gamma(\textbf{x},\underline{k})$ as the restriction of the coordinate hyperplanes of $\mathbb{R}E(\Gamma)$ corresponding to the internal edges. In other words $\mathcal{A}_\Gamma(\textbf{x},\underline{k})$ is defined by the polynomial
    \begin{equation}
        \phi_{\mathcal{A}_\Gamma(\textbf{x},\underline{k})}=\prod e_i,
    \end{equation}
    where $e_i$ are the coordinate functions on $\mathbb{R}E(\Gamma)$ on $\mathcal{F}_\Gamma(\textbf{x},\underline{k})$.
\end{definition}

We now collect a number of facts that were proved in the case of classical double Hurwitz numbers in \cite{CJM11}. The same arguments apply for our more general case. We indicate in brackets the corresponding results.

\begin{lemma}[{\cite[Lemma 2.13, Corollary 2.14]{CJM11}}]
\label{lem-bound}
    The bounded chambers of $\mathcal{A}_\Gamma(\textbf{x},\underline{k})$ correspond to orientations of $\Gamma$ with no directed cycles. They are in bijection with leaky combinatorial covers projecting to $\Gamma$ after forgetting the edge directions and vertex orderings.
\end{lemma}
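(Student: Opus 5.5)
The plan is to follow the argument of \cite[Lemma 2.13, Corollary 2.14]{CJM11}, observing that leakiness only changes the affine translate of the flow space and not its linear part, which is all the chamber combinatorics depends on. First I will describe the chambers of $\mathcal{A}_\Gamma(\mathbf{x},\underline{k})$. A chamber is a connected component of the complement of the coordinate hyperplanes $\{e_i=0\}$, $e_i\in E^0(\Gamma)$, in the affine space $\mathcal{F}_\Gamma(\mathbf{x},\underline{k})$. On it every internal edge coordinate has a fixed sign, so each chamber determines a sign vector and hence an orientation of the internal edges: orient $e$ along the reference orientation if $e>0$ and against it otherwise. The ends are already oriented by the signs of the $x_i$. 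Conversely, a sign vector gives a nonempty chamber precisely when the corresponding open orthant meets the affine space, and then the chamber is that intersection, which is convex.

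Second, I will show that a chamber is bounded if and only if its orientation has no directed cycle.

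For the direction "directed cycle implies unbounded", suppose $C$ is a directed cycle. It gives a vector $c\in\ker(d)$ with entries $\pm1$ on the edges of $C$, signed so that adding $tc$, $t>0$, increases $|e|$ along $C$. For any point $f$ of the chamber, $f+tc$ stays in the same orthant for all $t\ge0$ and in $\mathcal{F}_\Gamma(\mathbf{x},\underline{k})$. Hence the chamber is unbounded.

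For the converse, an unbounded convex chamber has a nonzero recession direction $c$. This $c$ lies in $\ker(d)$ and satisfies $\mathrm{sign}(c_e)\in\{0,\mathrm{sign}(e)\}$; it vanishes on the ends, since the end values are fixed by $\mathbf{x}$. Such a $c$ is a nonzero conformal flow with zero divergence at every vertex. By the standard conformal decomposition of circulations, it contains a directed cycle for the chamber's orientation.

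This direction is the only point where care is needed. The constraint at internal vertices is $d(f)=\underline{k}$ rather than $0$, but the recession cone only sees the homogeneous equation $d(c)=0$, so the leaky data play no role.

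Third, I will establish the bijection with leaky combinatorial covers. Given an acyclic chamber, take an integral point (when one exists) to get positive weights $\omega(e)=|e|$ on the edges. Acyclicity gives a partial order on the internal vertices, and a leaky combinatorial cover additionally records a compatible linear ordering $v_1<\dots<v_s$. The condition $d(f)=(\mathbf{x},\underline{k})$ is exactly the leaky balancing at each $v_i$. Conversely, a leaky combinatorial cover projecting to $\Gamma$ gives an acyclic orientation, and its weights give a point in the corresponding bounded chamber.

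The statement should be read as an identification of the bounded chambers with the orientation data of such covers, fibred over the integral points. Integrality of these points follows from $\mathbf{x},\underline{k}\in\mathbb{Z}$ together with the total unimodularity of the incidence matrix $d$. I expect no genuine obstacle beyond checking that the vertex constraints $d(f)=\underline{k}$ replacing $d(f)=0$ do not affect the recession-cone argument, as noted above.
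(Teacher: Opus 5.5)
Your proposal is correct and is essentially the argument the paper relies on. The paper gives no proof beyond citing \cite[Lemma 2.13, Corollary 2.14]{CJM11} and asserting that the same arguments apply. You make that transfer explicit by observing that the leaky data $\underline{k}$ only translate the affine flow space, so the directed-cycle/recession-direction criterion for boundedness is unchanged.

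Your reading of the ``bijection'' also matches how the paper uses the lemma afterwards. Bounded chambers correspond to acyclic orientations, integral points supply the weights, and compatible vertex orderings are counted separately by the $m_A$ and $m_A^{\mathrm{com}}$ of the paper.
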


For a fixed $\textbf{x}$-graph $\Gamma$, we call the chambers of $\mathcal{A}_\Gamma(\textbf{x},\underline{k})$ the $F$-chambers of $\Gamma$. For an $F$-chamber $A$, denote by $\Gamma_A$ the corresponding directed $\textbf{x}$-graph and denote by $m_A$ ($m_A^{\textrm{com}}$) the numbers of vertex orderings turning $\Gamma_A$ into a (resp. $\underline{k}$-compatible) leaky combinatorial cover.

We obtain the following result that follows from \cref{lem-bound}.

\begin{lemma}
    We have that $m(A)$ is zero if and only if $A$ is unbounded.
\end{lemma}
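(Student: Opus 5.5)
The statement follows from \cref{lem-bound} together with the observation that an orientation admits a compatible vertex ordering exactly when it has no directed cycle. I would prove the two implications separately, translating between points of $\mathcal{F}_\Gamma(\mathbf{x},\underline{k})$ and orientations of $\Gamma$.

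\emph{If $A$ is bounded, then $m_A\neq 0$.} A point of an $F$-chamber $A$ has all internal edge coordinates non-zero, with fixed signs. Orient each edge according to the sign of its flow value, so that the flow becomes a positive weight $\omega(e)$. This gives the directed graph $\Gamma_A$. Since the flow lies in $d^{-1}(\mathbf{x},\underline{k})$, the leakiness condition $\sum_{e\to v}\omega(e)-\sum_{v\to e}\omega(e)=k_i$ holds at every inner vertex, and the ends carry the prescribed weights. By \cref{lem-bound}, a bounded chamber corresponds to an orientation without directed cycles. An acyclic directed graph admits a topological sort, i.e. a linear ordering of the inner vertices compatible with the edge directions. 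This turns $\Gamma_A$ into a leaky combinatorial cover, so $m_A\geq 1$.

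\emph{If $A$ is unbounded, then $m_A=0$.} Suppose $\Gamma_A$ had a directed cycle $C$. Then the indicator vector of $C$, with sign $+1$ on each edge of $C$ in the direction of $\Gamma_A$, lies in $\ker(d)$ and is supported on internal edges. Adding $t$ times this vector to any point of $A$, for $t\geq 0$, preserves all signs and stays in $\mathcal{F}_\Gamma(\mathbf{x},\underline{k})$, so $A$ would contain a ray. Conversely, if $A$ is unbounded, it contains a ray direction $v\in\ker(d)$, and the signs of $v$ are weakly compatible with the orientation of $A$. A non-zero element of $\ker(d)$ that is nonnegative with respect to the orientation decomposes, by the standard conformal decomposition of circulations, into directed cycles of $\Gamma_A$. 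Hence $\Gamma_A$ has a directed cycle. Any vertex ordering compatible with the orientation would then give $v<v$ for a vertex $v$ on the cycle, so no valid ordering exists and $m_A=0$.

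The only point requiring care is the conformal decomposition step: one must check that ends cannot carry a component of $v$. This holds because end coordinates are fixed by $\mathbf{x}$, so $v$ vanishes on ends. Everything else is routine, and the argument follows \cite[Lemma 2.13]{CJM11} verbatim, since leakiness only shifts the affine space $d^{-1}(\mathbf{x},\underline{k})$ and does not change $\ker(d)$.
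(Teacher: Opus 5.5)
Your proof is correct and follows the same route as the paper, which deduces the statement directly from \cref{lem-bound}: $\Gamma_A$ admits a compatible vertex ordering exactly when it has no directed cycle (by topological sorting), and this holds exactly when $A$ is bounded. The only difference is that you also reprove the bounded/acyclic correspondence yourself, using recession directions in $\ker(d)$ and the decomposition of nonnegative circulations into directed cycles, which is the argument behind \cite[Lemma 2.13]{CJM11}.
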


Let $\mathrm{Ch}(\mathcal{A}_\Gamma(\textbf{x},\underline{k}))$ be the set of $F$-chambers of in $\mathcal{A}_\Gamma(\textbf{x},\underline{k})$. The sign of $\phi_{\mathcal{A}_\Gamma(\textbf{x},\underline{k})}=\prod_{i=1}^{|E(\Gamma)|}e_i$ alternates on adjacent $F$-chambers. We denote $\mathrm{sgn}(A)=(-1)^{N(A)}$, where $N(A)$ is the number of negative coordinates $e_i$ in $A$.

We define $\Lambda=\mathbb{Z}E(\Gamma)$ a lattice in $\mathbb{R}E(\Gamma)$ and
\begin{align}
    \begin{split}
        S_\Gamma(\textbf{x},\underline{k})=&\frac{1}{|\mathrm{Aut}(\Gamma)|}\sum_{A\in\mathrm{Ch}(\mathcal{A}_\Gamma(\textbf{x},\underline{k}))}m^{\mathrm{com}}(A)\sum_{f\in A\cap \Lambda}\prod_{e\in E^0(\Gamma)}\omega(e)\prod_{v\in V^0(\Gamma)}m_v\\
        =&\frac{1}{|\mathrm{Aut}(\Gamma)|}\sum_{A\in\mathrm{Ch}(\mathcal{A}_\Gamma(\textbf{x},\underline{k}))}m^{\mathrm{com}}(A)\mathrm{sign}(A)\sum_{f\in A\cap \Lambda}\phi'_{\mathcal{A}_\Gamma}(f)\prod_{v\in V^0(\Gamma)}m_v,
    \end{split}
\end{align}

where $\phi'_{\mathcal{A}_\Gamma}(f)=\prod_{e_i\in E^0(\Gamma)} e_i$

The following remark is an important observation.

\begin{remark}
Recall that $m_v=[z^{2g(v)}]\frac{\prod\mathcal{S}(x_iz)}{\mc{S}(z)}$, where $x_i$ are the weights of the adjacent vertices of $v$. Note that $\mathcal{S}(z)$ (and $1/\mathcal{S}(z)$) is an even function, which is why $m_v$ does not depend on the sign of the flow of $\Gamma$. Moreover, it is a polynomial in the $x_i$ of degree $2g(v)$.
\end{remark}

\subsection{Results}

To begin with, we observe that
\begin{equation}
\label{equ-poly}
    \frac{1}{|\mathrm{Aut}(\Gamma)|}\phi_{\mathcal{A}_\Gamma(f)}\prod_{v\in V^0(\Gamma)}m_v
\end{equation}

 is a polynomial in the coordinates of $\mathbb{R}E(\Gamma)$ of degree $|E^0(\Gamma)|+2\sum g_i$.

 We compute
 \begin{equation}
     |E^0(\Gamma)|=s-1+h^1(\Gamma)=s-1+g-\sum g_i
 \end{equation}

 and therefore 
 \begin{equation}
     |E^0(\Gamma)|+2\sum g_i=s-1+g+\sum g_i.
 \end{equation}
 We obtain that \cref{equ-poly} is a polynomial of degree $s-1+g+\sum g_i$.

 It is well-known, see e.g. \cite[Theorem 4.6]{AB17}, that summing a polynomial of degree $n$ over the lattice points of a $d$ dimensional lattice polytope with fixed topology yields a polynomial in the defining equations of that polytope of degree $n+d$. This is exactly the situation we are in. The polytopes given by the $F$-chambers are indeed lattice polytopes because the incidence matrix of a directed graph is unimodular.

 By the same argument as in \cite[Remark 2.11]{CJM11}, after fixing $\underline{k}$ each $F$-chamber $A$ is of dimension $h^1(\Gamma)=g-\sum g_i$.
 Thus, we obtain that $S_\Gamma(\textbf{x},\underline{k})$ is a polynomial of degree
 \begin{equation}
     g-\sum g_i+s-1+g+\sum g_i
     =
     s-1+2g
     =
     1+\sum_{i=1}^s r_i -n
 \end{equation}
 as long as the topology of $\mathcal{A}_\Gamma(\textbf{x},\underline{k})$ does not change.

 The only thing that remains to show is that the topology of $\mathcal{A}_\Gamma(\textbf{x},\underline{k})$ only changes when $(\textbf{x},\underline{k})$ crosses a wall in the leaky resonance arrangement.

 For generic hyperplane arrangements, the topology changes when one passes through non-transversalities. In our setting however, there are situations where there are always non-transversalities present. However, the topology still can only change, when one passes through additional non-transversalities. We give the following definitions.

 \begin{definition}
     Given $H_1,\dots,H_m$ hyperplanes in $\mathcal{A}_\Gamma(\textbf{x},\underline{m})$ that intersect in codimension $m-l$. This intersection is called \emph{good} if there exists a set $L$ of $l$ vertices in $\Gamma$, such that $H_1,\dots,H_m$ exactly correspond to the set of edges adjacent to the vertices in $L$. Moreover, we define the \emph{discriminant locus} $\mathcal{D}$ as the locus of points $(\textbf{x},\underline{k})\in\mathbb{R}^{n+s}$, such that $\mathcal{A}_\Gamma(\textbf{x},\underline{m})$ has non-good non-transverse intersections. The discriminant induces a hyperplane arrangement that we call the \emph{discriminant arrangement}.
 \end{definition}

 Our goal now is to classify the hyperplane in the discriminant arrangement.

 \begin{definition}
     A \emph{simple cut} of a graph $\Gamma$ is a minimal set $C$ of edges, that disconnects the ends of $\Gamma$. For an $\textbf{x}$-graph, a flow in $\mathcal{F}(\textbf{x},\underline{k})$ is \emph{disconnected} if there is a simple cut $C$, such that the flow on each edge in $C$ is zero.
 \end{definition}

We need a final lemma.

\begin{lemma}[{\cite[Lemma 3.8]{CJM11}}]
   We have that $(\textbf{x},\underline{k})\in\mathcal{D}$ if and only if $\mathcal{F}(\textbf{x},\underline{k})$ admits a disconnected flow.
\end{lemma}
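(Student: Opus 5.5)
Since this is the analogue of \cite[Lemma 3.8]{CJM11}, I will transport that proof, checking at each step that leakiness does not interfere. The only change in the linear algebra is that $d$ now has nonzero components at internal vertices. Everything else is affine-linear in $(\textbf{x},\underline{k})$, so the structure of $\mathcal{A}_\Gamma(\textbf{x},\underline{k})$ inside the affine space $\mathcal{F}_\Gamma(\textbf{x},\underline{k})=d^{-1}(\textbf{x},\underline{k})$ is governed by the same cycle/cut combinatorics as in the balanced case. The plan is to prove both implications by translating a non-transverse intersection of coordinate hyperplanes $\{e=0\}_{e\in S}$ into graph-theoretic data on $\Gamma\setminus S$.

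For the direction ``disconnected flow $\Rightarrow$ $\mathcal{D}$'', take a flow $f$ vanishing on a simple cut $C$. The hyperplanes $\{e=0\}_{e\in C}$ meet at $f$. Because $C$ separates the ends, $\Gamma\setminus C$ has a component $\Gamma'$ containing some but not all ends. Flow conservation (with the leakiness terms) on $\Gamma'$ forces
\begin{equation}
    \sum_{i\in I}x_i=\sum_{j\in J}k_j ,
\end{equation}
where $I$ and $J$ index the ends and internal vertices in $\Gamma'$. Hence these $|C|$ hyperplanes satisfy one linear relation more than expected, so they intersect non-transversally. One must then check that this intersection is not good. This holds because $C$ is a minimal cut that separates ends, so it is not the full star of a set of internal vertices: a full star of a vertex set $L$ cuts off only $L$, not ends, and minimality rules out the degenerate overlaps.

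For the converse, suppose hyperplanes $H_e$, $e\in S$, meet non-transversally at a flow $f$ in a non-good way. Restricting $d$ to $\mathbb{R}(E\setminus S)$, the codimension deficit means the map $\mathbb{R}(E\setminus S)\to \im d$ loses rank compared to the expected count. This happens exactly when some component of $\Gamma\setminus S$ imposes an extra conservation relation, i.e.\ when a component has its boundary edges all in $S$. If every such component consisted only of internal vertices with no ends, the relevant hyperplanes would be stars of vertex sets, and the intersection would be good. So some component contains ends, and also misses some ends. The edges of $S$ bounding it contain a simple cut $C$ on which $f$ vanishes, giving a disconnected flow.

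The main obstacle I expect is the rank bookkeeping in the converse. Leaky vertices break the automatic relation $\sum_{v}(\text{net flow})=0$ on a component, so I need to show that the extra relation is exactly the wall condition $W_{IJ}$ and nothing else. I would handle this by writing $\im d$ restricted to a component as the hyperplane $\{\sum=0\}$ in $\mathbb{R}V$ of that component, with $\mathcal{H}_{n,s}$ playing the role of the balanced target. The counting from \cite{CJM11} then transfers verbatim with $\textbf{x}$ replaced by $(\textbf{x},\underline{k})$.
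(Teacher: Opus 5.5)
Your forward direction is fine. By minimality, a simple cut $C$ leaves exactly two components of $\Gamma\setminus C$, both containing ends. So $\bigcap_{e\in C}H_e$ is nonempty of codimension $|C|-1$, and goodness would force $C$ to be the star of a single vertex, which would then be an end-free component.

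The gap is in the converse, at the sentence claiming that if the components of $\Gamma\setminus S$ not carrying the ends are end-free, then the hyperplanes are stars of vertex sets and the intersection is good. That claim is false, and it is exactly where leakiness stops the argument of \cite{CJM11} from transferring verbatim. The precise bookkeeping is as follows: $\bigcap_{e\in S}H_e$ is nonempty if and only if every component $K$ of $\Gamma\setminus S$ satisfies $\sum_{i\in I_K}x_i=\sum_{j\in J_K}k_j$, and then its codimension is $|S|-c(\Gamma\setminus S)+1$. For an end-free $K$ this condition reads $\sum_{j\in J_K}k_j=0$. This has two consequences:
\begin{itemize}
\item The star of a single vertex $v$ with $k_v\neq 0$ gives an empty intersection, not a good one.
\item When $K$ has several vertices you get non-good intersections that no cut separating the ends can witness. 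For example, take adjacent internal vertices $v_1,v_2$ with $k_{v_1}=-k_{v_2}\neq 0$, two edges from $v_1$ and one from $v_2$ into a connected remainder containing all ends. On the wall $W_{\emptyset,\{1,2\}}$ these three pairwise distinct hyperplanes meet in codimension two. Their edge set is not the star of a single vertex, so $(\mathbf{x},\underline{k})\in\mathcal{D}$. Yet for generic points of that wall there is no disconnected flow, since a disconnected flow forces some $W_{IJ}$ with $\emptyset\neq I\neq\llbracket n\rrbracket$.
\end{itemize}
Even with all $k_j=0$ your ``stars'' claim is not literally true for multi-vertex end-free components. In \cite{CJM11} such components are harmless only because they are automatically balanced.

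Consequently your argument, and the statement as literally phrased, goes through only when no nonempty partial sum $\sum_{j\in J}k_j$ vanishes. This holds for instance when all $k_j>0$, which covers the equal-leakiness setting of \cref{thm-WC}. There end-free components cannot occur in a nonempty intersection, so $S$ itself disconnects the ends, and any minimal subset $C\subseteq S$ that still disconnects them is the required simple cut. Your last step then closes the proof without the ``stars'' claim. For general $\underline{k}$ you must either exclude the walls $W_{\emptyset J}$, or enlarge the notion of disconnected flow to include flows vanishing on a cut that isolates an end-free vertex set $J$ with $\sum_{j\in J}k_j=0$. With that enlargement the discriminant is the full leaky resonance arrangement, including the walls with $I=\emptyset$, which is what the paper's subsequent conclusion actually uses.
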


Let $\Gamma$ admit a disconnected flow and consider the corresponding simple cut $C$. The leaky balancing condition immediately implies that the ends of each connected component of $\Gamma\backslash C$ satisfy $\sum_{i\in I} x_i+\sum_{j\in J}k_j$ for some $I$ and $J$. Thus, the discriminant arrangement coincides with the leaky resonance arrangement. Thus, the topology of $\mathcal{A}_\Gamma(\textbf{x},\underline{k})$ only changes when crossing a wall as desired. 

To summarise, we have proved the following:

\begin{theorem}
\label{thm-piecepoly}
    Let $g$ be a non--negative integers, fix positive integers $n,s>0$ and a tuple of positive integers $\underline{r}=(r_1,\dots,r_s)$ with $\sum_{i=1}^s (r_i - 1) = 2g-2+n$.
    
    Then, the map $h_g^{-,\underline{r}}$ is piecewise polynomial with respect to the leaky resonance arrangement. Restricted to each chamber, it is a polynomial of degree $1+\sum_{i=1}^s r_i -n$.
\end{theorem}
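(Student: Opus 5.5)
The plan is the Cavalieri--Johnson--Markwig lattice-point strategy of \cite{CJM11}, adapted to leaky balancing.

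\textbf{Step 1: reduce to a finite sum of graph contributions.} By \cref{thm-corr}, rephrased through leaky combinatorial covers, $h^{\underline{k},\underline{r}}_{g;\mathbf{x}^+,\mathbf{x}^-}$ is a weighted sum over $\underline{k}$-compatible leaky combinatorial covers of type $(g,\mathbf{x},\underline{k},\underline{r})$. I group the covers by their underlying $\mathbf{x}$-graph $\Gamma$. The set of such graphs is finite and independent of $(\mathbf{x},\underline{k})$. Indeed, the number of inner vertices is $s$, and the valencies and genera of the vertices are constrained by $r_i-1=2g(v_i)-2+\mathrm{val}(v_i)$. This gives
\[
h_g^{-,\underline{r}}(\mathbf{x},\underline{k})=\sum_\Gamma S_\Gamma(\mathbf{x},\underline{k}),
\]
so it suffices to prove that each $S_\Gamma$ is piecewise polynomial of the stated degree with respect to the leaky resonance arrangement.

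\textbf{Step 2: express $S_\Gamma$ as lattice-point sums over chambers.} By \cref{lem-bound}, the edge weightings of directed covers over $\Gamma$ are exactly the lattice points in the bounded $F$-chambers of $\mathcal{A}_\Gamma(\mathbf{x},\underline{k})$. The count $m^{\mathrm{com}}(A)$ depends only on the orientation of the chamber, not on $(\mathbf{x},\underline{k})$. On each chamber the summand $\mathrm{sgn}(A)\phi'_{\mathcal{A}_\Gamma}\prod_v m_v$ is a fixed polynomial. This uses the remark that each $m_v$ is an even polynomial of degree $2g(v)$ in the adjacent weights, so it is insensitive to signs of the flow. An Euler-characteristic count gives
\[
|E^0(\Gamma)|=s-1+g-\sum g(v_i),
\]
so the summand has degree $s-1+g+\sum g(v_i)$. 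Each chamber is a polytope of dimension $h^1(\Gamma)=g-\sum g(v_i)$. It is a lattice polytope because the incidence matrix of a directed graph is totally unimodular. The Ehrhart-type result \cite[Theorem 4.6]{AB17} then shows that, as long as the combinatorial type of the arrangement stays fixed, the sum is polynomial in the right-hand sides $(\mathbf{x},\underline{k})$ of degree
\[
s-1+2g=1+\sum r_i-n.
\]

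\textbf{Step 3: the topology of $\mathcal{A}_\Gamma(\mathbf{x},\underline{k})$ changes only across walls.} I expect this to be the main obstacle. The genuine non-transversalities, i.e.\ those not coming from sets of edges around a vertex, form the discriminant $\mathcal{D}$. By the analogue of \cite[Lemma 3.8]{CJM11}, $(\mathbf{x},\underline{k})\in\mathcal{D}$ exactly when the flow space admits a disconnected flow, meaning one that vanishes on a simple cut $C$. In that case each component of $\Gamma\setminus C$ carries a zero net flow through its boundary. Leaky balancing summed over that component then forces $\sum_{i\in I}x_i=\sum_{j\in J}k_j$, where $I$ is the set of ends of the component and $J$ its set of inner vertices. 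This is precisely a wall $W_{IJ}$. Conversely, every such wall can be realised by some graph.

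The point I need to check carefully is that the proof of \cite[Lemma 3.8]{CJM11} survives when the vertices carry the sources and sinks $k_j$. The vertex constraints are still coordinate equations of $d$ with affine right-hand sides, so the linear-algebra argument should go through verbatim with the inner-vertex components included in the target. Inside a chamber of the leaky resonance arrangement the arrangement topology is therefore constant, and Steps 1--2 give the theorem.
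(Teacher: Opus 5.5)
Your proposal is correct and follows essentially the same route as the paper: group the tropical count by $\mathbf{x}$-graphs, sum the polynomial $\phi_{\mathcal{A}_\Gamma}\prod_v m_v$ of degree $s-1+g+\sum g(v_i)$ over the lattice points of the unimodular $F$-chambers of dimension $g-\sum g(v_i)$ using \cite[Theorem 4.6]{AB17}, and use disconnected flows plus leaky balancing to place the discriminant on the walls $W_{IJ}$. Two small remarks. Your converse claim that every wall is realised by some graph is not needed, since only the inclusion of the discriminant in the leaky resonance arrangement matters. In the leaky setting the vertex-type non-transversalities only occur when some $k_j=0$, which is itself the wall $W_{\emptyset,\{j\}}$, so your linear-algebra check does go through.
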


\begin{remark}
\begin{itemize}
    \item For $k_1=\cdots=k_s$ and disconnected numbers, \cref{thm-piecepoly} was proved in \cite{AKL25}. A version for $r=1$ was previously derived in \cite{CMR25}.
    \item The polynomiality statement in \cref{thm-piecepoly} includes the variables $k_1,\dots,k_s$. This phenomenon of \emph{universal} piecewise polynomial was previously observed in \cite{ardila2012universal,AB17} for Severi degrees and Gromov--Witten invariants of certain toric surfaces.
\end{itemize}

\end{remark}

A natural question that arises from piecewise polynomiality results on Hurwitz numbers is that of wall--crossing formulae. The idea is to consider adjacent chambers $C_1$ and $C_2$ separated by a hyperplane $\delta=0$. The counting problem is then described by a polynomial $P_{\mathfrak{c}_1}$ in $\mathfrak{c}_1$ a polynomial $P_{\mathfrak{c}_2}$ in $\mathfrak{c}_2$. The goal is to express the difference $P_{\mathfrak{c}_2}-P_{\mathfrak{c}_1}$ via Hurwitz numbers with smaller input data. Tropical geometry has proved to be a powerful tool in this direction \cite{CJM11,HL20}. The key idea is to observe that different monodromy graphs appear in $\mathfrak{c}_1$ and $\mathfrak{c}_2$. These graphs may be related via cutting up and reglueing. The cut-up monodromy graphs then contribute to smaller Hurwitz numbers.

We wish to adapt these techniques to the setting of completed cycles leaky Hurwitz numbers. There is one subtlety to note, namely the cutting--and--gluing technique above crucially requires that the monodromy graphs considered allow interchanging vertices. For our setting, this would be equivalent to the operators $\mathcal{F}_{r_i+1}^{k_i}$ commuting. Thus, we restrict ourself to the situation of $r=r_1=\cdots=r_s$ and $k=k_1=\cdots=k_s$. We denote the corresponding completed cycles $((k,\dots,k),(r,\dots,r))$--leaky Hurwitz numbers by $h_g^{k,r}(\mathbf{x})$.

In this setting the leaky resonance arrangement is described by a wall $W_{Ij}=\{\sum_{i\in I}x_i=jk\}$ for $I\subset [n]$, $j\in[s]$. Let $\mathfrak{c}_1,\mathfrak{c}_2$ be adjacent chambers separated by the wall $W_{Ij}$ and define $\delta=\sum_{i\in I}x_i-jk$. The goal is to describe

\begin{equation}
    P_{\mathfrak{c}_2}-P_{\mathfrak{c}_1}.
\end{equation}

We prove the following theorem.

\begin{theorem}
    \label{thm-WC}
    We have
    \begin{equation}
        P_{\mathfrak{c}_2}-P_{\mathfrak{c}_1}
        =
        \sum_{\alpha_1+\alpha_2+\alpha_3=s}
        \sum_{\substack{\mathbf{y},\mathbf{z}\colon\\ |\mathbf{x}_I|=|\mathbf{y}|+\alpha_1k
        \\
        |\mathbf{y}|=|\mathbf{z}|+\alpha_2 k
        \\
        |\mathbf{z}|=|\mathbf{x}_{I^{c}}|+\alpha_3k}}(-1)^{\alpha_2}\frac{\prod y_i}{\ell(\mathbf{y})!}\frac{\prod z_i}{\ell(\mathbf{z})!}h^{k,r}_{g_1;\mathbf{x}_I,\mathbf{y}} h^{k,r}_{g_2;\mathbf{y},\mathbf{z}} h^{k,r}_{g_3;\mathbf{z},\mathbf{x}_{I^c}},
    \end{equation}

    where $\mathbf{y},\mathbf{z}$ are ordered tuples of positive integers with sums $|\mathbf{y}|$ and $|\mathbf{z}|$ and $g_1=\frac{\alpha_1 r+2-\ell(\mathbf{x}_I,-\mathbf{z})}{2}$ (and analogously for $g_2,g_3$).
\end{theorem}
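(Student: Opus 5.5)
Our approach is to adapt the tropical cut-and-reglue strategy of \cite{CJM11,HL20}, using the description of $h^{k,r}_g(\mathbf{x})$ as a weighted sum over $\underline{k}$-compatible leaky combinatorial covers together with the flow-space picture from the proof of \cref{thm-piecepoly}. Since all vertices carry the same operator $\mathcal{F}^k_r$, we may freely permute vertex labels. For each $\mathbf{x}$-graph $\Gamma$, the contribution $S_\Gamma$ is a lattice-point sum of a polynomial over the $F$-chambers of $\mathcal{A}_\Gamma(\mathbf{x},\underline{k})$. Fix a point on the wall $W_{Ij}$ with $\delta=0$. The $F$-chambers change topology precisely through the disconnected flows. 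These are flows vanishing on a simple cut $C$ that separates $\Gamma$ into a component $\Gamma'$ containing the ends $\mathbf{x}_I$ together with $j$ inner vertices, and a complement $\Gamma''$.

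The first step is to write $P_{\mathfrak{c}_2}-P_{\mathfrak{c}_1}$ as a sum over covers in which the edges of such a cut carry flow of sign depending on the side of the wall. In $\mathfrak{c}_1$, where say $\delta<0$, the cut edges between the $I$-side and the rest must carry net flow $\delta$ in one direction; in $\mathfrak{c}_2$ they carry it in the other. As in \cite{CJM11}, one extends the polynomial $P_{\mathfrak{c}_1}$ across the wall by allowing edges with negative weight, i.e.\ reversed orientation. The difference then counts graphs in which the cut has edges oriented ``against'' the ordering. In the non-leaky case this produces the quadratic formula with one cut.

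Here, however, the vertices contain the leakiness $k$, so the ordering of vertices matters for balance. In a monodromy graph straddling the wall, the edges crossing the cut can go both forward and backward in the vertex order. This yields a three-block decomposition: an initial block of $\alpha_1$ vertices attached to $\mathbf{x}_I$, a middle block of $\alpha_2$ vertices, and a final block of $\alpha_3$ vertices attached to $\mathbf{x}_{I^c}$. They are glued along the tuples $\mathbf{y}$ and $\mathbf{z}$ of cut edge weights. The conditions $|\mathbf{x}_I|=|\mathbf{y}|+\alpha_1 k$, $|\mathbf{y}|=|\mathbf{z}|+\alpha_2 k$ and $|\mathbf{z}|=|\mathbf{x}_{I^c}|+\alpha_3 k$ are just leaky balancing on each block.

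Concretely, we will cut each contributing graph along the edge sets $\mathbf{y}$ and $\mathbf{z}$. The resulting pieces are leaky combinatorial covers of types $(g_1;\mathbf{x}_I,\mathbf{y})$, $(g_2;\mathbf{y},\mathbf{z})$ and $(g_3;\mathbf{z},\mathbf{x}_{I^c})$, with genera fixed by the Euler characteristic computation of \cref{thm-corr}. The multiplicity factorises: the vertex multiplicities $m_v$ are local, and the internal edge weights on the cut give the factors $\prod y_i$ and $\prod z_i$. The factors $1/\ell(\mathbf{y})!$ and $1/\ell(\mathbf{z})!$ account for relabelling the cut edges as ordered tuples. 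The vertex-order shuffles between blocks are absorbed by the commutativity of the identical operators, which gives the multinomial choice of which vertices go in each block. The sign $(-1)^{\alpha_2}$ arises from reversing orientation in the middle block: each of its flows is the negative of a genuine flow. Since $m_v$ is even, only the edge-weight product changes sign. Counting edges oriented against the order in the middle piece, via the degree count $|E^0|$ of the proof of \cref{thm-piecepoly}, gives parity $\alpha_2$.

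The main obstacle is this middle-block analysis: verifying that the polynomial continuation of $P_{\mathfrak{c}_1}$ across the wall, minus $P_{\mathfrak{c}_2}$, is exactly the signed count of the three-block configurations, with nothing double-counted. Disconnected graphs must also be correctly excluded, since we work with connected numbers. We would attempt this first via a Fock space cross-check: commute $J_{\mathbf{x}_I}$ past $\mathcal{E}_{-k}$ factors as in \cite{AKL25}, then take connected parts, to confirm that the cubic shape and the sign $(-1)^{\alpha_2}$ are forced.
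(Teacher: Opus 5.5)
Your outer frame matches the paper: the tropical cut-and-reglue strategy of \cite{CJM11,HL20}, and the final factorisation of $\phi_\Gamma$ along the cut edges into $\frac{\prod y_i}{\ell(\mathbf{y})!}\frac{\prod z_i}{\ell(\mathbf{z})!}$ times three leaky Hurwitz numbers. But the step that actually produces the formula is missing, and the mechanism you put in its place is wrong.

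\textbf{Where the cubic shape comes from.} The three-block shape and the alternating sign are not caused by leakiness. Already for $k=0$ the tropical wall-crossing of \cite{CJM11} has exactly this cubic shape, with sign $(-1)^t$ and a possibly disconnected middle factor. The quadratic formula is the Fock-space one of \cite{Joh15,AKL25}, so your claim that the non-leaky tropical method yields the quadratic formula is incorrect. In the paper, the cubic shape comes from two steps.
\begin{enumerate}
\item The polyhedral Gauss--Manin analysis tracks how bounded $F$-chambers change across the wall. It writes $P_{\mathfrak{c}_2}-P_{\mathfrak{c}_1}$ as a sum over bounded chambers $A$ and over \emph{all} $I$-cuts $\mathfrak{E}$ of $\Gamma_A$. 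These are edge sets whose removal leaves the $I$-ends and the $I^c$-ends in two distinct components, with $\Gamma_A/\mathfrak{E}$ acyclic from the first to the second. Each term is weighted by $(-1)^{r(\mathfrak{E})-1}\binom{s}{\alpha_1,s_1,\dots,s_{r(\mathfrak{E})-1},\alpha_3}$.
\item Cuts are grouped by the unique thin cut $T$ they contain. An inclusion--exclusion identity then collapses $\sum_{\mathfrak{E}\in P(T)}(-1)^{r(\mathfrak{E})-1}\binom{s}{\alpha_1,s_1,\dots,\alpha_3}$ to a single signed multinomial in $(\alpha_1,\alpha_2,\alpha_3)$. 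This is where the sign of the theorem comes from.
\end{enumerate}
Only after this reduction does one cut along $T$ into three pieces. Neither the cut expansion nor the identity appears in your proposal; you defer exactly this as the ``main obstacle'', and it is the heart of the proof.

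\textbf{Your substitutes do not work.}
\begin{itemize}
\item Reversing the orientation of the middle block turns $k$-leaky balancing into $(-k)$-leaky balancing. That contradicts the condition $|\mathbf{y}|=|\mathbf{z}|+\alpha_2k$ you impose on the block, so the reversed block is not counted by $h^{k,r}_{g_2;\mathbf{y},\mathbf{z}}$.
\item Even ignoring that, negating the internal edge weights of the middle block gives the sign $(-1)^{|E^0(\Gamma_2)|}$, where $|E^0(\Gamma_2)|=\alpha_2-c+h^1(\Gamma_2)$ and $c$ is the number of components of $\Gamma_2$. For a thin cut $c$ need not be $1$. For a connected tree this parity is $\alpha_2-1$, not $\alpha_2$; the $m_v$ being even does not rescue it.
\item The proposed Fock-space cross-check (commuting $J_{\mathbf{x}_I}$ past the $\mathcal{E}_{-k}$ factors) is the route to the quadratic formula of \cite{AKL25}. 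As the paper remarks, its relation to the cubic one is unclear, so it cannot force the cubic shape or the sign.
\end{itemize}
To close the gap you need the chamber-tracking computation that gives the signed sum over $I$-cuts, followed by the thin-cut inclusion--exclusion.
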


\begin{remark}
    There is another wall-crossing formula for leaky completed cycles Hurwitz numbers, \cite[Theorem 6.1]{AKL25}, which is quadratic in the Hurwitz numbers, and generalises \cite[Theorem 1.5]{Joh15}. It is not clear to us how these two formulas compare.
\end{remark}

\begin{proof}
    The argument is very similar to the case of classical double Hurwitz number in \cite{CJM11} and to that of tropical Jucys covers in \cite[section 4]{HL20}. Thus, we only give a sketch.

    The key idea is to track how the contribution of $F$--chambers changes when crossing a wall. This is a formalised by a polyhedral Gauss--Manin connection. We collect the important intermediate steps.

    To begin with, we define a notion of certain cuts on $\mathbf{x}$--graphs with an orientation. For this, consider such a graph $\Gamma$ with a subset of edges $\mathfrak{E}$. Then, we define a new graph whose vertices are connected components of $\Gamma\backslash\mathfrak{E}^c$ and whose edges are $\mathfrak{E}$. We denote this new graph by $\Gamma/\mathfrak{E}$ and call it the contraction of $\Gamma$ w.r.t. $\mathfrak{E}$.
    
    Now, given such a graph $\Gamma$, we define an $I$--cut to be a subset of edges $\mathfrak{E}$, such that either $\mathfrak{E}=\emptyset$ or
    
    \begin{enumerate}
        \item $\Gamma\backslash \mathfrak{E}$ is disconnected;
        \item the ends of $\Gamma$ lie on exactly two components of $\Gamma\backslash \mathfrak{E}$, one containing $I$ and the other $I^c$; and
        \item the contraction $\Gamma/\mathfrak{E}$ is acyclic with the initial vertex being the component containing $I$ and the final vertex the component containing $I^c$.
    \end{enumerate}

    We denote the set of $I$--cuts of $\Gamma$ by $\mathrm{Cut}_I(\Gamma)$.

    The next step is define the rank of an $I$--cut. This is simply one less than the number $v(\Gamma,\mathfrak{E})$ of connected components of $\Gamma\backslash\mathfrak{E}$. We write
    \begin{equation}
        r(\mathfrak{E})=v(\Gamma,\mathfrak{E})-1.
    \end{equation}

    We denote the set of bounded $F$--chambers for $\mathbf{x}$ and an $\mathbf{x}$--graph $\Gamma$ by $\mathcal{BC}_{\Gamma}(\mathbf{x})$. Each $A\in\mathcal{BC}_\Gamma(\mathbf{x})$ gives $\Gamma$ and orientation. We denote the thus obtained oriented $\mathbf{x}$--graph $\Gamma_A$.
    We fix $\mathbf{x}\in\mathfrak{c}_2$. A careful examination of the Gauss--Manin connection then proves that
    \begin{equation}
    \label{equ-wallcrossstep}
        P_{\mathfrak{c}_2}-P_{\mathfrak{c}_1}
        =
        \sum_{\Gamma}\sum_{A\in\mathcal{BC}_{\Gamma}(\mathbf{x})}\sum_{\mathfrak{E}\in\mathrm{Cut}_I(\Gamma_A)}(-1)^{\mathrm{r}(\mathfrak{E})-1}\binom{s}{\alpha_1, s_1,\dots, s_{r(\mathfrak{E})-1},\alpha_3}\sum_{\mathrm{int}(A)}\phi_{\mathcal{A}_\Gamma}\prod m_v,
    \end{equation}
    where $\alpha_1$ is the number of vertices in the initial component of $\Gamma_A\backslash\mathfrak{E}$, $\alpha_3$ the number of vertices in the final component and $s_i$ the number of vertices in the inner components. Furthermore, $\mathrm{int}(A)$ denotes the integer points in the $F$--chamber $A$.

    An inclusion--exclusion process then simplifies this formula to an expression in terms of a small subset of cuts. More precisely, we call a cut $T$ a \emph{thin cut} if all edges in $T$ are either adjacent to the initial or the final component. Given a thin cut $T$, the set of all cuts containing it is denoted by $P(T)$. Each cut contains a unique thin cut. One then obtains (non--trivially) for a given thin cut $T$ that
    \begin{equation}
    \label{equ-inclexcl}
        (-1)^{\alpha_3}\binom{s}{\alpha_1,\alpha_2,\alpha_3}=\sum_{\mathfrak{E}\in P(T)} (-1)^{r(\mathfrak{E})-1}\binom{s}{\alpha_1, s_1,\dots ,s_{r(\mathfrak{E})-1},\alpha_3}.
    \end{equation}

    Finally, we observe that a thin cut cuts a graph $\Gamma$ into three components $\Gamma_1,\Gamma_2,\Gamma_3$ with $\alpha_i$ many inner vertices. Denoting $\Gamma_1$ as a $(\mathbf{x}_I,-\mathbf{y})$--graph, $\Gamma_2$ as a $(\mathbf{y},\mathbf{z})$--graph and $\Gamma_3$ as a $(\mathbf{z},\mathbf{x}_{I^c})$--graph, we obtain
    \begin{equation}
    \label{equ-cutgraphs}
        \phi_{\mathcal{A}_\Gamma}=\binom{s}{\alpha_1,\alpha_2,\alpha_3}\frac{\prod y_i}{\ell(\mathbf{y})!}\frac{\prod z_i}{\ell(\mathbf{z})!}\phi_{\mathcal{A}_{\Gamma_1}}\phi_{\mathcal{A}_{\Gamma_2}}\phi_{\mathcal{A}_{\Gamma_3}}.
    \end{equation}

    Plugging \cref{equ-inclexcl,equ-cutgraphs} into \cref{equ-wallcrossstep} proves the theorem.
\end{proof}

\section{Closed expressions}
\label{ClosedFormulaeSec}
In this section, we study the two initial cases $(g,n)=(0,1),(0,2)$ of completed cycles leaky Hurwitz numbers. In particular for $\underline{r}=(1^s)$, we do recover leaky Hurwitz numbers in the sense of \cite{CMR25}. Our main goal is to derive closed expressions in these cases.

We note that most computations in this section are superseded by the more general TR machinery as carried out in \cref{ExplicitSection,sec:TR}. However, we believe this section is still of independent interest as it shows how the recursive structure of tropical covers allows a hands-on derivation of closed expressions via generatingfunctionology. It was also our preferred method for exploring the theory; the graphical interface of tropical graphs is especially suited for this.

\subsection{The case \texorpdfstring{$(0,1)$}{(0,1)}}

We begin with a few basic properties on the numbers $\lh^{k,r,q}_{0;\mu}$. First, we note that a simple calculation shows that

\begin{equation}
    2
    =
    \frac{|\mu|}{q}+\ell(\mu)-(r - 1+\frac{k}{q})b.
\end{equation}

In particular, when $\ell(\mu)=1$, i.e. $\mu=(m)$,

\begin{equation}
\label{equ-gen0branch}
    m
    =
    (q(r-1)+k)b+q.
\end{equation}

We obtain the following recursion for the one--part leaky orbifold Hurwitz numbers.

\begin{proposition}\label{01Recursion}
	The $ (k,r,q)$--completed cycles leaky orbifold Hurwitz numbers satisfy the following recursion:
	\begin{equation}
		\lh^{k,r,q}_{0;(m)} 
		=\frac{1}{r}
		\sum_{\substack{b_1, \dotsc, b_{r} \\ \sum b_j = b - 1}} \binom{b - 1}{b_1, \dotsc, b_{r}} \prod_{j=1}^r m_j \lh^{k,r,q}_{0;(m_j)} ,
	\end{equation}
    where $b$ and $m$, as well as $b_j$ and $m_j$ are related via \cref{equ-gen0branch}. The initial condition is $ \lh^{k,r,q}_{0,(q)} = \frac{1}{q}$.
\end{proposition}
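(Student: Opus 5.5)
We argue via the tropical correspondence, \cref{thm-corr}, applied to $\lh^{k,r,q}_{0;(m)}$. The normalisation is that of \cref{def-leaky}, with $s=b$ copies of $\mathcal{F}^k_r/r$; we divide by the $b!$ implicit in $e^W$, so the vertices are not ordered labelled vertices but come with a factor $1/b!$ absorbed into the orbifold normalisation. In genus $0$ every vertex has genus $0$, so the vertex multiplicity $m_{v}=(r-1)!\,m_0=(r-1)!$ and each vertex is $(r+1)$-valent. The source graph is a tree with a single in-end of weight $m$ at $-\infty$ and $b$ out-ends of weight $q$ at $+\infty$. Since one end comes in from the left, the leftmost vertex $v_1$ must have exactly one incoming edge, namely the in-end of weight $m$, and $r$ outgoing edges. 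The set $S\mathbb{Z}^{r+1;k}$ forces $l\ge1$ negative entries; in a tree with a single source this means precisely one.

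Cutting the $r$ outgoing edges at $v_1$ splits the tree into $r$ subtrees $\Gamma_1,\dots,\Gamma_r$. Each $\Gamma_j$ is again a genus-$0$ leaky cover with one in-end of some weight $m_j$ and only $q$-out-ends. Leaky balancing is local, so each $\Gamma_j$ contributes to $\lh^{k,r,q}_{0;(m_j)}$ with $b_j$ inner vertices, and $\sum_j b_j=b-1$. The relation between $m_j$ and $b_j$ is \cref{equ-gen0branch} applied to the subtree. This is consistent at $v_1$: $m-\sum m_j=k$ holds because $\sum_j\bigl((q(r-1)+k)b_j+q\bigr)=(q(r-1)+k)(b-1)+rq=m-k$.

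Multiplicities then factor. The vertex $v_1$ contributes $(r-1)!$ together with the factor $1/r$ from $\mathcal{F}^k_r/r$, giving $(r-1)!/r$. Each cut edge of weight $m_j$ contributes $\omega(e)=m_j$, which accounts for the factor $m_j$. The orderings of the remaining $b-1$ vertices, taken compatibly within each subtree and interleaved, give the multinomial $\binom{b-1}{b_1,\dots,b_r}$ relative to the internal orderings of the $\Gamma_j$. Finally, turning the unordered multiset of outgoing edges at $v_1$ into the ordered tuple $(b_1,\dots,b_r)$ produces a factor $r!$ divided by the automorphisms of equal subtrees. Together with $(r-1)!/r$ and the $1/|\mathrm{Aut}|$ of \cref{thm-corr}, this must collapse to the single overall $1/r$ in the statement.

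The base case is $b=0$, which gives $m=q$: the cover is a single edge, and the normalisation $1/(\prod\mu_i\prod\nu_j)$ gives $\lh=1/q$.

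The main obstacle is the bookkeeping of symmetry factors. We must check that $\mathrm{Aut}(\mathbf{x}^\pm)$, the graph automorphisms, the $1/b!$ (or ordering conventions) and the $1/q^{b}$ orbifold normalisations recombine to exactly $\frac1r\binom{b-1}{b_1,\dots,b_r}$. To do this we would first treat the case where all subtrees are distinct, then use the standard orbit-counting argument that ordered $r$-tuples of subtrees correspond to unordered ones weighted by $1/|\mathrm{Aut}|$.
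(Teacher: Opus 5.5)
Your strategy is the paper's: cut off the leftmost vertex, obtain $r$ genus-$0$ subcovers with $\sum_j b_j=b-1$, and record the interleaving of the remaining vertices by $\binom{b-1}{b_1,\dots,b_r}$. The only non-formal content of the proposition is the prefactor, and your bookkeeping there is wrong, and then left unchecked.

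First, the vertex multiplicity $m_v=(r-1)!$ of \cref{thm-corr} equals $r!/r$. It already contains the $1/r$ of $\mathcal{F}^k_r/r$, so multiplying by a further $1/r$ double-counts. Your factors give $\frac{(r-1)!}{r}\cdot\frac{1}{r!}=\frac{1}{r^2}$, so the assertion that they ``must collapse'' to $\frac1r$ is false as written. The paper's count is $(r-1)!/r!=\frac1r$.

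Second, the normalisation is misidentified. A tree whose $b$ vertices each have one incoming and $r$ outgoing edges has $n=(r-1)b+1$ out-ends, not $b$. The recursion with $\lh^{k,r,q}_{0,(q)}=\frac1q$ holds for
\[
\lh^{k,r,q}_{0;(m)}=\frac{1}{m\,q^{n}\,n!}\Big\langle J_m\big(\tfrac{1}{r}\mathcal{F}^k_r\big)^{b}J_{-q}^{n}\Big\rangle .
\]
That is, the vertices are ordered, and there is a $1/n!$ for the $n$ indistinguishable $q$-ends. The $1/b!$ coming from $e^W$ is put into $W_{0,1}$, not into $\lh$; the multinomial only makes sense for ordered vertices. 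Your normalisation (that of \cref{def-leaky}, with or without an extra $1/b!$) already fails at $b=1$. It gives $(r-1)!$ where the recursion demands $\frac1r$, for every $r\ge 2$.

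The orbit-counting you postpone is unnecessary if you label the out-ends. Then $\frac{1}{m q^{n}}\langle\cdots\rangle$ is a sum of $\prod_v (r-1)!\prod_{e\in E^0}\omega(e)$ over covers with ordered vertices and labelled ends, all with trivial automorphism group. Every subcover $\Gamma_j$ contains at least one labelled out-end, so the $r$ subcovers are pairwise distinct. Each cover therefore arises from exactly $r!$ ordered tuples $(\Gamma_1,\dots,\Gamma_r)$, together with:
\begin{itemize}
\item an interleaving, counted by $\binom{b-1}{b_1,\dots,b_r}$;
\item a distribution of the labels, counted by $\binom{n}{n_1,\dots,n_r}$, with $n_j=(r-1)b_j+1$ and $\sum_j n_j=n$.
\end{itemize}
The cut edges contribute $\prod_j m_j$. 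When $b_j=0$, the factor $m_j\lh^{k,r,q}_{0,(q)}=1$ matches the fact that ends carry no weight. Dividing by $n!$ turns $\binom{n}{n_1,\dots,n_r}$ into $\prod_j 1/n_j!$, and $(r-1)!/r!=\frac1r$. This is exactly the stated recursion.
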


\begin{proof}
	The proof is a standard tropical argument. Recall the expression of $\lh^{k,r,q}_{0;(m)}$ in terms of tropical covers.  Given a cover contributing to the left side, we cut off its left-most vertex to obtain $r$ covers, with a total number of vertices one less, which is reflected in the sum. Since we are in genus $0$, the multiplicity of the removed vertex is $(r-1)!$. However, we are overcounting by $r!$ due to the ordering of the $b_i$. Thus, we obtain a prefactor of $\frac{1}{r}$. The multinomial is required to record the relative position of all the vertices along the line.
\end{proof}

Next, we define the generating function

\begin{equation}
    W_{0,1}(X)
    =
    \sum_{m=0}^\infty\frac{\lh^{k,r,q}_{0;(m)}}{b!}X^m,
\end{equation}

where we note that by \cref{equ-gen0branch}, $b$ and $m$ determine each other. Moreover, we define

\begin{equation}
    Y(X)=\frac{dW_{0,1}(X)}{dX}
    =
    \sum_{m=0}^\infty m\frac{\lh^{k,r,q}_{0;(m)}}{b!}X^{m-1}.
\end{equation}

We obtain the following proposition.

\begin{proposition}
\label{prop-01spectral}
	The formal power series $Y(X)$ satisfies the algebraic relation
	\begin{equation}\label{SpectralCurve}
		X^{(q-1) \kappa}
        =
        Y^\kappa \big( 1 - \frac{k}{r} Y^{r-1} X^{r-1+k} \big)^{\rho + \kappa} ,
	\end{equation}
	with $ \kappa $ and $ \rho$ given by $ g = \gcd (qr, k) $ and $ \rho = \frac{qr}{g}$, $ \kappa = \frac{k}{g}$.\par
	The differentials $ dY$ and $ dX$ are related by
	\begin{equation}\label{DiffEqFory}
		(1 - (q(r-1)+k)Y^{r-1} X^{r-1+k} ) \frac{dY}{Y}
        =
        \Big( (q-1) + \frac{(q(r-1)+k)(r+k)}{r} Y^{r-1} X^{r-1+k} \Big) \frac{dX}{X}.
	\end{equation}
\end{proposition}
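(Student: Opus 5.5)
The plan is to turn the recursion of \cref{01Recursion} into a functional equation for the generating series, and then reduce that to an ordinary differential equation in one auxiliary variable which can be integrated explicitly.

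\textbf{Step 1: functional equation.} Write $c \coloneqq q(r-1)+k$ and $a_m \coloneqq \lh^{k,r,q}_{0;(m)}/b!$, so that $m = cb + q$ by \cref{equ-gen0branch}. Dividing the recursion of \cref{01Recursion} by $(b-1)!$ gives
\[
b\, a_m = \frac{1}{r}\sum_{\sum b_j = b-1} \prod_{j=1}^r m_j a_{m_j}.
\]
Using $m_j = cb_j + q$, one checks that $\sum_j m_j = c(b-1) + rq = m - k$. Hence, with $Z \coloneqq XY = \sum_m m\, a_m X^m$, the right-hand side is the coefficient of $X^{m-k}$ in $Z^r/r$. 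Since $b = (m-q)/c$, multiplying by $cX^m$ and summing over $m$ should give
\[
Z - qW_{0,1} = \frac{c}{r} X^k Z^r .
\]
The term $b=0$ contributes nothing on either side, because $m-q=0$ there; this is consistent with the initial condition $a_q = 1/q$.

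\textbf{Step 2: the differential relation \cref{DiffEqFory}.} Apply $X\frac{d}{dX}$ to the functional equation and set $\theta = \frac{X\,dZ}{Z\,dX}$ and $u \coloneqq X^kZ^{r-1} = Y^{r-1}X^{r-1+k}$. Dividing by $Z$ gives $\theta - q = \frac{c}{r}u(k + r\theta)$, that is,
\[
\theta(1-cu) = q + \frac{ck}{r}u .
\]
Since $dY/Y = dZ/Z - dX/X$, subtracting $1-cu$ from both sides yields
\[
(1-cu)\frac{dY}{Y} = \Big((q-1) + cu + \frac{ck}{r}u\Big)\frac{dX}{X} = \Big((q-1) + \frac{c(r+k)}{r}u\Big)\frac{dX}{X},
\]
which is exactly \cref{DiffEqFory}.

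\textbf{Step 3: integrating to the algebraic relation \cref{SpectralCurve}.} Substitute $d\log u = (r-1)\,d\log Y + (r-1+k)\,d\log X$ into \cref{DiffEqFory}. This expresses $d\log X$ and $d\log Y$ each as a rational function of $u$ times $du/u$. The plan is to decompose these by partial fractions and integrate, producing $\log X$ and $\log Y$ as combinations of $\log u$ and $\log(1-\lambda u)$ for one specific constant $\lambda$.

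Eliminating $u$ then gives a relation of the form $X^{(q-1)\kappa} = C\,Y^\kappa(1-\lambda u)^{e}$, with $\kappa,\rho$ arising from clearing the denominators $qr$ and $k$ via $g=\gcd(qr,k)$. The integration constant $C=1$ is fixed by the leading behaviour $Y = X^{q-1}(1+O(X))$, which comes from $a_q = 1/q$.

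The main obstacle is this last step: getting the partial fraction bookkeeping right so that the pole location and exponent come out as $\lambda = k/r$ and $e = \rho+\kappa$, as claimed. The factor $1-cu$ in \cref{DiffEqFory} must cancel after the substitution for $d\log u$; if it did not, the pole would sit at $u = 1/c$ instead. As an independent check, I would differentiate the claimed \cref{SpectralCurve} logarithmically and verify directly that it reproduces \cref{DiffEqFory}. Together with uniqueness of the power series solution with $Y = X^{q-1}+\dotsb$, this also gives an alternative complete proof.
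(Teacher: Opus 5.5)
Your proposal is correct. Steps 1 and 2 are essentially the paper's argument. The paper writes down directly the $X$-derivative of your functional equation $Z - qW_{0,1} = \frac{c}{r}X^kZ^r$, with $c = q(r-1)+k$. It obtains this by reading multiplication by $m$ and by $b$ as derivative operators, and then rearranges the result into \eqref{DiffEqFory}.

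The difference is in Step 3. The paper never integrates. It defines $\bar Y$ as the power-series solution of $\bar Y = X^{q-1}\bigl(1-\frac{k}{r}\bar Y^{r-1}X^{r-1+k}\bigr)^{-\frac{qr}{k}-1}$. It checks by logarithmic differentiation that $\bar Y$ satisfies \eqref{DiffEqFory}, and concludes $Y=\bar Y$ by uniqueness given the leading term $X^{q-1}$. This is exactly your fallback check.

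Your direct integration also works, and the bookkeeping you flag as the obstacle is short. Insert \eqref{DiffEqFory} into $d\log u = (r-1)\,d\log Y + (r-1+k)\,d\log X$, and use $(r-1)(q-1)+(r-1+k) = c$ and $\frac{(r-1)(r+k)}{r}-(r-1+k) = -\frac{k}{r}$. This gives
\[
\frac{du}{u} = \frac{c\,(1-\frac{k}{r}u)}{1-cu}\,\frac{dX}{X},
\qquad\text{hence}\qquad
(q-1)\frac{dX}{X} - \frac{dY}{Y} = -\frac{(qr+k)\,c\,u}{r(1-cu)}\,\frac{dX}{X} = \frac{qr+k}{k}\, d\log\bigl(1-\tfrac{k}{r}u\bigr).
\]
So the factor $1-cu$ does drop out, and $\lambda = k/r$. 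Multiplying by $\kappa$ turns the exponent $\frac{qr+k}{k}$ into $\rho+\kappa$. The integration constant is $1$: the series $X^{(q-1)\kappa}Y^{-\kappa}(1-\frac{k}{r}u)^{-\rho-\kappa}$ has constant term $1$, since $Y = X^{q-1}(1+O(X^c))$ and $u = O(X^c)$, and its logarithmic derivative vanishes.

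The trade-off is this. The paper's ansatz-and-uniqueness route is shorter, but needs the answer in advance. Yours derives the curve, and gives more along the way. Integrating the $dX/X$ equation alone yields $X^c = u\,(1-\frac{k}{r}u)^{cr/k-1}$. With $z^c = u/(1-\frac{k}{r}u)$, this is exactly the rational parametrisation \eqref{equ-xviaz}, which the paper only introduces later. It also shows that, for $r>1$, the apparent singularity at $u=1/c$ is a zero of $dX$ (a ramification point) rather than a singularity of $Y$.
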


\begin{remark}
    If we take the $ \kappa$-th root of \cref{SpectralCurve},
    \begin{equation}
		X^{(q-1)}
        =
        Y \big( 1 - \frac{k}{r} (XY)^{r-1} X^{k} \big)^{\frac{qr}{k} + 1} ,
	\end{equation}
    we can take the $ k \to 0$ limit, to find
    \begin{align}
		X^{(q-1)} 
        &= 
        Y \Big( \exp \big( - \frac{1}{r} (XY)^{r-1} \big) \Big)^{qr }
        \\
        X^q 
        &= 
        XY e^{ - q (XY)^{r-1} }
	\end{align}
    Changing to $ x = \log X$ and $ y = XY$, we then find
    \begin{equation}
        x
        =
        \frac{1}{q} \log y - y^{r-1},
    \end{equation}
    which is the spectral curve for (non-leaky) $ r$-completed cycles $q$-orbifold Hurwitz numbers \cite{DKPS23}. Note that we use a different convention for $r$ compared to \cite{DKPS23}: our $ r-1$ is their $ r$.
\end{remark}

\begin{proof}
	Our starting point is \cref{01Recursion}.
	\begin{equation}
		\begin{split}
			Y(X) 
		      &
		      =
		      \sum_{b=0}^\infty \frac{m}{b!} \lh^{k,r,q}_{0;(m)} X^{m-1}
		      \\
		      &
		      =
    		\frac{1}{r} \sum_{b=0}^\infty \frac{m}{b!} \sum_{\substack{b_1, \dotsc, b_{r} \\ \sum b_j = b - 1}} \binom{b - 1}{b_1, \dotsc, b_{r}} \prod_{j=1}^{r} m_j \lh^{k,r,q}_{0;(m_j)} X^{m-1} + X^{q-1}
		      \\
		      &
		      =
		      \frac{1}{r} \sum_{b=0}^\infty \frac{m}{b} \sum_{\substack{b_1, \dotsc, b_{r} \\ \sum b_j = b - 1}}  \prod_{j=1}^{r} \Big( \frac{m_j}{b_j!} \lh^{k,r,q}_{0;(m_j)} X^{m_j-1} \Big) X^{r+k} + X^{q-1} .
		\end{split}
	\end{equation}
	Since $ m = (q(r-1)+k)b+q$, we can view both the multiplication by $m $ and $b$ as derivatives acting on the generating series:
	\begin{equation}
		X^{q-1} \frac{1}{q(r-1)+k} X \frac{d}{dX} X^{1-q} (Y - X^{q-1})  = \frac{1}{r} \frac{d}{dX} \Big( Y^{r} X^{r+k}\Big).
	\end{equation}
	This turns into
	\begin{equation}
		Y' - (q-1) \frac{Y}{X}
		=
		\frac{q(r-1)+k}{r} Y^{r} X^{r-1+k} \Big(r Y' + (r+k) \frac{Y}{X} \Big)
	\end{equation}
	which after reordering gives \cref{DiffEqFory}.\par
	Define for now a formal power series $\bar{Y}$ in $ X$ by
	\begin{equation}\label{FormalSC}
		\bar{Y} = X^{q-1}\big( 1 - \frac{k}{r} \bar{Y}^{r-1} X^{r-1+k} \big)^{- \frac{qr}{k} - 1}  .
	\end{equation}
	This formula uniquely defines $\bar{Y}$ as a power series, because it encodes a recursion for its coefficients. Since $\bar{Y}$ clearly satisfies \eqref{SpectralCurve}, we want to prove that $ y = \bar{Y}$.\par
	We now take the logarithmic derivative of \eqref{FormalSC}, which must also hold.
	\begin{equation}\label{DerivativeSC}
		\frac{\bar{Y}'}{\bar{Y}} 
        =
        (q-1) \frac{1}{X} + \frac{qr+k}{r+1} \frac{\bar{Y}^{r-1} X^{r-1+k} }{ 1 - \frac{k}{r} \bar{Y}^{r-1} X^{r-1+k} } \Big( (r-1) \frac{\bar{Y}'}{\bar{Y}} + (r-1+k) \frac{1}{X} \Big).
	\end{equation}
	Reordering and simplifying a bit gives
	\begin{align}
		\Big( 1 - \frac{(qr+k)(r-1)}{r} \frac{\bar{Y}^{r-1} X^{r-1+k} }{ 1 - \frac{k}{r} \bar{Y}^{r-1} X^{r-1+k} } \Big) \frac{\bar{Y}'}{\bar{Y}} 
		&=
		\Big( (q-1) + \frac{(qr+k)(r-1+k)}{r} \frac{\bar{Y}^{r-1} X^{r-1+k} }{ 1 - \frac{k}{r} \bar{Y}^{r-1} X^{r-1+k} } \Big) \frac{1}{X}
		\\
		\frac{ 1 -(q(r-1) +k ) \bar{Y}^{r-1} X^{r-1+k} }{ 1 - \frac{k}{r} \bar{Y}^{r-1} X^{r-1+k} } \frac{\bar{Y}'}{\bar{Y}} 
		&=
		\frac{ q-1 + \frac{qr^2+rk + qrk + k^2 -qr - qk}{r} \bar{Y}^{r-1} X^{r-1+k} }{ 1 - \frac{k}{r} \bar{Y}^{r-1} X^{r-1+k} } \frac{1}{X}
	\end{align}
	which up to multiplying by $ \big( 1 - \frac{k}{r} \bar{Y}^{r-1} X^{r-1+k} \big) $ is the same as \eqref{DiffEqFory}.\par
	So we have two power series which both satisfy the same first-order differential equation, which is equivalent to the recursion for the coefficients of \cref{01Recursion}. Since both have as first coefficient $ 1$, in front of $X^{q-1}$, they must be equal.
\end{proof}

From \cref{prop-01spectral}, we may now derive a closed formula for $\lh^{k,r,q}_{0;(m)}$. For $r=2$, it specialises to the one found in \cite[Theorem 1]{CMS25}.

\begin{theorem}
    Let $r,k,m,q$ be positive integers and define $A=\frac{k}{r}$. Then, $ \lh^{k,r,q}_{0;(m)} $ is non-zero if and only if $b = \frac{m-q}{q(r-1)+k}$ is an integer, and in that case
    \begin{equation}
        \lh^{k,r,q}_{0;(m)}
        =
        \frac{b!}{m(b(r-1)+1)}\binom{\frac{m}{A}}{b}A^b.
    \end{equation}
\end{theorem}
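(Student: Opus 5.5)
The plan is to extract coefficients from the functional equation \eqref{FormalSC} of \cref{prop-01spectral} by Lagrange inversion. The vanishing statement is immediate: by \cref{equ-gen0branch} a tropical cover contributing to $\lh^{k,r,q}_{0;(m)}$ has $b=\frac{m-q}{q(r-1)+k}$ vertices, so the number vanishes unless this is a non-negative integer. Conversely, the formula below is non-zero whenever $b$ is an integer. It remains to compute, for integer $b\ge 0$, the coefficient
\begin{equation}
    [X^{m-1}]\,Y = \frac{m\,\lh^{k,r,q}_{0;(m)}}{b!}.
\end{equation}

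With $A=\frac{k}{r}$ we have $\frac{qr}{k}=\frac{q}{A}$, so \eqref{FormalSC} reads $Y=X^{q-1}(1-u)^{-q/A-1}$, where we introduce the auxiliary series $u \coloneqq A\,Y^{r-1}X^{r-1+k}$. Substituting the expression for $Y$ into the definition of $u$ gives a closed equation for $u$ alone:
\begin{equation}
    u = T\,\phi(u),\qquad T\coloneqq A\,X^{q(r-1)+k},\qquad \phi(u)=(1-u)^{-c},\quad c=(r-1)\Big(\frac{q}{A}+1\Big).
\end{equation}
This is exactly the Lagrange inversion setup, with $X^{1-q}Y=H(u)\coloneqq(1-u)^{-q/A-1}$. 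Since $X^{m-1}=X^{q-1}\,(X^{q(r-1)+k})^b$, we get $[X^{m-1}]Y=A^b\,[T^b]H(u)$.

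Lagrange--Bürmann gives, for $b\ge1$,
\begin{equation}
    [T^b]H(u)=\frac1b[u^{b-1}]H'(u)\phi(u)^b=\frac{1}{b}\Big(\frac qA+1\Big)[u^{b-1}](1-u)^{-(q/A+2+cb)}=\frac1b\Big(\frac qA+1\Big)\binom{q/A+2+cb+b-2}{b-1}.
\end{equation}
The top entry of the binomial simplifies, using $m=q+bq(r-1)+bk$ and $k/A=r$, to
\begin{equation}
    \frac qA+b(r-1)\frac qA+b(r-1)+b=\frac qA+b(r-1)\frac qA+br=\frac mA.
\end{equation}
Hence
\begin{equation}
    \lh^{k,r,q}_{0;(m)}=\frac{b!\,A^b}{mb}\Big(\frac qA+1\Big)\binom{m/A}{b-1}.
\end{equation}

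To reach the stated form, I would use $\binom{m/A}{b-1}=\frac{b}{m/A-b+1}\binom{m/A}{b}$, which reduces everything to the identity $\frac{q+A}{m-Ab+A}=\frac{1}{b(r-1)+1}$. This identity follows by expanding $(q+A)(b(r-1)+1)=q+A+bq(r-1)+bk-Ab=m-Ab+A$. The case $b=0$ is the initial condition $\lh=\frac1q$ and matches the formula directly.

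The main obstacle is the choice of the auxiliary variable $u$, and in particular checking that the exponents combine into a single Lagrange equation in $T$. It is also worth noting that $m/A$ need not be an integer, so the binomials must be read as generalized binomial coefficients. The remaining steps are routine algebra.
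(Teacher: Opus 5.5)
Your proof is correct and is essentially the paper's argument: both apply Lagrange--B\"urmann inversion to the algebraic relation of \cref{prop-01spectral}. Your variables $u=A\,Y^{r-1}X^{r-1+k}$ and $T=A\,X^{q(r-1)+k}$ are just $A$ times the $(r-1)$-st powers of the paper's $W=YX^{\frac{r-1+k}{r-1}}$ and $X^{\frac{q(r-1)+k}{r-1}}$. Working with these powers, together with a nontrivial $H$ in Lagrange--B\"urmann, avoids fractional exponents of $X$, so your version also covers $r=1$, where the paper's substitution is undefined; the only cost is the final binomial identity turning $\binom{m/A}{b-1}$ into $\binom{m/A}{b}$.
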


\begin{proof}
    We take the $g$-th power of \cref{SpectralCurve} to obtain

    \begin{equation}
        X^{(q-1)k}
        =
        Y^k(1-\frac{k}{r}Y^{r-1}X^{r-1+k})^{qr+k}.
    \end{equation}

   We fix an $(r-1)$-st root of $ X$ and define a new variable $W=YX^{\frac{r-1+k}{r-1}}$. Taking $k$--th roots and rearranging, our spectral curve turns into

   \begin{equation}
       X^{\frac{q(r-1)+k}{r-1}}
       =
       W(1-\frac{k}{r} W^{r-1})^{\frac{qr+k}{k}}.
   \end{equation}

    We use the Lagrange--Bürmann formula to compute $W(X^{\frac{q(r-1)+k}{r-1}})$. We obtain
    
    \begin{equation}
        [(X^{\frac{q(r-1)+k}{r-1}})^n]W
        =
        \frac{1}{n}[W^{n-1}]\left(1-\frac{k}{r}W^{r-1} \right)^{-n\left(\frac{qr+k}{k}\right)}
    \end{equation}

    Shiftung $n$ by $1$ we obtain
    \begin{align}
        \begin{split}
            [(X^{\frac{q(r-1)+k}{r-1}})^{n+1}]W
            &=
            \frac{1}{n+1}[W^{n}]\left(1-\frac{k}{r} W^{r-1} \right)^{-(n+1)\left(\frac{qr+k}{k}\right)}
            \\
            &=
            \frac{1}{n+1}\binom{-(n+1)\left(\frac{qr+k}{k}\right)}{\frac{n}{r-1}}\Big(-\frac{k}{r}\Big)^{\frac{n}{r-1}}
        \end{split}
    \end{align}
    which may be rearranged, writing $ b = \frac{n}{r-1}$, to
    
    \begin{equation}
        [(X^{q(r-1)+k})^{b+\frac{1}{r-1}}]W
        =
        \frac{1}{b(r-1)+1}\binom{(b(r-1)+1)\left(\frac{qr+k}{k}\right)+b-1}{b} \Big(\frac{k}{r}\Big)^b.
    \end{equation}

    We recall that $Y = W X^{-\frac{r-1+k}{r-1}}$ and see

    \begin{equation}
        Y
        =
        W X^{-\frac{r-1+k}{r-1}}
        =
        X^{-\frac{r-1+k}{r-1}}\sum_{b\ge0}\frac{1}{b(r-1)+1}\binom{(b(r-1)+1)\left(\frac{qr+k}{k}\right)+b-1}{b}\Big(\frac{k}{r}\Big)^b X^{(q(r-1)+k)(\frac{b(r-1)+1}{r-1})}
    \end{equation}

    Computing
    \begin{equation}
        (q(r-1)+k)(\frac{b(r-1)+1}{r-1})-\frac{r-1+k}{r-1}
        =
        q(b(r-1)+1) + bk + \frac{k}{r-1} - \frac{k}{r-1} - 1
        =
        b (q(r-1)+k) + q - 1,
    \end{equation}
    we see that this is an integer, and we finally have
    \begin{equation}
        Y
        =
        \sum_{b\ge0} \frac{1}{b(r-1)+1} \binom{(b(r-1)+1)\left(\frac{qr+k}{k}\right)+b-1}{b} \Big(\frac{k}{r}\Big)^b X^{b(q(r-1)+k)+q-1}.
    \end{equation}
    We note that $(b(r-1)+1)\left(\frac{qr+k}{k}\right)+b-1=\frac{m}{A}$.
    Recalling the normalisation of $Y(X)$, we thus have for $m=b(q(r-1)+k)+q$ that the coefficient of $X^{m-1}$ of $Y$ has an extra factor of $\frac{m}{b!}$. Taking this into account, we obtain the desired expression for $\lh^{k,r,q}_{0;(m)}$.
\end{proof}

\subsection{The case \texorpdfstring{$(0,2)$}{(0,2)}}
In same flavour as in the previous section, we now derive a closed expression in case $(0,2)$. The idea is again to turn a recursion for $\lh_{0,(m_1,m_2)}^{k,r,q}$ arising from the involved tropical covers into an expression in terms of generating functions. We note that as above $m_1+m_2=m$ is determined by $b$ via the Riemann Hurwitz formula, we have 
\begin{equation}\label{RH02}
    \begin{split}
    	2
    	&=
    	\frac{m_1+m_2}{q}+2-(r-1+\frac{k}{q})b
    	\\
    	b
    	&=
    	\frac{m_1+m_2}{q(r-1)+k}
    \end{split}
\end{equation}

We have the following recursion.

\begin{proposition}
\label{prop-rec02}
  For $\ell(\mu)=2$, i.e. $\mu=(m_1,m_2)$ and $g=0$, the $(k,r,q)$-numbers satisfy the following recursion:
	\begin{equation} \label{Rec02}
    \begin{split}
       \lh_{0,(m_1,m_2)}^{k,r,q}
       =&
       \sum_{\sum_{j=1}^{r} \alpha_j=m_1-k} \binom{b-1}{b_1,\dots,b_{r}} \alpha_1 \lh_{0,(m_2,\alpha_1)}^{k,r,q} \prod_{j=2}^{r} \alpha_j \lh_{0,(\alpha_j)}^{k,r,q}\\
       &+
       \sum_{\sum_{j=1}^{r} \alpha_j=m_2-k} \binom{b-1}{b_1,\dots,b_{r}} \alpha_1 \lh_{0,(m_1,\alpha_1)}^{k,r,q} \prod_{j=2}^{r} \alpha_j \lh_{0,(\alpha_j)}^{k,r,q}\\
       &+
       \sum_{\sum_{j=1}^{r-1} \alpha_j = m_1 + m_2 - k} \binom{b-1}{b_1, \dotsc, b_{r-1}} \prod_{j = 1}^{r-1} \alpha_j \lh_{0,(\alpha_j)}^{k,r,q}\,.
    \end{split}
    \end{equation}
    Here, any $ b $ associated to a $ \lh_{0,(m_1,m_2)}$ is given by \cref{RH02}, while $ b $ associated to $ \lh_{0,(m)}$ are given by \cref{equ-gen0branch}\,.
\end{proposition}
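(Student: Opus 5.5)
We use the same cut-the-leftmost-vertex argument as in the proof of \cref{01Recursion}, now applied to the tropical covers of \cref{thm-corr} that compute $\lh^{k,r,q}_{0,(m_1,m_2)}$. In genus $0$ every inner vertex has genus $0$, so by condition (3) of \cref{thm-corr} each inner vertex has valency $r+1$. By \cref{prop-expansion} with $g=0$ the operator $\mathcal{F}^k_r$ only produces vertices with at least one incoming edge. We take the orientation convention in which the two ends of weights $m_1,m_2$ are the in-ends on the left and the orbifold ends of weight $q$ leave on the right. A vertex may absorb a whole group of in-edges at once, so the leftmost vertex $v_1$ has either one or two in-ends attached, and this gives the case split.

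\emph{Case 1: $v_1$ carries exactly one in-end, say $m_1$.} Then $v_1$ has $r$ outgoing edges with weights $\alpha_1,\dots,\alpha_r$ and $\sum\alpha_j=m_1-k$ by leakiness. Cutting at $v_1$ removes it together with the $m_1$-end. The remaining graph is a tree, since we are in genus $0$. Exactly one of its components contains the $m_2$-end, and it must be attached to one of the $\alpha_j$, say $\alpha_1$; connectedness forbids it from hanging elsewhere. That component is a genus-$0$ two-part cover of type $(m_2,\alpha_1)$, and the remaining $r-1$ components are one-part covers of type $(\alpha_j)$.
\begin{itemize}
    \item Each cut edge contributes $\omega(e)=\alpha_j$, which accounts for the factors $\alpha_j$ in \eqref{Rec02}.
    \item The vertex multiplicity is $(r-1)!\,m_0=(r-1)!$. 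It cancels against the $1/r$ in the definition \eqref{def-leaky} and against the overcounting from labelling the $r$ branches. Here one checks that, with the distinguished branch $\alpha_1$ singled out, the symmetry factor works out to exactly $1$ rather than the $1/r$ of \cref{01Recursion}.
    \item The multinomial $\binom{b-1}{b_1,\dots,b_r}$ records the interleaving of the remaining $b-1$ vertices along $\P^1_{\text{trop}}$. This is the same shuffle argument as in \cref{01Recursion}.
\end{itemize}
The case where $v_1$ carries $m_2$ gives the second line of \eqref{Rec02} by symmetry.

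\emph{Case 2: $v_1$ carries both in-ends.} Then $v_1$ has $r-1$ outgoing edges with $\sum\alpha_j=m_1+m_2-k$. Removing $v_1$ leaves $r-1$ one-part trees, one for each $\alpha_j$, and the same bookkeeping gives the third line.

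We then check that the $b_j$ are consistent with \cref{equ-gen0branch} and \cref{RH02}. This is an Euler characteristic count, which is immediate since the $b_j$ are just the numbers of inner vertices in each piece and they sum to $b-1$.

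The main obstacle is the normalisation constants. We must verify the exact cancellation of the factors $1/r$, $(r-1)!$, the $1/\prod\mu_i$ prefactors in \eqref{def-leaky} (hence the $\alpha_j$ rather than $\alpha_j^2$), and the automorphism factors, both in the distinguished-branch case and in the case where both ends enter $v_1$. The cleanest route is to run the cut directly at the level of Fock space. One commutes $J_{m_1}$ (respectively $J_{m_1}J_{m_2}$) past the first $\mathcal{F}^k_r/r$ using the genus-$0$ part of \cref{prop-expansion}, and reads off the coefficients of the resulting $\prod J_{-\alpha_j}$. Since the $\alpha_j$ range over ordered tuples, the $1/(r-1)!$ from the symmetric expansion combines with the prefactor $r!/r$ into the stated coefficients.
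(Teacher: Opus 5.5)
Your proposal is correct and follows essentially the same route as the paper. Both cut the leftmost vertex and split according to whether it meets $m_1$, $m_2$, or both; in the first two cases the factor $r$ for choosing which branch carries the other end cancels the $\frac1r$ of \cref{01Recursion}, and in the joining case the $(r-1)!$ orderings of the outgoing strands compensate the vertex multiplicity. One slip in your bookkeeping sentence: the vertex multiplicity $(r-1)!=r!/r$ of \cref{thm-corr} already contains the $\frac1r$ from \eqref{def-leaky}, so it should not be cancelled a second time, though the net factor $1$ you arrive at is right.
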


\begin{proof}
  The proof is again a standard tropical consideration. We cut the first vertex. For any tropical leaky cover contributing to $h_{0,(m_1,m_2)}$, we can only have vertices with one incoming edge from the left and $r$ outgoing edges to the right and exactly one vertex with two incoming edges from the left and $r-1$ outgoing edges to the right. This is due to the first Betti number of the source graph being $0$.\par
  The first term corresponds to tropical covers whose first vertex cuts the strand weighted $m_1$, the second term corresponds to tropical covers whose first vertex cuts the strand weighted $m_2$. We obtain a factor of $\frac{1}{r}$ from the extra vertex as in \cref{01Recursion}, but also a factor $ \binom{r}{1}$ for the choice which factor is $ h_{0,2}$, and these cancel.\par
  The last term encapsulates the case of the first vertex joining the strands weighted $m_1$ and $m_2$. In this case there are $r-1$ identical right-pointing strands, so $ (r-1)!$ orderings, which compensates the vertex multiplicity exactly.
\end{proof}

Our next goal is to package the numbers $\lh_{0,(m_1,m_2)}^{k,r,q}$ into a generating function and to translate the recursion of \cref{prop-rec02} into a functional equation that we can solve.

For this, it turns out to be convenient to introduce a variable $z$ via

\begin{equation}
	z \coloneq X \big( 1 - \frac{k}{r} Y^{r-1} X^{r-1+k} \big)^{-\frac{r}{k}} .
\end{equation}
We will see later, in \cref{rkSC}, that this is a projective coordinate on the spectral curve, and therefore a very natural variable. However, we would like to note that we first found this variable from the considerations in this section.

An important property is given by the following lemma.

\begin{lemma}\label{LogDerz}
	The logarithmic derivative of $z$ is given by
	\begin{equation}
		\frac{dz}{z} = \frac{dX}{X(1- (qr+k) X^k (XY)^{r})}
	\end{equation}
\end{lemma}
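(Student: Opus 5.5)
The plan is a direct computation: take the logarithmic derivative of the defining formula for $z$, then eliminate $dY$ using the differential relation \cref{DiffEqFory} from \cref{prop-01spectral}. Throughout, I abbreviate
\begin{equation}
    u \coloneqq Y^{r-1} X^{r-1+k}, \qquad A \coloneqq \frac{k}{r},
\end{equation}
so that $z = X(1-Au)^{-r/k}$ and hence
\begin{equation}
    \frac{dz}{z} = \frac{dX}{X} + \frac{r}{k}\,\frac{A\,du}{1-Au} = \frac{dX}{X} + \frac{u}{1-Au}\,\frac{du}{u}.
\end{equation}
Since $\frac{du}{u} = (r-1)\frac{dY}{Y} + (r-1+k)\frac{dX}{X}$, the whole question reduces to expressing $\frac{dY}{Y}$ in terms of $\frac{dX}{X}$.

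For the second step I substitute \cref{DiffEqFory}, which gives
\begin{equation}
    \frac{dY}{Y} = \frac{(q-1) + \frac{(q(r-1)+k)(r+k)}{r}\, u}{1-(q(r-1)+k)u}\,\frac{dX}{X}.
\end{equation}
Putting everything over the common denominator $(1-Au)\bigl(1-(q(r-1)+k)u\bigr)$, the numerator is a polynomial in $u$ of degree at most $2$. I then collect the coefficients of $u^0$, $u^1$, $u^2$. The goal is to show that this numerator equals $1-Au$ times a simple factor, so that the factor $1-Au$ cancels and only a single linear denominator in the monomial $X^aY^b$ remains. This mirrors the simplification already carried out in the proof of \cref{prop-01spectral}, and I would reuse those algebraic identities, for instance $qr^2+rk+qrk+k^2-qr-qk = (q(r-1)+k)(r+k)$.

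The final step, and the one I expect to be the main obstacle, is matching the resulting denominator with the stated one, $1-(qr+k)X^k(XY)^r$. A naive collection produces a denominator in the monomial $Y^{r-1}X^{r-1+k}$, whereas the lemma is phrased in terms of $X^{r+k}Y^r = (XY)\cdot u$. To bridge this I would use the algebraic spectral curve relation \cref{SpectralCurve}, equivalently \cref{FormalSC}, together with the first-order ODE it satisfies, to trade factors of $XY$ for powers of $(1-Au)$ and of $X^{q}$. I would also check carefully which normalisation of $Y$ is intended at this point: either $Y = dW_{0,1}/dX$, or the rescaled $XY$ that appears in the $k\to 0$ remark and in the constant $qr+k$ of \cref{DerivativeSC}.

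Concretely, I would first verify the identity to low order in $X$, using the explicit series for $Y$ from the closed formula (coefficients at $b=0,1$), to determine which form of the denominator is the correct reading. I would then carry out the symbolic cancellation in that form, so that the final identity is exactly $\frac{dz}{z} = \frac{dX}{X\,(1-(qr+k)X^k(XY)^r)}$.
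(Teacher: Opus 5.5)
Your first two steps are exactly the paper's argument: differentiate the definition of $z$ logarithmically, then eliminate $dY/Y$ using \eqref{DiffEqFory}. They already finish the computation. Put $B=q(r-1)+k$. The numerator over $(1-Au)(1-Bu)$ is not merely divisible by $1-Au$; it is equal to it. Indeed, the $u^1$ coefficient is $-(A+B)+(r-1)(q-1)+(r-1+k)=-A$, and the $u^2$ coefficient is $AB+\frac{B(r-1)(r+k)}{r}-B(r-1+k)=0$. This gives
\[
\frac{dz}{z}=\frac{dX}{X\bigl(1-(q(r-1)+k)\,X^{k}(XY)^{r-1}\bigr)} .
\]

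The gap is your third step. No manipulation with \eqref{SpectralCurve} can turn this into the stated denominator $1-(qr+k)X^k(XY)^r$, because the two are different series.
\begin{itemize}
\item Since $XY=X^q(1+\mathcal{O}(X))$, one starts $1-(q(r-1)+k)X^{q(r-1)+k}$ and the other $1-(qr+k)X^{qr+k}$.
\item Concretely, for $(k,r,q)=(1,2,1)$ in the parametrisation of \cref{ex-closedform}, the true factor is $1-2X^2Y=(1-\frac32 z^2)/(1+\frac12 z^2)$. The stated one is $1-3X^3Y^2=1-3z^3$.
\item Passing to $XY$, or any other natural normalisation of $Y$, does not help. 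Both the constant and the exponent are off by exactly the shift $r\mapsto r+1$.
\end{itemize}

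That shift is what is going on. The paper's own one-line proof starts from $\frac{dX}{X}+\frac{Y^rX^{r+k}}{1-\frac{k}{r+1}Y^rX^{r+k}}\bigl(r\frac{dY}{Y}+(r+k)\frac{dX}{X}\bigr)$. This is the logarithmic derivative of $X(1-\frac{k}{r+1}Y^rX^{r+k})^{-(r+1)/k}$, so it is written in a convention where $r$ is one less than in \cref{prop-01spectral}. Under that shift, your formula becomes exactly the stated one. Meanwhile, the later uses of the lemma (the proof of \cref{Semistable} and the expression for $E_X$) use the unshifted denominator $1-(q(r-1)+k)Y^{r-1}X^{r-1+k}$ that your step two produces.

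So the missing idea is to recognise the statement as a convention slip. You should stop after step two and note the index shift, rather than attempt an algebraic bridge to the literal formula, which cannot close. Your proposed low-order check would expose this already at leading order.
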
	

\begin{proof}
To begin with, we observe

\begin{equation}
    \frac{dz}{z}=\frac{dX}{X} + \frac{ Y^{r} X^{r+k}}{ 1 - \frac{k}{r+1} Y^{r} X^{r+k}} \Big( r \frac{dY}{Y} + (r+k) \frac{dX}{X} \Big)
\end{equation}
    Using \cref{prop-01spectral} to express $\frac{dY}{Y}$ in terms of $\frac{dX}{X}$ and some further algebra proves the lemma.
\end{proof}

We now introduce the generating function

\begin{equation}
W_{0,2}(X_1,X_2)=\sum_{l,m=1}^\infty\frac{\lh_{0,(l,m)}^{k,r,q}}{b!}X_1^lX_2^m.
\end{equation}

We get the following crucial identity.

\begin{proposition}\label{Semistable}
For any $k$, $r$, and $q$, we have the following:
\begin{equation}
\label{equ-02}
d_1d_2W_{0,2}(X_1,X_2)=\frac{dz_1dz_2}{(z_1-z_2)^2}-\frac{dX_1dX_2}{(X_1-X_2)^2}.
\end{equation}
\end{proposition}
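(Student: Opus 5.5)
Following the strategy of the $(0,1)$ case, I will turn the recursion of \cref{prop-rec02} into a first-order linear PDE for $W_{0,2}$, show that the right-hand side of \cref{equ-02} satisfies the same PDE, and then compare initial terms to conclude equality.

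\textbf{Deriving the PDE.} By \cref{RH02}, $b=(m_1+m_2)/(q(r-1)+k)$. Hence multiplication of the coefficient $\lh_{0,(m_1,m_2)}/b!$ by $b$ is realised by the Euler operator $\frac{1}{q(r-1)+k}(X_1\partial_{X_1}+X_2\partial_{X_2})$. Dividing \cref{Rec02} by $(b-1)!$ turns each multinomial into a product of factorials, so the sums become products of generating series.

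\emph{First two terms.} These become $X_1^{k}\,\partial_{X_1}$ applied to a product of $W_{0,2}$ with $r-1$ copies of $Y$. More precisely, one gets $X_i\partial_{X_i}W_{0,2}(X_i,X_j)$ evaluated with the split strand, times $(XY)^{r-1}$-type factors in the variable $X_i$.

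\emph{Third term.} This does not involve $W_{0,2}$. It is the joining vertex, so it gives an explicit function of $X_1,X_2$ built from $Y(X_1)$ and $Y(X_2)$: a divided difference $\frac{F(X_1)-F(X_2)}{X_1-X_2}$-type expression with $F=X^{r-1+k}Y^{r-1}$ up to constants. The reason is that the $r-1$ outgoing strands have total weight $m_1+m_2-k$, and this total must be split as coefficients in two variables.

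I will do all of this after applying $d_1d_2$, i.e.\ for $\Omega:=d_1d_2W_{0,2}$. This yields an equation of the form
\[
\big(\mathcal{D}_1+\mathcal{D}_2\big)\Omega = \text{(explicit source from the third term)},
\]
where $\mathcal{D}_i = \big(1-(q(r-1)+k)Y_i^{r-1}X_i^{r-1+k}\big)X_i\partial_{X_i}$ plus lower-order terms.

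\textbf{Changing to $z_i$.} By \cref{LogDerz} and \cref{DiffEqFory}, $\mathcal{D}_i$ should become, up to a scalar factor, $z_i\partial_{z_i}$. So the key step is to rewrite the equation in the variables $z_1,z_2$. There it should read
\[
(z_1\partial_{z_1}+z_2\partial_{z_2})\,\widetilde\Omega=\text{source},
\]
with $\widetilde\Omega$ the coefficient of $dz_1dz_2$. Now $B=\frac{dz_1dz_2}{(z_1-z_2)^2}$ is homogeneous of degree $0$ under simultaneous scaling of $z_1,z_2$, i.e.\ as a form it is annihilated by the Euler vector field. In the same way, $\frac{dX_1dX_2}{(X_1-X_2)^2}$ under the $X$-scaling produces exactly the source term coming from the third summand, via the divided-difference structure. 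I would then verify that $B-\frac{dX_1dX_2}{(X_1-X_2)^2}$ satisfies the PDE by direct substitution, using \cref{prop-01spectral} to eliminate $dY$.

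\textbf{Uniqueness and initial term.} The Euler-type operator is invertible on power series with no constant term in total degree, so the solution is unique given the lowest coefficient. I would check that the lowest-order term matches, namely $m_1+m_2=0$, so there are no terms. Equivalently, $B-\frac{dX_1dX_2}{(X_1-X_2)^2}$ is regular on the diagonal and vanishes at $X=0$, since $z=X(1+O(X))$. Regularity holds because $z$ is a local coordinate in $X$ with $z'(0)=1$, so the double poles cancel.

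\textbf{Main obstacle.} The hardest step is matching the third (joining) term precisely with the source generated by $\frac{dX_1dX_2}{(X_1-X_2)^2}$, including the factor $X^k$ coming from leakiness. I would first test this for $r=2$ and general $k$, where only $Y$ appears linearly, before generalising.
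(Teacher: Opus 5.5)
This is essentially the paper's argument. The recursion of \cref{prop-rec02} becomes a first-order PDE, \cref{LogDerz} turns its operator into the Euler field $E_z=z_1\partial_{z_1}+z_2\partial_{z_2}$, the candidate solves the same equation, and one inverts $E_z$. The only difference is that the paper stays at the level of $W_{0,2}$ and applies $d_1d_2$ only at the end. There your ``main obstacle'' is a one-line check: $E_z\log(z_1^{-1}-z_2^{-1})=-1$, while $E_z\log(X_1^{-1}-X_2^{-1})=-1+(q(r-1)+k)\frac{X_1^{r-2+k}Y_1^{r-1}-X_2^{r-2+k}Y_2^{r-1}}{X_1^{-1}-X_2^{-1}}$, and this divided difference is, up to the constant, the joining-vertex contribution. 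Since both sides have zero constant term, this gives $W_{0,2}=\log\frac{z_1^{-1}-z_2^{-1}}{X_1^{-1}-X_2^{-1}}$.

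Two details in your bidifferential version should be corrected, though neither breaks the argument. First, the source term is produced by the $E_z$-flow, not by $X$-scaling, which leaves $\frac{dX_1dX_2}{(X_1-X_2)^2}$ invariant. Second, $B-\frac{dX_1dX_2}{(X_1-X_2)^2}$ need not vanish at the origin: for $(k,r,q)=(1,2,1)$, where $X=z-z^3+O(z^5)$, it equals $dz_1dz_2$ there. No initial condition is needed anyway, because on bidifferentials the operator is the Lie derivative $\mathcal{L}_{E_z}$, i.e.\ $E_z+2$ on the coefficient of $dz_1dz_2$. It multiplies $z_1^az_2^b\,dz_1dz_2$ by $a+b+2\neq0$ and is therefore injective.
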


\begin{proof}
We note that for any power series $ f(X) = \sum_{n=0}^\infty f_n X^n$, we have
    \begin{equation}
    	\begin{split}
    		\sum_{l,m =1}^\infty f_{l+m-k} X_1^l X_2^m 
    		&=
    		\sum_{n=1}^\infty f_n \sum_{l = 1}^{n+k-1} X_1^l X_2^{n+k-l} 
    		\\
    		&= 
    		-\sum_{n=0}^\infty f_n \frac{X_1^{n+k-1} - X_2^{n+k-1}}{X_1^{-1}-X_2^{-1}}
    		\\
    		&= - \frac{X_1^{k-1} f(X_1) - X_2^{k-1} f(X_2)}{X_1^{-1} - X_2^{-1}}\,.
    	\end{split}
    \end{equation}

    Using this, we may rephrase \cref{prop-rec02} to obtain

    \begin{align}
    \begin{split}
         &\frac{1}{q(r-1)+k} \Big( X_1 \frac{d}{dX_1} + X_2 \frac{d}{dX_2} \Big) W_{0,2} (X_1,X_2)
      =\\
      &X_1^{k+1} \frac{dW_{0,2}}{X_1} (X_1,X_2) (X_1 Y(X_1))^{r-1} + X_2^{k+1} \frac{dW_{0,2}}{dX_2}(X_1,X_2) (X_2 Y(X_2))^{r-1}
      \\
      &  - 
      \frac{ X_1^{k-1} (X_1 Y(X_1))^{r-1} - X_2^{k-1} (X_2 Y(X_2))^{r-1}}{X_1^{-1} - X_2^{-1}} \,.
      \end{split}
    \end{align}

    After re-ordering, we obtain

    \begin{align}
    \begin{split}
        &\Big(X_1^{r-1+k} Y(X_1)^{r-1} - \frac{1}{q(r-1)+k}\Big) X_1 \frac{dW_{0,2}}{dX_1} + \Big( X_2^{r-1+k} Y(X_2)^{r-1} - \frac{1}{q(r-1)+k} \Big) X_2 \frac{dW_{0,2}}{dX_2} \\
        &= \frac{X_1^{r+k-2} Y(X_1)^{r-2} - X_2^{r+k-2} Y(X_2)^{r-1}}{X_1^{-1} - X_2^{-1}} \,.
    \end{split}
    \end{align}

    Using \cref{LogDerz} and denoting the Euler operator $E_z = z_1 \frac{\partial}{\partial_{z_1}} + z_2\frac{\partial}{\partial_{z_2}}$ we finally obtain

    \begin{equation}
        E_zW_{0,2}=\frac{X_1^{r+k-2} Y(X_1)^{r-1} - X_2^{r+k-2} Y(X_2)^{r-1}}{X_1^{-1} - X_2^{-1}}.
    \end{equation}

    Applying $E_z$ to $\log (X_1^{-1} - X_2^{-1}) - \log (z_1^{-1} - z_2^{-1})$ yields the same right hand side. Inverting $E_z$ and applying $d_1d_2$ yields the proposition.
\end{proof}

Next, we express $X$ in $z$. Namely, we have after considering the spectral curve of \cref{prop-01spectral} that
\begin{equation}
    z^q=XY(1-\frac{k}{r}Y^{r-1}X^{r-1+k})
\end{equation}
and therefore that
\begin{equation}
    1+\frac{k}{r}z^{q(r-1)+k}
    =
    (1-\frac{k}{r} Y^{r-1}X^{r-1+k})^{-1}.
\end{equation}

Thus, we immediately see that
\begin{equation}
\label{equ-xviaz}
    X
    =
    z(1+\frac{k}{r}z^{q(r-1)+k})^{-\frac{r}{k}}.
\end{equation}

Similarly, we obtain

\begin{equation}
\label{equ-yviaz}
    Y
    =
    z^{q-1}(1+\frac{k}{r}z^{q(r-1)+k})^{\frac{r+k}{k}}
\end{equation}

With this, we now aim to find closed expressions for $\lh_{0,(l,m)}^{k,r,q}$. We observe that
\begin{equation}
    d_1 d_2 \log \big( \frac{X_1^{-1} - X_2^{-1}}{z_1^{-1} - z_2^{-1}} \big)
    =
    \frac{dz_1 dz_2}{(z_1 - z_2)^2} - \frac{dX_1 dX_2}{(X_1 - X_2)^2}.
\end{equation}

Thus, it suffices to compare $W_{0,2}$ with the expansion of $\log ( X_1^{-1} - X_2^{-1}) - \log ( z_1^{-1} - z_2^{-1} ) $. We introduce the Euler operator $E_X = X_1 \partial_{X_1} + X_2 \partial_{X_2} $ and observe that
\begin{equation}
    E_X W_{0,2}
    =
    \sum_{l,m=1}^\infty (l+m) \frac{\lh_{0,(l,m)}^{k,r,q}}{b!} X_1^l X_2^m,
\end{equation}

i.e. the generating function changes only by multiplying each coefficient with the degree $|\mu|$.

In order to extract $\lh_{0,(l,m)}^{k,r,q}$, we observe

\begin{equation}
    \lh_{0,(l,m)}^{k,r,q}
    =
    \frac{b!}{l+m}\Res_{X_1=0} \Res_{X_2=0} E_X W_{0,2} \frac{dX_1 dX_2}{X_1^{l+1} X_2^{m+1}}
\end{equation}

and with \cref{prop-rec02}

\begin{equation}
    \lh_{0,(l,m)}^{k,r,q}
    =
    \frac{b!}{l+m} \Res_{X_1=0} \Res_{X_2=0} E_X \log \big( \frac{X_1^{-1} - X_2^{-1}}{z_1^{-1} - z_2^{-1}} \big) \frac{dX_1 dX_2}{X_1^{l+1} X_2^{m+1}} .
\end{equation}

Since
\begin{equation}
    E_X \log (X_1^{-1} - X_2^{-1})
    =
    -1,
\end{equation}

the complexity of the problem lies in the computation of $E_X \log (z_1^{-1} - z_2^{-1})$.

\begin{example}
\label{ex-closedform}
We consider the case $(k,r,q)=(1,2,1)$. We have the spectral curve
\begin{equation}
    1
    =
    Y(1-\frac{X^2Y}{2})^3
\end{equation}
with the parametrisation

\begin{equation}
    X(z)
    =
    \frac{z}{(1+\frac{z^2}{2})^2}
    =
    \frac{4z}{(2+z^2)^2}
    \quad
    \textrm{and}
    \quad
    Y(z)
    =
    (1+\frac{z^2}{2})^3
    =
    \frac{(2+z^2)^3}{8}.
\end{equation}

We consider the Euler operator $ E_X = X_1 \partial_{X_1} + X_2 \partial_{X_2} $. In the coordinates $z_i$ for $X_i = X(z_i)$, we have

\begin{equation}
    E_X
    =
    \frac{z_1(2+z_1)^2}{(2-3z_1^2)} \partial_{z_1} + \frac{z_2(2+z_2)^2}{(2-3z_2^2)} \partial_{z_2}.
\end{equation}

We obtain

\begin{equation}
    E_X \log (X_1^{-1} - X_2^{-1})
    =
    -1
    \quad
    \textrm{and}
    \quad
    E_X \log (z_1^{-1} - z_2^{-1})
    =
    \frac{4z_1 z_2 (3z_1z_2 + 2)}{(3z_1^2 - 2) (3z_2^2 - 2)} - 1.
\end{equation}

Therefore, we have

\begin{equation}
    E_X \log \big( \frac{X_1^{-1} - X_2^{-1}}{z_1^{-1} - z_2^{-1}} \big)
    =
    \frac{4z_1 z_2 (3z_1 z_2 + 2)}{(3z_1^2 - 2) (3z_2^2 - 2)}.
\end{equation}

We compute

\begin{equation}
\begin{split}
    &\Res_{X_1=0} \Res_{X_2=0}  E_X  \log \big(\frac{X_1^{-1} - X_2^{-1}}{z_1^{-1} - z_2^{-1}} \big) \frac{dX_1 dX_2 }{X_1^{l+1} X_2^{m+1}}
    \\
    &\quad =
    \Res_{z_1=0} \Res_{z_2=0} \frac{(2 + z_1^2)^{2(l+1)} (2 + z_2^2)^{2(m+1)} 4 z_1 z_2 (3 z_1 z_2 + 2)}{(4z_1)^{l+1} (4z_2)^{m+1} (3z_1^2 - 2) (3z_2^2 - 2)}4 \frac{2 - 3z_1^2}{(2 + z_1^2)^3} dz_1 4 \frac{2 - 3z_2^2}{(2 + z_2^2)^3} dz_2
    \\
    &\quad =
    4 \Res_{z_1=0} \Res_{z_2=0} \frac{(2 + z_1^2)^{2l-1} (2 + z_2^2)^{(2m-1)}(3 z_1 z_2 + 2)}{(4 z_1)^l (4 z_2^m)} dz_1 dz_2.
\end{split}
\end{equation}

To obtain closed expressions, we extract the coefficient of $z_1^{-1} z_2^{-1}$ in the power series and observing that for $(k,r,q)=(1,2,1)$ and $\mu=(l,m)$, we have $b=\frac{l+m}{2}$. In total, we have the following formulae, that depend on the parities of $l$ and $m$:

\begin{align}
		\lh^{1,1,1}_{0,(2l_1,2l_2)}
		&=
		\frac{3}{2^{l_1+l_2-1}} \binom{4l_1-1}{l_1-1} \binom{4l_2-1}{l_2-1} (l_1 + l_2 - 1)! \label{h02EvenSimplecase}
		\\
		\lh^{1,1,1}_{0,(2l_1-1,2l_2-1)}
		&=
		\frac{1}{2^{l_1+l_2-2}} \binom{4l_1 -3}{l_1 -1} \binom{4l_2 -3}{l_2 -1} (l_1 + l_2 -2)!
		\\
		\lh^{1,1,1}_{0,(2l_1,2l_2-1)}
		&=
		0 .
	\end{align}
\end{example}

With this example under the belt, we now treat the general case. A similar calculation shows that for arbitrary $(r,k,q)$, we have

\begin{equation}
    E_X
    =
    \frac{z_1 (1 + \frac{k}{r} z_1^{q(r-1)+k})}{1 + (\frac{k}{r} - (q(r-1)+k)) z_1^{q(r-1)+k}} \partial_{z_1} + \frac{z_2 (1 + \frac{k}{r} z_2^{q(r-1)+k})}{1 + (\frac{k}{r} - (q(r-1)+k)) z_2^{q(r-1)+k}} \partial_{z_2}.
\end{equation}

We obtain after rearranging, writing $A=\frac{k}{r}$ and $B=q(r-1)+k$, that

\begin{equation}
    E_X \log \big( \frac{X_1^{-1} - X_2^{-1}}{z_1^{-1} - z_2^{-1}} \big)
    =
    \frac{B( \sum_{j=1}^{B-1} z_1^j z_2^{B-j} + (B-A) (z_1z_2)^B)}{(1 + (A - B) z_1^{B}) (1 + (A - B) z_2^{B})}.
\end{equation}

It remains to compute

\begin{equation}
    \Res_{X_1=0} \Res_{X_2=0}  \frac{B (\sum_{j=1}^{B-1} z_1^j z_2^{B-j} + (B - A) (z_1 z_2)^B)}{(1 + (A - B) z_1^{B}) (1 + (A - B) z_2^{B})} \frac{ dX_1 dX_2 }{X_1^{l+1} X_2^{m+1}}.
\end{equation}

Writing $X=z(1-Az^B)^{-\frac{1}{A}}$, we obtain

\begin{equation}
\begin{split}               
    \Res_{z_1=0} \Res_{z_2=0} & \frac{(1 + A z_1^B)^{\frac{l}{A} - 1}(1 + A z_2^B)^{\frac{m}{A} - 1} (B (\sum_{j=1}^{B-1} z_1^j z_2^{B-j} + (B - A) (z_1 z_2)^B)}{z_1^{l+1} z_2^{m+1}} dz_1 dz_2
    \\
    & \quad=
    B\sum_{j=1}^{B-1} \Res_{z_1=0} \Res_{z_2=0} \frac{(1 + A z_1^B)^{\frac{l}{A} - 1}(1 + A z_2^B)^{\frac{m}{A} - 1}}{z_1^{l+1-j} z_2^{m+1-B+j}} dz_1 dz_2
    \\
    & \qquad
    +B (B - A) \Res_{z_1=0} \Res_{z_2=0} \frac{(1 + A z_1^B)^{\frac{l}{A} - 1}(1 + A z_2^B)^{\frac{m}{A} - 1}}{z_1^{l+1-B} z_2^{m+1-B}} dz_1 dz_2.
\end{split}
\end{equation}

Extracting the $z_1^{-1}z_2^{-1}$ coefficient, we have proved the following formula, which generalises \cref{ex-closedform}.

\begin{theorem}
    We have for $A = \frac{k}{r}$ and $B = q(r-1)+k$ that
    \begin{equation}
        \lh_{0,(l,m)}^{k,r,q}
        =\frac{b!}{l+m} A^{\frac{l+m}{B}-2} B \begin{cases}
            0, &\textrm{if $l+m\not\equiv 0\pmod{B}$}\\
            A \binom{\frac{l}{A} - 1}{ \lfloor \frac{l}{B} \rfloor } \binom{\frac{m}{A} - 1}{ \lfloor \frac{m}{B} \rfloor } &\textrm{if $l\equiv - m \not\equiv 0 \pmod{B}$}\\           
            (B - A) \binom{\frac{l}{A} - 1}{\frac{l-B}{B}} \binom{\frac{m}{A} - 1}{\frac{m-B}{B}} &\textrm{if $l \equiv m \equiv 0\pmod{B}$}.
        \end{cases}
    \end{equation}
\end{theorem}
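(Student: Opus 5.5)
The plan is to start from the residue representation derived just before the statement. By \cref{Semistable} and the identity $d_1d_2\log\frac{X_1^{-1}-X_2^{-1}}{z_1^{-1}-z_2^{-1}} = \frac{dz_1dz_2}{(z_1-z_2)^2}-\frac{dX_1dX_2}{(X_1-X_2)^2}$, the coefficient $\frac{l+m}{b!}\lh^{k,r,q}_{0,(l,m)}$ is the $X_1^lX_2^m$ coefficient of $E_X\log\frac{X_1^{-1}-X_2^{-1}}{z_1^{-1}-z_2^{-1}}$. The additive constant introduced when $E_X$ is inverted is killed by $d_1d_2$, and it does not affect coefficients with $l,m\ge1$.

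First, I would verify the closed expression for this Euler derivative. This uses \cref{LogDerz} to write $E_X$ in the $z$-coordinates, via $X\partial_X = \frac{z(1+Az^B)}{1+(A-B)z^B}\partial_z$, which follows from \cref{equ-xviaz}. Applying this to $\log(z_1^{-1}-z_2^{-1})$ and subtracting the constant $-1$ coming from $E_X\log(X_1^{-1}-X_2^{-1})$ gives a rational function. After multiplying through, its numerator is divisible by $z_1-z_2$. Performing that division yields the stated numerator $B\big(\sum_{j=1}^{B-1}z_1^jz_2^{B-j}+(B-A)(z_1z_2)^B\big)$. This is the routine but fiddly algebra, and I would check it against \cref{ex-closedform} with $(A,B)=(1/2,3)$.

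Second, I would change variables in the residues from $X_i$ to $z_i$. Writing $X=z(1+Az^B)^{-1/A}$ gives
\[
X^{-l-1}\,dX = z^{-l-1}(1+Az^B)^{\frac{l+1}{A}-\frac1A-1}\big(1+(A-B)z^B\big)\,dz .
\]
The factor $1+(A-B)z^B$ cancels the denominator, leaving $(1+Az^B)^{\frac lA-1}$ in each variable. Each term then factorises into a product of single-variable residues of the form $[z^{N}](1+Az^B)^{\frac lA-1}$, which is nonzero only when $B\mid N$. In that case it equals $\binom{\frac lA-1}{N/B}A^{N/B}$.

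The case split then follows directly:
\begin{itemize}
\item For the sum term, we need $B\mid l-j$ and $B\mid m-B+j$ with $1\le j\le B-1$. This forces $l\equiv j\not\equiv0$ and $l+m\equiv0 \pmod B$. Exactly one $j$ survives, giving exponents $\lfloor l/B\rfloor$ and $\lfloor m/B\rfloor$.
\item For the $(z_1z_2)^B$ term, we need $l\equiv m\equiv0\pmod B$, giving exponents $\frac{l-B}{B}$ and $\frac{m-B}{B}$.
\end{itemize}
In every other case $l+m\not\equiv0$ and both terms vanish, consistent with \cref{RH02} requiring $b$ to be an integer.

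Finally, I would collect the powers of $A$. In the first case the exponents sum to $\frac{l+m}{B}-1$, and in the second to $\frac{l+m}{B}-2$. Factoring out $A^{\frac{l+m}{B}-2}$ leaves the extra factor $A$ in the first case. The prefactor $B$ comes from the numerator, and multiplying by $\frac{b!}{l+m}$ gives the stated formula.

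I expect the main obstacle to be the first step: getting the exact rational form of $E_X\log(z_1^{-1}-z_2^{-1})$, including the sign and normalisation conventions ($A-B$ versus $B-A$) and the division by $z_1-z_2$. If needed, I would verify it by clearing denominators and comparing low-order coefficients.
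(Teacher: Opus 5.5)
Your proposal is correct and is essentially the paper's own proof. Both go through the residue representation from \cref{Semistable}, write $E_X$ as $\frac{z(1+Az^B)}{1+(A-B)z^B}\partial_z$, divide out $z_1-z_2$, cancel $1+(A-B)z^B$ against $X^{-l-1}dX$, and read off the coefficients of $(1+Az^B)^{\frac{l}{A}-1}$.

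Two small slips need fixing, though neither affects the final formula:
\begin{itemize}
\item The identity you quote has the wrong sign (the paper has the same sign). It should read $d_1d_2\log\frac{X_1^{-1}-X_2^{-1}}{z_1^{-1}-z_2^{-1}}=\frac{dX_1dX_2}{(X_1-X_2)^2}-\frac{dz_1dz_2}{(z_1-z_2)^2}$. Hence $E_XW_{0,2}=E_X\log\frac{z_1^{-1}-z_2^{-1}}{X_1^{-1}-X_2^{-1}}$, which is exactly what your ``subtract the constant $-1$'' step computes.
\item For $(k,r,q)=(1,2,1)$, the sanity check should use $(A,B)=(1/2,2)$, not $(1/2,3)$.
\end{itemize}
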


\section{Cut-and-join as Hamiltonian flow}
\label{CJSection}

In this section, we will use the Fock space expression for the multidifferentials of leaky Hurwitz numbers, \cref{def:Multidifferentials}, to obtain spectral curves through the flow generated by a Hamiltonian encoding the cut-and-join operator.

Let us consider a general cut-and-join operator
\begin{equation}
	W
	=
	\sum_{l \in \Z'} \sum_{k = 0}^\infty w_k (l + \frac{k}{2} ,\hbar) E_{l + k,l} ,
\end{equation}
where the $ w_k (j, \hbar)$ are analytic functions in $ j$ whose coefficients are power series in $ \hbar$, satisfying \cref{TopologicalAssumption}. Note that we restrict $ k $ to be non-negative. This is essential for our approach, because it guarantees that for a given left partition $ \mu$, there are only finitely many tropical covers contribution to the relevant Hurwitz numbers.


We define the generating functions
\begin{equation}\label{CJGenFunction}
	\mf{W}(z,j,\hbar )
	\coloneqq
	\sum_{k=0}^\infty w_k(j, \hbar ) z^k ,
	\qquad
	S(z)
	\coloneqq
	\sum_q s_q z^q
\end{equation}

Because $ k \ge 0$, the cut-and-join operator annihilates the covacuum. So $ \< 0| e^{-W}  = \< 0|$, and hence it makes sense to calculate $ e^{-W} J(X) e^W = e^{-\ad_W} J(X)$.

\begin{proposition}\label{CurrentConjugation}
	In this general setup,
	\begin{equation}
		e^{-\beta \ad_W} J_m
		=
		\sum_{l \in \Z'} \sum_{\mu \in \Z} \sum_{n=0}^\infty \Phi^{\mf{W}}_{n,\mu} (\mf{J}_m) \big( \hbar (l- \frac{\mu}{2} )\big) \beta^n E_{l-\mu,l} 
        + \sum_{n=0}^\infty \tilde{\Phi}^{\mf{W}}_n (\mf{J}_m) \beta^n c,
	\end{equation}
	where $ \Phi^{\mf{W}}_{n,\mu} (\mf{J}_{m})$ satisfies the recursion $ \Phi^{\mf{W}}_{0,\mu}(\mf{J}_{m}) (s) = \mf{J}_{m,\mu} \coloneqq \delta_{\mu,m}$ and
	\begin{equation}\label{PhiRecursion}
		(n+1) \Phi^{\mf{W}}_{n+1,\mu} (\mf{J}_{m}) (s) 
		=
		\sum_{k = 0}^\infty w_k (\frac{s}{\hbar} + \frac{\mu+k}{2}, \hbar ) \Phi^{\mf{W}}_{n, \mu+k}(\mf{J}_{m}) ( s + \frac{\hbar k}{2}) - w_k ( \frac{s}{\hbar} - \frac{\mu+k}{2}, \hbar ) \Phi^{\mf{W}}_{n,\mu+k} (\mf{J}_{m}) \big( s - \frac{\hbar k}{2}\big) ,
	\end{equation}
    while the central term is given by
    \begin{equation}\label{CentralTerm}
        (n+1) \tilde{\Phi}^{\mf{W}}_{n+1} (\mf{J}_{m})
        =
        (n+1) \frac{1}{\varsigma ( \hbar \del_s) } \Phi^{\mf{W}}_{n+1,0} (\mf{J}_m) (s) \Big|_{s=0}
        \coloneqq
        \sum_{k=0}^\infty \sum_{ l = \frac{1}{2}}^{k - \frac{1}{2}} w_k (l - \frac{k}{2}, \hbar ) \Phi^{\mf{W}}_{n,k} (\mf{J}_{m}) ( \hbar ( l - \frac{k}{2})).
    \end{equation}
\end{proposition}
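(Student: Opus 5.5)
The plan is to write $e^{-\beta\ad_W}J_m=\sum_{n\ge0}\frac{(-\beta)^n}{n!}\ad_W^n J_m$ and prove the statement by induction on $n$. Define $\Phi_{n,\mu}$ and $\tilde\Phi_n$ as the coefficients of $\beta^n$. Differentiating in $\beta$ gives $\frac{d}{d\beta}X(\beta)=-[W,X(\beta)]=[X(\beta),W]$ for $X(\beta)=e^{-\beta\ad_W}J_m$. Comparing coefficients of $\beta^n$ then yields $(n+1)X_{n+1}=[X_n,W]$. The base case is immediate: $J_m=\mc E_m(0)=\sum_l E_{l-m,l}$, plus a central term only for $m=0$, which we may ignore since it commutes. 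So $\Phi_{0,\mu}=\delta_{\mu,m}$, a constant function of $s$.

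For the inductive step, write $X_n=\sum_{l,\mu}\Phi_{n,\mu}(\hbar(l-\frac{\mu}{2}))E_{l-\mu,l}+\tilde\Phi_n c$ and $W=\sum_{l',k}w_k(l'+\frac k2,\hbar)E_{l'+k,l'}$. Compute $[E_{l-\mu',l},E_{l'+k,l'}]$ with the commutator of $\widehat{\mf a}_\infty$:
\begin{itemize}
\item the term $\delta_{l,l'+k}E_{l-\mu',l'}$ gives a matrix unit $E_{l-\mu'',l''}$ with $l''=l-k$, $\mu''=\mu'-k$;
\item the term $-\delta_{l-\mu',l'}E_{l'+k,l}$ gives $E_{l-\mu'',l}$ with $\mu''=\mu'-k$ again.
\end{itemize}
Reindexing so that the output is $E_{l-\mu,l}$ with $\mu'=\mu+k$, the two contributions become coefficient products evaluated at shifted arguments.

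In the first contribution the original index is $l+k$. The argument of $\Phi_{n,\mu+k}$ is then $\hbar(l+k-\frac{\mu+k}{2})=s+\frac{\hbar k}{2}$, where $s=\hbar(l-\frac\mu2)$. The $w_k$ argument is $l+\frac k2=\frac s\hbar+\frac{\mu+k}{2}$.

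In the second contribution, $l'=l-\mu-k$. This gives $w_k(\frac s\hbar-\frac{\mu+k}{2})$ and $\Phi_{n,\mu+k}(s-\frac{\hbar k}{2})$.

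These reproduce \eqref{PhiRecursion}, once we check that the order of the commutator, $[X_n,W]$ rather than $[W,X_n]$, gives exactly the stated signs. That sign check is routine.

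The main obstacle is the central term. It arises from $\delta_{i,l}\delta_{k,j}(\delta_{j>0}-\delta_{l>0})c$, which requires $\mu'=k$, i.e. only $\Phi_{n,k}$ contributes. The factor $\delta_{j>0}-\delta_{l>0}$ restricts to the finitely many $l$ with $0<l<k$, i.e. $l=\frac12,\dots,k-\frac12$. Tracking the arguments produces the right-hand side of \eqref{CentralTerm}.

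The remaining claim is the identification with $\frac{1}{\varsigma(\hbar\del_s)}\Phi_{n+1,0}(s)|_{s=0}$. To prove it, I would use that the non-central part of $[X_n,W]$ at $\mu=0$, written as a function of $s$, is a finite difference $F(s+\frac{\hbar k}2)-F(s-\frac{\hbar k}2)$ summed over $k$. Since $F(s+\frac{\hbar k}{2})-F(s-\frac{\hbar k}{2})=\varsigma(\hbar k\del_s)F$, inverting $\varsigma(\hbar\del_s)$ turns this into a sum of $F$ over the $k$ half-integer shifts $\hbar(l-\frac k2)$. That matches the finite sum above.

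Convergence of the infinite matrices is guaranteed because $k\ge0$: only finitely many diagonals lie above the main one, so each coefficient is a finite sum.
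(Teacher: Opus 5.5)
Your proof is correct and follows essentially the same route as the paper: compute $[X_n,W]$ directly on matrix units with the $\widehat{\mf{a}}_\infty$ commutator, reindex to obtain the recursion for $\Phi^{\mf{W}}_{n+1,\mu}(\mf{J}_m)$, and read off the finite central sum over $l=\frac12,\dots,k-\frac12$ from the cocycle term. The only small difference is in how you motivate the notation $\frac{1}{\varsigma(\hbar\del_s)}\Phi^{\mf{W}}_{n+1,0}(\mf{J}_m)(s)\big|_{s=0}$: you apply the finite identity $\varsigma(k\hbar\del_s)/\varsigma(\hbar\del_s)=\sum_{l=1/2}^{k-1/2}e^{(l-k/2)\hbar\del_s}$ term by term in $k$, whereas the paper telescopes over $l\in\Z'_+$ and uses $-\sum_{l\in\Z'_+}e^{l\hbar\del_s}=1/\varsigma(\hbar\del_s)$; both arguments are formal, since, as the paper remarks, the middle expression is really a definition of the right-hand side.
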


\begin{remark}\label{WhatIsCentralTerm}
    The middle part of \cref{CentralTerm} is not well-defined: because $ \varsigma (z) = z + \mc{O} (z^3)$, we find that $ \frac{1}{\varsigma (\hbar \del_s)} = \frac{1}{\hbar \del_s} + \mc{O} (\del_s)$. We would thus need to take an antiderivative, which is only defined up to a constant, and then evaluate that constant. Rather, the middle expression is convenient \emph{notation} for the right one.\par
    It is also consistent with the original definition of the completed cycles in \cite{OP06a}: compare the current proof with \cite[Section 0.4.3]{OP06a}.
\end{remark}

\begin{proof}
	Consider
	\begin{equation}
	A = \sum_{l \in \Z'} \sum_{\mu \in \Z} a_\mu \big( \hbar (l -\frac{\mu}{2})\big) E_{l-\mu,l}.
	\end{equation} 
	Then direct calculation shows that
	\begin{equation}
        \begin{split}
		      [A,W]
		      &=
		      \sum_{l \in \Z'} \sum_{\mu \in \Z} \sum_{k=0}^\infty \Big( w_k (l + \frac{k}{2}, \hbar ) a_\mu \big( \hbar ( l + k - \frac{\mu}{2} ) \big) - w_k ( l - \mu + \frac{k}{2}, \hbar ) a_\mu \big( \hbar( l - \frac{\mu}{2} ) \big) \Big) E_{l - \mu + k,l}
            \\
            &\quad + \sum_{k=0}^\infty \sum_{ l = \frac{1}{2}}^{k - \frac{1}{2}} w_k (l - \frac{k}{2}, \hbar ) a_k ( \hbar ( l - \frac{k}{2})) c.
        \end{split}
	\end{equation}
	Writing
	\begin{equation} 
		A' 
		=
		[A,W]
		=
		\sum_{l \in \Z'} \sum_{\mu' \in \Z} a'_{\mu'} \big( \hbar ( l -\frac{\mu'}{2}) \big) E_{l - \mu',l} + a'' c,
	\end{equation} 
	we then get
	\begin{align}
		a'_{\mu'} ( s)
		&=
		\sum_{k=0}^\infty w_k (\frac{s}{\hbar} + \frac{\mu' +k}{2}, \hbar ) a_{\mu'+k} ( s + \frac{\hbar k}{2}) - w_k ( \frac{s}{\hbar} - \frac{\mu'+k}{2}, \hbar ) a_{\mu'+k} \big( s - \frac{\hbar k}{2}\big) ,
        \\
        a'' 
        &=
        \sum_{k=0}^\infty \sum_{ l = \frac{1}{2}}^{k - \frac{1}{2}} w_k (l - \frac{k}{2}, \hbar ) a_k ( \hbar ( l - \frac{k}{2})).
	\end{align}
	Therefore, writing
	\begin{equation}
		\frac{1}{n!} \ad_{-W}^n J_m 
		\eqqcolon 
		\sum_{l \in \Z'} \sum_{\mu \in \Z} \Phi^{\mf{W}}_{n,\mu} (\mf{J}_{m}) \big( \hbar (l -\frac{\mu}{2})\big) E_{l - \mu} + \tilde{\Phi}^{\mf{W}}_{n} (\mf{J}_m) c,
	\end{equation}
	we do find that $ \Phi^{\mf{W}}_{0,\mu} (\mf{J}_{m}) (s) = \delta_{m,\mu}$ and that its recursion holds.\par
    For the central term, we find that
    \begin{equation}
        \begin{split}
            (n+1) \tilde{\Phi}^{\mf{W}}_{n+1} (\mf{J}_{m})
            &=
            \sum_{k=0}^\infty \sum_{ l = \frac{1}{2}}^{k - \frac{1}{2}} w_k (l - \frac{k}{2}, \hbar ) \Phi^{\mf{W}}_{n,k} (\mf{J}_{m}) ( \hbar ( l - \frac{k}{2}))
            \\
            &=
            \sum_{k=0}^\infty \sum_{l \in \Z'_+} w_k (l - \frac{k}{2}, \hbar ) \Phi^{\mf{W}}_{n,k} (\mf{J}_{m}) ( \hbar ( l - \frac{k}{2})) - w_k (l + \frac{k}{2}, \hbar ) \Phi^{\mf{W}}_{n,k} (\mf{J}_{m}) ( \hbar ( l + \frac{k}{2}))
            \\
            &=
            - \sum_{l \in \Z'_+} e^{l \hbar \del_s} (n+1) \Phi^{\mf{W}}_{n+1,0} (\mf{J}_{m}) (s) \Big|_{s=0}.
        \end{split}
    \end{equation}
    The first equality is a finite, hence well-defined sum, because for any fixed $ m$, $ \Phi^{\mf{W}} ( \mf{J}_m)_{n,k} $ can only be non-zero for $ k \leq m$.
    On the other hand, formally we can rewrite
    \begin{equation}
        - \sum_{l \in \Z'_+} e^{l \hbar \del_s}
        =
        - e^{\frac{\hbar}{2} \del_s} \frac{1}{1 - e^{\hbar \del_s}}
        =
        \frac{1}{e^{\frac{\hbar}{2} \del_s} - e^{-\frac{\hbar}{2}\del_s}}
        =
        \frac{1}{\varsigma (\hbar \del_s)},
    \end{equation}
    which yields the result.
\end{proof}

The same proof in fact yields the following more general result.

\begin{proposition}\label{MoreGeneralConjugation}
    Let\footnote{The apparently inconsistent sign for exponents of $ z$ in this formula compared to that for $ \mf{W}$ in \eqref{CJGenFunction} comes from the minus sign in \eqref{LeakyCompletedOperator}, explained directly after that latter equation.}
    \begin{equation}
        \mf{P}(z,s) 
        = 
        \sum_{\mu \in \Z} \mf{P}_\mu (s)z^{-\mu}
        \in
        \C \llbracket z, s, \hbar \rrbracket [z^{-1} ]
    \end{equation}
    be such that $ \mf{P}_0 (s) = 0$, and define
    \begin{equation}
		\Phi^{\mf{W}} (\mf{P}) (z,s,\beta) 
		\coloneqq
		\sum_{n =0}^\infty \sum_{\mu \in \Z} \Phi^{\mf{W}}_{n,\mu} (\mf{P})(s) \beta^n z^{-\mu}
        \in \C \llbracket s, \hbar \rrbracket [ z^{\pm}] \llbracket \beta \rrbracket,
	\end{equation}
    through the recursion \eqref{PhiRecursion} with the initial condition $ \Phi^{\mf{W}}_{0,\mu} (\mf{P})(s) = \mf{P}_\mu(s)$.
    Then
    \begin{align}
        \Res_{z=0} \mc{E}(\hbar \del_s , z) \mf{P} (z,s) \Big|_{s=0}
        &=
        \sum_{l \in \Z' } \sum_{\mu \in \Z} \mf{P}_\mu (\hbar(l-\frac{\mu}{2})) E_{l-\mu,l},
        \\
        \begin{split}
            \Res_{z=0} \mc{E}(\hbar \del_s , z) \Phi^{\mf{W}}(\mf{P}) (z,s, \beta ) \Big|_{s=0}
            &=
            \sum_{l \in \Z' } \sum_{\mu \in \Z} \Phi^{\mf{W}}_{n,\mu} (\mf{P}) \beta^n (\hbar(l-\frac{\mu}{2})) E_{l-\mu,l}
            \\
            & \qquad \qquad
            +
            \frac{1}{\varsigma (\hbar \del_s)} [z^0] \Phi^{\mf{W}}(\mf{P})(z,s, \beta ) \Big|_{s=0} c
        \end{split}
    \end{align} 
    are well-defined, and 
    \begin{equation}
        \Res_{z=0} \mc{E}(\hbar \del_s , z) \Phi^{\mf{W}}(\mf{P}) (z,s, \beta ) \Big|_{s=0}
        =
        e^{- \beta \ad_W} \Res_{z=0} \mc{E}(\hbar \del_s , z) \mf{P} (z,s) \Big|_{s=0}.
    \end{equation}
\end{proposition}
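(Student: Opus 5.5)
The plan is to follow the proof of \cref{CurrentConjugation} and replace its single-mode initial datum $\delta_{\mu,m}$ by the general Laurent coefficients $\mf{P}_\mu$. Everything reduces to three checks: the first identity, well-definedness, and equality of the two sides coefficient by coefficient in $\beta$.

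\emph{First identity.} Expand $\mc{E}(u,z) = \sum_{n} z^n \mc{E}_n(u)\frac{dz}{z}$ with
\begin{equation*}
    \mc{E}_n(u) = \sum_{l\in\Z'} e^{u(l-n/2)} E_{l-n,l} + \frac{\delta_{n,0}}{\varsigma(u)}c .
\end{equation*}
Multiply by $\mf{P}(z,s)=\sum_\mu \mf{P}_\mu(s) z^{-\mu}$ and take $\Res_{z=0}$; this picks out the terms with $n=\mu$, leaving $\sum_\mu \mc{E}_\mu(\hbar\del_s)\mf{P}_\mu(s)$. Then use that $e^{\hbar(l-\mu/2)\del_s}$ acts as a shift, so evaluating at $s=0$ gives $\mf{P}_\mu(\hbar(l-\frac{\mu}{2}))$. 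The central contribution is $\varsigma(\hbar\del_s)^{-1}\mf{P}_0$, which vanishes because $\mf{P}_0=0$. This gives the first formula.

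\emph{Second identity and well-definedness.} The same computation at each order in $\beta$ gives the second formula. The central term is now $\varsigma(\hbar\del_s)^{-1}$ applied to $[z^0]\Phi^{\mf{W}}(\mf{P})$, and it has to be read as in \cref{WhatIsCentralTerm}, namely as the finite sum in \cref{CentralTerm}. This is legitimate because $\Phi^{\mf{W}}_{0,0}=\mf{P}_0=0$ means every $[z^0]$ coefficient arises from the recursion. For well-definedness, note that each $\mf{P}$ has only finitely many negative powers of $z$ and $k\ge 0$, so in the recursion \eqref{PhiRecursion} the index $\mu$ can only decrease from $\mu+k$ to $\mu$. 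Consequently, at each order $\beta^n$ only finitely many $\mu>0$ occur (these are the upper diagonals). The terms with $\mu\to-\infty$ are power series in $z$, and together these facts give membership in $\widehat{\mf{a}}_\infty$ by the footnote's finiteness condition. The sum over $l$ in the central term is finite for the same reason as in \cref{CurrentConjugation}: only $\Phi_{n,k}$ with $k$ bounded by the top degree of $\mf{P}$ contribute.

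\emph{The conjugation identity.} Write $A=\sum_{l,\mu} a_\mu(\hbar(l-\frac{\mu}{2}))E_{l-\mu,l}$. The commutator computation in the proof of \cref{CurrentConjugation} is linear in the family $(a_\mu)$ and never uses the form of the initial datum. It therefore shows that $[A,W]$ is again of this form, with diagonal coefficients given by the right-hand side of \eqref{PhiRecursion} and with the central part as in \cref{CentralTerm}. By induction on $n$, $\frac{1}{n!}\ad_{-W}^n A_0$ (where $A_0$ is the first identity's operator) has coefficients $\Phi^{\mf{W}}_{n,\mu}(\mf{P})$. Summing against $\beta^n$ gives $e^{-\beta\ad_W}$.

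The only subtle point, and the expected main obstacle, is the central term. Two things must be checked. First, the telescoping rewrite $-\sum_{l\in\Z'_+}e^{l\hbar\del_s}=\varsigma(\hbar\del_s)^{-1}$ has to be applied exactly as in the earlier proof, with $\Phi_{n+1,0}$ now generated from an arbitrary $\mf{P}$. Second, $\mf{P}_0=0$ is what removes an ambiguous constant at order $\beta^0$. Once the central term is read as the notation fixed in \cref{WhatIsCentralTerm}, the argument is the earlier one verbatim.
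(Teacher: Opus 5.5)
Your proposal is correct and takes the same route as the paper, whose entire proof is the remark that the argument for \cref{CurrentConjugation} goes through verbatim: the commutator computation, induction on $n$, and reading the central term via the finite sum in \eqref{CentralTerm}, with $\delta_{\mu,m}$ replaced by $\mf{P}_\mu$ and $\mf{P}_0=0$ removing the ambiguous constant at order $\beta^0$. Your additional bookkeeping fills in the well-definedness details the paper leaves implicit: the number of upper diagonals stays bounded by the top degree of $\mf{P}$, and the central sum is finite.
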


In this notation, for $ m \neq 0$,
\begin{align} 
    \mf{J}_m (z)
    &= 
    \sum_{\mu \in \Z} \mf{J}_{m,\mu} z^{-\mu}
    =
    \sum_{\mu \in \Z} \delta_{m,\mu} z^{-\mu}
    =
    z^{-m}
    \\
    \mf{J} (z,s) 
    &=
    \sum_{\mu \in \Z \setminus \{ 0\} } \big( \frac{z}{X} \big)^m \frac{dX}{X} , 
\end{align}
and for these functions, \cref{MoreGeneralConjugation} reduces to \cref{CurrentConjugation}.

\begin{remark}
    \Cref{MoreGeneralConjugation} does not give a full map $ \C \llbracket z, s, \hbar \rrbracket [z^{-1} ] \to \widehat{\mf{a}}_\infty$ intertwining the flow on both sides. The problem is that the central term is not well-defined in general. We can define it to be zero in case $ \mf{P}_0 = 0$, and we can determine it for $ \Phi^{\mf{W}} (\mf{P}) $ only by \emph{using} the $ \beta$-flow.
\end{remark}




\Cref{TopologicalAssumption} implies that
\begin{equation}\label{LeadingOrderAssumption}
	\mf{W}(z, \frac{s}{\hbar}, \hbar) = \frac{1}{\hbar} \mf{w}(z,s) + \mc{O}(\hbar^1),
\end{equation}
where $ \mf{w} (z, s) \in \C \llbracket z, s\rrbracket$ is independent of $ \hbar$.

\begin{proposition}
    Under \cref{TopologicalAssumption}, for any $ \mf{P} \in \C \llbracket z, s, \hbar \rrbracket [ z^{-1} ]$, $\mu \in \Z$ and $ n \in \N$, $ \Phi^{\mf{W}}_{n,\mu} (\mf{P})(s)$ is a power series in $ \hbar$.
	Writing $ \bar{\Phi}^{\mf{w}}_{n,\mu} (\mf{P})(s) = \Phi^{\mf{W}}_{n,\mu} (\mf{P})(s) \big|_{\hbar = 0}$ and $\bar{\Phi}^{\mf{w}}(\mf{P}) (z,s,\beta) = \Phi^{\mf{W}}(\mf{P}) (z,s,\beta) \big|_{\hbar = 0}$, this satisfies a partial differential equation
	\begin{equation}\label{PDE}
		\del_\beta \bar{\Phi}^{\mf{w}}(\mf{P}) (z,s,\beta)
		=
		(z\del_z \mf{w}(z,s)) \del_s \bar{\Phi}^{\mf{w}}(\mf{P}) (z,s,\beta) - (\del_s \mf{w}(z,s)) z \del_z \bar{\Phi}^{\mf{w}}(\mf{P}) (z,s,\beta)
	\end{equation}
	with initial condition $ \bar{\Phi}^{\mf{w}}(\mf{P}) (z,s,0) = \mf{P}(z,s)$ In particular, $ \bar{\Phi}^{\mf{w}} (\mf{J}_m) (z,s,0) = z^{-m}$ and $ \bar{\Phi}^{\mf{w}}(\mf{J}_m) (z,s,\beta ) = \bar{\Phi}_1 (z, s, \beta)^m $.\par
	This differential equation can be restated as a Hamiltonian flow
	\begin{equation}\label{HamFlow}
		\del_\beta \bar{\Phi}^{\mf{w}} (\mf{P})
        = 
        V_\mf{w} \bar{\Phi}^{\mf{w}} (\mf{P}) ,
        \quad
        \text{i.e.}
        \quad
        \bar{\Phi}^{\mf{w}} (\mf{P}) (z, s, \beta )
        =
        e^{\beta V_{\mf{w}}} \mf{P} (z,s),
	\end{equation}
	where $ V_\mf{w}$ is the Hamiltonian vector field associated to the Hamiltonian function $ \mf{w}$ by the standard symplectic form on $ \C^* \times \C$.
	\begin{equation}\label{SymplecticForm}
		\Omega 
		= 
		\frac{dz}{z} \wedge ds .
	\end{equation}
    Finally, the central term is a Laurent series in $ \hbar$ with leading order $ \hbar^{-1}$, whose coefficient is found by
    \begin{equation}\label{CentralTermAsFunctions}
        \del_\beta \tilde{\bar{\Phi}}^{\mf{w}}(\mf{P}) ( \beta) 
        =
        \Res_{z=0} ( \del_z \mf{w}(z,s)) \bar{\Phi}^{\mf{w}}(\mf{P}) (z, s, \beta ) dz \Big|_{s = 0};
        \qquad
        \tilde{\bar{\Phi}}^{\mf{w}}(\mf{P}) ( \beta = 0) = 0\,.
    \end{equation}
    This is solved by
    \begin{equation}\label{CentralTermSolution}
        \tilde{\bar{\Phi}}^{\mf{w}} (\mf{P}) (\beta)
        =
        \int_{b = 0}^\beta \Big( \Res_{z=0} ( \del_z \mf{w}(z,s)) \bar{\Phi}^{\mf{w}}(\mf{P}) (z, s, b ) dz \Big|_{s = 0} \Big) db.
    \end{equation}
\end{proposition}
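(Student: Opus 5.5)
The plan is to argue by induction on $n$ using the recursion \eqref{PhiRecursion}, after substituting $w_k(\frac{s}{\hbar}+c,\hbar)$ via \cref{TopologicalAssumption}. For fixed $\mu$ and $n$ only finitely many $k$ contribute. Indeed, $\mf{P}$ has only finitely many negative powers of $z$, and $k\ge 0$ only raises $\mu$, so by induction $\Phi^{\mf{W}}_{n,\mu}(\mf{P})$ vanishes for $\mu$ above a bound depending on $n$. The sum in \eqref{PhiRecursion} is therefore finite.

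For the $\hbar$-power-series claim, consider each term $w_k(\frac{s}{\hbar}\pm\frac{\mu+k}{2},\hbar)\Phi_{n,\mu+k}(s\pm\frac{\hbar k}{2})$. Write $w_k(\frac{s}{\hbar}+c,\hbar)=w_k(\frac{s+\hbar c}{\hbar},\hbar)=\frac{1}{\hbar}\mf{w}_k(s+\hbar c)+\mc{O}(\hbar)$, which is a Laurent series in $\hbar$ with leading order $\hbar^{-1}$. The difference of the ``$+$'' and ``$-$'' terms is odd under $\hbar\mapsto-\hbar$ at the level of the shift operator $e^{\pm\hbar(\cdot)\del_s}$, so it is divisible by $\hbar$. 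Expanding by Taylor's theorem in $\hbar$, the $\hbar^{-1}$ pieces cancel. The $\hbar^0$ part is
\[
\mf{w}_k(s)\,k\,\del_s\Phi_{n,\mu+k}+(\mu+k)\,\mf{w}_k'(s)\,\Phi_{n,\mu+k}.
\]
Here the first summand comes from shifting the argument of $\Phi$ by $\pm\frac{\hbar k}{2}$, and the second from shifting $w$ by $\pm\hbar\frac{\mu+k}{2}$. By induction, $\Phi_{n+1,\mu}$ is then a power series in $\hbar$, and setting $\hbar=0$ gives
\[
(n+1)\bar\Phi_{n+1,\mu}=\sum_k\big(k\mf{w}_k\,\del_s\bar\Phi_{n,\mu+k}+(\mu+k)\mf{w}_k'\,\bar\Phi_{n,\mu+k}\big).
\]

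Next I translate this into generating functions. Multiply by $\beta^n z^{-\mu}$ and sum, using $\mf{w}=\sum_k\mf{w}_kz^k$. Then $z\del_z\mf{w}=\sum k\mf{w}_kz^k$, and $-z\del_z$ acting on $z^{-(\mu+k)}$ produces the factor $(\mu+k)$. This yields exactly \eqref{PDE}, with the sign of the second term fixed by $-z\del_z z^{-\mu-k}=(\mu+k)z^{-\mu-k}$. I will double-check this sign carefully, since it decides whether the result is $V_{\mf{w}}$ or $-V_{\mf{w}}$ for $\Omega=\frac{dz}{z}\wedge ds$. The right-hand side is the Poisson bracket $\{\mf{w},\cdot\}$ in the coordinates $(\log z,s)$, which identifies it with the Hamiltonian vector field and gives \eqref{HamFlow}. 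The statement $\bar\Phi(\mf{J}_m)=\bar\Phi_1^m$ follows because $e^{\beta V}$ is a flow of a derivation, hence multiplicative on functions.

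For the central term, I will use \eqref{CentralTerm}: $\sum_{l=1/2}^{k-1/2}w_k(l-\frac{k}{2},\hbar)\Phi_{n,k}(\hbar(l-\frac k2))$. Each $w_k$ there contributes $\hbar^{-1}\mf{w}_k(\hbar(l-\frac k2))+\dots$, and $\Phi_{n,k}$ is evaluated near $0$. The leading order is therefore $\hbar^{-1}\,k\,\mf{w}_k(0)\bar\Phi_{n,k}(0)$, since there are $k$ terms. Summing, $\sum_k k\mf{w}_k(0)\bar\Phi_{n,k}(0)=\Res_{z=0}\del_z\mf{w}\,\bar\Phi\,dz|_{s=0}$, because $\del_z\mf{w}\,dz$ pairs $z^{k-1}$ with $z^{-k}$. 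This gives \eqref{CentralTermAsFunctions}, and integrating in $\beta$ with zero initial value gives \eqref{CentralTermSolution}. The main obstacle I expect is bookkeeping: verifying the exact cancellation at order $\hbar^{-1}$, including the subleading $\mc{O}(\hbar)$ parts of $w_k$ that could still feed into order $\hbar^0$ for higher $n$. I would handle this by first proving, inductively, the stronger claim that $\Phi_n$ is even in $\hbar$ up to the shift structure, in the same way as \cref{TopologicalAssumption} does for $w_k$.
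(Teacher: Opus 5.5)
Your proposal is correct and follows essentially the same route as the paper: induction on $n$ through \eqref{PhiRecursion}, cancellation of the $\hbar^{-1}$ terms so that the order-$\hbar^0$ part is $k\bar w_k\,\del_s\bar\Phi_{n,\mu+k}+(\mu+k)\bar w_k'\,\bar\Phi_{n,\mu+k}$, translation into \eqref{PDE} via generating series, and the same count of $k$ identical leading terms for the central term. The worry in your final paragraph is unfounded and needs no parity argument: the $\mc{O}(\hbar)$ corrections to $w_k$ are multiplied by the shift difference, which is itself $\mc{O}(\hbar)$, so they only enter from order $\hbar^2$ on, and $\bar\Phi_{n+1}$ depends only on $\bar\Phi_n$ and the leading parts $\bar w_k$.
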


\begin{proof}
	By assumption, $ \Phi^{\mf{W}}_{0,\mu} (\mf{P}) = \mf{P}_\mu$ is a power series in $ \hbar$.
	The leading orders in $\hbar $ of the recursion \eqref{PhiRecursion} give, writing $ w_k ( \frac{s}{\hbar}, \hbar ) = \frac{1}{\hbar} \bar{w}_k (s) + \mc{O}(\hbar)$,
	\begin{equation}
		\begin{split}
			(n+1) \Phi^{\mf{W}}_{n+1,\mu} (\mf{P}) (s) 
			&=
			\sum_{k = 0}^\infty w_k (\frac{s}{\hbar} + \frac{\mu+k}{2}, \hbar ) \Phi^{\mf{W}}_{n, \mu+k} (\mf{P}) ( s + \frac{\hbar k}{2}) - w_k ( \frac{s}{\hbar} - \frac{\mu+k}{2}, \hbar ) \Phi^{\mf{W}}_{n,\mu+k} (\mf{P}) \big( s - \frac{\hbar k}{2}\big)
			\\
			&=
			\sum_{k = 0}^\infty \big( \frac{1}{\hbar} \bar{w}_k (s ) +  \frac{\mu+k}{2} \bar{w}'_k(s) + \mc{O}(\hbar) \big) \big( \Phi^{\mf{W}}_{n, \mu+k} (\mf{P}) ( s ) + \frac{\hbar k}{2} \Phi^{\mf{W}}_{n, \mu+k} (\mf{P})' ( s ) + \mc{O}(\hbar^2) \big) 
			\\
			&\qquad
			- \big( \frac{1}{\hbar} \bar{w}_k (s ) - \frac{\mu+k}{2} \bar{w}'_k(s) + \mc{O}(\hbar) \big) \big( \Phi^{\mf{W}}_{n, \mu+k} (\mf{P}) ( s ) - \frac{\hbar k}{2} \Phi^{\mf{w}}_{n, \mu+k} (\mf{P})' ( s ) + \mc{O}(\hbar^2) \big)
			\\
			&=
			\sum_{k = 0}^\infty  k \bar{w}_k(s) \Phi^{\mf{W}}_{n,\mu+k} (\mf{P})' (s) + (\mu+k) \bar{w}'_k(s) \Phi^{\mf{W}}_{n, \mu+k} (\mf{P}) ( s ) + \mc{O}(\hbar) ,
		\end{split}
	\end{equation}
	which shows that by recursion all $ \Phi^{\mf{W}}_{n,\mu} (\mf{P})(s)$ are power series in $ \hbar$.
	In terms of $ \bar{\Phi}^{\mf{w}}_{n,\mu} (\mf{P})(s)$,
	\begin{equation}
		(n+1) \bar{\Phi}^{\mf{w}}_{n+1,\mu} (\mf{P}) (s) 
		=
		\sum_{k = 0}^\infty k \bar{w}_k ( s ) \del_s \bar{\Phi}^{\mf{w}}_{n,\mu+k} (\mf{P}) \big( s \big) + (\mu + k) \del_s \bar{w}_k (s ) \bar{\Phi}^{\mf{w}}_{n, \mu+k} (\mf{P}) ( s ) ,
	\end{equation}
	which after taking generating series immediately yields the partial differential equation.\par
    For the central term, we find from \cref{CentralTerm} that
    \begin{equation}
        \begin{split}
            (n+1) \tilde{\Phi}^{\mf{W}}_{n+1} (\mf{P})
            &=
            \sum_{k=0}^\infty \sum_{ l = \frac{1}{2}}^{k - \frac{1}{2}} \big( \frac{1}{\hbar} \bar{w}_k (0 ) + \mc{O} (\hbar^0) \big) \big( \bar{\Phi}^{\mf{w}}_{n,k} (\mf{P}) ( 0) + \mc{O} (\hbar) \big)
            \\
            &=
            \frac{1}{\hbar} \sum_{k=0}^\infty k \bar{w}_k (0) \bar{\Phi}^{\mf{w}}_{n,k} (\mf{P}) (0) + \mc{O} (\hbar^0).\qedhere
        \end{split}
    \end{equation}
\end{proof}

\begin{definition}\label{DefSymplectomorphism}
	We denote the symplectic flow generated by this Hamiltonian vector field
	\begin{equation}
		\Xi_\beta (z,s)
		= 
		e^{\beta V_\mf{w}} z 
		=
		\bar{\Phi}^{\mf{w}}(\mf{J}_{-1}) (z,s,\beta),
		\qquad
		\Upsilon_\beta (z,s)
		=
		e^{\beta V_\mf{w}} s .
	\end{equation}
\end{definition}

Because they are Hamiltonian flows, $ (\Xi_\beta ,\Upsilon_\beta )$ is a symplectomorphism onto its image for $ \Omega$, i.e. 
\begin{equation}
	\Omega 
    =
    \frac{dz}{z} \wedge ds
    =
    \frac{d\Xi_\beta}{\Xi_\beta} \wedge d\Upsilon_\beta .
\end{equation}

This symplectic flow is well-defined for formal $ \beta$, i.e. we can consider $ \Xi_\beta (z,s) , \Upsilon_\beta (z,s) \in \C \llbracket z, s, \beta \rrbracket$. In the general setup with $ \mf{w} \in \C \llbracket z, s\rrbracket $, we have no guarantees that the $ \beta = 1$ flow, necessary for the next proposition, exists. However, we will see that in a fairly wide class of cases, we will be able to write the $ \beta = 1$ flow explicitly as analytic functions.

We will consider both the cut-and-join operator $ W$ and the `right ramification operator' $ \check{S} = \sum_{q=1}^\infty s_q \frac{J_q}{q\hbar} $ as inducing flows. We will require that $ \mf{w} (z, s= 0) = 0$, to ensure that the central term from the first flow vanishes. If this term can be computed otherwise, it can be added to $ y(z)$ by the same argument.

\begin{proposition}\label{GeneralSC}
	If $ \mf{w}(z,s) \in s\C \llbracket z, s\rrbracket $, the spectral curve is given by 
	\begin{equation}
		\begin{cases}
			X(z)
			&=
			\Xi_1 (z, S(z))
			\\
			y (z)
			&=
			\Upsilon_1 (z, S(z)) - \Upsilon_1 (z, 0)
		\end{cases} 
	\end{equation}
	if this is well-defined.
\end{proposition}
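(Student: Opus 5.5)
We work with the disconnected one-point function and extract $\omega_{0,1}$ as the leading $\hbar^{-1}$ term of $\widehat\omega_1(X)=\langle J(X)\, e^W e^{\check S}\rangle$, where $\check S=\sum_q s_q J_{-q}/(q\hbar)$. The plan has four steps: push $J(X)$ through both exponentials, recognise the result as a composition of flows, dequantise, and read off the spectral curve.

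\textbf{Step 1: push $J(X)$ through $e^W$.} Since $W$ has only non-negative $k$, we have $\langle 0|e^{W}=\langle 0|$. Therefore
\begin{equation}
\langle J(X)\,e^W e^{\check S}\rangle=\langle e^{-\operatorname{ad}_W}J(X)\,e^{\check S}\rangle .
\end{equation}
By \cref{MoreGeneralConjugation} with $\mf P=\mf J(z,s)$ and $\beta=1$, the conjugated current equals $\Res_{z=0}\mc E(\hbar\del_s,z)\,\Phi^{\mf W}(\mf J)(z,s,1)\big|_{s=0}$. The central term vanishes because $\mf w(z,0)=0$: by \cref{CentralTermSolution} the integrand $\del_z\mf w(z,0)$ is zero.

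\textbf{Step 2: push through $e^{\check S}$.} We conjugate by $e^{\check S}$ in the same way, using that $\langle 0|e^{\check S}=\langle 0|$ fails, so we instead move $e^{\check S}$ leftwards. The operator $\check S$ is a combination of $J_{-q}=\mc E_{-q}(0)$, so it is itself a cut-and-join operator with generating function $\frac1\hbar S(z)$. This function depends on $z$ only, so its Hamiltonian vector field with respect to $\Omega$ is the pure translation $s\mapsto s+S(z)$, with $z$ fixed.

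Conjugating $\mc E(\hbar\del_s,z)$ by $e^{\check S}$ shifts the argument $s\mapsto s+S(z)$ up to $\hbar$ corrections. These corrections are exact commutators $[J_{-q},\mc E_\mu(u)]=\varsigma(qu)\mc E_{\mu-q}(u)$. Their leading order reproduces the shift, and the central pieces $\varsigma(qu)/\varsigma(u)$ at $u=\hbar\del_s$ give the $\hbar^{-1}$ contribution.

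\textbf{Step 3: dequantise.} At leading order we use the preceding proposition: $\Phi^{\mf W}(\mf J)\to e^{V_{\mf w}}\mf J$, which is a function of $\Xi_1(z,s)$ only. Indeed $\mf J_m=z^{-m}$ and the flow is a ring homomorphism on functions, so $\bar\Phi(\mf J_m)=\Xi_1^{-m}$.

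Consequently $\mf J(z,s)$ becomes $\sum_m (\Xi_1(z,s)/X)^m\,dX/X$. After the $S$-shift and the vacuum evaluation, the only surviving contribution at order $\hbar^{-1}$ comes from the $z^0$/central part. Writing this as a residue, $\omega_{0,1}$ is a sum over the solutions of $X=\Xi_1(z,S(z))$ of the central contribution.

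\textbf{Step 4: identify $y$.} The central contribution should be $\Upsilon_1(z,S(z))-\Upsilon_1(z,0)$, i.e.\ the $s$-coordinate transported by the same flow. The subtraction of $\Upsilon_1(z,0)$ corresponds to the normalisation at $s=0$ of the vacuum.

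We expect the main obstacle to be this identification of the central term in Step 4. The first thing to try is differentiating in $\beta$ using \cref{CentralTermAsFunctions}: show that $\del_\beta$ of the candidate $y\,dX$ equals $\Res(\del_z\mf w)\,\bar\Phi\,dz$. This should follow from the symplectomorphism property $\frac{d\Xi}{\Xi}\wedge d\Upsilon=\Omega$, which lets $y\,d\log X$ be rewritten as the pullback of $s\,\frac{dz}{z}$ up to exact terms.

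We would also check the base case $\beta=0$, where $X=z$ and $y=S(z)$. This reproduces the trivial curve of \cref{TrivialTR} after the $\log$ change of variables. The formal-versus-analytic issue of the $\beta=1$ flow is absorbed by the hypothesis ``if this is well-defined''.
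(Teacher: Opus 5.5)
Your Steps 1--3 reproduce the paper's set-up: conjugate first by $W$, then by $\check S$; the $W$-central term vanishes because $\del_z\mf{w}(z,0)=0$; the $W$-flow sends $\mf{J}$ to $\sum_m(\Xi_1(z,s)/X)^m\frac{dX}{X}$. The step that actually produces $y$, however, is missing. Step 4 only asserts that the central contribution ``should be'' $\Upsilon_1(z,S(z))-\Upsilon_1(z,0)$, and the route you propose cannot work as written. The right-hand side of \cref{CentralTermAsFunctions} with Hamiltonian $\mf{w}$ is $\Res_{z=0}(\del_z\mf{w})\bar\Phi\,dz|_{s=0}$, which is identically zero by your own Step 1, so it would force $\omega_{0,1}$ to be independent of $W$. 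Since the $W$-flow contributes no central term at order $\hbar^{-1}$, all of $\omega_{0,1}$ comes from the central term of the \emph{second} flow, the one generated by $\check S$, and you never compute it. A side remark: your claim that $\langle 0|e^{\check S}=\langle 0|$ fails is backwards. $\check S$ is built from the creation operators $J_{-q}$, so $\langle 0|e^{\check S}=\langle 0|$ does hold, and that is exactly what lets you move $e^{\check S}$ to the left; it is $e^{\check S}|0\rangle$ that differs from $|0\rangle$.

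The missing computation goes as follows. The Hamiltonian of $\check S$ is $\mf{s}(z)=\int_0^z S(z')\frac{dz'}{z'}$, not $S(z)$; with $S$ itself you would translate by $zS'(z)$. Since $\mf{s}$ is $s$-independent with $z\del_z\mf{s}=S(z)$, its flow is $\mf{P}(z,s)\mapsto\mf{P}(z,s+bS(z))$, and \cref{CentralTermSolution} gives the leading central term
\[
\int_0^1\Res_{z=0}S(z)\,\mf{P}(z,bS(z))\,\frac{dz}{z}\,db=\Res_{z=0}\int_{s=0}^{S(z)}\mf{P}(z,s)\,ds\wedge\frac{dz}{z},
\]
to be applied with $\mf{P}=e^{V_{\mf{w}}}\mf{J}=\sum_m(\Xi_1(z,s)/X)^m\frac{dX}{X}$. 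Only at this point does the symplectomorphism enter, as a change of variables in this two-form integral. We have $ds\wedge\frac{dz}{z}=d\Upsilon_1\wedge\frac{d\Xi_1}{\Xi_1}$, and the integrand depends on $\Xi_1$ alone. Integrating out $\Upsilon_1$ between its values at $s=0$ and at $s=S(z)$, the formal delta function $\sum_m(\Xi_1/X)^m\frac{d\Xi_1}{\Xi_1}$ then sets $\Xi_1=X$. This is where $X=\Xi_1(z,S(z))$ and $y=\Upsilon_1(z,S(z))-\Upsilon_1(z,0)$ come from. The subtracted term is the lower limit $s=0$ of the integral, not a vacuum normalisation, and it in fact vanishes: $\mf{w}\in s\C\llbracket z,s\rrbracket$ makes $\{s=0\}$ invariant under $V_{\mf{w}}$. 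Your $\beta$-derivative idea can be rescued, but only by replacing $\mf{w}$ with its transport $\mf{w}(z,s+S(z))$ under the $\check S$-flow, since the commutator you actually need is with $e^{-\check S}We^{\check S}$. This gives $\del_\beta\omega_{0,1}=d_X[\mf{w}(X,y_\beta)]$, a Hamilton--Jacobi equation that the candidate curve does satisfy.
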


\begin{proof}

    We use a double flow in terms of functions, and take the vacuum expectation value, i.e. the central term, of $ \mc{E}$.
    For the $ \mf{w}$-flow, note that $ \del_z \mf{w} (z,s) \Big|_{s=0} = 0$, so that using \cref{CentralTermAsFunctions},
    \begin{equation}
        \del_\beta \tilde{\bar{\Phi}}^{\mf{w}}(\mf{J}) ( \beta) 
        =
        \Res_{z=0} ( \del_z \mf{w}(z,s)) \bar{\Phi}^{\mf{w}}(\mf{J}) (z, s, \beta ) dz \Big|_{s = 0}
        =
        0,
    \end{equation}
    and hence the central term for this flow vanishes.\par
    For the flow with $ \mf{s} (z,s) \coloneqq \hbar \check{\mf{S}}(z,\frac{s}{\hbar} ) \Big|_{\hbar = 0} = \int_0^z S(z') \frac{dz'}{z'}$, on the other hand, we have $ z \del_z \mf{s} (z,s) = S(z)$. This latter is independent of $s$, so that in particular its flow is given by
    \begin{equation}
        \bar{\Phi}^{\mf{s}} (\mf{P} ) (z, s, \beta)
        =
        e^{\beta V_\mf{s}} \mf{P}(z,s)
        =
        \mf{P} (z, s + \beta S(z) )
    \end{equation}
    so that \cref{CentralTermSolution} gives for the central term:
    \begin{equation}
        \begin{split}
            \tilde{\bar{\Phi}}^{\mf{s}} (\mf{P}) (\beta)
            &=
            \int_{b = 0}^\beta \Big( \Res_{z=0} ( \del_z \mf{s}(z,s)) \bar{\Phi}^{\mf{s}}(\mf{P}) (z, s, b ) dz ) \Big|_{s = 0} \Big) db
            \\
            &=
            \int_{b = 0}^\beta \Res_{z=0} S(z) \mf{P} (z, b S(z) ) \frac{dz}{z} \wedge db
            \\
            &=
            \Res_{z=0} \int_{s = 0}^{\beta S(z)}  \mf{P} (z, s ) ds \wedge \frac{dz}{z}.
        \end{split}
    \end{equation}

    This yields:
    \begin{equation}
        \begin{split}
            \omega_{0,1}(X)
            &=
            [\hbar^{-1}] \Big\< J(X) e^{W} e^{\check{S}} \Big\>
            \\
            &=
            [\hbar^{-1}] \Res_{z = 0} \Big\< e^{-\ad_{\check{S}}} e^{-\ad_W} \mc{E}( \hbar \del_s, z) \mf{J}(z,s) \Big|_{s = 0} \Big\> 
            \\
            &=
            [\hbar^{-1}] \Res_{z = 0} \Big\< \mc{E}( \hbar \del_s, z) \Big\> \Phi^{\check{\mf{S}}} (\Phi^{\mf{W}} (\mf{J}))(z,s, 1, 1) \Big|_{s = 0} 
            \\
            &=
            [\hbar^{-1}] \Res_{z = 0} \frac{dz}{z \varsigma (\hbar \del_s)} \Phi^{\check{\mf{S}}} (\Phi^{\mf{W}} (\mf{J}))(z,s, 1, 1) \Big|_{s = 0}
            \\
            &=
            \tilde{\bar{\Phi}}^{\mf{s}}(e^{V_{\mf{w}}} \mf{J} )(z,s, 1)
            \\
            &=
            \Big( \int_{s=0}^{S(z)} \Res_{z=0} \sum_{\mu \in \Z \setminus \{ 0\} } e^{V_\mf{w}} \big( \frac{z}{X} \big)^m ds \wedge \frac{dz}{z} \Big) \frac{dX}{X}
            \\
            &=
            \Big( \Res_{z = 0}  \int_{s = 0}^{S(z)} \sum_{\mu \in \Z \setminus \{ 0\} } \big( \frac{\Xi_1(z,s )}{X} \big)^m ds \wedge\frac{dz}{z  } \Big) \frac{dX}{X}.
        \end{split}
    \end{equation}
    Now we use that $ ( \Xi_1, \Upsilon_1 ) $ is a symplectomorphism, and write $ z(\Xi_1) $ for the local inverse to $ \Xi_1 (z)$ near $ z = \Xi_1 = 0$ to find
    \begin{equation}
        \begin{split}
            \omega_{0,1}(X)
            &=
            \Big( \Res_{\Xi_1 = 0}  \int_{\Upsilon_1 = \Upsilon_1 (z(\Xi_1),0)}^{\Upsilon_1(z(\Xi_1),S(z(\Xi_1)))} \sum_{\mu \in \Z \setminus \{ 0\} } \big( \frac{\Xi_1}{X} \big)^m d\Upsilon_1 \wedge\frac{d\Xi_1}{\Xi_1  } \Big) \frac{dX}{X}
            \\
            &=
            \sum_{\mu \in \Z \setminus \{ 0\} } \Big( \Res_{\Xi_1 = 0} \big( \Upsilon_1(z(\Xi_1),S(z(\Xi_1))) - \Upsilon_1 (z(\Xi_1),0) \big) \big( \frac{\Xi_1}{X} \big)^m \frac{d\Xi_1}{\Xi_1  } \Big) \frac{dX}{X}
            \\
            &=
            \Big( \Upsilon_1(z(X),S(z(X))) - \Upsilon_1 (z(X),0) \Big) \frac{dX}{X},
        \end{split}
    \end{equation}
    using that the $ \mu = 0$ term vanishes, as $ z(0) = S(0) = 0$. Equivalently, using the parametrisation given in the proposition,
    \begin{equation}
        \omega_{0,1} (X (z)) = y(z) \frac{dX(z)}{X(z)}.\qedhere
    \end{equation}
\end{proof}

\begin{corollary}
	Because of Hamiltonian flow,
	\begin{equation}
		\mf{w}( X, y) 
		= 
		\mf{w} (z, S(z)) .
	\end{equation}
\end{corollary}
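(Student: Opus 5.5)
The identity $\mf{w}(X,y) = \mf{w}(z,S(z))$ should follow from conservation of the Hamiltonian along its own flow, combined with the fact that the $\mf{w}$-part of the curve depends only on the difference $y$. The plan is to first record the conservation law and then compare it with the parametrisation of \cref{GeneralSC}.

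For the first step, recall that $V_{\mf{w}}$ is the Hamiltonian vector field of $\mf{w}$ with respect to $\Omega = \frac{dz}{z}\wedge ds$. Hence $V_{\mf{w}}\mf{w} = \{\mf{w},\mf{w}\} = 0$, which can be checked directly from \eqref{PDE} with $\mf{P} = \mf{w}$: the two terms cancel identically. Consequently $e^{\beta V_{\mf{w}}}$ fixes $\mf{w}$. Since $e^{\beta V_{\mf{w}}}$ acts on functions by composition with the flow, $e^{\beta V_{\mf{w}}}F = F\circ(\Xi_\beta,\Upsilon_\beta)$, we obtain
\[
\mf{w}\big(\Xi_\beta(z,s),\Upsilon_\beta(z,s)\big) = \mf{w}(z,s)
\]
as formal series in $\beta$. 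At $\beta = 1$ this holds wherever the flow is defined, which is exactly the hypothesis of \cref{GeneralSC}. The only point requiring care is that the exponential acts multiplicatively, i.e.\ as a ring homomorphism; this holds because $V_{\mf{w}}$ is a derivation.

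Next, I would substitute $s = S(z)$ and use $X = \Xi_1(z,S(z))$ from \cref{GeneralSC}. This gives $\mf{w}\big(X,\Upsilon_1(z,S(z))\big) = \mf{w}(z,S(z))$.

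The main obstacle is the shift $y = \Upsilon_1(z,S(z)) - \Upsilon_1(z,0)$, since the conservation law controls $\Upsilon_1(z,S(z))$, not $y$. I would try to show that $\Upsilon_1(z,0) = 0$. Because $\mf{w} \in s\,\C\llbracket z,s\rrbracket$, we have $z\del_z\mf{w}|_{s=0} = 0$, so $V_{\mf{w}}$ is tangent to the hypersurface $\{s=0\}$. Thus the flow preserves $\{s = 0\}$, and $\Upsilon_\beta(z,0) = 0$ for all $\beta$. Concretely, an induction on the $\beta$-coefficients of $\Upsilon_\beta = e^{\beta V_{\mf{w}}}s$ should show that each coefficient lies in $s\,\C\llbracket z,s\rrbracket$, since $V_{\mf{w}}$ preserves the ideal $(s)$.

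It then follows that $y = \Upsilon_1(z,S(z))$, and the corollary follows from the previous step.
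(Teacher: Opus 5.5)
Your proposal is correct and is the argument the paper intends by ``because of Hamiltonian flow'': $V_{\mf{w}}\mf{w}=0$ gives $\mf{w}(\Xi_1,\Upsilon_1)=\mf{w}(z,s)$, and then you substitute $s=S(z)$. You also check that $\Upsilon_\beta(z,0)=0$, since $\mf{w}\in s\C\llbracket z,s\rrbracket$ makes $V_{\mf{w}}$ preserve the ideal $(s)$; this makes the subtracted term in \cref{GeneralSC} vanish so that $y=\Upsilon_1(z,S(z))$, a step the paper leaves implicit and which you handle correctly.
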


In particular cases, this equation can be a first step for obtaining a plane curve representation of the spectral curve. This can be useful for finding e.g. the quantum curve of the problem. We do not pursue this direction here.

\section{Explicit closed formulas }
\label{ExplicitSection}

The goal of this section is to find explicit closed formulas for the multidifferential generating series of leaky Hurwitz numbers. 

\subsection{Explicit closed algebraic formulas for multidifferentials}
\label{ExplicitMultidifferentials}

This section is heavily inspired by \cite{BDKS22}, and we will mostly use the same formalism. In fact, most of the results of this section are direct extensions of those in \cite{BDKS22}, and the proofs given there go through verbatim. As such, we do not give all the details, but rather only note the difference.

We start with \cite[Sections 3 \& 4.1-2]{BDKS22}, which work verbatim (except that we write $ s $ and $t$ for their $ y$ and $ r$, respectively),  with the understanding that $ \phi$ and $ \bar{\phi}$, and hence also $ U $ and $U^+$, depend explicitly on $z$. This does not change the proofs in any way.




For completeness, let us explicitly define $ \phi$ and $ \bar{\phi}$ in terms of our flow formalism.

\begin{definition}\label{Defsphi}
    Define 
    \begin{equation}
        \phi_{m,n,\mu}(s)
        \coloneqq
        \Phi^{\mf{W}}_{n,\mu+m} (\mf{J}_{m})(s) ,
        \quad
        \phi_m (z,s)
        \coloneqq
        \sum_{n=0}^\infty \sum_{\mu \in \Z} \phi_{m,n,\mu} z^{-\mu} ,
        \quad
        \bar{\phi}_m(z,s)
        \coloneqq
        \phi_m (z,s)\Big|_{\hbar = 0}.
    \end{equation}
    Then by \cref{PhiRecursion}, $ \phi_{m,0,\mu} = \delta_{\mu, 0} $ and
    \begin{equation}\label{Generalphi}
		(n+1) \phi_{m,n+1,\mu} (s) 
		=
		\sum_{k = 0}^\infty w_k (\frac{s}{\hbar} + \frac{\mu+m+k}{2}, \hbar ) \phi_{m,n, \mu+k} ( s + \frac{\hbar k}{2}) - w_k ( \frac{s}{\hbar} - \frac{\mu+k}{2}, \hbar ) \phi_{m,n,\mu+m+k} \big( s - \frac{\hbar k}{2}\big) .
	\end{equation}
    Moreover, from \cref{HamFlow,DefSymplectomorphism}, we find directly that
    \begin{equation}\label{phiAsRatio}
        \bar{\phi}_{m} (z,s) = (\frac{z}{\Xi_1 (z,s)})^{m}.
    \end{equation}
\end{definition}

To adapt \cite[Sections 4.3-4]{BDKS22}, we use \cite[Lemma 4.6]{BDKS22} as definition.

\begin{definition}
	We define the functions $ L_t (v,z, s, \hbar)$ by
	\begin{equation}\label{FunctionL}
		L_t (v,z,s,\hbar) \coloneqq \bar{\phi}_v(z,s)^{-1} \del_s^t \phi_v(z,s),
	\end{equation}
	which is well-defined, since following \cref{Generalphi}, $ \phi_m$, and hence $ \bar{\phi}_m$, is well-defined for $ m \in \C$, not just $m \in \N$.
\end{definition}

\begin{lemma}
	We have that
	\begin{align}
		L_0(v,z,s,\hbar) 
		&= 
		1 + \mc{O} (\hbar) \,,
		\\
		L_t(v,z,s,\hbar)
		&=
		\bar{\phi}_v(z,s)^{-1} \del_s^t \bar{\phi}_v (z,s) L_0(v,z,s,\hbar)
		=
		\Big( \del_s + v \frac{\bar{\phi}_1' (z, s)}{\bar{\phi}_1 (z, s)} \Big)^t L_0 (v,z,s, \hbar) .
	\end{align}
	Moreover, $ L_0$, and hence all $ L_t$, are power series in $ \hbar$ with coefficients that are polynomial in $v$ and the $s$-derivatives of $ \log \bar{\phi}_1 (z, s)$.
\end{lemma}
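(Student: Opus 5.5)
Following \cite[Lemma 4.6]{BDKS22}, I will read off everything from the recursion \eqref{Generalphi} after the rescaling $s \mapsto s/\hbar$ from \cref{TopologicalAssumption}. The plan is to show first that the ratio $\phi_v/\bar{\phi}_v$ is a power series in $\hbar$ with constant term $1$. The second identity then follows by the Leibniz rule, and the structural claim by an induction on powers of $\hbar$.

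\textbf{Step 1: $L_0 = 1 + \mc{O}(\hbar)$.} By the proposition following \cref{LeadingOrderAssumption}, each $\Phi^{\mf{W}}_{n,\mu}(\mf{P})(s)$ is a power series in $\hbar$. Its $\hbar^0$ part solves the Hamiltonian PDE \eqref{PDE}, so by \eqref{phiAsRatio} we have $\phi_v = \bar{\phi}_v + \mc{O}(\hbar)$. For non-integer $v$, the recursion \eqref{Generalphi} is polynomial in $v$ coefficientwise, so the same statement holds. Since $\bar{\phi}_v = 1 + \mc{O}(\beta)$ is invertible as a series, $L_0 = \bar{\phi}_v^{-1}\phi_v = 1 + \mc{O}(\hbar)$.

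\textbf{Step 2: the formula for $L_t$.} Write $\phi_v = \bar{\phi}_v L_0$. From \eqref{phiAsRatio}, $\bar{\phi}_v = \bar{\phi}_1^{\,v}$, so $\del_s \bar{\phi}_v = v\,(\bar{\phi}_1'/\bar{\phi}_1)\,\bar{\phi}_v$. Commuting $\bar{\phi}_v$ through $\del_s^t$ gives the conjugation identity $\bar{\phi}_v^{-1}\del_s^t\bar{\phi}_v = (\del_s + v\,\bar{\phi}_1'/\bar{\phi}_1)^t$ as operators. This yields the second displayed equality. The middle expression there should be read as this operator acting on $L_0$.

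\textbf{Step 3: polynomial structure of $L_0$.} This is the main obstacle. I would substitute $\phi_v = \bar{\phi}_1^{\,v} L_0$ into \eqref{Generalphi}, written as a $z$-generating identity in which the shifts $s \pm \hbar k/2$ act as $e^{\pm \frac{\hbar k}{2}\del_s}$. Dividing by $\bar{\phi}_1^{\,v}$ turns each shift into $\exp(\pm\frac{\hbar k}{2}(\del_s + v\,\partial_s\log\bar{\phi}_1))$ acting on $L_0$. Expanding in $\hbar$, the $\hbar^0$ part cancels exactly by the Hamiltonian equation satisfied by $\bar{\phi}_1$. At order $\hbar^{j}$ one then gets a first-order transport equation in $\beta$ for $[\hbar^j]L_0$. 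Its source terms are polynomial in $v$, in $s$-derivatives of $\log\bar{\phi}_1$, and in lower-order coefficients of $L_0$. The coefficients $w_k$, expanded using \cref{TopologicalAssumption}, enter through $\mf{w}$ and its derivatives.

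The delicate point is that this transport operator is exactly the one defining the flow. I would need to check that the solution stays in the claimed ring, namely polynomials in $v$ and in $\del_s^i \log\bar{\phi}_1$, with the $z$- and $\beta$-dependence absorbed into those derivatives. I would attempt this as in \cite[Lemma 4.6]{BDKS22}: make an ansatz for $[\hbar^j]L_0$ as such a polynomial, and verify it by showing that the vector field $V_{\mf{w}}$ acts on $\del_s^i\log\bar{\phi}_1$ in a way closed under this algebra. Then induct on $j$. The bookkeeping for the $z$-dependence of $\mf{w}$, which is absent in \cite{BDKS22}, is where I expect to spend the effort.
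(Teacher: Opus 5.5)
Your Steps 1 and 2 are exactly the paper's argument: $\bar\phi_v$ is by definition the $\hbar^0$ part of $\phi_v$, and $L_t=\bar\phi_v^{-1}\del_s^t(\bar\phi_vL_0)$ is rewritten using $\bar\phi_v=\bar\phi_1^{\,v}$. The gap is Step 3. You leave it as a plan, and the closure you propose to verify is false, so no ansatz in the ring generated by $v$ and the $\del_s^i\log\bar\phi_1$ can work. Two things break it.

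First, \cref{TopologicalAssumption} allows for instance $w(\frac{s}{\hbar},\hbar)=\frac1\hbar\bar w(s)+\hbar c(s)$. Then $[\hbar^2z^k]L_0=\frac{v^3}{24}\bar w'''+v\,c'$ contains $c$, which $\bar\phi_1$ does not see. So the higher $\hbar$-corrections of $\mf{W}$ do not ``enter through $\mf{w}$''.

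Second, and more seriously, the closure fails even for $w(\frac{s}{\hbar},\hbar)=\frac1\hbar\bar w(s)$ with fixed $k>0$. In \cref{Generalphi} the arguments of $w_k$ are shifted by $\pm\frac{\mu}{2}$, where $\mu$ is the $z$-exponent. After $s\mapsto s/\hbar$ these shifts become $\mp\frac{\hbar}{2}z\del_z$ acting on $\bar\phi_1^{\,v}L_0$. Your source terms therefore contain $z$-derivatives of $\log\bar\phi_1$, with explicit powers of $k$, and not only $s$-derivatives.

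This is not just bookkeeping. Let $\phi_{v,n}$ be the $z^{nk}$-coefficient from \cref{phi_mn}.
\begin{itemize}
\item Then $[\hbar^2z^k]L_0=\frac{v^3}{24}\bar w'''$. Since $[z^k]\del_s^i\log\bar\phi_1=\bar w^{(i+1)}$, any universal polynomial in $v$ and the $\del_s^i\log\bar\phi_1$ must have linear part $\frac{v^3}{24}\del_s^2\log\bar\phi_1$ at order $\hbar^2$.
\item Using $\log\bar\phi_1=\bar w'z^k+\frac k2(\bar w\bar w''-\bar w'^2)z^{2k}+\dotsb$, this linear part contributes $\frac{kv^3}{48}\bar w\bar w''''$ at order $z^{2k}$. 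Quadratic terms only give products $\bar w^{(a)}\bar w^{(b)}$ with $a,b\ge1$.
\item Expanding $\phi_{v,2}$ directly, however, the coefficient of $\bar w\bar w''''$ in $[\hbar^2z^{2k}]L_0$ is $\frac{kv(v^2+k^2)}{48}$.
\end{itemize}
The surplus $\frac{k^3v}{48}\bar w\bar w''''$ is an independent differential monomial in $\bar w$, so no such polynomial can produce it. It is a $z\del_z$-type contribution; compare the $\del_\zeta$ in \cref{SubstitutionOfDerivatives}. So the last sentence of the lemma is overstated for $k>0$; the paper's own justification is only that it ``follows from the recursive formula''.

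What your transport-equation induction does prove is polynomiality in $v$, and this is what is used later (finiteness of the $j$-sums in \cref{HgnAsGraphSum}). After conjugating by $\bar\phi_v$, the homogeneous operator on $[\hbar^j]L_0$ is $\del_\beta-V_{\mf{w}}$. For fixed leakiness this is $(1+kz^k\bar w')\frac{z\del_z}{k}-kz^k\bar w\,\del_s$, cf.\ \cref{PDEforphibar}. It does not involve $v$, while the sources are polynomials in $v$ of bounded degree by induction. Aim for that statement, with coefficients allowed to involve $z$-derivatives and the $\hbar$-corrections of $\mf{W}$.
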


\begin{proof}
	The first identity holds from the definition of $ \bar{\phi}$ as the leading $ \hbar$ order term of $ \phi$. The second equality follows directly from the definition, while the last follows from $ \bar{\phi}_v = (\bar{\phi}_1)^v$.\par
	The polynomiality statement follows from the recursive formula.
\end{proof}

With this, we get a minor extension of \cite[Proposition 4.3]{BDKS22}, which they call their \emph{principal identity}. We write it in terms of a one-form in $z$, rather than a function, cf. \cite[Proposition 4.4]{ABDKS24}.

\begin{proposition}[{\cite[Proposition 4.3]{BDKS22}}]
	Let $h(u, z) \in \C [u]\llbracket \hbar, z \rrbracket [z^{-1}] dz $. Then
	\begin{equation}
		\sum_{m \in \Z} X^m\frac{dX}{X} \sum_{t = 0}^\infty \Res_{\tilde{z} = 0} \del_s^t \phi_m (\tilde{z},s) \big|_{s = 0} [u^t] e^{u S(\tilde{z})} \frac{H(u,\tilde{z})}{\tilde{z}^m}
		=
		\sum_{j,t = 0}^\infty \big( d \frac{1}{dx} \big)^j [v^j] L_t(v, z, S(z),\hbar) [u^t] H(u,z)
	\end{equation}
	where
	\begin{equation}
		X(z)
        =
        e^{x(z)}
		\coloneqq
		z  \big( \bar{\phi}_1(z,S(z)) \big)^{-1} .
	\end{equation}
\end{proposition}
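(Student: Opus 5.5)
The proof will follow \cite[Proposition 4.3]{BDKS22}, with the $z$-dependence of $\phi$ tracked explicitly. The idea is to rewrite the left-hand side as a residue in $\tilde z$ of an expression whose only $m$-dependence is through $(\Xi/X)^m$, and then to perform the sum over $m$ as a formal delta function, as in the proof of \cref{GeneralSC}.

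\textbf{Step 1: resum over $t$ and $u$.} For fixed $m$, Taylor's theorem gives
\begin{equation}
    \sum_t \del_s^t \phi_m(\tilde z,s)\big|_{s=0}\, [u^t]\, e^{uS(\tilde z)} H(u,\tilde z)
\end{equation}
as the operator $H(\del_s,\tilde z)$ acting on $\phi_m(\tilde z,s)$, evaluated at $s=S(\tilde z)$. This uses that $H$ is polynomial in $u$, so the sum is finite.

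\textbf{Step 2: factor out the leading order.} Write $\phi_m = \bar\phi_m \cdot \bar\phi_m^{-1}\phi_m$. Then
\begin{equation}
    \del_s^t\phi_m = \bar\phi_m\, L_t(m,\tilde z,s,\hbar),
\end{equation}
by \eqref{FunctionL}. By \eqref{phiAsRatio}, $\bar\phi_m(\tilde z,S(\tilde z))\,\tilde z^{-m} = X(\tilde z)^{-m}$, with $X(z) = z\,\bar\phi_1(z,S(z))^{-1}$ as in the statement. The summand therefore becomes
\begin{equation}
    \Res_{\tilde z=0}\, \Big(\frac{X}{X(\tilde z)}\Big)^m \sum_t L_t(m,\tilde z,S(\tilde z),\hbar)\,[u^t]H(u,\tilde z).
\end{equation}
Since the $L_t$ are polynomial in $v$ order by order in $\hbar$, we expand $L_t(m,\ldots) = \sum_j m^j [v^j]L_t$.

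\textbf{Step 3: sum over $m$.} Multiplication by $m^j$ applied to $X^m\frac{dX}{X}$ is $\big(d\frac{1}{dx}\big)^j$ acting in the external variable, with $x=\log X$. We then apply the standard identity
\begin{equation}
    \sum_{m\in\Z} \Big(\frac{X}{X(\tilde z)}\Big)^m \frac{dX}{X}\,\Res_{\tilde z=0} f(\tilde z) = f(z(X)),
\end{equation}
which holds for $f$ a Laurent-type one-form in $\tilde z$ after changing variables $\tilde z\mapsto X(\tilde z)$. This is legitimate because $X(\tilde z)=\tilde z(1+O(\tilde z))$ near $0$: $\bar\phi_1(z,s)$ is invertible as a series, being $1$ at $\beta=0$ with corrections coming from $\mf w\in s\C\llbracket z,s\rrbracket$. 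Evaluating gives exactly the right-hand side.

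\textbf{Main obstacle.} The main difficulty is justifying the formal manipulations. Specifically:
\begin{itemize}
    \item that for each order in $\hbar$ only finitely many $m$ contribute nontrivially in each coefficient, so that the delta-function identity applies;
    \item that the $\tilde z$-dependence of $\phi$ (absent in \cite{BDKS22}) does not interfere. The residue is only taken after substituting $s=S(\tilde z)$, so the $z$-dependence simply rides along in $f$.
\end{itemize}
I would check these by grading in $\hbar$ and $z$, exactly as in \cite{BDKS22}, noting that the proof there never uses independence of $\phi$ from $z$.
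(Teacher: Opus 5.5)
Your proposal is correct and is precisely the argument the paper relies on. The paper gives no new proof; it states that the proof of \cite[Proposition 4.3]{BDKS22} goes through verbatim, namely: Taylor resummation in $s$, writing $\partial_s^t\phi_m=\bar{\phi}_m L_t$ with $\bar{\phi}_m(\tilde z,S(\tilde z))\,\tilde z^{-m}=X(\tilde z)^{-m}$, trading $m^j$ for $(d\frac{1}{dx})^j$, and the Lagrange-inversion/delta-function identity. The only new feature is the $\tilde z$-dependence of $\phi_m$, which is harmless for exactly the reason you give.

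One minor correction: in general $X(\tilde z)=e^{-\partial_s\mathfrak{w}(0,0)}\,\tilde z\,(1+\mathcal{O}(\tilde z))$, so the leading coefficient need not be $1$ (it is $1$ in the fixed-$k>0$ case). Only its non-vanishing is needed for the change of variables.
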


Apart from minor change in notation, the only difference is that we allow $ \phi_m$ to depend on $ \tilde{z}$. The proof again goes through verbatim. Similarly, we get the following direct extensions of \cite[Corollary 4.4, Theorem 4.8, Corollary 4.10]{BDKS22}.


\begin{proposition}
	For $ n \geq 2$,
	\begin{equation}\label{HgnAsGraphSum}
		\omega_{g,n} + \delta_{n,2} \delta_{g,0} \frac{dX_1 dX_2}{(X_1 - X_2)^2}
		=
		[\hbar^{2g-2+n}] U_n \dotsb U_1 \sum_{\gamma \in \Gamma_n} \prod_{\{ v_j, v_l\} \in E_\gamma} w_{j,l} ,
	\end{equation}
	where
	\begin{align}
		(Uf) (X(z))
		&=
		\sum_{j,t = 0}^\infty \big( d \frac{1}{dx}\big)^j \Big( [v^j] L_t(v, z, S(z),\hbar) [u^t]\frac{e^{u (\mc{S}(u \hbar z \del_z) -1) S(z)}}{u \hbar \mc{S}(u\hbar)} f(u,z) \frac{dz}{z} \Big)
		\\
		w_{j,l} 
		&=
		e^{\hbar^2 u_j u_l \mc{S}(u_j \hbar Q_j D_j) \mc{S}(u_l \hbar Q_l D_l) z_j z_l/ (z_j - z_l)^2} -1 .
	\end{align}
	The sum is over all connected simple graphs on $n$ labelled vertices.\par
	For fixed $g$ and $n$, after taking the coefficient $[\hbar^{2g-2+n}]$, all sums in this formula become finite, and it becomes a rational expression in $z_1, \dotsc, z_n$ and the derivatives of the functions $S_i = S(z_i)$ and $ \log \bar{\phi}_1 (s_i, z_i^k)$.
\end{proposition}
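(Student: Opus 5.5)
The plan is to follow the proof of \cite[Theorem 4.8, Corollary 4.10]{BDKS22} essentially verbatim, checking at each step that the only new feature of our setting does not interfere: $\phi_m$ now depends on $\tilde z$, and hence so do $L_t$ and $U$. The starting point is the Fock space expression for $\widehat{\omega}_n$ in \cref{Correlators}. Since $W$ has only non-negative $k$, we have $\<0|e^{W}=\<0|$, so we may conjugate each current $J(X_i)$ through $e^{W}$ and then through $e^{\check S}$. By \cref{MoreGeneralConjugation}, $e^{-\ad_W}J_m$ is expressed through the residue of $\mc{E}(\hbar\del_s,\tilde z)$ against $\Phi^{\mf W}(\mf J_m)$. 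Rewritten with \cref{Defsphi}, this means each $J_m$ is replaced by $\Res_{\tilde z=0}\,\phi_m(\tilde z,\hbar\del_s)\,\tilde z^{-m}\,\mc{E}(\hbar\del_s,\tilde z)|_{s=0}$. Conjugation by $e^{\check S}$ then shifts $s\mapsto s+S(\tilde z)$ and produces the factor $e^{u(\mc{S}(u\hbar\tilde z\del_{\tilde z})-1)S(\tilde z)}$ acting on each $\mc{E}$, exactly as in \cite[Section 4]{BDKS22}.

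The second step is to compute the vacuum expectation value of a product of $n$ operators $\mc{E}(u_i\hbar,\tilde z_i)$. We use the bosonic formula from the Boson--Fermion correspondence and normal-order the exponentials. This gives $\prod_i \frac{1}{u_i\hbar\,\mc{S}(u_i\hbar)}$ times a product over pairs of factors of the form $\exp\big(\hbar^2u_ju_l\mc{S}(u_j\hbar Q_jD_j)\mc{S}(u_l\hbar Q_lD_l)\tfrac{z_jz_l}{(z_j-z_l)^2}\big)$. Taking the connected part by inclusion--exclusion turns this product into the sum over connected graphs $\gamma\in\Gamma_n$ of $\prod w_{j,l}$, as in \cite[Theorem 4.8]{BDKS22}. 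This step is purely about the operators $\mc{E}$ and the operator $\check S$, so it is unchanged.

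The third and key step applies the principal identity (the extension of \cite[Proposition 4.3]{BDKS22} above) in each variable separately. This converts $\sum_m X_i^m\frac{dX_i}{X_i}\Res_{\tilde z_i=0}\del_s^t\phi_m(\tilde z_i,s)|_{s=0}[u_i^t](\dots)\tilde z_i^{-m}$ into the operator $U_i$ acting on the remaining $(u_i,z_i)$-dependence. Here $X(z)=z\bar\phi_1(z,S(z))^{-1}$, which is consistent with \cref{GeneralSC} and \cref{phiAsRatio}. I expect this to be the main obstacle. The identity must be applied to a function of $\tilde z$ that itself involves the other variables through $w_{j,l}$. It must also handle the shift $s\to s+S(\tilde z)$ combined with the $\tilde z$-dependence of $\phi_m$. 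One has to check that the Lagrange-inversion style argument behind \cite[Proposition 4.3]{BDKS22} only uses that $\phi_m(\tilde z,s)$ is a formal series in $\hbar$ whose leading term is $(\tilde z/\Xi_1)^m$. The $\tilde z$-dependence is then harmless, because the residue is taken after the substitution $s=S(\tilde z)$. The $\delta_{n,2}$ term comes from the difference between $\widehat\omega$ and $\omega$ in \cite[Proposition 4.1]{BDKS22}.

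Finally, for finiteness we extract $[\hbar^{2g-2+n}]$. Each edge factor $w_{j,l}$ is $O(\hbar^2)$. Each $U_i$ contributes $\hbar^{-1}$ times a series in $\hbar u_i$. Moreover $L_t=O(\hbar^{\lceil t/2\rceil})$ modulo the $\bar\phi$ part, by the lemma on $L_t$ together with \cref{TopologicalAssumption}. Together these bound the number of edges, the powers of $u_i$ and $t$, and hence $j$. Only finitely many terms survive, and they are rational in the $z_i$ and in the derivatives of $S_i$ and $\log\bar\phi_1$, by the polynomiality part of that lemma.
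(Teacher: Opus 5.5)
Your outline follows the same route as the paper, which simply asserts that the proofs of \cite[Proposition 4.3, Theorem 4.8, Corollary 4.10]{BDKS22} go through verbatim once $\phi_m$ (and hence $L_t$ and $U$) may depend on $\tilde z$, with $X=z\,\bar\phi_1(z,S(z))^{-1}=\Xi_1(z,S(z))$. Your finiteness count contains two false intermediate claims, though the conclusion still holds:
\begin{itemize}
\item $L_t$ is not $O(\hbar^{\lceil t/2\rceil})$, since its $\hbar^0$ part is $(\partial_s+v\,\partial_s\log\bar\phi_1)^t 1$.
\item $U$ is not $\hbar^{-1}$ times a series in $\hbar u$, since it contains terms $u^{2a+1}\hbar^{2a}$.
\end{itemize}
The correct bookkeeping is as follows. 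Each power of $u_i$ costs at least $\hbar^{2/3}$ in $U$ and at least $\hbar$ in $w_{j,l}$. Connectedness for $n\ge 2$ cancels the $u_i^{-1}$ pole. Each $\hbar$-coefficient of $L_t$ is polynomial in $v$, which bounds $j$.
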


\begin{corollary}
	All diagonal poles (i.e., poles at $z_i = z_j$ for $i \neq j$) on the right side of \cref{HgnAsGraphSum} cancel out; in other words, the expression in the right side of \cref{HgnAsGraphSum} actually belongs to subring $ \C \llbracket z_1, \dotsc, z_n \rrbracket$.
\end{corollary}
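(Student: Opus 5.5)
The plan is to argue in two parts, following the approach of \cite[Corollary 4.10]{BDKS22}. First, the left side of \cref{HgnAsGraphSum}, namely $\omega_{g,n} + \delta_{n,2}\delta_{g,0}\frac{dX_1dX_2}{(X_1-X_2)^2}$, is by \cref{def:Multidifferentials} and \cite[Proposition 4.1]{BDKS22} the connected Fock space correlator $\widehat{\omega}_{g,n}$, but with the $\delta_{g,0}\delta_{n,2}$ term removed. Its non-completed part $\omega_{g,n}$ is by definition a power series in $X_1,\dotsc,X_n$. Since $X(z) = z\,\bar{\phi}_1(z,S(z))^{-1}$ and $\bar{\phi}_1(0,s)=1$, each $X_i$ is a local coordinate at $z_i=0$, so $\omega_{g,n}$ is a power series in the $z_i$ (times $\prod dz_i$). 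For $(g,n)\neq(0,2)$ this settles the left side directly.

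The point is to see that the extra term is also compatible with the graph sum. I would separate the $(0,2)$ case: the difference $\frac{dX_1dX_2}{(X_1-X_2)^2}-\frac{dz_1dz_2}{(z_1-z_2)^2}$ is regular on the diagonal, because $z\mapsto X(z)$ is a local biholomorphism near $0$. This reduces the claim to showing that the graph sum for $(0,2)$ produces exactly $\frac{dz_1dz_2}{(z_1-z_2)^2}$ plus a regular part. I would check this directly from the leading order of $U_2U_1 w_{1,2}$: at $\hbar^0$ the operators $U_i$ reduce to $\frac{1}{\hbar u}\cdot\hbar u$ with $L_0=1$, and $w_{1,2}=\hbar^2u_1u_2\frac{z_1z_2}{(z_1-z_2)^2}+\dotsb$.

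Second, I would make the identification rigorous. For fixed $(g,n)$ the right side is a finite rational expression in the $z_i$ whose only possible poles are diagonal poles $z_i=z_j$ (coming from $w_{j,l}$) and the poles introduced by the operators $d\frac{1}{dx}$ at zeroes of $dx$. These cannot be at $z_i=0$ near the origin, since $dx = \frac{dz}{z}(1+O(z))$ there. Both sides agree as formal expansions in the region $|z_1|<\dotsb<|z_n|$ (near $0$), where each $w_{j,l}$ is expanded as a power series. Hence the right side, expanded in that region, equals a series lying in $\C\llbracket z_1,\dotsc,z_n\rrbracket$ (after dividing by $\prod dz_i$).

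A rational function that has only poles along $z_i=z_j$ near the origin, and whose expansion in one ordered region is a genuine power series, has no such poles. The reason is that a pole along $z_i=z_j$ would produce an expansion with unboundedly negative powers of one variable in that region, for example $\sum_k z_i^k z_j^{-k-1}$. So the diagonal poles cancel, and the right side is a power series.

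The main obstacle is justifying that the formal identity of \cref{HgnAsGraphSum} holds as an expansion in a fixed ordered region, and not merely symmetrically. I would handle this as in \cite{BDKS22}, by noting that the principal identity is an identity of residues in $\tilde{z}$ at $0$ with $|X|$ small. The vacuum expectation value with $J(X_i)$ in a fixed operator order then gives precisely the expansion in $|X_1|<\dotsb<|X_n|$. Since $w_{j,l}$ is symmetric, this holds for every ordering, which gives the result.
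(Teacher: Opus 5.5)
Your argument for $(g,n)\neq(0,2)$ is the intended one; the paper gives no separate proof and simply inherits it from \cite[Corollary 4.10]{BDKS22}. There the left side of \cref{HgnAsGraphSum} is $\omega_{g,n}$, a power series in the $X_i$ and hence in the $z_i$. The right side lies in $\C\llbracket z_1,\dotsc,z_n\rrbracket[\prod_{i<j}(z_i-z_j)^{-1}]$. Note that it is rational in the $z_i$ together with the series $S(z_i)$, $\log\bar{\phi}_1$ and their derivatives, not rational in the $z_i$ alone. Expanding this ring into iterated Laurent series in one fixed region is injective, and this is the clean form of your ``unboundedly negative powers'' heuristic. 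One region suffices, so the symmetry remark at the end is unnecessary. The validity of \cref{HgnAsGraphSum} as an expansion is part of that proposition, not something the corollary must re-prove.

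The genuine problem is your treatment of $(g,n)=(0,2)$: the reduction runs the wrong way. Your own leading-order computation gives $[\hbar^0]\,U_2U_1w_{1,2}=\frac{dz_1dz_2}{(z_1-z_2)^2}$ exactly. Only the $\hbar^2u_1u_2$ term of $w_{1,2}$ with $t=j=0$ survives, since every other $u$ comes with a positive power of $\hbar$. This has a double pole on the diagonal with biresidue $1$. Equivalently, the left side $\omega_{0,2}+\frac{dX_1dX_2}{(X_1-X_2)^2}$ has such a pole, because $X$ is a local coordinate at $0$. So in this case the right side of \cref{HgnAsGraphSum} is \emph{not} in $\C\llbracket z_1,z_2\rrbracket\,dz_1dz_2$. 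Showing that the graph sum equals $\frac{dz_1dz_2}{(z_1-z_2)^2}$ plus a regular part refutes the claim rather than proving it. What you should conclude is that the corollary, as written, holds only for $(g,n)\neq(0,2)$, or after moving the $\delta_{g,0}\delta_{n,2}$ term to the right-hand side. For $(0,2)$, the correct statement is that the right side minus $\frac{dX_1dX_2}{(X_1-X_2)^2}$, namely $\omega_{0,2}$, is regular, consistent with \cref{Semistable}. Your observation that $\frac{dX_1dX_2}{(X_1-X_2)^2}-\frac{dz_1dz_2}{(z_1-z_2)^2}$ is regular on the diagonal proves exactly this corrected statement, not the one you set out to prove.
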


For $ n=1$, we need to be a bit more careful in our calculation, due to a pole in $u$.

We will use a similar approach to \cref{GeneralSC} and \cite[Section 6.2]{BDKS22}.

\begin{proposition}
	In terms of $ x = \log X$,
	\begin{equation}\label{FormulaForn=1}
		\omega_1 (x)
		=
	    U 1 + \frac{1}{\hbar}\sum_{j=0}^\infty (d \frac{1}{dx})^j [v^j] \int_{\Upsilon=0}^{y(z(x))} L_0(v, z(x,\Upsilon), s(x,\Upsilon),\hbar) d\Upsilon \wedge dx .
\end{equation}
\end{proposition}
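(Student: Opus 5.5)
The plan is to follow the derivation of the $n=1$ formula in \cite[Section 6.2]{BDKS22}, combined with the double-flow argument used in the proof of \cref{GeneralSC}. The starting point is
\begin{equation}
    \omega_1(X) = \Big\< J(X) e^W e^{\check S} \Big\>^\circ,
\end{equation}
written, via \cref{MoreGeneralConjugation}, as $\Res_{z=0}$ of $\< \mc{E}(\hbar\del_s,z)\>$ applied to the conjugated symbol $\Phi^{\check{\mf S}}(\Phi^{\mf W}(\mf J))(z,s,1,1)|_{s=0}$. Both terms of \cref{FormulaForn=1} arise from splitting this expression.

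The first step expands the $\check S$-conjugation exactly, not just to leading order. Since $\check S$ is a linear combination of the commuting $J_q$, conjugating $\mc{E}(u,z)$ by it only multiplies by $e^{u(\mc S(u\hbar z\del_z)-1)S(z)}$ times the scalar shift $e^{uS(z)}$. This is the same manipulation as for \cite[Proposition 4.3]{BDKS22}. Expanding $\< \mc{E}(u,z)\> = \frac{1}{\varsigma(u)}\frac{dz}{z}$ in $u=\hbar\del_s$ then gives a Laurent series in $u$ with a simple pole, since $\frac{1}{\varsigma(u)} = \frac{1}{u\hbar\mc S(u\hbar)}$ after the rescaling. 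The regular part, $u^t$ for $t\ge 0$, is exactly what the operator $U$ acts on with $f=1$. Applying the principal identity together with $L_t = \bar\phi_v^{-1}\del_s^t\phi_v$ produces the term $U1$, exactly as in the $n\ge2$ case.

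The remaining work is the polar term in $u$, which corresponds to the central term and to \cref{WhatIsCentralTerm}. There $\frac{1}{u}$ acting through $u=\hbar\del_s$ is an antiderivative in $s$. My plan is to fix the constant of integration in the same way as in the proof of \cref{GeneralSC}. The $\mf w$-flow has vanishing central term because $\mf w\in s\C\llbracket z,s\rrbracket$, so the antiderivative should start at the $\check S$-flow origin, that is $\Upsilon=0$. This gives
\begin{equation}
    \frac{1}{\hbar}\int^{S(z)}_{0} \del_s^0 \phi_m(z,s)\, ds ,
\end{equation}
summed against $X^m\frac{dX}{X}$. Applying the principal identity in its $t=0$ form, with $L_0(v,z,s,\hbar)$ in place of $\phi$, turns the $m$-sum into $\sum_j (d\frac{1}{dx})^j[v^j]$.

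The final step changes variables from $(z,s)$ to $(x,\Upsilon)$ by the symplectomorphism $(\Xi_1,\Upsilon_1)$, using $\frac{dz}{z}\wedge ds = dx\wedge d\Upsilon$. This is the same trick that produced $\omega_{0,1}$ in the proof of \cref{GeneralSC}. At leading order in $\hbar$ it reproduces $y\,dx$, which serves as a consistency check.

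I expect the main obstacle to be this change of variables. The principal identity is stated with $s=S(z)$ fixed, whereas here $s$ varies along the integration path. I would first try to show that the principal identity holds uniformly in a free parameter $s$, with $X$ replaced by $\Xi_1(z,s)$. Then integrating in $s$ commutes with the residue and the $(d\frac{1}{dx})^j$ operators, once $x$ is held fixed via $z=z(x,\Upsilon)$, $s=s(x,\Upsilon)$.
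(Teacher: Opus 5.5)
Structurally, your outline is the paper's argument:
\begin{itemize}
\item the exact $\check S$-conjugation;
\item the splitting of $e^{uS}\,\frac{e^{u(\mc{S}(u\hbar z\partial_z)-1)S}}{\varsigma(u\hbar)}$ into a part regular in $u$, which the principal identity turns into $U1$, and the term $\frac{e^{uS}}{u\hbar}$;
\item the symplectic change of variables.
\end{itemize}
The obstacle you anticipate at the end goes away if, as in the paper, you first write $\phi_m/z^m=L_0(m,\cdot)/\Xi_1^m$ and change variables in the two-form $ds\wedge\frac{dz}{z}=d\Upsilon\wedge\frac{d\Xi}{\Xi}$ before summing over $m$. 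That sum is then just $\sum_m X^m m^j[\Xi^m]F=(X\partial_X)^jF(X)$.

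The genuine gap is the constant of integration. You set the $u^{-1}$ term to zero at $S\equiv0$ because the $\mf{w}$-flow has no central term. However, \cref{GeneralSC} proves this only for the leading $\hbar^{-1}$ coefficient, via \cref{CentralTermAsFunctions}, and for $k>0$ it fails at higher order.

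Take $k=1$ and $w(j,\hbar)=\hbar j^2/2$, so $\mf{w}=zs^2/2\in s\C\llbracket z,s\rrbracket$, and look at the $s_q$-degree-zero part, $S\equiv0$.
\begin{itemize}
\item $W|0\rangle=w(0,\hbar)E_{1/2,-1/2}|0\rangle=0$, so $\omega_1=0$.
\item Your integral term vanishes because $y\equiv0$.
\item Yet $\phi_{1,1}(s)=w(\frac{s}{\hbar}+\frac12)-w(\frac{s}{\hbar}-\frac12)=s$. Hence $U1$ contains $[u^1]\frac{1}{u\hbar\mc{S}(u\hbar)}\cdot\partial_s\phi_{1,1}(0)\,X\frac{dX}{X}=-\frac{\hbar}{24}X\frac{dX}{X}$.
\end{itemize}
So the $u^{-1}$ term at $S\equiv0$ must equal $+\frac{\hbar}{24}X\frac{dX}{X}$. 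This is exactly what \cref{CentralTerm} gives: writing $\phi_{1,1}(s)=F(s+\frac\hbar2)-F(s-\frac\hbar2)$ with $F(s)=\frac{s^2}{2\hbar}$, the $\partial_s^{-1}$-part is $\frac1\hbar\int_{-\hbar/2}^{\hbar/2}F(\sigma)\,d\sigma=\frac{\hbar}{24}$.

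The mismatch persists for $S(z)=z$, i.e.\ $(k,r,q)=(1,2,1)$. There $\langle J_1e^We^{J_{-1}/\hbar}\rangle=\hbar^{-1}$ exactly. The right-hand side of the proposition instead gives $\hbar^{-1}-\frac{\hbar}{24}+\dotsb$ as the coefficient of $X\frac{dX}{X}$.

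The paper's own proof makes the identical assumption (``initial value $0$ at $\beta=0$''), so here you have reproduced its argument rather than departed from it. The step is nevertheless invalid, and the formula as stated is missing a term. What is needed is an actual evaluation of that initial value through the recursion \cref{CentralTerm}, in the sense of \cref{WhatIsCentralTerm}. This contributes the $S$-independent term $\frac1\hbar\sum_m\bigl(\partial_s^{-1}[z^m]\phi_m\bigr)(0)\,X^m\frac{dX}{X}$, which is generally nonzero from genus one on whenever $k>0$.

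For $k=0$ this term vanishes identically: the inner sum in \cref{CentralTerm} is empty, and $[z^m]\phi_m=0$ for $m\neq0$. That is why the argument of \cite{BDKS22} you are adapting never encounters it.
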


\begin{proof}
    As in the proof of \cref{GeneralSC}, we consider a double adjunction with $ W$ and $ \check{S}$. By \cref{Defsphi}, 
    \begin{equation}
        \Phi^{\mf{W}} (\mf{J} (X) ) (z,s, \beta = 1)
        =
        \sum_{m \in \Z \setminus \{ 0\} } \phi_m (z,s) \Big( \frac{X}{z} \Big)^m \frac{dX}{X}.
    \end{equation}
    On the other hand, because the generating series $ \hbar \check{\mf{S}} $ does not depend on $\hbar$ and $s$, we find, using the appropriate version of \cref{PhiRecursion}, that
    \begin{equation}
        (n+1) \Phi^{\check{\mf{S}}}_{n+1,\mu} (\mf{P}) (s) 
		=
		\sum_{q = 0}^\infty \frac{s_q}{\hbar} \Big( \Phi^{\check{\mf{S}}}_{n, \mu+q}(\mf{P}) ( s + \frac{\hbar q}{2}) - \Phi^{\check{\mf{S}}}_{n,\mu+q} (\mf{P}) \big( s - \frac{\hbar q}{2}\big) \Big),
	\end{equation}
    so that
    \begin{equation}
        \begin{split}
            \del_\beta \Phi^{\check{\mf{S}}} ( \mf{P}) (z,s,\beta) 
            &=
            \frac{1}{\hbar} \Big( \check{\mf{S}} (z e^{\frac{\hbar}{2} \del_s}) - \check{\mf{S}} (z e^{-\frac{\hbar}{2} \del_s}) \Big) \Phi^{\check{\mf{S}}}(\mf{P}) ( z, s, \beta)
            \\
            &=
            \sum_{t=0}^\infty \del_s^t \Phi^{\check{\mf{S}}} (\mf{P}) ( z, s, \beta) [u^t] \frac{\varsigma ( u \hbar z \del_z)}{\hbar} \check{\mf{S}}(z) 
            \\
            &=
            \sum_{t=0}^\infty \del_s^t \Phi^{\check{\mf{S}}} (\mf{P}) ( z, s, \beta) [u^t] u \mc{S} ( u \hbar z \del_z) S(z) ,
        \end{split}
    \end{equation}
    i.e.
    \begin{equation}
        \Phi^{\check{\mf{S}}} ( \mf{P}) (z,s,\beta) 
        =
        \sum_{t=0}^\infty \del_s^t \mf{P} ( z, s) [u^t] e^{u \beta \mc{S} ( u \hbar z \del_z) S(z)}
    \end{equation}
	Therefore, we find
	\begin{equation}
		\begin{split}
			\omega^\beta_{1} (X) 
            &\coloneqq
            \Big\< e^{- \beta \ad_{\check{S}}} e^{- \ad_W} J(X) \Big\>
            \\
			&=
			\sum_{m \in \Z} \sum_{t=-1}^\infty \Res_{z=0} \frac{dz}{z^{m+1}} \del_s^t \phi_m(z,s )\Big|_{s = 0} [u^t ] \frac{e^{u \beta \mc{S}(u \hbar z \del_z)S(z)}}{ \varsigma (u \hbar)} X^m \frac{dX}{X}
			\\
			&=
			\sum_{m \in \Z} \sum_{t=-1}^\infty \Res_{z=0} \frac{dz}{z^{m+1}} \del_s^t \phi_m(z,s )\Big|_{s = 0} [u^t ] \Big( \frac{e^{u \beta \mc{S}(u \hbar z \del_z)S(z)}}{u\hbar \mc{S}(u \hbar)} - \frac{e^{u \beta S(z)}}{u\hbar} \Big) X^m \frac{dX}{X}
			\\
			& \qquad
			+ \sum_{m \in \Z} \sum_{t=-1}^\infty \Res_{z=0} \frac{dz}{z^{m+1}} \del_s^t \phi_m(z,s )\Big|_{s = 0} [u^t ] \frac{e^{u \beta S(z)}}{u\hbar}  X^m \frac{dX}{X} .
		\end{split}
	\end{equation}

	The penultimate line is regular in $u$, so we can set $ \beta = 1$ and use the principal identity to find

	\begin{equation}
		\begin{split}
			&\sum_{m \in \Z} \sum_{t=0}^\infty \Res_{z=0} \frac{dz}{z^{m+1}} \del_s^t \phi_m(z,s )\Big|_{s = 0} [u^t ] \Big( \frac{e^{u \mc{S}(u \hbar z \del_z)S(z)}}{u\hbar \mc{S}(u \hbar)} - \frac{e^{u S(z)}}{u\hbar} \Big) X^m \frac{dX}{X}
			\\
			&\qquad =
			\frac{dX}{X} \sum_{m \in \Z} X^m \sum_{t=0}^\infty \Res_{z=0} \frac{dz}{z^{m+1}} \del_s^t \phi_m(z,s )\Big|_{s = 0} [u^t ] e^{uS(z)}\Big( \frac{e^{u (\mc{S}(u \hbar z \del_z)-1)S(z)}}{u\hbar \mc{S}(u \hbar)} - \frac{1}{u\hbar} \Big)
			\\
			&\qquad =
			\sum_{j,t=0}^\infty \big( d \frac{1}{dx} \big)^j \Big( [v^j] L_t (v, z, S(z),\hbar) [u^t] \Big( \frac{e^{u (\mc{S}(u \hbar z \del_z)-1)S(z)}}{u\hbar \mc{S}(u \hbar)} - \frac{1}{u\hbar} \Big) \frac{dz}{z} \Big) 
			\\
			&\qquad =
			U1 .
		\end{split}
	\end{equation}

	For the other summand, we need to actually interpret the $ \del_s^{-1}$ in the $ t = -1$ term. As explained in \cref{WhatIsCentralTerm}, we can make sense of this through the recursion on $n $ in \cref{CentralTerm}, or after taking generating series, through the $\beta $ flow. Then this is a well-defined initial value problem with initial value $0$ at $ \beta = 0$. So we actually want to calculate (again setting $ \beta = 1$)

	\begin{equation}
		\begin{split}
			\sum_{t=0}^\infty \Res_{z=0} \frac{dz}{z^{m+1}} \del_s^t \phi_m(z,s )\Big|_{s = 0} [u^t ] \frac{e^{u S(z)} - 1}{u}
			&=
			\Res_{z=0} \frac{1}{z^m} \frac{e^{S(z) \del_s} - 1}{\del_s} \phi_m (z,s) \Big|_{s=0} \frac{dz}{z}
			\\
			&=
			\Res_{z=0} \int_{s=0}^{S(z)} \frac{\phi_m ( z,s )}{z^m} ds \wedge \frac{dz}{z}
			\\
			&=
			\Res_{\Xi = 0} \int_{\Upsilon=0}^{y(z(\Xi))} \frac{L_0(m,z(\Xi,\Upsilon), s(\Xi,\Upsilon),\hbar)}{\Xi^m} d\Upsilon \wedge \frac{d\Xi}{\Xi},
		\end{split}
	\end{equation}
	where we used \cref{FunctionL,phiAsRatio} in the last equality, so that
	\begin{equation*}
			\omega_1 (x)
            =
            \omega_1^\beta (X) \big|_{\beta = 1}
			=
			U 1 + \frac{1}{\hbar}\sum_{j=0}^\infty (d \frac{1}{dx})^j [v^j] \int_{\Upsilon=0}^{y(z(x))} L_0(v, z(x,\Upsilon), s(x,\Upsilon),\hbar) d\Upsilon \wedge dx .
			\qedhere
	\end{equation*}
\end{proof}

Comparing \cref{HgnAsGraphSum,FormulaForn=1} with \cite[Proof of proposition 4.4]{ABDKS24}, using \cref{TrivialTR}, we find that the following explicit formulas for multidifferentials.

\begin{proposition}\label{LeakyMultidifferentials}
	The multidifferential generating series of leaky Hurwitz numbers with cut-and-join operator \eqref{CJ} are given by
	\begin{equation}
		\omega_n (z_{\llbracket n\rrbracket})
		=
		\Big( \prod_{i=1}^n T_i \Big)
		\mb{W}_n^{\mc{S}_0} (z_{\llbracket n\rrbracket}, u_{\llbracket n\rrbracket}) 
		+
		\frac{\delta_{n,1}}{\hbar} \sum_{j=0}^\infty ( d \frac{1}{dx} )^j [v^j] \int_{s = 0}^{S(z)} L_0 (v, z, s, \hbar) ds \wedge \frac{dz}{z},
	\end{equation}
	where for a formal series $ f(u,z) $ in $u$ with coefficients one-forms in $ z$,
	\begin{equation}
		Tf (z)
		=
		\sum_{j,t = 0}^\infty \big(d \frac{1}{dx}\big)^j  [v^j] L_t(v, z, s, \hbar) [u^t] f(u,z)
		=
		\sum_{j,t = 0}^\infty \big(- d \frac{1}{dx}\big)^j  [v^j] L_t(-v, z, s, \hbar) [u^t] f(u,z) .
	\end{equation}
\end{proposition}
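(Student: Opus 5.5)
The proof will combine the two formulas already obtained, \cref{HgnAsGraphSum} for $n\ge 2$ and \cref{FormulaForn=1} for $n=1$, with the known graph-sum expression for the trivial spectral curve in \cref{TrivialTR}. I would follow the argument of \cite[Proof of Proposition 4.4]{ABDKS24}. The key observation is that the operators $U_i$ in \cref{HgnAsGraphSum} factor as $T_i$ applied to a $u_i$-dependent one-form in $z_i$. Explicitly, $U_i f = T_i\big( \frac{e^{u_i(\mc{S}(u_i\hbar z_i\del_{z_i})-1)S(z_i)}}{u_i\hbar\mc{S}(u_i\hbar)} f \frac{dz_i}{z_i}\big)$. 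Here the $L_t$ enter through $[u^t]$, exactly as in the definition of $T$. So it suffices to identify
\begin{equation*}
\prod_{i=1}^n \frac{e^{u_i(\mc{S}(u_i\hbar z_i\del_{z_i})-1)S(z_i)}}{u_i\hbar\mc{S}(u_i\hbar)}\frac{dz_i}{z_i}\sum_{\gamma\in\Gamma_n}\prod_{\{v_j,v_l\}\in E_\gamma} w_{j,l}
\end{equation*}
with $\mb{W}_n^{\mc{S}_0}$. We must take $y_0$ in \cref{TrivialTR} to be $S(z)$, with $x_0=\log z$, so that $\hbar u_i\del_{x_i}=u_i\hbar z_i\del_{z_i}$ and the exponential prefactors agree.

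The combinatorial heart is the identity in \cite{ABDKS24}: the connected sum over simple graphs of products of $w_{j,l}=e^{\hbar^2u_ju_l\mc{S}\mc{S}\,z_jz_l/(z_j-z_l)^2}-1$ equals, after the prefactors $1/(u_i\hbar\mc{S}(u_i\hbar))$, the sum over $n$-cycles $(-1)^{n-1}\prod_i 1/(e^{u_i\hbar/2}z_i-e^{-u_{\sigma(i)}\hbar/2}z_{\sigma(i)})$. This is the standard fact that the connected correlator of $n$ free fermion bilinears $\mc{E}_0(u_i\hbar)$ at points $z_i$ is a cycle sum, while its bosonised form is the exponential of pairwise $\mc{S}$-dressed two-point functions. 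I would cite this rather than reprove it, since the identity involves only the trivial curve and does not see $\phi$. The only thing to check is that the leaky modification (the explicit $z$-dependence of $\phi$, hence of $L_t$) sits entirely inside $T$. This holds because the principal identity was already extended to allow $z$-dependence in $\phi_m$.

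For the second form of $T$, substitute $v\mapsto -v$ together with $d\frac{1}{dx}\mapsto -d\frac{1}{dx}$. The total sign is $(-1)^j(-1)^j=1$ coefficientwise, so this is trivial.

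For $n=1$, \cref{FormulaForn=1} gives $U1$ plus the integral term. Here $U1$ involves $\frac{1}{u\hbar\mc{S}(u\hbar)}-\frac{1}{u\hbar}$, while $\mb{W}_1^{\mc{S}_0}$ contains the full $\frac{1}{u\hbar\mc{S}(u\hbar)}\cdot\frac{dz}{z}$ from the $1$-cycle. The $\frac{1}{u\hbar}$ part gives a $t=-1$ contribution, which must be matched with the integral term. Finally, rewrite the integral in the variables $(z,s)$ via the symplectomorphism $(\Xi_1,\Upsilon_1)$, as in the proof of \cref{GeneralSC}. This converts $\int_{\Upsilon=0}^{y}\dots d\Upsilon\wedge dx$ into $\int_{s=0}^{S(z)}L_0\,ds\wedge\frac{dz}{z}$.

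I expect the main obstacle to be this $n=1$ bookkeeping. One has to see that the pole $1/u$ of $\mb{W}_1^{\mc{S}_0}$ under $T$ is precisely what the separate $\delta_{n,1}$ integral term accounts for. The interpretation of $\del_s^{-1}$ via the $\beta$-flow must be applied consistently, and one must confirm there is no double counting of the $\frac{1}{u\hbar}$ term.
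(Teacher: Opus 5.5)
Your proposal is correct and follows the paper's own argument, which is exactly to compare \cref{HgnAsGraphSum,FormulaForn=1} with \cite[Proof of Proposition 4.4]{ABDKS24} via \cref{TrivialTR}; your double-counting worry at $n=1$ goes away because $T$ only extracts $[u^t]$ with $t\geq 0$, so $T_1\mb{W}^{\mc{S}_0}_1=U1$ exactly, while the pole-carrying piece $\frac{e^{uS(z)}}{u\hbar}$ was already turned into the separate integral term of \cref{FormulaForn=1}. The last step needs care: the symplectomorphism matches the $2$-forms $d\Upsilon\wedge dx=ds\wedge\frac{dz}{z}$ and the endpoints, so the $s$-integral in the statement must be taken along the fibre of $x$ (as in \cref{FormulaForn=1}), not at fixed $z$, because at fixed $z$ the leading term would be $S(z)\frac{dz}{z}$ instead of $\omega_{0,1}=y\,dx$, and these already differ for $\bar{w}(s)=\frac{s^2}{2}$, $S(z)=z^q$.
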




\subsection{Explicit spectral curves for fixed leakiness}
\label{CurvesForFixedk}

Now, we consider a fixed leakiness $k \geq 0$, and take general linear combinations of  completed cycles. I.e., we consider a cut-and-join operator

\begin{equation}\label{CJ}
	W = \sum_{l \in \Z'} w (l + \frac{k}{2}, \hbar) E_{l + k,l}.
\end{equation}

The case $ w(j, \hbar) = \frac{\hbar^{r-1} j^r}{r} $ is $r$-completed cycles, cf. \cref{LeakyCompletedOperator,CCexample}.

In this case, we will be able to write the spectral curve explicitly.





\begin{lemma}
	For fixed leakiness $ k$, $ \phi_{m,n,\mu}$ is non-zero only if $ \mu = - nk$. So define
	\begin{equation}\label{phi_m}
		\phi_{m,n}(s)
		\coloneqq
		\phi_{m,n,-nk}(s) .
	\end{equation}
	Then $ \phi_{m,n}(s)$ is given recursively by $ \phi_{m,0}(s) = 1$ and
	\begin{equation}\label{phi_mnRec}
		(n+1) \phi_{m,n+1} (s) 
		=
		w (\frac{s}{\hbar} + \frac{m-nk}{2}, \hbar ) \phi_{m,n} ( s + \frac{\hbar k}{2}) - w ( \frac{s}{\hbar} + \frac{-m+nk}{2}, \hbar ) \phi_{m,n} \big( s - \frac{\hbar k}{2}\big) .
	\end{equation}
	This recursion is solved by
	\begin{equation}\label{phi_mn}
		\phi_{m,n} (s )
		=
		\sum_{i=0}^n \frac{(-1)^{n-i}}{i!(n-i)!} \prod_{j=0}^{i-1} w \big( \frac{s}{\hbar} + \frac{m + (2j+1-n)k}{2}, \hbar \big) \prod_{j=i}^{n-1} w \big( \frac{s}{\hbar} + \frac{-m + (2j+1-n)k}{2}, \hbar \big) .
\end{equation}
\end{lemma}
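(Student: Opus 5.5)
The plan has three steps: first determine the support of $\phi_{m,n,\mu}$ in $\mu$ by induction on $n$, then specialise \cref{Generalphi} to obtain \cref{phi_mnRec}, and finally check that \cref{phi_mn} satisfies this recursion. Since a solution of \cref{phi_mnRec} with $\phi_{m,0}=1$ is unique, this gives the statement.

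\emph{Support.} For the cut-and-join operator \cref{CJ} only $w_k = w$ is non-zero; all other $w_{k'}$ with $k' \neq k$ vanish. The sum over $k$ in \cref{Generalphi} therefore collapses to a single index, and $\phi_{m,n+1,\mu}$ involves only $\phi_{m,n,\cdot}$ at indices shifted by $k$ relative to $\mu$, with the $m$-dependence of the index accounted for by the convention of \cref{Defsphi}. Starting from $\phi_{m,0,\mu}=\delta_{\mu,0}$, induction on $n$ shows that $\phi_{m,n,\mu}\neq 0$ forces $\mu=-nk$. This step only tracks which indices can appear, and should be immediate.

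\emph{Specialising the recursion.} Substituting $\mu=-(n+1)k$ into \cref{Generalphi} turns the arguments of $w$ into $\tfrac{s}{\hbar}+\tfrac{m-nk}{2}$ and $\tfrac{s}{\hbar}+\tfrac{-m+nk}{2}$, with the shifts $s\pm\tfrac{\hbar k}{2}$ in the arguments of $\phi_{m,n}$. This is exactly \cref{phi_mnRec}. The only care needed is the sign bookkeeping in the arguments, using the conventions of \cref{Defsphi}.

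\emph{Verifying the closed formula.} This is the main computational step, and I would do it by induction on $n$. The case $n=0$ gives $1$. For the inductive step, insert the formula for $\phi_{m,n}$ into the right-hand side of \cref{phi_mnRec}. The key observation is that the shift $s\mapsto s+\tfrac{\hbar k}{2}$ adds $\tfrac{k}{2}$ to every argument $\tfrac{s}{\hbar}+\tfrac{\pm m+(2j+1-n)k}{2}$. The new argument equals $\tfrac{s}{\hbar}+\tfrac{\pm m+(2j'+1-(n+1))k}{2}$ with $j'=j+1$. So after shifting, the products for $n$ become the products for $n+1$ with index range $[1,i]$ and $[i+1,n]$. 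The extra prefactor $w(\tfrac{s}{\hbar}+\tfrac{m-nk}{2},\hbar)$ is precisely the missing $j=0$ factor of the first product at level $n+1$. Hence the first term of \cref{phi_mnRec} contributes the $(n+1)$-level summand with index $i+1$. Symmetrically, the shift $s\mapsto s-\tfrac{\hbar k}{2}$ leaves the index ranges in place, and the factor $w(\tfrac{s}{\hbar}+\tfrac{-m+nk}{2},\hbar)$ supplies the $j=n$ factor of the second product. With the minus sign, this term therefore contributes the $(n+1)$-level summand with index $i$ and sign $(-1)^{n+1-i}$. Collecting the coefficient of a fixed summand $i'$ at level $n+1$ yields
\[
\frac{1}{(i'-1)!\,(n+1-i')!}+\frac{1}{i'!\,(n-i')!}=\frac{n+1}{i'!\,(n+1-i')!},
\]
which matches the left-hand side $(n+1)\phi_{m,n+1}$. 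The boundary cases $i'=0$ and $i'=n+1$ are the same identity with one term absent. The main obstacle is keeping the index and sign conventions consistent in this shift computation; once the shift is identified as $j\mapsto j+1$, the rest reduces to this Pascal-type identity.
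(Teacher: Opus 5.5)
Your proposal is correct and follows the paper's proof: specialise the general recursion to the single diagonal, then check the closed form. You carry out the verification the paper calls straightforward correctly, via the shift $j\mapsto j+1$ and the Pascal-type identity.

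One caution: do the specialisation directly from \cref{PhiRecursion} with $\phi_{m,n,\mu}=\Phi^{\mf{W}}_{n,\mu+m}(\mf{J}_m)$, as the paper does (index $m-(n+1)k$). As printed, \cref{Generalphi} misplaces the $m$ in its second term, which should read $w_k(\frac{s}{\hbar}-\frac{\mu+m+k}{2},\hbar)\,\phi_{m,n,\mu+k}(s-\frac{\hbar k}{2})$. Substituting literally into the printed version gives neither the support claim nor \cref{phi_mnRec}; your ``conventions of \cref{Defsphi}'' remarks are papering over exactly this.
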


\begin{proof}
	The recursion \eqref{phi_mnRec} is a direct specialisation of \cref{PhiRecursion}, using that with the choice of parameters there, $ \mu = m - (n+1)k$. It is straightforward to check that the given solution solves the recursion.
\end{proof}

\begin{proposition}\label{ChangeOfCoords}
	If $ w(j, \hbar )$ is of the shape $ w(\frac{s}{\hbar}, \hbar) = \frac{1}{\hbar} \bar{w}(s)+ \mc{O} (\hbar^0)$ for some $ \bar{w} (s) \in s \C\llbracket s \rrbracket$ with positive radius of convergence, then all $ \phi_{m,n}(s)$ are power series in $ \hbar$, and
	\begin{equation}
		\bar{\phi}_m ( z, s) 
        =
        \sum_{n=0}^\infty \phi_{m,n} (s) z^{nk} \Big|_{\hbar = 0}
        =
        \Big( \frac{\bar{w}(s + k \bar{w}(s)z^k)}{\bar{w}(s)}\Big)^{\frac{m}{k}} .
	\end{equation}
\end{proposition}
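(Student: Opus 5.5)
The plan is to apply the general leading-order result of \cref{CJSection} to the Hamiltonian of the present operator, solve its flow in closed form using a conserved quantity, and then check the answer directly against the defining recursion.

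\textbf{Power series in $\hbar$ and the Hamiltonian.} That all $\phi_{m,n}(s)$ are power series in $\hbar$ follows from the proposition after \cref{LeadingOrderAssumption}. There $\Phi^{\mf{W}}_{n,\mu}(\mf{P})$ was shown to be a power series in $\hbar$ under \cref{TopologicalAssumption}, and $\phi_{m,n}=\Phi^{\mf{W}}_{n,m-nk}(\mf{J}_m)$ is a special case. One can also read this off from \cref{phi_mnRec}: the $\hbar^{-1}$ parts of the two terms cancel after Taylor expansion in $\hbar$, as in that proof.

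For the leading order, the generating function \cref{CJGenFunction} reduces to the single monomial $\mf{W}=w\,z^k$. Its dequantisation is therefore the Hamiltonian $\mf{w}(z,s)=\bar{w}(s)z^k$, which lies in $s\C\llbracket z,s\rrbracket$. By \cref{phiAsRatio} we have $\bar{\phi}_m(z,s)=(z/\Xi_1(z,s))^m$, so it suffices to compute the time-one map of the flow $e^{\beta V_{\mf{w}}}$ from \cref{PDE}.

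\textbf{Solving the flow.} The characteristics of \cref{PDE} are
\begin{equation*}
\frac{d s}{d\beta}=z\del_z\mf{w}=k\bar{w}(s)z^k,\qquad \frac{d\log z}{d\beta}=-\del_s\mf{w}=-\bar{w}'(s)z^k ,
\end{equation*}
and $\mf{w}$ itself is conserved, so $C\coloneqq\bar{w}(s)z^k$ is constant along each trajectory. The $s$-equation then linearises to $ds/d\beta=kC$, giving $\Upsilon_\beta=s+\beta k\bar{w}(s)z^k$. Conservation of $C$ then gives $\Xi_\beta^k=\bar{w}(s)z^k/\bar{w}(\Upsilon_\beta)$. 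At $\beta=1$ this yields
\begin{equation*}
\Big(\frac{z}{\Xi_1}\Big)^m=\Big(\frac{\bar{w}(s+k\bar{w}(s)z^k)}{\bar{w}(s)}\Big)^{m/k},
\end{equation*}
which is the claimed formula.

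\textbf{Well-definedness and a direct check.} The expression must make sense as a formal series in $z^k$. By Taylor expansion, $\bar{w}(s+k\bar{w}z^k)-\bar{w}(s)=\sum_{j\ge1}\bar{w}^{(j)}(s)(k\bar{w}(s)z^k)^j/j!$, and every term is divisible by $\bar{w}(s)$. So the ratio is $1+\mc{O}(z^k)$ with coefficients in $\C\llbracket s\rrbracket$, and its $(m/k)$-th power is an unambiguous power series in $z^k$. This also matches the fact that only $\mu=-nk$ contributes, i.e.\ only the powers $z^{nk}$ appear.

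To be independent of how $e^{\beta V_{\mf{w}}}$ acts (pullback versus pushforward), I would also verify the result directly. Take the $\hbar\to0$ limit of \cref{phi_mnRec}, which gives $(n+1)\bar{\phi}_{m,n+1}=k\bar{w}\,\bar{\phi}_{m,n}'+(m-nk)\bar{w}'\bar{\phi}_{m,n}$. Then check that $F=\big(\bar{w}(s+k\bar{w}z^k)/\bar{w}(s)\big)^{m/k}$ satisfies the generating-function form of this recursion,
\begin{equation*}
\del_\beta F = k\bar{w}z^k\del_sF+\bar{w}'z^k(m-z\del_z)F ,
\end{equation*}
where $z^k$ is inserted by hand and the flow is then evaluated at $\beta=1$, together with the initial value $1$. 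Uniqueness of the solution, order by order in $z^k$, then closes the argument.

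The main obstacle is bookkeeping rather than analysis. The sign conventions ($z^{-\mu}$ with $\mu=-nk$, the factor $m-nk$, and the orientation of $V_{\mf{w}}$) have to line up so that one gets exponent $+m/k$ and the argument $s+k\bar{w}z^k$, rather than their inverses or $s-k\bar{w}z^k$. This is precisely what the direct recursion check is meant to pin down.
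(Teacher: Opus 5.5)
Your proposal is correct, but your main argument runs in the opposite direction from the paper's.

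The paper never integrates the flow in this proof. It substitutes $\mf{w}=z^k\bar{w}(s)$ into \eqref{PDE} and uses that $\bar{\phi}_m$ depends on $\beta$ and $z$ only through $\beta z^k$. This lets it replace $\del_\beta$ at $\beta=1$ by $\frac{1}{k}z\del_z$, which gives the $\beta$-free equation \eqref{PDEforphibar}. It then checks that the stated closed form solves this equation with $\bar{\phi}_m(0,s)=1$. Your fallback ``direct check'' is essentially that verification, with $\beta$ kept as a separate variable instead of being absorbed by homogeneity. Your equation $\del_\beta F = k\bar{w}z^k\del_s F+\bar{w}'z^k(m-z\del_z)F$ is correct, and $F=\big(\bar{w}(s+k\beta\bar{w}(s)z^k)/\bar{w}(s)\big)^{m/k}$ does satisfy it.

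Your primary route instead derives the formula:
\begin{itemize}
\item Conservation of $\mf{w}$ under $e^{\beta V_{\mf{w}}}$ makes the $s$-equation linear. Since $e^{\beta V_{\mf{w}}}$ is a ring automorphism, this holds at the level of formal series in $\beta$, $z$, $s$, so no analytic existence of the time-one flow is needed.
\item This gives $\Upsilon_\beta=s+k\beta\bar{w}(s)z^k$. Then $\Xi_\beta^k$ follows from $\bar{w}(\Upsilon_\beta)\Xi_\beta^k=\bar{w}(s)z^k$, and \eqref{phiAsRatio} finishes.
\end{itemize}

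This explains where the formula comes from and proves \cref{Symplectomorphism} along the way, whereas the paper deduces that proposition from this one. The cost is that you must track flow conventions and the branch of the $k$-th root, which the paper's pure verification avoids. Your observation that every Taylor term of $\bar{w}(s+k\bar{w}(s)z^k)-\bar{w}(s)$ is divisible by $\bar{w}(s)$ is worth keeping. It shows the ratio is $1+\mc{O}(z^k)$ in $\C\llbracket s,z\rrbracket$ despite $\bar{w}(0)=0$, a well-definedness point the paper leaves implicit.
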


\begin{proof}
	Taking the partial differential equation \eqref{PDE} for $ \bar{\Phi}_m$, and using that in our case $ \mf{w} (z,s) = z^k \bar{w}(s)$, so $ \bar{\phi}_m$ is weighted homogeneous in $ \beta $ and $z$, and $ \bar{\phi}_m (z,s) = z^m \bar{\Phi}(z,s,1) $, we find the equation
	\begin{equation}\label{PDEforphibar}
		(\frac{1}{k} + z^k \bar{w}'(s)) z \del_z \bar{\phi}_m (z,s)
		=
		k z^k \bar{w}(s) \del_s \bar{\phi}_m (z,s) + m z^k \bar{w}'(s) \bar{\phi}_m (z,s) 
	\end{equation}
	with initial condition $ \bar{\phi}_m (0,s) = 1$. It is straightforward to check that the given solution does indeed solve this equation.
\end{proof}	

\begin{example}\label{FindingpsiFork=0}
	The $ k \to 0$ limit of \cref{ChangeOfCoords} is consistent with the non-leaky story:
	\begin{equation}
		\begin{split}
		    \lim_{k \to 0} \log \bar{\phi}_1 ( z, s)
            &=
            \lim_{k \to 0} \frac{\log (\bar{w} (s + k \bar{w}(s)z^k ))  - \log (\bar{w}(s))}{k}
            =
            \big( \log (\bar{w}(s))\big)' \bar{w}(s)
            \\
            &=
            \frac{\bar{w}'(s)}{\bar{w}(s)} \bar{w}(s)
            =
            \bar{w}'(s) ,
		\end{split}
	\end{equation}
	and $ \bar{w}'(s)$ equals the $ \psi(s)$ of \cite{BDKS22}.\par
	We can also see this directly from the Hamiltonian flow perspective: $ \mf{w}(z,s) = \bar{w}(s)$ is independent of $ z$ in this case, so $ z \del_z \mf{w}(z,s) = 0$, and
	\begin{equation}
		V_\mf{w}
		=
		- \bar{w}'(s) z \del_z
		=
		- \psi(s) z \del_z .
	\end{equation}
	By \cref{GeneralSC}, this yields the spectral curves
	\begin{equation}
		\begin{cases}
			X(z) &= \Big\lfloor_{s \to S(z)} e^{- \psi(s) z \del_z } z  = z e^{- \psi(S(z))}
			\\
			y(z) &= \Big\lfloor_{s \to S(z)} e^{- \psi(s) z \del_z } s = S(z)
		\end{cases} .
	\end{equation}
\end{example}

\begin{proposition}\label{Symplectomorphism}
	Let $ \mf{w}(z,s) = z^k \bar{w}(s)$, with $ \bar{w} $ an analytic function on $ \C$, such that $ \bar{w} (0) = 0$. The flows of $ z$ and $ s$ along $ V_\mf{w}$ are given by
	\begin{align}
		\Xi_\beta (z,s)
		&= e^{\beta V_\mf{w}} z 
		=
		z \Big( \frac{\bar{w}(s)}{\bar{w}(s+ k z^k \beta \bar{w}(s))}\Big)^{\frac{1}{k}}
		\\
		\Upsilon_\beta (z,s)
		&=
		e^{\beta V_\mf{w}} s
		=
		s + k z^k \beta \bar{w} (s) .
	\end{align}
	On a small enough punctured neighbourhood $U$ of $ (0,0) $ in $ \C^* \times \C$ (punctured in the $ z$-variable), this is well-defined, because the argument of the $k$-th root is close to $ 1$; we take the branch specified by $ \Xi (z,0) = z$.\par
\end{proposition}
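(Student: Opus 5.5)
We will verify directly that the stated pair $(\Xi_\beta,\Upsilon_\beta)$ solves the Hamiltonian flow equations of $V_\mf{w}$ for $\Omega = \frac{dz}{z}\wedge ds$, and that it has the correct value at $\beta = 0$. Uniqueness of solutions of the flow then identifies it with $e^{\beta V_\mf{w}}$. We can also argue uniqueness formally: the flow is determined order by order in $\beta$.

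From \cref{PDE}, the vector field is
\[
V_\mf{w} = (z\del_z \mf{w})\,\del_s - (\del_s\mf{w})\,z\del_z = k z^k \bar{w}(s)\,\del_s - z^{k}\bar{w}'(s)\, z\del_z .
\]
Along integral curves we therefore have $\dot s = k z^k \bar w(s)$ and $\dot z/z = -z^k \bar w'(s)$.

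The first step is to use conservation of the Hamiltonian. Since $V_\mf{w}\mf{w} = 0$, the quantity $z^k\bar w(s)$ is constant along the flow, equal to its initial value $z^k\bar w(s)$. The $s$-equation then becomes $\dot s = k z^k \bar w(s) = \text{const}$, which integrates immediately to $\Upsilon_\beta = s + k z^k\beta\bar w(s)$.

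For the $z$-coordinate there are two routes.

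\emph{Conservation route.} We solve $\Xi_\beta^k\,\bar w(\Upsilon_\beta) = z^k\bar w(s)$ for $\Xi_\beta$, which gives
\[
\Xi_\beta = z\Big(\frac{\bar w(s)}{\bar w(s+kz^k\beta\bar w(s))}\Big)^{1/k}.
\]

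\emph{Direct route.} We compute $\del_\beta \log \Xi_\beta$ from the candidate formula:
\[
\del_\beta \log \Xi_\beta = -\tfrac1k\, \frac{\bar w'(\Upsilon_\beta)\, k z^k \bar w(s)}{\bar w(\Upsilon_\beta)} = -\bar w'(\Upsilon_\beta)\,\Xi_\beta^k ,
\]
where the last equality uses the conserved quantity. This matches the required equation $\dot z/z = -z^k\bar w'(s)$ evaluated at the flowed point.

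At $\beta = 0$ both formulas reduce to the identity, $(\Xi_0,\Upsilon_0) = (z,s)$. As a consistency check, \cref{ChangeOfCoords} gives $\bar\phi_1 = z/\Xi_1$, which agrees with \cref{phiAsRatio}.

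The main point needing care is well-definedness of the $k$-th root and the division by $\bar w(s)$, which vanishes at $s = 0$. We would handle this as follows. Write
\[
\bar w(s+\epsilon)/\bar w(s) = 1 + \epsilon\,\frac{\bar w(s+\epsilon)-\bar w(s)}{\epsilon\,\bar w(s)}
\]
with $\epsilon = kz^k\beta\bar w(s)$. The factor $\bar w(s)$ cancels, so the ratio equals $1 + kz^k\beta\,\Delta(s,\epsilon)$, where $\Delta$ is the analytic difference quotient, so it is analytic near $s = 0$ even though $\bar w(0) = 0$. For $|z|$ small, with $\beta$ bounded (in particular $\beta = 1$), this ratio is close to $1$. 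Hence the principal branch of the $1/k$-th power is analytic on a small punctured neighbourhood $U$, and this branch satisfies $\Xi(z,0) = z$, since the ratio equals $1$ at $s = 0$ to leading order, namely $\bar w'(0)$-quotient $\to 1$ when $\bar w'(0) \neq 0$. If $\bar w'(0) = 0$, the same difference-quotient argument still shows that the ratio tends to $1$ as $z \to 0$.

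Finally, agreement with the formal flow $e^{\beta V_\mf{w}}$ in $\C\llbracket z,s,\beta\rrbracket$ follows by expanding the analytic solution in $\beta$, since both satisfy the same ODE with the same initial data.
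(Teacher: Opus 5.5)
Your proof is correct, but it takes a different route from the paper.

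\textbf{The paper's route.} The paper's proof is a one-line reduction to \cref{ChangeOfCoords}. There, \eqref{PDE} is rewritten, using weighted homogeneity in $\beta$ and $z$, as the linear first-order equation \eqref{PDEforphibar} for $\bar\phi_m$. A closed-form candidate is checked against it, and $\Xi$ is then read off via \eqref{phiAsRatio}.

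\textbf{Your route.} You work with the characteristic system of $V_{\mf{w}}$ and use conservation of the Hamiltonian $z^k\bar w(s)$. This makes $\dot s$ constant, which gives $\Upsilon_\beta$ at once. Then $\Xi_\beta$ comes from the level set $\Xi_\beta^k\,\bar w(\Upsilon_\beta)=z^k\bar w(s)$.

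\textbf{Comparison.} The paper's route reuses an identity it needs anyway, namely $\bar\phi_m$ for arbitrary, even complex, $m$ in the definition of $L_t$. Yours has three advantages:
\begin{itemize}
\item it derives the formula rather than verifying a guess;
\item it handles all $\beta$ and both coordinates at once;
\item it makes the identity $\mf{w}(X,y)=\mf{w}(z,S(z))$ in the corollary to \cref{GeneralSC} transparent.
\end{itemize}
Your difference-quotient rewriting $\bar w(s+\epsilon)/\bar w(s)=1+kz^k\beta\,\Delta(s,\epsilon)$ also justifies well-definedness more cleanly than the paper's remark. It shows $\Xi_\beta$ is analytic even across $z=0$. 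It also lets you extend the computations you do on $\{\bar w(s)\neq 0\}$, where you divide by $\bar w(s)$ and $\bar w(\Upsilon_\beta)$, to the whole neighbourhood by the identity theorem. You should state that extension step explicitly.

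\textbf{Branch discussion.} This part is muddled. At $s=0$ you have $\epsilon=0$, so the ratio is exactly $1+kz^k\beta\,\bar w'(0)$. Hence $\Xi_\beta(z,0)=z\,(1+k\beta\,\bar w'(0)z^k)^{-1/k}$. This equals $z$ exactly when $\bar w'(0)=0$; when $\bar w'(0)\neq 0$ and $\beta\neq 0$, the literal condition $\Xi(z,0)=z$ fails. The right normalisation, which your principal branch does satisfy, is $\Xi_\beta=z\,(1+\mathcal{O}(z^k))$, equivalently continuity from $\Xi_0=z$. This is the branch the flow forces, so your conclusion is unaffected.
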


\begin{proof}
	Straightforward calculation, given \cref{ChangeOfCoords}.
\end{proof}

\begin{corollary}\label{SpectralCurveShape}
	Under the assumptions of \cref{Symplectomorphism}, the spectral curve is given by
	\begin{equation} \label{SpectralCurveParam}
		\begin{cases}
			e^{x(z)}
            =
            X(z)
			&=
			\Xi_1 (z, S(z))
			=
			z \Big( \frac{\bar{w}(S(z))}{\bar{w}(S(z)+ k \bar{w}(S(z))z^k)}\Big)^{\frac{1}{k}}
			\\
			y (z)
			&=
			\Upsilon_1 (z, S(z))
			=
			S(z) + k \bar{w}(S(z)) z^k
		\end{cases} .
	\end{equation}
\end{corollary}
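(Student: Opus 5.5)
This follows by substituting the explicit flows of \cref{Symplectomorphism} into the general description of the spectral curve in \cref{GeneralSC}. The first step is to check the hypothesis of \cref{GeneralSC}. With $\mf{w}(z,s) = z^k \bar{w}(s)$ and $\bar{w}(0)=0$, we have $\mf{w} \in s\C\llbracket z,s\rrbracket$. Hence the central term of the $\mf{w}$-flow vanishes, and \cref{GeneralSC} gives
\begin{equation}
    X(z) = \Xi_1(z,S(z)), \qquad y(z) = \Upsilon_1(z,S(z)) - \Upsilon_1(z,0).
\end{equation}

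The second step is to evaluate these expressions at $\beta = 1$ using \cref{Symplectomorphism}. Setting $s = S(z)$ in the formula for $\Xi_\beta$ gives the first line of \eqref{SpectralCurveParam} directly. For $y$, we note that
\begin{equation}
    \Upsilon_1(z,0) = 0 + k z^k \bar{w}(0) = 0,
\end{equation}
so the subtracted term vanishes. What remains is $y(z) = S(z) + k\bar{w}(S(z))z^k$.

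The only genuine issue is well-definedness, which is the condition ``if this is well-defined'' in \cref{GeneralSC}. The $\beta=1$ flow must exist along the curve $s = S(z)$ for $z$ in a punctured neighbourhood of $0$. Since $S(0)=0$, the point $(z,S(z))$ lies in the neighbourhood $U$ of \cref{Symplectomorphism} for small $z\neq 0$. There the quotient $\bar{w}(S)/\bar{w}(S+k\bar{w}(S)z^k)$ is close to $1$, because $k\bar{w}(S)z^k \to 0$. We take the branch of the $k$-th root fixed by $\Xi(z,0)=z$.

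One subtlety appears when $\bar{w}(S(z))$ vanishes identically to high order. Writing the ratio as $\bigl(1 + k z^k \bar{w}'(S)+\dots\bigr)^{-1}$, obtained by dividing out $\bar{w}(S)$, shows that it is analytic near $z=0$ and equals $1$ at $z=0$. I expect this branch and analyticity check to be the main point requiring care. The rest is direct substitution.
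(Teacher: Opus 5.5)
Your argument is correct and is the intended one: $\bar w(0)=0$ gives both $\mf{w}=z^k\bar w(s)\in s\C\llbracket z,s\rrbracket$, so \cref{GeneralSC} applies, and $\Upsilon_1(z,0)=0$; the $\beta=1$ formulas of \cref{Symplectomorphism} evaluated at $s=S(z)$ then give \eqref{SpectralCurveParam}. One small correction to the branch normalisation, which you inherited from the wording of \cref{Symplectomorphism}: the line $s=0$ is invariant with $\del_\beta z=-\bar w'(0)z^{k+1}$ on it, so $\Xi_1(z,0)=z(1+k\bar w'(0)z^k)^{-1/k}$, and the branch should be fixed by $\Xi_1(z,s)/z\to 1$ as $z\to 0$ (which your division argument supplies) rather than by $\Xi_1(z,0)=z$, which holds only when $\bar w'(0)=0$.
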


In general $ X(z)$ is not a univalued function on all of $ \C$, but $ \frac{dX}{X}$ is meromorphic, and this is sufficient for the general topological recursion we work with.






\begin{remark}
	We are writing $ \omega_{0,1} = y \frac{dX}{X}$ here, so actually the spectral curve is $ (x = \log X, y)$. Or, more properly in the degenerate generalised topological recursion formalism, $ ( \P^1, \frac{dX}{X}, dy, B, \mc{P} = \{ p \, | \, d\frac{X}{X}(p) = 0 \} )$.
\end{remark}


\begin{example}\label{rkSC}
	In the $r$-completed cycles case, $ w(j) = \hbar^{r-1} \frac{j^r}{r}$,
	\begin{equation}
		\bar{\phi}_m (z,s) 
		=
		\Big( \frac{r(s + k \frac{s^r}{r} z^k )^r}{r s^r}\Big)^{\frac{m}{k}}
		=
		\big( 1 + \frac{k}{r} s^{r-1} z^k \big)^{\frac{mr}{k}},
	\end{equation}
    so the spectral curve is
    \begin{equation}
        \begin{cases}
            X(z)
            &=
            z (1 + \frac{k}{r} S(z)^{r-1} z^k )^{-\frac{r}{k}}
            \\
            y(z)
            &=
            S(z) + k S(z)^r z^k
        \end{cases}.
    \end{equation}
    For $ S(z) = z^q$, this recovers \cref{prop-01spectral} in the parametrisation $ y = XY $, through \cref{equ-xviaz,equ-yviaz}.
\end{example}

\section{Topological recursion for fixed leakiness} \label{sec:TR}

We consider a setup with fixed $ k > 0$. We then have the spectral curve \eqref{SpectralCurveParam}, and hence we get multidifferentials defined by (logarithmic) topological recursion. In this section, we will identify exactly for a very wide class what leaky Hurwitz these multidifferentials generate.

Our main result is the following theorem, whose proof will take up most of this section.

\begin{theorem}\label{LeakyTRThm}
	Consider the leaky Hurwitz numbers with cut-and-join operator \eqref{CJ} such that $ w(\frac{s}{\hbar},\hbar) = \frac{1}{\hbar} \bar{w}(s) $, where $ \bar{w}(s)$ is a rational function of $s$ with no repeated factors, and with rational $S(z)$. The multidifferentials of \cref{Correlators} are calculated by topological recursion on the spectral curve \eqref{SpectralCurveParam}.
\end{theorem}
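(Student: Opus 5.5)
The approach follows the strategy announced in the introduction. We build the spectral curve \eqref{SpectralCurveParam} from the trivial spectral curve $\mc{S}_0$ of \cref{TrivialTR} by a chain of $x$--$y$ swaps (\cref{Thmx-y}) and symplectic dualities (\cref{ThmSD}). We track the extended differentials $\mb{W}_n$ through each step. At the end we compare the result with the explicit closed formula of \cref{LeakyMultidifferentials}, which has exactly the shape $\big(\prod_i T_i\big)\mb{W}_n^{\mc{S}_0}$ plus a $\delta_{n,1}$ correction. Since $w(\frac{s}{\hbar},\hbar)=\frac1\hbar\bar w(s)$ has no $\hbar$-corrections, $\phi_m$ should be expressible through $\bar w$ alone. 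Concretely, \cref{phi_mn} should give $L_0(v,z,s,\hbar)$ as an explicit $\hbar$-deformation of $(\bar w(s+k\bar w(s)z^k)/\bar w(s))^{v/k}$: a product of shifted values $\bar w(s+\hbar(\cdot))$. This is what makes each operator $T_i$ factor as a composition of the elementary operators $e^{u(\mc{S}(u\hbar\del_x)\tilde\psi-\psi)}$ and $\sum_r(-d\frac1{dy})^r[u^r]$ that appear in \cref{ExtendedSD,Extendedx-y}.

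The chain of curves is read off from \cref{Symplectomorphism}. Start from $(\log z, S(z))$, which is $\mc{S}_0$ after a symplectic duality adding $S$ as a function of $x=\log z$; this is allowed by the third or fourth bullet of \cref{ThmSD}, using that $S$ is rational. Then the flow $\Upsilon_1=s+kz^k\bar w(s)$ is the symplectic duality adding $k\bar w(y)e^{kx}$. As a function of $y$ this requires an $x$--$y$ swap first: swap so that $y$ becomes the argument, and use $z^k=e^{kx}$. The modification of $x$ by $-\frac1k\log\big(\bar w(\Upsilon)/\bar w(s)\big)$ is again a duality adding a function of $y$, namely the logarithm of a rational function with simple zeroes. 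This is exactly the log-case in the second bullet of \cref{ThmSD}, and it is where the hypothesis that $\bar w$ has no repeated factors enters: it makes the logs have distinct $b_i$ and simple zeros. Interleaving with swaps gives the announced four swaps and three dualities. Alternatively, I would split $\log\bar w(\Upsilon)$ and $\log\bar w(s)$ into separate dualities, each sandwiched by swaps.

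For each step, I would verify the side conditions of \cref{Thmx-y,ThmSD}: compactness, genus zero with standard $B$, regularity at critical values, and simplicity of zeros of $dx$, $dy$, $dy^\dagger$. The hypothesis of rational $S$ and $\bar w$ guarantees the correct function classes. The simplicity and disjointness conditions hold only generically, and non-generic cases would be handled by the limit result \cref{TRLimits}. The quantum corrections $\tilde\psi$ in the log case, $\frac1{\beta\mc{S}(\hbar\beta\del_\theta)}\log$, must match precisely the $\hbar$-shifts in the product formula \cref{phi_mn} for $L_0$.

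The main obstacle is this matching. We must show that the composite operator from the seven transformations, applied to $\mb{W}^{\mc{S}_0}$, equals $\prod_i T_i$, including the extra $\delta_{n,1}$ term, which should arise from the $\delta_{n,1}\frac{dy}{v\hbar}$ terms in \cref{Extendedx-y}. I would first do this for $n=1$ and low order in $\hbar$ as a check. Then I would prove the general identity by commuting $e^{-vx}$-conjugations past the duality exponentials, as in \cite[Section 6]{ABDKS24}. The expected key identity is that $\varsigma$-difference operators in $y$ conjugated by the swap become shifts $s\mapsto s\pm\frac{\hbar k}{2}$, reproducing the recursion \cref{phi_mnRec}.
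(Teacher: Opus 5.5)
\textbf{Gap: the chain of transformations you propose cannot be built from \cref{ThmSD}.} Your overall strategy is the paper's. You build the curve from $\mc{S}_0$ by swaps and symplectic dualities, push the extended differentials through each step, compare with the closed formula, and read off the $n=1$ correction from the $\delta_{n,1}\frac{dy}{v\hbar}$ terms. The problem is the concrete chain.

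\cref{ThmSD} only adds to $y$ a function of $x$ alone, or, after a swap, to $x$ a function of $y$ alone. Your two steps do not have this form:
\begin{itemize}
\item The increment $kz^k\bar w(s)$ depends on both $\zeta=\log z$ and $s$.
\item The correction $-\frac1k\log(\bar w(\Upsilon)/\bar w(s))$ to $x$ depends on both the new and the old $y$-coordinate.
\end{itemize}
A swap only relabels the coordinates, so it does not remove this two-variable dependence.

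On the curve $(\log z,S(z))$ you could rewrite $kz^k\bar w(S(z))$ as a rational function of $e^{x}$ and add it to $y$. But then the term $\frac1k\log\bar w(S(z))$ in the final $x$ is not a function of the new $y=S+kz^k\bar w(S)$ alone, so the next step is stuck. Splitting the logarithms into separate dualities does not help either: the $\log\bar w(s)$ shear is only available while $s$ is still one of the two coordinates. Also, $\mc{S}_0$ in \cref{TrivialTR} already has $y_0=S(z)$. No preliminary duality adding $S$ is needed, and starting from $y=0$ would not be admissible.

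\textbf{The missing idea is to conjugate the Hamiltonian flow by a shear.} Put $\psi(\theta)=\frac1k\log\bar w(\theta)$ and $g=\zeta+\psi(s)$. Then $kz^k\bar w(s)=\chi(g)$ with $\chi(\theta)=ke^{k\theta}$, which is a function of $g$ alone. So $y=s+\chi(g)$ is a genuine shear. Next, $x=g-\psi(y)$, with $\psi$ now evaluated at the new $y$, undoes the conjugation and gives exactly $e^{x}=z\,(\bar w(s)/\bar w(y))^{1/k}$. In other words, $z^k\bar w(s)$ is symplectically conjugate to $e^{kg}$.

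This yields the chain: swap, add $\psi$, swap, add $\chi$, swap, add $\psi^\vee=-\psi$, swap. It also fixes the quantum deformations:
\begin{itemize}
\item $\tilde\psi=-\tilde\psi^\vee=\frac{1}{k\mc{S}(k\hbar\del_\theta)}\log\bar w$, from the second case of \cref{ThmSD}. This is where the absence of repeated factors in $\bar w$ is used.
\item $\tilde\chi=\chi$, from the fourth case of \cref{ThmSD}.
\end{itemize}

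With this chain, the matching you anticipate becomes a concrete computation. First, formal integrations by parts turn the $(-d\frac{1}{dy})^p$ operators into $\del$-derivatives. Then:
\begin{itemize}
\item $e^{v\mc{S}(v\hbar\del_s)\tilde\psi^\vee(s)}$ and the analogous $\tilde\psi$-factor become infinite products of $\hbar$-shifted values of $\bar w$.
\item $e^{\del_s\mc{S}(\hbar\del_\zeta\del_s)\tilde\chi(\zeta)}$ becomes $\sum_{n}\sum_{i}\frac{(-1)^{n-i}}{i!(n-i)!\hbar^n}e^{nk\zeta}e^{(2i-n)k\hbar\del_s/2}$.
\item These shifts make the infinite products telescope to the finite products in \cref{phi_mn}.
\end{itemize}
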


\begin{remark}
    In principle, we will prove that the differentials satisfy logarithmic topological recursion. But since $dy$ is given as the differential of a well-defined function in \eqref{SpectralCurveParam}, this family of spectral curves has no logarithmic singularities, and logarithmic topological recursion on it coincides with `usual' topological recursion.
\end{remark}

\Cref{LeakyTRThm} does not cover some natural cases, such as the $ r$-completed cycles $ k$-leaky Hurwitz numbers \cref{LeakyTRThm} for any $ r$ > 1: in this case, $ \bar{w} (s) = \frac{s^r}{r}$ clearly has a repeated factor. However, we can extend our result to include more cases, including these, by limit arguments.

\begin{corollary}\label{LeakyTRLimit}
    For rational functions $ \bar{w}(s)$ and $S(z)$ such that $ d x$ from \eqref{SpectralCurveParam} has simple zeroes and poles, the multidifferentials of \cref{Correlators} for the leaky Hurwitz numbers with cut-and-join operator \eqref{CJ} such that $ w(\frac{s}{\hbar}) = \frac{1}{\hbar} \bar{w}(s) $ are calculated by topological recursion on the spectral curve \eqref{SpectralCurveParam}.
\end{corollary}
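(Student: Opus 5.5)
The plan is to deduce \cref{LeakyTRLimit} from \cref{LeakyTRThm} by deformation, using \cref{TRLimits}. Given rational $\bar{w}$ and $S$ for which $dx$ from \eqref{SpectralCurveParam} has simple zeroes and poles, embed $\bar{w}$ in an analytic family $\bar{w}_\epsilon$ of rational functions. We take $\bar{w}_\epsilon(0)=0$, no repeated factors for $\epsilon\neq 0$, and $\bar{w}_0=\bar{w}$. For example, if $\bar{w}=P/Q$, perturb the repeated roots of $P$ and $Q$ apart, keeping the root at $0$ simple where possible, or perturb by $\epsilon\, s$-linear terms. In the $r$-completed cycles case one can take $\bar{w}_\epsilon(s)=\frac{1}{r}s\prod_{i=1}^{r-1}(s-\epsilon a_i)$ with distinct $a_i$. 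Keep $S$ fixed, or perturb it similarly if needed. For $\epsilon\neq 0$, \cref{LeakyTRThm} applies: the multidifferentials $\omega^{\epsilon}_{g,n}$ of \cref{Correlators} for $w_\epsilon(\frac{s}{\hbar},\hbar)=\frac1\hbar\bar{w}_\epsilon(s)$ are computed by topological recursion on the curve $\mathcal{S}_\epsilon$ given by \eqref{SpectralCurveParam}.

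The next step is continuity on the enumerative side. By \cref{LeakyMultidifferentials}, or already from the Fock space definition, each coefficient of the expansion of $\omega^{\epsilon}_{g,n}$ at $z_i=0$ is a polynomial in the Taylor coefficients of $\bar{w}_\epsilon$. Indeed, $\phi_{m,n}$ in \eqref{phi_mn} is polynomial in the values of $w_\epsilon$, and since $k>0$ only finitely many covers contribute to each coefficient. So $\omega^\epsilon_{g,n}\to\omega^0_{g,n}$ coefficientwise as formal series. The same holds for the local coordinate change $X(z)$ near $z=0$, which is analytic in $\epsilon$.

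On the topological recursion side, $\mathcal{S}_\epsilon$ forms an analytic family over a disc in $\epsilon$: $dx_\epsilon=\frac{dX_\epsilon}{X_\epsilon}$ and $dy_\epsilon$ are meromorphic and depend analytically on $\epsilon$ away from finitely many points. The main obstacle is verifying the hypotheses of \cref{TRLimits}. We need an open set $U\subset\P^1$ containing all zeroes of $dx_\epsilon$ for small $\epsilon$, on whose boundary $dx_\epsilon,dy_\epsilon$ are regular and nonvanishing. Equivalently, we need that zeroes of $dx_\epsilon$ do not escape to or collide with poles of $dx_\epsilon$, $dy_\epsilon$, and the expansion point $z=0$. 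This is exactly where the hypothesis that $dx_0$ has simple zeroes and poles enters: simple zeroes deform to simple zeroes, and simple poles persist without new zeroes being created at them, so $\mathcal{P}_\epsilon=\{dx_\epsilon=0\}$ varies continuously and stays inside a fixed $U$. Poles coming from collisions of roots of $\bar{w}_\epsilon$ must be checked explicitly using the formula for $X$ in \eqref{SpectralCurveParam}, where the $k$-th root makes $dx$ a sum of logarithmic differentials. Because degenerate TR of \cite{ABDKS25} allows arbitrary local behaviour of $dy$ at special points, collisions of zeroes of $dy$ with those of $dx$ in the limit are harmless.

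Given this, \cref{TRLimits} says that the TR correlators on $\mathcal{S}_\epsilon$ converge to those on $\mathcal{S}_0$, and in particular so do their expansions at $z=0$. Combining this with the coefficientwise convergence of the enumerative side and the equality for $\epsilon\neq 0$ proves the corollary.
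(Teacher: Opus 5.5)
Your overall route is the paper's: deform $\bar w$ to a rational function without repeated factors (keeping $\bar w_\epsilon(0)=0$), apply \cref{LeakyTRThm} for $\epsilon\neq0$, and pass to the limit with \cref{TRLimits}. Your coefficientwise continuity of the Hurwitz side is correct, and the paper leaves it implicit.

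\textbf{The gap.} It lies in the step you flag as the main obstacle. You claim that simple poles of $dx_0$ persist ``without new zeroes being created at them'', so that $\{dx_\epsilon=0\}$ varies continuously. This is false for exactly the deformations you need.

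Let $b$ be a root of multiplicity $m\ge 2$ of $\bar w$, and let $y(z_*)=b$ with $S(z_*)\neq b$. Then $x_0$ has a logarithmic singularity at $z_*$ with residue $-m/k$. Splitting $b$ into $m$ simple roots gives $m$ nearby simple poles of $dx_\epsilon$, each of residue $-1/k$. By the argument principle on a small circle around $z_*$, $dx_\epsilon$ then acquires $m-1$ new simple zeros there, and these converge to $z_*$.

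Take your own example with $k=1$, $r=2$, $q=1$ and $\bar w_\epsilon(s)=\tfrac12 s(s-\epsilon)$:
\[
X_\epsilon(z)=\frac{z}{\bigl(1+\tfrac12 z(z-\epsilon)\bigr)\bigl(1+\tfrac12 z^2\bigr)},
\qquad
X_0(z)=\frac{z}{\bigl(1+\tfrac12 z^2\bigr)^2}.
\]
Here $X_0$ has two critical points ($z^2=\tfrac23$) and double poles at $\pm i\sqrt2$. By contrast, $X_\epsilon$ has four critical points, two of which tend to $\pm i\sqrt2$, where $dy_0=-2\,dz\neq0$. Throughout, $dx_\epsilon$ has only simple zeros and poles, so the corollary's hypothesis does not prevent this.

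\textbf{Consequence for \cref{TRLimits}.} No fixed $U$ separates $\{dx_\epsilon=0\}$ from the other special points. Any $U$ containing the newborn ramification points also contains the adjacent simple poles of $dx_\epsilon$. These are special points of type $(r,s)=(0,1)$ in the sense of \cref{DefSpecialPoints}, and including or excluding them changes the recursion in general. So \cref{TRLimits}, with $\mathcal{P}=\mathsf{S}\cap U$, compares recursions in which these poles lie in $\mathcal{P}$. That is not the recursion with $\mathcal{P}=\{dx=0\}$ which \cref{LeakyTRThm} provides for $\epsilon\neq0$ and which the corollary asserts at $\epsilon=0$.

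To close the argument you must show that the contribution of the newborn ramification points tends to zero in the limit. This is not a formality. The situation is exactly where the $\hbar$-deformation prescribed by \cref{ThmSD} jumps:
\begin{itemize}
\item for $m$ distinct $b_i$, the terms $\frac1k\log(\theta-b_i)$ are paired with $\sum_i\frac{1}{k\mathcal{S}(k\hbar\partial_\theta)}\log(\theta-b_i)$, whose limit $\frac{m}{k\mathcal{S}(k\hbar\partial_\theta)}\log(\theta-b)$ is what the Hurwitz side sees;
\item for $\frac mk\log(\theta-b)$ itself, the prescribed deformation is $\frac{m}{k\mathcal{S}(\frac km\hbar\partial_\theta)}\log(\theta-b)$.
\end{itemize}
This jump is why \cref{LeakyTRThm} excludes repeated factors.

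Your remark that collisions of zeros of $dy$ with zeros of $dx$ are harmless has the same defect. It holds only if those zeros of $dy$ are put into $\mathcal{P}$, which again alters the recursion for $\epsilon\neq0$.

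The paper's own proof only asserts that topological recursion is ``manifestly analytic under such deformations'' and does not address these collisions either. So your proposal reproduces the paper's argument together with this unverified step.
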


\begin{proof}
    Assume $ \bar{w}(s)$ and $S(z)$ are rational functions such that $ dx$ has simple zeroes and poles. As having simple zeroes and poles is an open condition, we can deform $ \bar{w}$ to a rational function of the same degree with no repeated factors, while preserving the condition that $ dx $ has simple zeroes and poles. Topological recursion (in any formulation) is manifestly analytic under such deformations, as also follows from \cref{TRLimits}.
\end{proof}

\begin{remark}
    With the more sophisticated limit arguments of \cite{BBCKS25,ABDKS25}, this corollary can be extended to cases where different special points in the sense of \cref{DefSpecialPoints} collide. As we do not currently know of any specifically interesting cases that require this, we omit this extension.
\end{remark}

\begin{example}
    In the $r$-completed cycles, $k$-leaky, $q$-orbifold case, $ w(\frac{s}{\hbar}) = \frac{1}{\hbar} \frac{s^r}{r}$, and $ S(z) = z^q$. Then
    \begin{equation}
		dx
		=
		\frac{1 - (qr+k) \frac{r-1}{r} z^{qr - q +k }}{z ( 1 + \frac{k}{r} z^{qr-q + k} )} dz
	\end{equation}
    has simple zeroes and poles, so its multidifferentials are calculated by topological recursion.
\end{example}

\begin{remark}
    In the non-leaky case, the completed cycles Hurwitz numbers seemed to have a non-trivial $\hbar$-deformation for their cut-and-join operator, despite being the numbers reconstructed by topological recursion: looking at \cite[Remark 1.4]{BDKS22} and \cite[Table 2]{BDKS24}, we see that the generating series capturing the cut-and-join equation is (in our conventions)
    \begin{equation}
        \hat{\psi} (s; \hbar ) = \frac{(s + \hbar/2)^r - (s - \hbar/2)^r}{r\hbar}.
    \end{equation}
    We claim that this is an artifact: the function $ \hat{\psi} $ is not the right generating function, but rather our function  $ \mf{W}$ or in this case $w$ (which appears also in \cite[Equation (58)]{BDKS24} as an auxiliary function). In this non-leaky, $k=0$, setting, we can recover $ \psi$ as $ \lim_{k\to 0} \log \bar{\phi}_1 (z,s)$ as in \cref{FindingpsiFork=0}, while $ e^{\hat{\psi}}$ is given using \cref{phi_mn} by the limit
    \begin{equation}
        \begin{split}
            \lim_{k \to 0} \phi_1 (z,s)
            &=
            \lim_{k \to 0} \sum_{n=0}^\infty \phi_{1,n} (s) z^{nk}
            \\
            &=
            \lim_{k \to 0} \sum_{n=0}^\infty \sum_{i=0}^n \frac{(-1)^{n-i}z^{kn}}{i!(n-i)!} \prod_{j=0}^{i-1} \frac{\hbar^{r-1}\big( \frac{s}{\hbar} + \frac{1 + (2j+1-n)k}{2} \big)^r }{r}  \prod_{j=i}^{n-1} \frac{\hbar^{r-1} \big( \frac{s}{\hbar} + \frac{-1 + (2j+1-n)k}{2} \big)^r }{r} 
            \\
            &=
            \sum_{n=0}^\infty \sum_{i=0}^n \frac{(-1)^{n-i}}{i!(n-i)!}  \Big( \frac{( s + \frac{\hbar}{2} )^r}{r\hbar} \Big)^i \Big( \frac{( s - \frac{\hbar}{2} )^r}{r\hbar} \Big)^{n-i}
            \\
            &=
            \exp \Big( \frac{(s + \hbar /2)^r - (s - \hbar/2)^r}{r \hbar} \Big),
        \end{split}
    \end{equation}
    and this limit only depends on $ \bar{w} (s) = \hbar w( \frac{s}{\hbar} )$ in these cases.
\end{remark}

\begin{example}
    The original leaky Hurwitz numbers studied in \cite{CMR25}, expressed in Fock space in \cref{CMRFockSpace}, have cut-and-join operator $ \hbar M_k = \hbar (\mc{F}_2^k - \frac{k^2 - 1}{24} \mc{F}^k_0)$. This means that $ w(j) = \hbar (j^2 - \frac{k^2 -1}{24}) $, and hence $ w(\frac{s}{\hbar}) = \hbar^{-1} s^2 - \hbar \frac{k^2 -1}{24} \neq \frac{1}{\hbar} \bar{w}(s)$. Therefore, these in-completed leaky Hurwitz numbers are \emph{not} computed by topological recursion on the spectral curve
    \begin{equation}
		\begin{cases}
			e^{x(z)}
            =
            X(z)
			&=
			z \big( 1+ k S(z)z^k\big)^{-\frac{2}{k}}
			\\
			y (z)
			&=
			S(z) + k (S(z))^2 z^k
		\end{cases}
	\end{equation}
    for generic $ S$, even though this does compute the genus-$0$ sector. An appropriate version of blobbed topological recursion \cite{BS17,ABDKS25blobbed} would probably fit this example, but this goes beyond the current work.
\end{example}

To prove \cref{LeakyTRThm}, we will first need several lemmas. As the case $ n = 1$ is a bit different from $ n > 1$, and more complicated, we will first deal with the latter case.

We will construct the spectral curve via $x$--$y$ duality and symplectic duality, and using the expressions from \cref{Thmx-y,ThmSD} for the changes of (extended) multidifferentials under these transformations. These transformations preserve topological recursion, and we will show that the resulting multidifferentials agree with the one found in \cref{LeakyMultidifferentials}.

Explicitly, we will go through the following chain of transformations starting from \cref{TrivialTR}:\footnote{There is some abuse of notation here: strictly speaking, we should only work with $ dx_j$ and $ dy_j$, because of logarithmic singularities and their incorporation in log topological recursion.}

\begin{equation}\label{ChainOfTransformations}
	\begin{split}
		\begin{cases}
			x_0 & = \log z
			\\
			y_0 & = S(z)
		\end{cases}
		&\to
		\begin{cases}
			x_1 & = y_0
			\\
			y_1 & = x_0
		\end{cases}
		\to
		\begin{cases}
			x_2 & = x_1
			\\
			y_2 & = y_1 + \psi (x_1)
		\end{cases}
		\to
		\begin{cases}
			x_3 & = y_2
			\\
			y_3 & = x_2
		\end{cases}
		\\
		&\to
		\begin{cases}
			x_4 & = x_3
			\\
			y_4 & = y_3 + \chi (x_3)
		\end{cases}
		\to
		\begin{cases}
			x_5 & = y_4
			\\
			y_5 & = x_4
		\end{cases}
		\to
		\begin{cases}
			x_6 & = x_5
			\\
			y_6 & = y_5 + \psi^\vee (x_5)
		\end{cases}
		\to
		\begin{cases}
			x & = y_6
			\\
			y & = x_6
		\end{cases}
		,
	\end{split}
\end{equation}
where
\begin{equation}
	\psi( \theta ) 
	= 
	- \psi^\vee (\theta ) 
	= 
	\frac{1}{k} \log \bar{w} (\theta ) , 
	\qquad 
	\chi (\theta ) 
	= 
	k e^{k \theta} .
\end{equation}

We see that all of these functions are one of the following five:
\begin{equation}\label{FiveFunctions}
	\zeta
	=
	\log z , 
	\quad 
	s 
	=
	S(z) , 
	\quad 
	g
	=
	\zeta + \psi (s) , 
	\quad 
	x
	=
	\zeta + \psi (s) + \psi^\vee \big( s + \chi (\zeta + \psi (s)) \big) , 
	\quad 
	y
	=
	s + \chi (\zeta + \psi (s)) .
\end{equation}
So, we will go through a sequence of three times $x$--$y$ duality and symplectic duality, followed by a final $ x$--$y$ duality. We will first consider the $ n > 1$ case, and then give an additional argument for the harder $ n =1 $ case.

For the remainder of this section, we will consider $ \zeta = \log z$ and $ s$ as formally independent variables, and consider $ g$, $ x$, and $ y$ as functions of these two variables through the last three equalities in \cref{FiveFunctions}. We will explicitly specialise $ s$ to $ S(z)$ at the end of the argument.

\subsection{The \texorpdfstring{$ n > 1$}{n>1} cases}

\begin{lemma}\label{DualitiesPreserveTR}
	Assume that $ S(z)$ is a rational function and $ \bar{w}(s)$ is rational with no repeated factors, and take
	\begin{equation}\label{TransformationDeformations}
		\tilde{\psi} ( \theta )
		=
		- \tilde{\psi}^\vee (\theta)
		=
		\frac{1}{k \mc{S}( k \hbar \del_\theta )} \log \bar{w}(\theta )
		\,,
		\qquad
		\tilde{\chi} (\theta )
		=
		\chi (\theta )
		=
		k e^{k \theta} .
	\end{equation}
	Then the multidifferentials for $ n > 1$ obtained by logarithmic topological recursion on the spectral curve \cref{SpectralCurveParam} are given by
	\begin{equation}
		\tilde{\omega}_{g,n} (z_{\llbracket n\rrbracket}) 
		=
		[\hbar^{2g-2+n}] \Big( \prod_{i=1}^n \tilde{T}_i \Big) \mb{W}_n^{\mc{S}_0} (z_{\llbracket n\rrbracket}, u_{\llbracket n\rrbracket}) ,
	\end{equation}
	where
	\begin{equation}
	\begin{split}
		\tilde{T} f (z)
		&=
		\sum_{j,p,q,t=0}^\infty (-d \frac{1}{dx})^j [v^j] 
		e^{v ( \mc{S}(v\hbar \del_{y}) \tilde{\psi}^\vee (y) - x )} \big( -d \frac{1}{dy} \big)^p e^{vy_2} [b^p ]
		e^{b ( \mc{S}(b \hbar \del_{y_2}) \tilde{\chi}(y_2) - y )} 
		\\
		&\hspace{2cm}
		\big( -d \frac{1}{dy_2} \big)^q e^{bx_2} [a^q ]
		e^{a ( \mc{S}(a\hbar \del_{y_0}) \tilde{\psi}(y_0) - y_2 )} \big( -d \frac{1}{dy_0} \big)^t e^{ax_0} [u^t ]
		f(u,z)
		\\
		&=
		\Big\lfloor_{s \to S(z)} \sum_{j,p,q,t=0}^\infty (-d \frac{1}{dx})^j [v^j] 
		e^{v ( \mc{S}(v\hbar \del_{y}) \tilde{\psi}^\vee(y) - x )} \big( -d \frac{1}{dy} \big)^p e^{vg} [b^p ]
		e^{b ( \mc{S}(b\hbar \del_g) \tilde{\chi}(g) - y )}
		\\
		&\hspace{2cm}
		\big( -d \frac{1}{dg} \big)^q e^{bs} [a^q ]
		e^{a ( \mc{S}(a\hbar \del_{s}) \tilde{\psi}(s) - g )} \big( -d \frac{1}{ds} \big)^t e^{a\zeta} [u^t ]
		f(u,z) .
	\end{split}
	\end{equation}
\end{lemma}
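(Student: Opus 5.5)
The plan is to follow the chain of transformations \eqref{ChainOfTransformations} step by step, starting from the trivial spectral curve $\mc{S}_0$ of \cref{TrivialTR}. For that curve, the extended differentials $\mb{W}_n^{\mc{S}_0}$ are known explicitly and all stable $\omega_{g,n}$ vanish. At each step we keep track of the extended differentials $\mb{W}_n$, rather than only the $\omega_n$. This is what allows us to compose the transformation rules.

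An $x$--$y$ swap is handled by \cref{Extendedx-y}. Applied to $\mb{W}_n$, it produces the dual extended differentials through the operator
\begin{equation}
    (-1)^n \prod_i e^{-v_i x_i}\sum_r (-d_i\tfrac{1}{dy_i})^r e^{v_i x_i}[u_i^r].
\end{equation}
A symplectic duality $y\mapsto y+\psi(x)$ is handled by \cref{ExtendedSD}. It multiplies each factor by
\begin{equation}
    e^{u_i(\mc{S}(u_i\hbar\del_{x_i})\tilde\psi(x_i)-\psi(x_i))}.
\end{equation}
Composing these seven transformations in order gives exactly the nested operator $\tilde{T}$. The innermost factor $e^{a\zeta}$ comes from the first swap, where $x_0=\zeta$. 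Then comes the $\tilde\psi$ shift in the variable $x_1=s$. The second swap contributes $e^{bs}$, the $\tilde\chi$ shift acts in the variable $x_3=g$, and the third swap contributes $e^{vg}$. Next comes the $\tilde\psi^\vee$ shift in $x_5=y$. The final swap yields the non-extended differentials through \cref{NonExtendedx-y}, which accounts for the outer $(-d\frac{1}{dx})^j[v^j]$. The signs $(-1)^n$ from the four swaps cancel. The $\delta_{n,1}$ correction terms in \cref{Extendedx-y} can be dropped because $n>1$; the additive term for $n=1$ is deferred to the next subsection.

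The second point is to check, at each step, that the hypotheses of \cref{Thmx-y} and \cref{ThmSD} hold, so that each intermediate curve genuinely satisfies (log) topological recursion. For the $x$--$y$ swaps, we only need $\Sigma=\P^1$ with the standard Bergman kernel; the choice $\mc{P}$ is the complement of the previous special points. For the symplectic dualities, we match the bullet cases of \cref{ThmSD}:
\begin{itemize}
    \item In step 2 we have $x_1=S(z)$ rational and $\tilde\psi=\frac{1}{k\mc{S}(k\hbar\del_\theta)}\log\bar w(\theta)$. Factoring $\bar w$ into distinct linear factors gives the second case, with $\beta_i=\pm k$ and $T=0$ (up to the constant leading coefficient). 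The assumption that $\bar w$ has no repeated factors is what makes the $b_i$ distinct.
    \item In step 4 we have $x_3=g=\log z+\frac1k\log\bar w(S(z))$, which is a sum of logarithms of rational functions with residues in $\frac1k\Z$. The function $\chi=ke^{k\theta}$ is $T(e^{k\theta})$, so this is the fourth case, with $\gamma/\alpha\in\Z$.
    \item In step 6 we have $x_5=y$ rational and $\tilde\psi^\vee=-\tilde\psi$, which is again the second case.
\end{itemize}
Along the way we use that the composed curve is indeed \eqref{SpectralCurveParam}. This is the substitution check $x=\zeta+\psi(s)+\psi^\vee(s+ke^{k(\zeta+\psi(s))})$, which reproduces $X=z\,(\bar w(S)/\bar w(S+k\bar wz^k))^{1/k}$.

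The main obstacle is verifying the genericity conditions in \cref{ThmSD}. These require the $R(z)-b_i$ to have only simple zeroes, regularity at critical values, and no new common poles. For special $S$ and $\bar w$ these conditions may fail. I would first treat the case of generic rational $S$ and $\bar w$ with no repeated factors, where these transversality conditions hold. I would then extend to all $S$ and $\bar w$ allowed in the statement by the deformation result \cref{TRLimits}, since both sides depend analytically on the parameters. Finally, I would specialise $s\to S(z)$, which gives the second form of $\tilde{T}$, using that the differentials $d\frac{1}{dg}$ and $d\frac{1}{ds}$ are computed along the curve.
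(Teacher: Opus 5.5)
Your proposal is correct and follows the paper's proof essentially step for step. Starting from $\mb{W}^{\mc{S}_0}_n$, you compose \cref{Extendedx-y} and \cref{ExtendedSD} along \eqref{ChainOfTransformations}, read off the deformations from the second case of \cref{ThmSD} for $\psi,\psi^\vee$ and the fourth case for $\chi$, and finish with \cref{NonExtendedx-y}.

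The genericity-plus-deformation layer you add is not in the paper, which simply asserts that these cases of \cref{ThmSD} apply. Be aware, though, that perturbing $S$ and $\bar w$ cannot secure the pole condition in step 6. At any $z_*$ with $\bar w(S(z_*))=0$ one automatically has $y(z_*)=S(z_*)$, so $d(\psi^\vee\circ y)$ and $dg$ always share a pole there; their residues cancel in $dy_6=dx$. That hypothesis therefore needs a separate justification, an issue the paper's one-line appeal to \cref{ThmSD} also leaves open.
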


\begin{proof}
	By \cref{TrivialTR}, the $ \mb{W}^{\mc{S}_0}_n$ are the extended multidifferentials associated by topological recursion to $ \mc{S}_0$.

	By \cref{Extendedx-y,ExtendedSD}, we find that for the transformation $ (x, y) \mapsto (\hat{x} = y, \hat{y} = x + \psi(y))$, the extended correlators should transform as
	\begin{equation}\label{DoubleTransform}
		\begin{split}
			\hat{\mb{W}}_n (z_{\llbracket n\rrbracket}, v_{\llbracket n\rrbracket})
			&=
			(-1)^n \prod_{i=1}^n \bigg(e^{v_i ( \mc{S}(v_i\hbar \del_{y_i}) \tilde{\psi}(y_i) - \psi (y_i) )} \sum_{r=0}^\infty e^{-v_i x_i} \big( -d_i \frac{1}{dy_i} \big)^r e^{v_ix_i} [u_i^r ] \bigg) \mb{W}_n (z_{\llbracket n\rrbracket}, u_{\llbracket n\rrbracket} )
			\\
			&\hspace{6cm}
			+ \delta_{n,1} e^{v_1 ( \mc{S}(v_1\hbar \del_{y_1}) \tilde{\psi}(y_1) - \psi (y_1) )} \frac{dy_1}{v_1\hbar}
			\\
			&=
			(-1)^n \prod_{i=1}^n \bigg(e^{v_i ( \mc{S}(v_i\hbar \del_{y_i}) \tilde{\psi}(y_i) - \hat{y}_i )} \sum_{r=0}^\infty \big( -d_i \frac{1}{dy_i} \big)^r e^{v_ix_i} [u_i^r ] \bigg) \mb{W}_n (z_{\llbracket n\rrbracket}, u_{\llbracket n\rrbracket} ) 
			\\
			&\hspace{6cm}
			+ \delta_{n,1} e^{v_1 ( \mc{S}(v_1\hbar \del_{y_1}) \tilde{\psi}(y_1) - \psi (y_1) )} \frac{dy_1}{v_1\hbar} .
		\end{split}
	\end{equation}
	with $ \tilde{\psi}$ the deformation of $ \psi $ given in \cref{ThmSD}. We are in the situation of the second (resp. fourth) case of \cref{ThmSD} for $ \psi$ and $ \psi^\vee$ (resp. $\chi$), so we find the deformations given in \cref{TransformationDeformations}. So, we apply this three times, for $ \psi$, $ \chi$, and $ \psi^\vee$, and combine it finally with \cref{NonExtendedx-y} to obtain the formula in the lemma.
\end{proof}

Comparing \cref{LeakyMultidifferentials,DualitiesPreserveTR}, we see that we want to prove $ \omega^{(g)}_n = \tilde{\omega}^{(g)}_n$, or $ T = \tilde{T}$. In fact, we will show this last equality term by term in $t$, i.e. we will prove equality between the following two maps on one-forms $F(z)$ in $z$.

\begin{align}
	V_t F (z)
	&=
	\sum_{j = 0}^\infty \big(- d \frac{1}{dx}\big)^j  [v^j] L_t(-v, s,z,\hbar) F(z)
	\\
	\begin{split}
		\tilde{V}_t F (z)
		&=
		\sum_{j,p,q=0}^\infty (-d \frac{1}{dx})^j [v^j] 
		e^{v ( \mc{S}(v\hbar \del_{y}) \tilde{\psi}^\vee(y) - x )} \big( -d \frac{1}{dy} \big)^p e^{vg} [b^p ]
		e^{b ( \mc{S}(b\hbar \del_g) \tilde{\chi}(g) - y )}
		\\
		&\hspace{2cm}
		\big( -d \frac{1}{dg} \big)^q e^{bs} [a^q ]
		e^{a ( \mc{S}(a\hbar \del_{s}) \tilde{\psi}(s) - g )} \big( -d \frac{1}{ds} \big)^t e^{a\zeta}
		F(z) .
	\end{split}
\end{align}

\begin{lemma} \label{VrIsMultiplication}
	For any $t \geq 0$, and any formal power series in $ \tilde{\psi} ,  \tilde{\chi}, \tilde{\psi}^\vee \in \C \llbracket \theta, \hbar \rrbracket $, with $ \tilde{\psi} \big|_{\hbar = 0} = \psi$ and similarly for the others, the map $\tilde{V}_t $ is given by
	\begin{equation}
		\begin{split}
			\tilde{V}_t F (z)
			&=
			\sum_{j,p,q=0}^\infty (-d \frac{1}{dx})^j [v^j] 
			\bigg(  \Big\lfloor_{\eta \to y} e^{-v \psi^\vee (\eta)} \del_\eta^p  e^{v \mc{S}(v \hbar \del_\eta) \tilde{\psi}^\vee (\eta)}\bigg)
			[b^p ] \bigg( \Big\lfloor_{\gamma \to g}e^{-v \gamma} e^{-b \chi (\gamma)} \del_\gamma^q e^{v \gamma} e^{b \mc{S}(b \hbar \del_\gamma)\tilde{\chi} (\gamma) } \bigg) 
			\\
			&\hspace{4cm}
			[a^q] \bigg(  \Big\lfloor_{\sigma \to s} e^{-b \sigma} e^{-a \psi (\sigma)} \del_\sigma^t e^{b \sigma} e^{a \mc{S}(a \hbar \del_\sigma)\tilde{\psi} (\sigma) } \bigg) F(z) .
		\end{split}
	\end{equation}
\end{lemma}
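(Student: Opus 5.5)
The plan is to move every operator of the form $\big(-d\frac{1}{dY}\big)^p e^{cZ}[b^p]$ in the definition of $\tilde V_t$ past the exponential prefactors, so that each layer becomes a plain differential operator $\partial^p$ in an auxiliary variable. The tool is a single commutation identity, applied three times from the innermost layer outwards.

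The identity concerns two functions $Y,Z$ on the spectral curve and a function $A(u,\cdot)$ analytic in its first argument. It reads
\begin{equation*}
\sum_{p\ge0}\Big(-d\frac{1}{dY}\Big)^p e^{cZ}[u^p]\, e^{-uY}A(u)\,G
=\sum_{p\ge0}\Big(-d\frac{1}{dY}\Big)^p\Big( e^{cZ}\,[u^p]\,e^{-uY}A(u)\Big) G ,
\end{equation*}
where $G$ is a one-form independent of $u$. We then use that $\sum_p(-d\frac{1}{dY})^p [u^p]e^{-uY}(\cdots)$ acts as a Laplace-type substitution. In practice we use the formal lemma of \cite{ABDKS24}: for $f(u)=e^{uY}\sum_m f_m u^m$ one has $\sum_p(-d\frac1{dY})^p[u^p] f(u)\,G = \sum_m f_m\,(\dots)$. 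The version we need is the exponential shift relation $e^{-uY}\partial_Y^p\, e^{uY}=(\partial_Y+u)^p$.

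Concretely, we proceed from the innermost layer outwards.

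\textbf{Step 1 (the $t$-layer).} Since $g=\zeta+\psi(s)$, we have $e^{a(\mc{S}(a\hbar\partial_s)\tilde\psi(s)-g)}=e^{-a\zeta}\,e^{-a\psi(s)}e^{a\mc{S}(a\hbar\partial_s)\tilde\psi(s)}$. Its factor $e^{-a\zeta}$ cancels the $e^{a\zeta}$ standing to the right of $(-d\frac1{ds})^t$. The key point is that $\zeta$ and $s$ are formally independent, so $d\frac1{ds}$ acting on $e^{a\zeta}F$ with $\zeta$ held fixed is just $\partial_s$ on the coefficient of $F$. The remaining $s$-dependent factors are then absorbed into the auxiliary variable $\sigma$ by inserting $\lfloor_{\sigma\to s}$. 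The factor $e^{bs}$ from the next layer becomes $e^{b\sigma}$ inside $\partial_\sigma^t$, conjugated back by $e^{-b\sigma}$. This produces the third bracket.

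\textbf{Steps 2 and 3 (the $q$- and $p$-layers).} These are identical in structure. Use $y=s+\chi(g)$ to write $e^{b(\mc{S}\tilde\chi(g)-y)}=e^{-bs}e^{-b\chi(g)}e^{b\mc{S}\tilde\chi(g)}$. The factor $e^{-bs}$ cancels against the $e^{bs}$ already moved inside, and $(-d\frac1{dg})^q$ becomes $\partial_\gamma^q$ with $\gamma=g$ and $s$ fixed. The factor $e^{vg}$ gives $e^{v\gamma}$, conjugated by $e^{-v\gamma}$. For the $p$-layer, $x=g+\psi^\vee(y)$ splits off $e^{-vg}$ and leaves $e^{-v\psi^\vee(\eta)}\partial_\eta^p e^{v\mc{S}\tilde\psi^\vee(\eta)}$.

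The main obstacle is justifying that $d\frac1{dY}$ may be replaced by a partial derivative $\partial_\eta$ (respectively $\partial_\gamma$, $\partial_\sigma$) acting only on the explicit prefactors. This replacement requires three things:
\begin{itemize}
\item choosing a coordinate system at each layer, namely $(g,y)$, $(s,g)$ and $(\zeta,s)$, in which the complementary variable is held fixed;
\item checking that the complementary variable coincides with the one whose exponential was cancelled;
\item checking that the residual operator $d\frac1{dY}$ acting on a one-form equals $\partial_Y$ on its coefficient with respect to $d(\text{complementary})$, up to the sign and the $[u^p]$ bookkeeping.
\end{itemize}
I would handle this by working throughout with two independent variables and applying the exponential shift identity above. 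The result should be the precise factorisation in the statement, in which each bracket depends only on the local data of one transformation and the specialisation $s\to S(z)$ is deferred to the end.
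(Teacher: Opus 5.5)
There is a genuine gap: the proposal never supplies the mechanism that moves the derivatives. In $\tilde V_t$ the operators $(-d\frac{1}{ds})^t$, $(-d\frac{1}{dg})^q$, $(-d\frac{1}{dy})^p$ act on everything to their right, and ultimately on $e^{a\zeta}F(z)$. In the claimed formula, by contrast, $F(z)$ is not differentiated at all. There $\partial_\sigma^t$, $\partial_\gamma^q$, $\partial_\eta^p$ act on $e^{b\sigma}e^{a\mc{S}(a\hbar\partial_\sigma)\tilde\psi(\sigma)}$, $e^{v\gamma}e^{b\mc{S}(b\hbar\partial_\gamma)\tilde\chi(\gamma)}$ and $e^{v\mc{S}(v\hbar\partial_\eta)\tilde\psi^\vee(\eta)}$, i.e.\ on factors that stood to the \emph{left} of those operators. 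Moving a derivative from one factor onto another is an integration by parts. Your ``commutation identity'' only restates how the operators are nested, and the shift relation $e^{-uY}\partial_Y^pe^{uY}=(\partial_Y+u)^p$ cannot transfer a derivative either.

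In particular Step 1 fails as written. The $e^{-a\zeta}$ hidden in $e^{-ag}$ and the $e^{a\zeta}$ sit on opposite sides of $(-d\frac{1}{ds})^t$, so they do not cancel. Reading $d\frac{1}{ds}$ as $\partial_s$ at fixed $\zeta$ would at best leave $\partial_s^t$ acting on $F$, which is not the statement. For $t\ge 1$ the two sides of each layer genuinely differ by total-derivative terms of the form $e^{-aX}dI$. They agree only after the next outer operator (e.g.\ $\sum_q(-d\frac{1}{dg})^q[a^q]$) is applied, so the argument must run from the inside out, with each outer layer justifying the inner one.

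The missing ingredient is the formal integration by parts of \cite[(73)]{ABDKS24}, which is what the paper uses. For a one-form $A$ and a function $B$, both power series in $v$,
\[
\sum_{j\ge0}\big(-d\tfrac{1}{dX}\big)^j[v^j]\,A\,\partial_Y^rB=\sum_{j\ge0}\big(-d\tfrac{1}{dX}\big)^j[v^j]\,B\,e^{-vX}\big(-d\tfrac{1}{dY}\big)^re^{vX}A .
\]
It follows by induction on $r$ from two facts: $A\,\partial_YB+B\,e^{-vX}d\big(e^{vX}A/dY\big)=e^{-vX}d\big(e^{vX}AB/dY\big)$, and the telescoping identity $\sum_j(-d\frac{1}{dX})^j[v^j]\,e^{-vX}dI=0$ for any $I$.

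Apply the identity three times, with $(X,Y,v)$ equal to $(g,s,a)$, then $(y,g,b)$, then $(x,y,v)$:
\begin{itemize}
\item first $A_1=e^{-a\psi(s)}F$ and $B_1=e^{bs}e^{a\mc{S}(a\hbar\partial_s)\tilde\psi(s)}$;
\item then $A_2=e^{-by}[a^q](\cdots)F$ and $B_2=e^{vg}e^{b\mc{S}(b\hbar\partial_g)\tilde\chi(g)}$;
\item then $A_3=e^{-vx}[b^p](\cdots)[a^q](\cdots)F$ and $B_3=e^{v\mc{S}(v\hbar\partial_y)\tilde\psi^\vee(y)}$.
\end{itemize}
The relations $g=\zeta+\psi(s)$, $y=s+\chi(g)$ and $x=g+\psi^\vee(y)$ enter where you used them, but in a different role. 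The factors $e^{\pm aX}$ are the conjugating factors the identity requires, e.g.\ $e^{ag}A_1=e^{a\zeta}F$. The leftover pieces of $e^{-by}=e^{-bs}e^{-b\chi(g)}$ and $e^{-vx}=e^{-vg}e^{-v\psi^\vee(y)}$ produce exactly the prefactors $e^{-b\sigma}$, $e^{-b\chi(\gamma)}$, $e^{-v\gamma}$ and $e^{-v\psi^\vee(\eta)}$ in the brackets.
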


\begin{proof}

	We apply \cite[(73)]{ABDKS24} three times, in the slightly modified version
	\begin{equation}\label{FormalIntByParts}
		\sum_{j=0}^\infty ( - d \frac{1}{dx^\dagger} )^j [v^j] A \del_y^r B
		=
		\sum_{j=0}^\infty ( - d \frac{1}{dx^\dagger} )^j [v^j] B e^{- v x^\dagger} (- d \frac{1}{dy} )^r e^{v x^\dagger} A
	\end{equation}
	for a function $ B(z,v)$ and a one-form in $z$ $A(z,v)$.
	
	First, we use this for
	\begin{equation}
		A_1 
		= 
		e^{-a \psi (s )} F(z) \,, 
		\qquad 
		B_1
		=
		e^{b s} e^{a \mc{S} (a \hbar \del_s )\tilde{\psi} (s)}
	\end{equation}
	to get
	\begin{equation*}
		\begin{split}
			&
			\sum_{q = 0}^\infty \big( -d \frac{1}{dg} \big)^q e^{bs} [a^q ]
			e^{a ( \mc{S}(a\hbar \del_s) \tilde{\psi}(s) - g )} \big( -d \frac{1}{ds} \big)^t e^{a \zeta} F(z)
			\\
			&\qquad=
			\sum_{q = 0}^\infty \big( -d \frac{1}{dg} \big)^q [a^q ] \bigg(  \Big\lfloor_{\sigma \to s} e^{-a \psi (\sigma)} \del_\sigma^t e^{b \sigma} e^{a \mc{S}(a \hbar \del_\sigma)\tilde{\psi} (\sigma) } \bigg) F(z) .
		\end{split}
	\end{equation*}
	Then we apply it again for
	\begin{equation}
		A_2
		=
		e^{-b y} [a^q ] \bigg(  \Big\lfloor_{\sigma \to s} e^{-a \psi (\sigma)} \del_\sigma^t e^{b \sigma} e^{a \mc{S}(a \hbar \del_\sigma)\tilde{\psi} (\sigma) } \bigg) F(z) \,,
		\qquad
		B_2
		=
		e^{vg} e^{b \mc{S}(b \hbar \del_g) \tilde{\chi} (g)}
	\end{equation}
	to find
	\begin{equation*}
		\begin{split}
			&\sum_{p,q = 0}^\infty \big( -d \frac{1}{dy} \big)^p e^{vg} [b^p ]
			e^{b ( \mc{S}(b\hbar \del_g) \tilde{\chi}(g) - y )} \big( -d \frac{1}{dg} \big)^q e^{bs} [a^q ]
			e^{a ( \mc{S}(a\hbar \del_s) \tilde{\psi}(s) - g )} \big( -d \frac{1}{ds} \big)^t e^{a\zeta} F(z)
			\\
			&=
			\sum_{p,q = 0}^\infty \big( -d \frac{1}{dy} \big)^p e^{vg} [b^p ]
			e^{b ( \mc{S}(b\hbar \del_g) \tilde{\chi}(g) - y )} \big( -d \frac{1}{dg} \big)^q [a^q ]
			\bigg(  \Big\lfloor_{\sigma \to s} e^{-a \psi (\sigma)} \del_{\sigma}^t e^{b \sigma} e^{a \mc{S}(a \hbar \del_{\sigma})\tilde{\psi} (\sigma) } \bigg) F(z) 
			\\
			&=
			\sum_{p,q=0}^\infty \big( -d \frac{1}{dy} \big)^p  [b^p ] 
			\bigg(  \Big\lfloor_{\gamma \to g} e^{-b \chi (\gamma)} \del_{\gamma}^q e^{v \gamma} e^{b \mc{S}(b \hbar \del_{\gamma})\tilde{\chi} (\gamma) } \bigg) [a^q]
			\bigg(  \Big\lfloor_{\sigma \to s} e^{-b \sigma} e^{-a \psi (\sigma)} \del_{\sigma}^t e^{b \sigma} e^{a \mc{S}(a \hbar \del_{\sigma})\tilde{\psi} (\sigma) } \bigg) F(z) .
		\end{split}
	\end{equation*}
	Finally, we do this a third time with
	\begin{equation}
		\begin{split}
			A_3
			&=
			e^{-vx} [b^p ] 
			\bigg(  \Big\lfloor_{\gamma \to g} e^{-b \chi (\gamma)} \del_\gamma^q e^{v \gamma} e^{b \mc{S}(b \hbar \del_\gamma) \tilde{\chi} (\gamma) } \bigg) [a^q]
			\bigg(  \Big\lfloor_{\sigma \to s} e^{-b \sigma} e^{-a \psi (\sigma)} \del_\sigma^t e^{b \sigma} e^{a \mc{S}(a \hbar \del_\sigma )\tilde{\psi} (\sigma) } \bigg) F(z)
			\,,
			\\
			B_3
			&=
			e^{v \mc{S}(v \hbar \del_y) \tilde{\psi}^\vee (y)} 
		\end{split}
	\end{equation}
	to find
	\begin{align*}
			\tilde{V}_t F (z)
			&=
			\sum_{j,p,q=0}^\infty (-d \frac{1}{dx})^j [v^j] 
			e^{v ( \mc{S}(v\hbar \del_{y}) \tilde{\psi}^\vee(y) - x )} \big( -d \frac{1}{dy} \big)^p e^{vg} [b^p ]
			e^{b ( \mc{S}(b\hbar \del_g) \tilde{\chi}(g) - y )} \big( -d \frac{1}{dg} \big)^q e^{bs} 
			\\
			&\hspace{4cm}
			[a^q ]
			e^{a ( \mc{S}(a \hbar \del_s) \tilde{\psi}(s) - g )} \big( -d \frac{1}{ds} \big)^t e^{a \zeta} F(z)
			\\
			&=
			\sum_{j,p,q=0}^\infty (-d \frac{1}{dx})^j [v^j] 
			e^{v ( \mc{S}(v\hbar \del_{y}) \tilde{\psi}^\vee(y) - x )} \big( -d \frac{1}{dy} \big)^p [b^p ]
			\bigg( \Big\lfloor_{\gamma \to g} e^{-b \chi (\gamma)} \del_{\gamma}^q e^{v \gamma} e^{b \mc{S}(b \hbar \del_\gamma)\tilde{\chi} (\gamma) } \bigg) 
			\\
			&\hspace{4cm} 
			[a^q] \bigg(  \Big\lfloor_{\sigma \to s} e^{-b \sigma} e^{-a \psi (\sigma)} \del_\sigma^t e^{b \sigma} e^{a \mc{S}(a \hbar \del_\sigma)\tilde{\psi} (\sigma) } \bigg) F(z)
			\\
			&=
			\sum_{j,p,q=0}^\infty (-d \frac{1}{dx})^j [v^j] 
			\bigg(  \Big\lfloor_{\eta \to y} e^{-v \psi^\vee (\eta)} \del_\eta^p  e^{v \mc{S}(v \hbar \del_\eta) \tilde{\psi}^\vee (\eta)}\bigg)
			[b^p ] \bigg( \Big\lfloor_{\gamma \to g}e^{-v \gamma} e^{-b \chi (\gamma)} \del_\gamma^q e^{v \gamma} e^{b \mc{S}(b \hbar \del_\gamma)\tilde{\chi} (\gamma) } \bigg) 
			\\
			&\hspace{4cm}
			[a^q] \bigg(  \Big\lfloor_{\sigma \to s} e^{-b \sigma} e^{-a \psi (\sigma)} \del_\sigma^t e^{b \sigma} e^{a \mc{S}(a \hbar \del_\sigma)\tilde{\psi} (\sigma) } \bigg) F(z) . \qedhere
	\end{align*}
\end{proof}

\begin{lemma}\label{SubstitutionOfDerivatives}
	Under the same conditions as \cref{VrIsMultiplication},
	\begin{equation}
		\begin{split}
			&\sum_{p,q=0}^\infty 
			\bigg(  \Big\lfloor_{\eta \to y} e^{-v \psi^\vee (\eta)} \del_\eta^p  e^{v \mc{S}(v \hbar \del_{\eta}) \tilde{\psi}^\vee (\eta)}\bigg)
			[b^p ]
			\bigg( \Big\lfloor_{\gamma \to g} e^{-v \gamma} e^{-b \chi (\gamma)} \del_\gamma^q e^{v \gamma} e^{b \mc{S}(b \hbar \del_\gamma) \tilde{\chi} (\gamma) } \bigg) [a^q]
			\\
			&\hspace{6cm}
			\bigg(  \Big\lfloor_{\sigma \to s} e^{-b \sigma} e^{-a \psi (\sigma)} \del_\sigma^t e^{b \sigma} e^{a \mc{S}(a \hbar \del_\sigma)\tilde{\psi} (\sigma) } \bigg)
			\\
			&\qquad=
			e^{v (\zeta - x)}
			\del_s^t \Big\lfloor_{\bar{\sigma} \to s}
			e^{ (\del_\zeta+v) \mc{S}( \hbar (\del_\zeta + v) \del_{\bar{\sigma}} )\tilde{\psi} (\bar{\sigma}) }
			e^{ \del_s \mc{S}( \hbar \del_\zeta \del_s ) \tilde{\chi} (\zeta ) }
			e^{v \mc{S}(v \hbar \del_s) \tilde{\psi}^\vee (s)}.
		\end{split}
	\end{equation}
\end{lemma}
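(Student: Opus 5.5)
The plan is to collapse the three nested operator factors one at a time, from the innermost ($\sigma$) outward. The key device is to rewrite each coefficient extraction $[a^q]$, $[b^p]$ as a Taylor shift. Each factor has the shape $\lfloor_{\gamma\to g} e^{-(\cdot)}\partial_\gamma^q(\cdots)$. Summing over $q$ against $[a^q]$ of a function of $a$ replaces $a$ by the derivative operator acting on the next variable, via the identity
\begin{equation*}
\sum_q \partial_\gamma^q F(\gamma)\,[a^q]G(a)=G(\partial_{\gamma'})F(\gamma)\big|_{\gamma'=\gamma}
\end{equation*}
read appropriately. Concretely, in the $\sigma$-factor the variable $a$ appears only through $e^{-a\psi(\sigma)}$ and $e^{a\mathcal S(a\hbar\partial_\sigma)\tilde\psi(\sigma)}$. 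Pairing $[a^q]$ with $\partial_\gamma^q$ from the middle factor therefore substitutes $a\mapsto \partial_\gamma$ acting on the $\gamma$-dependent expression $e^{v\gamma}e^{b\mathcal S(b\hbar\partial_\gamma)\tilde\chi(\gamma)}$.

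Since $\partial_\gamma$ acting on $e^{v\gamma}$ shifts to $\partial_\gamma+v$, the exponent $a\mathcal S(a\hbar\partial_\sigma)\tilde\psi$ becomes $(\partial_\zeta+v)\mathcal S(\hbar(\partial_\zeta+v)\partial_{\bar\sigma})\tilde\psi(\bar\sigma)$. Here I identify $\gamma$ with $\zeta$ up to the shift $g=\zeta+\psi(s)$. The prefactors $e^{-a\psi(\sigma)}$ and $e^{-v\gamma}e^{-b\chi(\gamma)}$ should exactly compensate the evaluation at $\gamma=g$ rather than at $\zeta$. That is, $e^{-\psi(s)\partial_\gamma}$ translates $g$ back to $\zeta$, and this is the mechanism producing $\lfloor_{\bar\sigma\to s}$ with argument $\zeta$ in $\tilde\chi(\zeta)$.

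Next, I do the same with $[b^p]$ against $\partial_\eta^p$ from the outer factor. The variable $b$ appears in $e^{-b\sigma}\cdots\partial_\sigma^t e^{b\sigma}$, which is conjugation turning $\partial_\sigma^t$ into $(\partial_\sigma+b)^t$. It also appears in $e^{-b\chi(\gamma)}e^{b\mathcal S(b\hbar\partial_\gamma)\tilde\chi(\gamma)}$. Substituting $b\mapsto\partial_\eta$ acting on $e^{v\mathcal S(v\hbar\partial_\eta)\tilde\psi^\vee(\eta)}$ at $\eta=y=s+\chi(g)$ gives the following. The translation $e^{-\chi(\gamma)\partial_\eta}$ moves evaluation from $y$ back to $s$, and the combination $(\partial_\sigma+\partial_\eta)^t$ becomes a total $\partial_s^t$ in front. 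This explains the final form, with $\partial_s^t$ outside and all three exponentials evaluated at the common base point $s$, the middle one becoming $e^{\partial_s\mathcal S(\hbar\partial_\zeta\partial_s)\tilde\chi(\zeta)}$. The leftover scalar $e^{-v\psi^\vee(y)}e^{-v\gamma}$ combined with $g=\zeta+\psi(s)$ gives $e^{v(\zeta-x)}$, using the definition of $x$ in \cref{FiveFunctions}.

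The main obstacle is bookkeeping which operator acts on which variable and in which order. The $\partial_\zeta$ in the middle factor must act both on $\tilde\chi(\zeta)$ and, through the substitution $a\mapsto\partial_\gamma$, inside the $\tilde\psi$ exponential. I would handle this by treating $\zeta$, $\bar\sigma$, and $s$ as independent formal variables throughout, verifying each substitution as an identity of formal power series in $a,b,v,\hbar$. Commutation is justified since all operators involved are constant-coefficient in the shifted variables, and the evaluations happen only at the end. A check at $\hbar=0$ and $t=0$ would confirm the signs: all exponentials reduce to translations, and the left side should telescope to $e^{v(\zeta-x)}$ times translated evaluations.
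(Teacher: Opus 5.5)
Your proposal is correct and follows essentially the same route as the paper: pair $[a^q]$ with $\partial_\gamma^q$ and $[b^p]$ with $\partial_\eta^p$ to substitute $a\mapsto\partial_\gamma$ and $b\mapsto\partial_\eta$. Then let the resulting translation operators move the evaluation points via $g-\psi(s)=\zeta$, $y-\chi(g)=s$ and $x=g+\psi^\vee(y)$, commuting $e^{v\gamma}$ to the left to produce $(\partial_\zeta+v)$. The only cosmetic difference is in the $b$-step. You absorb $e^{\mp b\sigma}$ as the conjugation $(\partial_\sigma+\partial_\eta)^t$ and use the chain rule on the diagonal $\sigma=\eta=s$. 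The paper instead combines $e^{-bs}e^{-b\chi(g)}=e^{-by}$ with $e^{b\sigma}$ into the shift $\eta\mapsto\eta-y+\sigma$ and then trades $\partial_\eta$ for $\partial_\sigma$.
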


\begin{proof}
	This is an explicit calculation equating functions, using twice that $ \sum_{p = 0}^\infty [u^p]f(u) \del_\theta^p h(\theta) = f(\del_\theta ) h(\theta)$ to obtain
	\begin{align*}
			\sum_{p,q=0}^\infty
			&
			\bigg(  \Big\lfloor_{\eta \to y} e^{-v \psi^\vee (\eta)} \del_\eta^p  e^{v \mc{S}(v \hbar \del_{\eta}) \tilde{\psi}^\vee (\eta)}\bigg)
			[b^p ]
			\bigg( \Big\lfloor_{\gamma \to g} e^{-v \gamma} e^{-b \chi (\gamma)} \del_\gamma^q e^{v \gamma} e^{b \mc{S}(b \hbar \del_\gamma) \tilde{\chi} (\gamma) } \bigg) [a^q]
			\\
			&\hspace{7cm}
			\bigg(  \Big\lfloor_{\sigma \to s} e^{-b \sigma} e^{-a \psi (\sigma)} \del_\sigma^t e^{b \sigma} e^{a \mc{S}(a \hbar \del_\sigma)\tilde{\psi} (\sigma) } \bigg)
			\\
			&=
			\sum_{p,q=0}^\infty \Big\lfloor_{\substack{\eta \to y \\ \gamma \to g \\ \sigma \to s}}
			[a^q b^p ]
			\bigg( e^{-b s} e^{-a \psi (s)} \del_\sigma^t e^{b \sigma} e^{a \mc{S}(a \hbar \del_\sigma)\tilde{\psi} (\sigma) } \bigg)
			\bigg( e^{-v g} e^{-b \chi (g)} \del_\gamma^q e^{v \gamma} e^{b \mc{S}(b \hbar \del_\gamma ) \tilde{\chi} (\gamma) } \bigg)
			\\
			&\hspace{7cm}
			\bigg( e^{-v \psi^\vee (y)} \del_{\eta}^p  e^{v \mc{S}(v \hbar \del_{\eta}) \tilde{\psi}^\vee (\eta)}\bigg)
			\\
			&=
			e^{-v (\psi^\vee (y)+ g)}
			\sum_{p,q=0}^\infty \Big\lfloor_{\substack{\eta \to y \\ \gamma \to g \\ \sigma \to s}}
			[a^q b^p ]
			e^{-b (s+ \chi (g))} e^{-a \psi (s)} 
			\bigg( \del_\sigma^t e^{b \sigma} e^{a \mc{S}(a \hbar \del_\sigma)\tilde{\psi} (\sigma) } \bigg)
			\\
			&\hspace{7cm}
			\bigg( \del_\gamma^q e^{v \gamma} e^{b \mc{S}(b \hbar \del_\gamma)\tilde{\chi} (\gamma) } \bigg)
			\bigg( \del_\eta^p  e^{v \mc{S}(v \hbar \del_{\eta}) \tilde{\psi}^\vee (\eta)}\bigg)
			\\
			&=
			e^{-v x}
			\Big\lfloor_{\substack{\eta \to y \\ \gamma \to g \\ \sigma \to s}}
			e^{- y \del_\eta} e^{- \psi (s) \del_\gamma} 
			\del_\sigma^t e^{\sigma \del_\eta} e^{ \del_\gamma \mc{S}( \hbar \del_\gamma \del_\sigma )\tilde{\psi} (\sigma) }
			e^{v \gamma} e^{ \del_\eta \mc{S}( \hbar \del_\gamma \del_\eta)\tilde{\chi} (\gamma) }
			e^{v \mc{S}(v \hbar \del_\eta) \tilde{\psi}^\vee (\eta)}
			\\
			&=
			e^{-v x}
			\Big\lfloor_{\substack{\eta \to y \\ \gamma \to g \\ \sigma \to s}}
			e^{- y \del_\eta} e^{- \psi (s) \del_\gamma} 
			\del_\sigma^t \Big\lfloor_{\bar{\sigma} \to \sigma} 
			e^{\sigma \del_\eta} e^{ \del_\gamma \mc{S}( \hbar \del_\gamma \del_{\bar{\sigma}} )\tilde{\psi} (\bar{\sigma} ) }
			e^{v \gamma} e^{ \del_\eta \mc{S}( \hbar \del_\gamma \del_\eta)\tilde{\chi} (\gamma) }
			e^{v \mc{S}(v \hbar \del_\eta) \tilde{\psi}^\vee (\eta)}
			\\
			&=
			e^{-v x}
			\Big\lfloor_{\substack{\eta \to y \\ \gamma \to g \\ \sigma \to s}}
			\del_\sigma^t \Big\lfloor_{\bar{\sigma} \to \sigma}
			e^{ \del_\gamma \mc{S}( \hbar \del_\gamma \del_{\bar{\sigma}} )\tilde{\psi} ( \bar{\sigma}) }
			e^{v (\gamma - \psi (s))} e^{ \del_\eta \mc{S}( \hbar \del_\gamma \del_\eta)\tilde{\chi} (\gamma - \psi (s)) }
			e^{v \mc{S}(v \hbar \del_\eta) \tilde{\psi}^\vee (\eta - y + \sigma)}
			\\
			\intertext{Here, $\eta$ and $ \sigma$ only appear in the combination $ \eta - y + \sigma$, so we can exchange $ \del_\eta $ for $ \del_\sigma$:}
			&=
			e^{-v (x + \psi (s) )}
			\Big\lfloor_{\substack{\eta \to y \\ \gamma \to g \\ \sigma \to s}}
			\del_\sigma^t \Big\lfloor_{\bar{\sigma} \to \sigma}
			e^{ \del_\gamma \mc{S}( \hbar \del_\gamma \del_{\bar{\sigma}} )\tilde{\psi} ( \bar{\sigma}) }
			e^{v \gamma } e^{ \del_\sigma \mc{S}( \hbar \del_\gamma \del_\sigma) \tilde{\chi} (\gamma - \psi (s)) }
			e^{v \mc{S}(v \hbar \del_\sigma) \tilde{\psi}^\vee (\eta - y + \sigma)}
			\\
			&=
			e^{v (-x - \psi (s) + g)}
			\Big\lfloor_{\substack{\gamma \to g \\ \sigma \to s}}
			\del_\sigma^t 
			\Big\lfloor_{\bar{\sigma} \to \sigma}
			e^{ (\del_\gamma+v) \mc{S}( \hbar (\del_\gamma + v) \del_{\bar{\sigma}} )\tilde{\psi} (\bar{\sigma}) }
			e^{ \del_\sigma \mc{S}( \hbar \del_\gamma \del_\sigma)\tilde{\chi} (\gamma - \psi (s)) }
			e^{v \mc{S}(v \hbar \del_\sigma) \tilde{\psi}^\vee ( \sigma)}
			\\
			&=
			e^{v (\zeta - x)}
			\Big\lfloor_{\sigma \to s}
			\del_\sigma^t \Big\lfloor_{\bar{\sigma} \to \sigma}
			e^{ (\del_g+v) \mc{S}( \hbar (\del_g + v) \del_{\bar{\sigma}} )\tilde{\psi} (\bar{\sigma}) }
			e^{ \del_\sigma \mc{S}( \hbar \del_g \del_\sigma ) \tilde{\chi} (g - \psi (s)) }
			e^{v \mc{S}(v \hbar \del_\sigma) \tilde{\psi}^\vee (\sigma)} 
			\\
			\intertext{Now, $ g$ and $ s$ only appear in the combination $ g - \psi (s) = \zeta$, so we can write that and change $ \del_g$ into $ \del_\zeta$:}
			&=
			e^{v (\zeta - x)}
			\Big\lfloor_{\sigma \to s}
			\del_\sigma^t \Big\lfloor_{\bar{\sigma} \to \sigma}
			e^{ (\del_\zeta+v) \mc{S}( \hbar (\del_\zeta + v) \del_{\bar{\sigma}} )\tilde{\psi} (\bar{\sigma}) }
			e^{ \del_\sigma \mc{S}( \hbar \del_\zeta \del_\sigma ) \tilde{\chi} (\zeta ) }
			e^{v \mc{S}(v \hbar \del_\sigma) \tilde{\psi}^\vee (\sigma)} 
			\\
			&=
			e^{v (\zeta - x)}
			\del_s^t \Big\lfloor_{\bar{\sigma} \to s}
			e^{ (\del_\zeta+v) \mc{S}( \hbar (\del_\zeta + v) \del_{\bar{\sigma}} )\tilde{\psi} (\bar{\sigma}) }
			e^{ \del_s \mc{S}( \hbar \del_\zeta \del_s ) \tilde{\chi} (\zeta ) }
			e^{v \mc{S}(v \hbar \del_s) \tilde{\psi}^\vee (s)} .
			\qedhere
	\end{align*}
\end{proof}

At this point, it is useful to use the actual values of the functions $ \psi$, $ \chi$, and $ \psi^\vee$ and their deformations.

\begin{lemma}\label{ExplicitDualityChain}
	With the assumptions of \cref{DualitiesPreserveTR},
	\begin{equation}
		\begin{split}
			&e^{v (\zeta - x)}
			\del_s^t \Big\lfloor_{\bar{\sigma} \to s}
			e^{ (\del_\zeta+v) \mc{S}( \hbar (\del_\zeta + v) \del_{\bar{\sigma}} )\tilde{\psi} (\bar{\sigma}) }
			e^{ \del_s \mc{S}( \hbar \del_\zeta \del_s ) \tilde{\chi} (\zeta ) }
			e^{v \mc{S}(v \hbar \del_s) \tilde{\psi}^\vee (s)}
			\\
			&\qquad=
			\sum_{n=0}^\infty \sum_{i = 0}^n \frac{(-1)^{n-i} }{i!(n-i)! \hbar^n } e^{ nk \zeta}
			\del_s^t
			\prod_{j = 0}^{i-1} \bar{w} ( s + \frac{-v+ (2j+1-n) k}{2} \hbar)
			\prod_{j = i}^{n-1} \bar{w} ( s + \frac{ v+ (2j+1-n) k}{2} \hbar)
		\end{split}
	\end{equation}
\end{lemma}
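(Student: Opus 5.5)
The plan is to substitute the explicit choices \eqref{TransformationDeformations} into the left-hand side and evaluate the three exponentials from the inside out. Each exponential is a shift operator in disguise, so everything reduces to telescoping sums of shifts applied to $\log \bar{w}$.

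\emph{Step 1: the middle factor.} I start with $e^{\del_s \mc{S}(\hbar \del_\zeta \del_s) \tilde{\chi}(\zeta)}$. Since $\tilde{\chi}(\zeta) = k e^{k\zeta}$ and $\del_\zeta$ acts on $e^{k\zeta}$ as multiplication by $k$, the exponent becomes
\begin{equation*}
    e^{k\zeta}\, \frac{\varsigma(k\hbar\del_s)}{\hbar}
    =
    \frac{e^{k\zeta}}{\hbar}\big( e^{\frac{k\hbar}{2}\del_s} - e^{-\frac{k\hbar}{2}\del_s} \big).
\end{equation*}
Expanding the exponential and using the binomial theorem gives
\begin{equation*}
    \sum_{n\geq 0} \sum_{i=0}^n \frac{(-1)^{n-i} e^{nk\zeta}}{i!(n-i)!\hbar^n}\, e^{(2i-n)\frac{k\hbar}{2}\del_s}.
\end{equation*}
This already accounts for the prefactor $\frac{(-1)^{n-i}}{i!(n-i)!\hbar^n} e^{nk\zeta}$ on the right-hand side. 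The shift operator then acts on the right-most factor $e^{v\mc{S}(v\hbar\del_s)\tilde{\psi}^\vee(s)}$ and moves its argument by $(2i-n)k\hbar/2$.

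\emph{Step 2: the left factor.} In each $n$-summand the remaining $\zeta$-dependence is $e^{nk\zeta}$, so in the left factor $\del_\zeta$ may be replaced by $nk$. Using $\tilde{\psi} = \frac{1}{k\mc{S}(k\hbar\del)}\log\bar{w}$, the exponent becomes
\begin{equation*}
    (nk+v)\,\mc{S}\big((nk+v)\hbar\del\big)\,\tilde{\psi}
    =
    \frac{\varsigma\big((nk+v)\hbar\del\big)}{\hbar\,\varsigma(k\hbar\del)}\log\bar{w}.
\end{equation*}
Similarly, the shifted right factor has exponent $-e^{(2i-n)\frac{k\hbar}{2}\del}\frac{\varsigma(v\hbar\del)}{\hbar\varsigma(k\hbar\del)}\log\bar{w}$. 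I then split the first exponent with the addition formula
\begin{equation*}
    \varsigma(A+B) = e^{B/2}\varsigma(A) + e^{-A/2}\varsigma(B),
    \qquad A = nk\hbar\del,\quad B = v\hbar\del.
\end{equation*}
The first piece is $e^{v\hbar\del/2}\frac{\varsigma(nk\hbar\del)}{\varsigma(k\hbar\del)}$, which equals the finite sum $\sum_{j=0}^{n-1} e^{(2j+1-n)k\hbar\del/2}$ shifted by $v\hbar/2$. Exponentiating turns this sum of shifted $\log\bar{w}$ into a product of $\bar{w}$ at shifted arguments. The second piece, which still contains the non-finite ratio $\varsigma(v\hbar\del)/\varsigma(k\hbar\del)$, I want to combine with the $i$-shifted $\tilde{\psi}^\vee$-exponent. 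Their difference is
\begin{equation*}
    \big(e^{-nk\hbar\del/2} - e^{(2i-n)k\hbar\del/2}\big)\frac{\varsigma(v\hbar\del)}{\varsigma(k\hbar\del)}\log\bar{w}.
\end{equation*}
The bracket is divisible by $\varsigma(k\hbar\del)$ as a finite geometric sum of $i$ terms, so this again becomes a finite sum of shifted copies of $\frac{\varsigma(v\hbar\del)}{\hbar}\log\bar{w}$, that is, of differences $\log\bar{w}(\cdot + v\hbar/2) - \log\bar{w}(\cdot - v\hbar/2)$. The claim to verify is that reassembling these flips the sign of $v$ in exactly the first $i$ factors, giving the product $\prod_{j<i}\bar{w}(s + \frac{-v+(2j+1-n)k}{2}\hbar)\prod_{j\geq i}\bar{w}(s + \frac{v+(2j+1-n)k}{2}\hbar)$. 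Since $\del_s^t$ and the evaluation $\bar{\sigma}\to s$ commute with these manipulations, the $\del_s^t$ then simply passes through.

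\emph{Step 3: the main obstacle.} What remains is the prefactor $e^{v(\zeta - x)}$, which does not appear on the right-hand side. Because $x = \zeta + \psi(s) + \psi^\vee(y)$, this prefactor equals $e^{-v(\psi(s) + \psi^\vee(y))}$. This must cancel against the $\hbar^0$ parts of the exponents. The exponents involve $1/\varsigma(k\hbar\del)$, which contains the formal antiderivative $\frac{1}{k\hbar\del}$, and these leading parts contribute terms like $\pm\frac{v}{k}\log\bar{w}$ at arguments $s$ and $y = s + k\bar{w}(s)z^k$. After summing over $n$, such terms would match the leading-order resummation of \cref{ChangeOfCoords}. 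I would pin this down first by checking the identity order by order in $v$, and at $v = 0$ and $\hbar\to 0$ against \cref{ChangeOfCoords}. If the antiderivative ambiguity of \cref{WhatIsCentralTerm} causes trouble, I would track those pieces separately and show they cancel between the $\psi$- and $\psi^\vee$-exponents, since $\psi^\vee = -\psi$.
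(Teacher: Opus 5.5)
Your Steps 1 and 2 are correct and already prove the identity without the prefactor, once you fix a bookkeeping slip. There is no $\hbar^{-1}$ in the exponents: $(nk+v)\mc{S}((nk+v)\hbar\del)\tilde{\psi} = \frac{\varsigma((nk+v)\hbar\del)}{\varsigma(k\hbar\del)}\log\bar{w}$, and the same holds for the shifted $\tilde{\psi}^\vee$-term. With the $\hbar^{-1}$ in place, exponentiating would not give a product of $\bar{w}$'s. After the fix, the claim you left to verify does hold. Your bracket equals $-e^{(i-n)k\hbar\del/2}\varsigma(ik\hbar\del)$, so the second piece plus the shifted right exponent is $-\sum_{l=0}^{i-1} e^{(2l+1-n)k\hbar\del/2}\varsigma(v\hbar\del)\log\bar{w}$. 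This removes the first $i$ factors $\bar{w}(s+\frac{v+(2l+1-n)k}{2}\hbar)$ of the first piece and replaces them by their $-v$ counterparts, giving the stated product to all orders in $\hbar$.

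This differs from the paper's route. The paper expands $\frac{1}{1-e^{-k\hbar\del}}$ geometrically, writes both outer exponentials as infinite products of shifted $\bar{w}$'s, and lets these telescope. Those products do not converge as power series in $\hbar$, so the paper's argument is formal. Your version uses only finite shift sums and the genuine power series $\varsigma(c\hbar\del)/\varsigma(k\hbar\del)$, so nothing divergent or antiderivative-like ever appears.

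The gap is Step 3, which cannot work. After Step 2 nothing is left over to absorb $e^{v(\zeta-x)}$. This factor is $(\bar{w}(y)/\bar{w}(s))^{v/k}$ with $y = s + k\bar{w}(s)z^k$. The three exponentials, by contrast, only involve shifts of $\bar{w}$ around $s$. Their $\hbar^0$ parts, $\frac{nk+v}{k}\log\bar{w}(s)$ and $-\frac{v}{k}\log\bar{w}(s)$, are already used up in Step 2. No antiderivative ambiguity arises either, since $1/\varsigma(k\hbar\del)$ always comes multiplied by some $\varsigma(c\hbar\del)$.

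Concretely, at $t=0$ the right-hand side is $\phi_{-v}(z,s)$ by \eqref{phi_mn}. Its $\hbar\to 0$ limit is $(\bar{w}(y)/\bar{w}(s))^{-v/k}$ by \cref{ChangeOfCoords}, while the left-hand side tends to $1$. So the display is false as literally written, and the prefactor $e^{v(\zeta-x)}$ must be dropped (or placed on both sides). The corrected identity is what the paper's proof actually computes, since it never carries the prefactor. It is also how the lemma is used in the proof of \cref{LeakyTRThm}: there $e^{v(\zeta-x)} = (z/e^x)^v$ is split off before invoking \eqref{phiAsExponentials}. Rather than trying to cancel the prefactor order by order, stop after Step 2 and record this correction to the statement.
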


\begin{proof}
	Starting from the right, we find
	\begin{equation}
		\begin{split}
			v \mc{S}(v \hbar \del_s ) \tilde{\psi}^\vee (s)
			&=
			- \frac{v \mc{S}(v \hbar \del_s )}{k \mc{S}( k \hbar \del_s )} \log \bar{w} (s)
			\\
			&=
			- e^{\frac{v-k}{2} \hbar \del_s} \frac{1 - e^{-v \hbar \del_s }}{1 - e^{-k \hbar \del_s }} \log \bar{w} (s)
			\\
			&=
			\sum_{j = 0}^ \infty \log \bar{w} (s + \frac{-v- (2j + 1) k}{2} \hbar) -\log \bar{w} (s + \frac{v- (2j + 1) k}{2} \hbar) ,
		\end{split}
	\end{equation}
	so that
	\begin{equation}
		e^{v \mc{S}(v \hbar \del_s) \tilde{\psi}^\vee (s)}
		=
		\prod_{j = 0}^ \infty  \frac{\bar{w} (s + \frac{-v- (2j + 1) k}{2} \hbar)}{\bar{w} (s + \frac{v- (2j + 1) k}{2} \hbar)} .
	\end{equation}
	Similarly, the left factor is
	\begin{equation}
		e^{ (\del_\zeta+v) \mc{S}( \hbar (\del_\zeta + v) \del_{\bar{\sigma}} )\tilde{\psi} (\bar{\sigma}) }
		=
		\prod_{j = 0}^ \infty  \frac{\bar{w} ( \bar{\sigma} + \frac{\del_\zeta + v- (2j + 1) k}{2} \hbar)}{\bar{w} ( \bar{\sigma} + \frac{- \del_\zeta -v- (2j + 1) k}{2} \hbar)} .
	\end{equation}
	For the middle factor, we find
	\begin{equation}
		\del_s \mc{S}( \hbar \del_\zeta \del_s ) \tilde{\chi} (\zeta )
		=
		\del_s \mc{S}( \hbar \del_\zeta \del_s) k e^{k\zeta}
		=
		k e^{k\zeta} \del_s \mc{S}( \hbar k \del_s)
		=
		e^{k\zeta}  \frac{e^{\frac{k \hbar}{2} \del_s} - e^{- \frac{k\hbar}{2} \del_s}}{\hbar}
	\end{equation}
	and therefore
	\begin{equation}
		e^{\del_s \mc{S}( \hbar \del_\zeta \del_s)\tilde{\chi} (\zeta)}
		=
		\sum_{n=0}^\infty \sum_{i = 0}^n \frac{(-1)^{n-i}}{i!(n-i)!\hbar^n} e^{nk\zeta}  e^{\frac{(2i-n)k \hbar}{2} \del_s} 
	\end{equation}
	Putting all of this together,
	\begin{align*}
			&
			\del_s^t \Big\lfloor_{\bar{\sigma} \to s}
			e^{ (\del_\zeta + v) \mc{S}( \hbar (\del_\zeta + v) \del_{\bar{\sigma}} )\tilde{\psi} (\bar{\sigma}) }
			e^{ \del_s \mc{S}( \hbar \del_\zeta \del_s) \tilde{\chi} (\zeta) }
			e^{v \mc{S}(v \hbar \del_s ) \tilde{\psi}^\vee (s)}
			\\
			&\qquad=
			\del_s^t
			\prod_{j = 0}^ \infty \frac{\bar{w} ( s + \frac{\del_\zeta + v- (2j + 1) k}{2} \hbar)}{\bar{w} ( s + \frac{- \del_\zeta -v- (2j + 1) k}{2} \hbar)}
			\sum_{n=0}^\infty \sum_{i = 0}^n \frac{(-1)^{n-i}}{i!(n-i)!\hbar^n} e^{nk\zeta}  e^{\frac{(2i-n)k \hbar}{2} \del_s}
			\prod_{l = 0}^ \infty \frac{\bar{w} (s + \frac{-v- (2l + 1) k}{2} \hbar)}{\bar{w} (s + \frac{v- (2l + 1) k}{2} \hbar)}
			\\
			&\qquad=
			\sum_{n=0}^\infty \sum_{i = 0}^n \frac{(-1)^{n-i} e^{ nk \zeta} }{i!(n-i)! \hbar^n } 
			\del_s^t
			\prod_{j = 0}^ \infty \frac{\bar{w} ( s + \frac{ v- (2j + 1 - n) k}{2} \hbar)}{\bar{w} ( s + \frac{-v- (2j + 1 + n) k}{2} \hbar)}
			\prod_{l = 0}^ \infty \frac{\bar{w} ( s + \frac{-v- (2l + 1 + n - 2i) k}{2} \hbar)}{\bar{w} ( s + \frac{v- (2l + 1 + n - 2i) k}{2} \hbar)}
			\\ 						
			&\qquad=
			\sum_{n=0}^\infty \sum_{i = 0}^n \frac{(-1)^{n-i} e^{ nk \zeta} }{i!(n-i)! \hbar^n } 
			\del_s^t
			\prod_{l = 0}^{n-i-1} \bar{w} ( s + \frac{ v- (2l + 1 - n) k}{2} \hbar)
			\prod_{l = n-i}^{n-1} \bar{w} ( s + \frac{-v- (2l + 1 - n) k}{2} \hbar)
			\\
			&\qquad=
			\sum_{n=0}^\infty \sum_{i = 0}^n \frac{(-1)^{n-i} e^{ nk \zeta}}{i!(n-i)! \hbar^n } 
			\del_s^t
			\prod_{j = 0}^{i-1} \bar{w} ( s + \frac{-v+ (2j+1-n) k}{2} \hbar)
			\prod_{j = i}^{n-1} \bar{w} ( s + \frac{ v+ (2j+1-n) k}{2} \hbar) . \qedhere
	\end{align*}
\end{proof}

\subsection{The \texorpdfstring{$n=1$}{n=1} case}

In the case $n=1$, we can use the same proof idea, but we need to be more careful because of the extra term in \cref{DoubleTransform}.
\begin{equation}
		\begin{split}
			\tilde{\omega}_1 (z)
			&=
			-\sum_{j=0}^\infty (-d \frac{1}{dx})^j [v^j] \Bigg( 
			e^{v ( \mc{S}(v\hbar \del_{y}) \tilde{\psi}^\vee(y) - \psi^\vee (y) )} \frac{dy}{v\hbar}
			- \bigg(e^{v ( \mc{S}(v\hbar \del_y) \tilde{\psi}^\vee (y) - x )} \sum_{p=0}^\infty \big( -d \frac{1}{dy} \big)^p e^{vg} [b^p ] \bigg)
			\\
			& \qquad
			\cdot \Bigg( e^{b ( \mc{S}(b\hbar \del_g) \tilde{\chi}(g) - \chi (g) )} \frac{dg}{b\hbar}
			-\bigg(e^{b ( \mc{S}(b\hbar \del_g) \tilde{\chi}(g) - y )} \sum_{q=0}^\infty \big( -d \frac{1}{dg} \big)^q e^{bs} [a^q ] \bigg)
			\\
			&\qquad 
			\cdot \bigg( e^{a ( \mc{S}(a\hbar \del_s) \tilde{\psi}(s) - \psi (s) )} \frac{ds}{a\hbar}
			- e^{a ( \mc{S}(a\hbar \del_s) \tilde{\psi}(s) - g )} \sum_{t=0}^\infty \big( -d \frac{1}{ds} \big)^t e^{a\zeta} [u^t ] \mb{W}^{\mc{S}_0}_1 (z,u) 
			\bigg) \Bigg) \Bigg)
			\\
			&=
			\tilde{T}\mb{W}^{\mc{S}_0}_1(z)
			-\sum_{j,p,q=0}^\infty (-d \frac{1}{dx})^j [v^j] e^{v ( \mc{S}(v\hbar \del_{y}) \tilde{\psi}^\vee(y) - x)}
			\big( -d \frac{1}{dy} \big)^p e^{vg} [b^p a^q] 
			\\
			&\qquad \qquad
			\Bigg( \frac{dy}{v\hbar} - 
			e^{b ( \mc{S}(b\hbar \del_g) \tilde{\chi}(g) - y )} \big( -d \frac{1}{dg} \big)^q e^{bs}
			\bigg( \frac{dg}{b\hbar}- e^{a ( \mc{S}(a\hbar \del_s) \tilde{\psi}(s) - \psi (s) )} \frac{ds}{a\hbar} \bigg) \Bigg)
		\end{split}
	\end{equation}


We ensure that all objects involved in this equation are power series in $ a$, $ b$, and $v$, by writing

\begin{equation}\label{ExtraTermForn=1}
	\begin{split}
		\bar{\omega}_1 (z) 
		\coloneqq
		\hbar ( \tilde{\omega}_1 (z) - \tilde{T}\mb{W}^{\mc{S}_0}_1(z) )
		&=
		\sum_{j,p,q=0}^\infty (-d \frac{1}{dx})^j [v^j b^p a^q] 
		\Bigg( - \frac{e^{v ( \mc{S}(v\hbar \del_{y}) \tilde{\psi}^\vee(y) - \psi^\vee (y) )} - 1}{v} dy
    \\
    &\quad
    + 
    e^{v ( \mc{S}(v\hbar \del_{y}) \tilde{\psi}^\vee(y) - x)} \big( -d \frac{1}{dy} \big)^p 
		e^{vg} \bigg( \frac{e^{b ( \mc{S}(b\hbar \del_g) \tilde{\chi}(g) - \chi (g) )} -1}{b} dg 
    \\
    &\qquad \quad
    - 
    e^{b ( \mc{S}(b\hbar \del_g) \tilde{\chi}(g) - y )}  \big( -d \frac{1}{dg} \big)^q
    e^{bs} \frac{e^{a ( \mc{S}(a\hbar \del_s) \tilde{\psi}(s) - \psi (s) )} - 1 }{a} ds \bigg) \Bigg)
	\end{split}
\end{equation}

We will use the following equality: using that $ g = x - \psi^\vee (y)$ and $ s = y - \chi (g)$, for function $ F(x,v)$ and $ G(y,b)$ which are power series in $ v$, resp. $b$,

\begin{align}
  0
  &=
  \sum_{j=0}^\infty ( - d \frac{1}{dx} )^j [v^j] ( v +  d \frac{1}{dx}) e^{-v \psi^\vee (y) } F dx 
  =
  \sum_{j=0}^\infty ( - d \frac{1}{dx} )^j [v^j] e^{-v \psi^\vee (y)} \big( v F dg + dF \big)
  \label{IntByPartsx}
  \\
  0
  &=
  \sum_{p=0}^\infty ( - d \frac{1}{dy} )^p [b^p] e^{-b \chi (g)} \big( b G ds + dG \big) 
  \label{IntByPartsy}
\end{align}

\begin{lemma}
	With the assumption that $ \tilde{\psi}^\vee = - \tilde{\psi}$, the remaining term for $ n =1$ is
	\begin{equation}
		\begin{split}
			\bar{\omega}_1 (z)
      &=
      \sum_{j=0}^\infty (-d \frac{1}{dx})^j [v^j ] 
			\Bigg( 
      e^{-v \psi^\vee (y)}  \frac{e^{-\chi (g) \del_y} - 1}{\del_y} e^{v \psi^\vee (y )} dg 
      - 
      e^{v (\zeta - x)} s \, d\zeta
      \\
      &\quad
      + 
      e^{-xv}
      \int_{\sigma = 0}^s \Big\lfloor_{\eta \to \sigma} e^{ \mc{S}(\hbar \del_\sigma \del_\zeta) \tilde{\psi}(\sigma) \del_\zeta} e^{v\zeta} e^{ \mc{S}(\hbar \del_\zeta \del_\eta) \tilde{\chi}(\zeta) \del_\eta }
			e^{v \mc{S}(v\hbar \del_{\eta}) \tilde{\psi}^\vee(\eta) } d\sigma \wedge d\zeta
      \Bigg) .
		\end{split}
	\end{equation}
\end{lemma}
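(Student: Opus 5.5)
Starting from \cref{ExtraTermForn=1}, I will simplify the three summands from the inside out, mirroring \cref{VrIsMultiplication,SubstitutionOfDerivatives}. The difference is that each layer now carries a one-form term ($dy$, $dg$, $ds$) rather than a pure operator term. For the first summand, I will use $\tilde{\psi}^\vee = -\tilde{\psi}$ together with the identity $ \frac{e^{-\chi(g)\del_y}-1}{\del_y}$. The idea is to combine the outer $dy$-term with the $dg$-term coming from the middle layer. Since $y = s + \chi(g)$, we have $dy = ds + \chi'(g)\,dg$. Then \eqref{IntByPartsy} (with $G$ the middle exponential factor) converts the $b$-dependence $\frac{e^{b(\ldots)}-1}{b}$ into an integral over the shift $b \mapsto \del_y$. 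This should produce exactly the term
\begin{equation*}
	e^{-v \psi^\vee (y)}  \frac{e^{-\chi (g) \del_y} - 1}{\del_y} e^{v \psi^\vee (y )} dg .
\end{equation*}
Here the $-1$ pieces of the numerators combine to cancel the original $\frac{e^{v(\ldots)}-1}{v}dy$. The cancellation happens because the $\tilde\psi^\vee$-deformation appears identically on both sides once $\tilde\psi^\vee=-\tilde\psi$.

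Next, I will treat the innermost term
\begin{equation*}
	e^{bs}\frac{e^{a(\mc{S}(a\hbar\del_s)\tilde\psi(s)-\psi(s))}-1}{a}\,ds .
\end{equation*}
This is formally $\frac{1}{\del}$ applied to the operator expressions of \cref{VrIsMultiplication}, i.e. the $t=-1$ analogue. I will apply the formal integration by parts \eqref{FormalIntByParts} twice, now with $A$ a one-form containing $ds$ and $B$ the deformed exponentials. Then, just as in the proof of \cref{SubstitutionOfDerivatives}, I will convert the sums $\sum_{p,q}[a^q b^p]$ into substitutions $a\to\del_\gamma$, $b\to\del_\eta$. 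The factor $\frac{e^{a(\cdots)}-1}{a}$ becomes $\frac{e^{\del_\gamma(\cdots)}-1}{\del_\gamma}$. Recognising $\frac{e^{s\del}-1}{\del}$ as $\int_{\sigma=0}^s e^{\sigma\del}\,d\sigma$ then produces the integral $\int_{\sigma=0}^s\ldots\,d\sigma\wedge d\zeta$. The change of variables $g-\psi(s)=\zeta$ and $\eta - y + \sigma$ proceeds as in \cref{SubstitutionOfDerivatives}, turning $dg$ into $d\zeta$ after accounting for $dg = d\zeta + \psi'(s)ds$. The $ds\wedge ds$ parts vanish, and the prefactor $e^{v(\zeta-x)}$ arises as before.

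The leftover piece $-e^{v(\zeta-x)}s\,d\zeta$ comes from the constant $-1$ in $\frac{e^{b(\ldots)}-1}{b}$ and $\frac{e^{a(\ldots)}-1}{a}$. Concretely, the undeformed part $\int_0^s d\sigma = s$ multiplied by $e^{v\zeta}$ must be subtracted. To fix it, I will apply \eqref{IntByPartsx} with $F = e^{v\zeta}s$. This trades $v F\,dg$ for $-dF$ and moves the $dg$-terms from the middle layer into $d\zeta$ form.

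I expect the main obstacle to be bookkeeping the boundary terms. The $t=-1$ "antiderivative" terms are only defined through the $-1$ subtractions, and each application of \eqref{IntByPartsx}, \eqref{IntByPartsy} shifts exact pieces between layers. If the signs do not close up, I would first recheck the combination of $dy$ and $dg$ via $dy=ds+\chi'(g)dg$, since this is where the $\frac{e^{-\chi(g)\del_y}-1}{\del_y}$ term must emerge.
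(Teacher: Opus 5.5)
Your skeleton (two uses of \eqref{FormalIntByParts}, then the substitutions $\gamma\to g-\psi(s)=\zeta$ and $\eta\to y-\chi(g)=s$, which in the $n=1$ case rely on $\psi^\vee=-\psi$) matches the paper. However, the steps specific to $n=1$ are either assigned the wrong mechanism or missing, so there is a genuine gap.

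\textbf{The $dy$-term.} The outer term $-\frac{1}{v}\big(e^{v(\mc{S}(v\hbar\del_y)\tilde{\psi}^\vee(y)-\psi^\vee(y))}-1\big)dy$ of \eqref{ExtraTermForn=1} lies outside $\sum_p(-d\frac{1}{dy})^p[b^p]$. So \eqref{IntByPartsy} is not what removes it, and $\tilde{\psi}^\vee=-\tilde{\psi}$ plays no role in its cancellation. The paper needs three separate moves:
\begin{enumerate}
\item \eqref{IntByPartsy} with $G=\frac{e^{-\psi(s)\del_g}-1}{b\del_g}e^{vg}\big(e^{b\mc{S}(b\hbar\del_g)\tilde\chi(g)}-1\big)$. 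This strips the $\tilde\chi$-deformation from the middle $dg$-term, leaving $e^{vg}\frac{e^{-b\chi(g)}-1}{b}dg$ plus a new $d\zeta$-term.
\item \eqref{FormalIntByParts} with $B'=e^{v\mc{S}(v\hbar\del_y)\tilde{\psi}^\vee(y)}$. This gives $e^{-v\psi^\vee(y)}\frac{e^{-\chi(g)\del_\eta}-1}{\del_\eta}e^{v\mc{S}(v\hbar\del_\eta)\tilde{\psi}^\vee(\eta)}dg$, which is still deformed.
\item \eqref{IntByPartsx} with $F=\frac{e^{-\chi(g)\del_y}-1}{v\del_y}\big(e^{v\mc{S}(v\hbar\del_y)\tilde{\psi}^\vee(y)}-e^{v\psi^\vee(y)}\big)$. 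This simultaneously cancels the $dy$-term and replaces $\tilde{\psi}^\vee$ by $\psi^\vee$ in the $dg$-term, at the price of an extra $ds$-term. This is why the final $dg$-term is undeformed.
\end{enumerate}
Your choice $F=e^{v\zeta}s$ does none of this. It also generates terms with prefactor $e^{-v\psi^\vee(y)}e^{v\zeta}$ rather than $e^{v(\zeta-x)}=e^{-v\psi^\vee(y)}e^{v(\zeta-g)}$, so it cannot produce the target term.

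\textbf{The $d\sigma\wedge d\zeta$ integral.} The integral $\int_{\sigma=0}^s\cdots d\sigma\wedge d\zeta$ does not come from reading $\frac{e^{s\del}-1}{\del}$ as $\int_0^s e^{\sigma\del}d\sigma$. The difference operators that actually occur shift by $\mc{S}(\hbar\del_s\del_\zeta)\tilde{\psi}(s)$ and $\mc{S}(\hbar\del_\zeta\del_\eta)\tilde\chi(\zeta)$, not by $s$.

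After the substitutions, $\tilde{\psi}^\vee=-\tilde{\psi}$ merges the two $ds$-terms; this is where that assumption is really used. One is left with a $ds$-term whose coefficient is $\frac{e^{\mc{S}(\hbar\del_s\del_\zeta)\tilde{\psi}(s)\del_\zeta}-1}{\del_\zeta}e^{v\zeta}\big(1-e^{\mc{S}(\hbar\del_\zeta\del_\eta)\tilde\chi(\zeta)\del_\eta}\big)e^{v\mc{S}(v\hbar\del_\eta)\tilde{\psi}^\vee(\eta)}$, evaluated at $\eta=s$.

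The missing idea is that $\sum_j(-d\frac{1}{dx})^j[v^j]e^{-vx}dI=0$ for every $I$. Take $I$ to be $\int_{\sigma=0}^s$ of this coefficient, with $s$ replaced by $\sigma$. Then the $ds$-part of $dI$ kills the leftover term. Its $d\zeta$-part, in which $\del_\zeta$ cancels the denominator, is exactly the $d\sigma\wedge d\zeta$ integral.

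\textbf{The term $-e^{v(\zeta-x)}s\,d\zeta$.} This needs no further integration by parts. It is the $-1$ in $e^{\mc{S}(\hbar\del_\zeta\del_\eta)\tilde\chi(\zeta)\del_\eta}-1$ inside that integrand, which collapses to $e^{v\zeta}$ by $\tilde{\psi}^\vee=-\tilde{\psi}$.
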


\begin{proof}
	We will use \cref{FormalIntByParts} twice. We first take, for the $b$-sum
	\begin{equation}
		A
		=
		e^{b(s-y)} [a^q ] 
		\frac{e^{a ( \mc{S}(a\hbar \del_s) \tilde{\psi}(s) - \psi (s) )} - 1}{a} ds \,
		\qquad
		B
		=
		e^{vg} e^{b \mc{S}(b\hbar \del_g) \tilde{\chi}(g) } .
	\end{equation}
	So
  \begin{equation}\label{FirstStepForn=1}
		\begin{split}
			\bar{\omega}_1 (z)
      &=
      \sum_{j,p=0}^\infty (-d \frac{1}{dx})^j [v^j b^p ] 
			\Bigg( - \frac{e^{v ( \mc{S}(v\hbar \del_{y}) \tilde{\psi}^\vee(y) - \psi^\vee (y) )} - 1}{v} dy
      \\
      &\quad
      + 
      e^{v ( \mc{S}(v\hbar \del_{y}) \tilde{\psi}^\vee(y) - x)} \big( -d \frac{1}{dy} \big)^p 
			\bigg( e^{vg} \frac{e^{b ( \mc{S}(b\hbar \del_g) \tilde{\chi}(g) - \chi (g) )} -1}{b} dg 
      \\
      &\qquad \quad
      - 
      e^{-b\chi(g)} \Big\lfloor_{\gamma \to g} \frac{e^{( \mc{S}(\hbar \del_s \del_\gamma) \tilde{\psi}(s) - \psi (s) )\del_\gamma} - 1}{\del_\gamma} e^{v\gamma} e^{b \mc{S}(b\hbar \del_\gamma) \tilde{\chi}(\gamma) }
			ds \bigg) \Bigg) .
		\end{split}
	\end{equation}
	We use \cref{IntByPartsy} with $ G = \frac{e^{-\psi (s) \del_g} - 1}{b \del_g} e^{vg} (e^{b \mc{S}(b \hbar \del_g) \tilde{\chi} (g)} - 1)$ to get
	\begin{equation}
    \begin{split}
        0
        &=
        \sum_{p=0}^\infty ( - d \frac{1}{dy} )^p [b^p] e^{-b \chi (g)} \Big(
        \frac{e^{-\psi (s) \del_g} - 1}{\del_g} e^{vg} (e^{b \mc{S}(b \hbar \del_g) \tilde{\chi} (g)} - 1) ds
        \\
        &\qquad \qquad
        +
        \frac{e^{-\psi (s) \del_g} - 1}{b} e^{vg} (e^{b \mc{S}(b \hbar \del_g) \tilde{\chi} (g)} - 1) dg
        -
        \psi'(s) \frac{e^{-\psi (s) \del_g}}{b} e^{vg} (e^{b \mc{S}(b \hbar \del_g) \tilde{\chi} (g)} - 1)ds \Big)
        \\
        &=
        \sum_{p=0}^\infty ( - d \frac{1}{dy} )^p [b^p] e^{-b \chi (g)} \Big(
        \frac{e^{-\psi (s) \del_g} - 1}{\del_g} e^{vg} (e^{b \mc{S}(b \hbar \del_g) \tilde{\chi} (g)} - 1) ds 
       	\\
       	&\qquad \qquad
        -
        e^{vg} \frac{e^{b \mc{S}(b \hbar \del_g) \tilde{\chi} (g)} - 1}{b} dg
        +
        e^{-\psi (s) \del_g}e^{vg} \frac{e^{b \mc{S}(b \hbar \del_g) \tilde{\chi} (g)} - 1}{b} d\zeta
        \Big) ,
    \end{split}
	\end{equation}
	and we add this to \cref{FirstStepForn=1} to find
	\begin{equation}
		\begin{split}
			\bar{\omega}_1 (z)
      &=
      \sum_{j,p=0}^\infty (-d \frac{1}{dx})^j [v^j b^p ] 
			\Bigg(
      - \frac{e^{v ( \mc{S}(v\hbar \del_{y}) \tilde{\psi}^\vee(y) - \psi^\vee (y) )} - 1}{v} dy
      \\
      &\quad
      + 
      e^{v ( \mc{S}(v\hbar \del_{y}) \tilde{\psi}^\vee(y) - x)} \big( -d \frac{1}{dy} \big)^p 
			\bigg( e^{vg} \frac{e^{-b  \chi (g)} -1}{b} dg 
			+
			e^{-b \chi (g)} e^{-\psi (s) \del_g}e^{vg} \frac{e^{b \mc{S}(b \hbar \del_g) \tilde{\chi} (g)} - 1}{b} d\zeta
      \\
      &\qquad \quad
      - 
      e^{-b\chi(g)} \Big\lfloor_{\gamma \to g} \Big(  \frac{e^{-\psi (s) \del_\gamma} - 1}{\del_\gamma} e^{v\gamma} 
      + e^{-\psi (s) \del_\gamma} \frac{e^{ \mc{S}(\hbar \del_s \del_\gamma) \tilde{\psi}(s) \del_\gamma} - 1}{\del_\gamma} e^{v\gamma} e^{b \mc{S}(b\hbar \del_\gamma) \tilde{\chi}(\gamma) }
			\Big) ds \bigg) \Bigg) .
		\end{split}
	\end{equation}
	Let us now again use \cref{FormalIntByParts} for
	\begin{equation}
		\begin{split}
			A'
			&=
			e^{-vx} 
			\bigg( e^{vg} \frac{ e^{-b \chi (g)} -1 }{b} dg 
    	+
    	e^{-b \chi (g)}
    	e^{-\psi (s) \del_g}e^{vg} \frac{e^{b \mc{S}(b \hbar \del_g) \tilde{\chi} (g)} - 1}{b} d\zeta
    	\\
			& \qquad
			+ 
			e^{-b \chi (g)} \Big\lfloor_{\gamma \to g} \Big(
      - \frac{e^{-\psi (s) \del_\gamma} - 1}{\del_\gamma} e^{v\gamma} 
      - 
    	e^{-\psi (s) \del_\gamma} \frac{e^{ \mc{S}(\hbar \del_s \del_\gamma) \tilde{\psi}(s) \del_\gamma} - 1}{\del_\gamma} e^{v\gamma} e^{b \mc{S}(b\hbar \del_\gamma) \tilde{\chi}(\gamma) }
      \Big)
			ds \bigg)
			\\
			B'
			&=
			e^{v \mc{S}(v\hbar \del_{y}) \tilde{\psi}^\vee(y) }
		\end{split}
	\end{equation}
	to obtain
	\begin{align}
			\bar{\omega}_1 (z)
      &=
      \sum_{j=0}^\infty (-d \frac{1}{dx})^j [v^j ] \Big\lfloor_{\substack{\gamma \to g \\ \eta \to y}}
			\Bigg(
      - \frac{e^{v ( \mc{S}(v\hbar \del_{y}) \tilde{\psi}^\vee(y) - \psi^\vee (y) )} - 1}{v} dy
      + 
      e^{-vx} 
			\bigg( e^{vg} \frac{ e^{- \chi (g) \del_\eta} -1 }{\del_\eta} dg 
			\nonumber
			\\
			&\qquad
			+
    	e^{- \chi (g) \del_\eta}
    	e^{-\psi (s) \del_\gamma}e^{v\gamma} \frac{e^{ \mc{S}( \hbar \del_\gamma \del_\eta) \tilde{\chi} (\gamma) \del_\eta} - 1}{\del_\eta} d\zeta
			+ 
			e^{- \chi (g) \del_\eta} \Big(
      - \frac{e^{-\psi (s) \del_\gamma} - 1}{\del_\gamma} e^{v\gamma} 
      \nonumber
      \\
      &\qquad
      - 
    	e^{-\psi (s) \del_\gamma} \frac{e^{ \mc{S}(\hbar \del_s \del_\gamma) \tilde{\psi}(s) \del_\gamma} - 1}{\del_\gamma} e^{v\gamma} e^{ \mc{S}(\hbar \del_\gamma \del_\eta) \tilde{\chi}(\gamma) \del_\eta }
      \Big)
			ds \bigg)
			e^{v \mc{S}(v\hbar \del_{\eta}) \tilde{\psi}^\vee(\eta) }
      \Bigg) 
      \nonumber
      \\
      &=
      \sum_{j=0}^\infty (-d \frac{1}{dx})^j [v^j ] \Big\lfloor_{\substack{\gamma \to g \\ \eta \to y}}
			\Bigg(
      - \frac{e^{v ( \mc{S}(v\hbar \del_{y}) \tilde{\psi}^\vee(y) - \psi^\vee (y) )} - 1}{v} dy
      + 
      e^{-v \psi^\vee (y)} \frac{ e^{- \chi (g) \del_\eta} -1 }{\del_\eta} e^{v \mc{S}(v\hbar \del_{\eta}) \tilde{\psi}^\vee(\eta) } dg
      \nonumber
			\\
			&\quad
			+
    	e^{-vx} e^{- \chi (g) \del_\eta}
			\bigg( 
    	e^{-\psi (s) \del_\gamma}e^{v\gamma} \frac{e^{ \mc{S}( \hbar \del_\gamma \del_\eta) \tilde{\chi} (\gamma) \del_\eta} - 1}{\del_\eta} e^{v \mc{S}(v\hbar \del_{\eta}) \tilde{\psi}^\vee(\eta) } d\zeta
    	\label{SecondStepForn=1}
    	\\
    	&\qquad
			- 
			\Big(
      \frac{e^{-\psi (s) \del_\gamma} - 1}{\del_\gamma} e^{v\gamma} 
      + 
    	e^{-\psi (s) \del_\gamma} \frac{e^{ \mc{S}(\hbar \del_s \del_\gamma) \tilde{\psi}(s) \del_\gamma} - 1}{\del_\gamma} e^{v\gamma} e^{ \mc{S}(\hbar \del_\gamma \del_\eta) \tilde{\chi}(\gamma) \del_\eta }
      \Big) e^{v \mc{S}(v\hbar \del_{\eta}) \tilde{\psi}^\vee(\eta) }
			ds \bigg)
      \Bigg) .
      \nonumber
	\end{align}
	We now use \cref{IntByPartsx} with $ F = \frac{e^{-\chi (g) \del_y} - 1}{v \del_y} (e^{v \mc{S}(v \hbar \del_y) \tilde{\psi}^\vee (y)} - e^{v \psi^\vee (y)})$ to get
	\begin{equation}
    \begin{split}
        0
        &=
        \sum_{j=0}^\infty ( - d \frac{1}{dx} )^j [v^j] e^{-v \psi^\vee (y)} \Big(
        \frac{e^{-\chi (g) \del_y} - 1}{\del_y} (e^{v \mc{S}(v \hbar \del_y) \tilde{\psi}^\vee (y)} - e^{v \psi^\vee (y)}) dg \\
        &\qquad \qquad
        -
        \frac{e^{v \mc{S}(v \hbar \del_y) \tilde{\psi}^\vee (y)} - e^{v \psi^\vee (y)}}{v} dy
        +
        e^{-\chi (g) \del_y} \frac{e^{v \mc{S}(v \hbar \del_y) \tilde{\psi}^\vee (y)} - e^{v \psi^\vee (y)}}{v} ds
        \Big)
    \end{split}
	\end{equation}
	and we subtract this from \cref{SecondStepForn=1} to get
	\begin{equation}
		\begin{split}
			\bar{\omega}_1 (z)
            &=
            \sum_{j=0}^\infty (-d \frac{1}{dx})^j [v^j ]
			\Bigg(
            e^{-v \psi^\vee (y)} \frac{ e^{- \chi (g) \del_y} -1 }{\del_y} e^{v \psi^\vee(y) } dg
			\\
			&\quad
			+
    	    e^{-vx} \Big\lfloor_{\substack{\gamma \to g \\ \eta \to y}} e^{- \chi (g) \del_\eta}
			\bigg( 
    	    e^{-\psi (s) \del_\gamma}e^{v\gamma} \frac{e^{ \mc{S}( \hbar \del_\gamma \del_\eta) \tilde{\chi} (\gamma) \del_\eta} - 1}{\del_\eta} e^{v \mc{S}(v\hbar \del_{\eta}) \tilde{\psi}^\vee(\eta) } d\zeta
    	    \\
    	    &\qquad
    	    +
            \frac{ e^{\psi^\vee (\eta) \del_\gamma} -e^{-\psi (s) \del_\gamma} e^{ \mc{S}(\hbar \del_{\eta} \del_\gamma) \tilde{\psi}^\vee(\eta) \del_\gamma} }{\del_\gamma} e^{v \gamma} ds
            \\
            &\qquad
            - 
    	    e^{-\psi (s) \del_\gamma} \frac{e^{ \mc{S}(\hbar \del_s \del_\gamma) \tilde{\psi}(s) \del_\gamma} - 1}{\del_\gamma} e^{v\gamma} e^{ \mc{S}(\hbar \del_\gamma \del_\eta) \tilde{\chi}(\gamma) \del_\eta }
            e^{v \mc{S}(v\hbar \del_{\eta}) \tilde{\psi}^\vee(\eta) }
			ds \bigg)
            \Bigg) .
		\end{split}
	\end{equation}

	Using that $ \psi^\vee = -\psi$, so that
	\begin{align*}
		\Big\lfloor_{\eta \to y} e^{-\chi(g) \del_\eta} e^{ \psi^\vee (\eta) \del_\gamma} f
		&=
		\Big\lfloor_{\eta \to y} e^{\psi^\vee (\eta- \chi(g)) \del_\gamma} e^{-\chi(g)\del_\eta} f
		=
		\Big\lfloor_{\eta \to y} e^{- \psi (s) \del_\gamma} e^{-\chi(g)\del_\eta} f
		=
		\Big\lfloor_{\eta \to y} e^{-\chi(g)\del_\eta} e^{- \psi (s) \del_\gamma} f,
		\\
		\Big\lfloor_{\substack{\gamma \to g\\ \eta \to y}} e^{-\chi(g)\del_\eta} e^{- \psi (s) \del_\gamma} f
		&=
		\Big\lfloor_{\substack{\gamma \to g - \psi(s) \\ \eta \to y - \chi(g)}} f
		=
		\Big\lfloor_{\substack{\gamma \to \zeta \\ \eta \to s}} f,
	\end{align*}
	we get, also using $ \tilde{\psi}^\vee = - \tilde{\psi}$ in the second equality
	\begin{align*}
		\bar{\omega}_1 (z)
        &=
        \sum_{j=0}^\infty (-d \frac{1}{dx})^j [v^j ] 
		\Bigg( 
        e^{-v \psi^\vee (y)}  \frac{e^{-\chi (g) \del_y} - 1}{\del_y} e^{v \psi^\vee (y )} dg 
        + 
        e^{-vx}
        \Big\lfloor_{\substack{\gamma \to \zeta \\ \eta \to s}} \bigg( 
        e^{v\gamma} \frac{e^{ \mc{S}( \hbar \del_\gamma \del_\eta) \tilde{\chi} (\gamma) \del_\eta} - 1}{\del_\eta} e^{v \mc{S}(v\hbar \del_{\eta}) \tilde{\psi}^\vee(\eta) } d\zeta
        \\
		& \qquad
		+
		\Big(
		\frac{ 1 - e^{ \mc{S}(\hbar \del_{\eta} \del_\gamma) \tilde{\psi}^\vee(\eta) \del_\gamma} }{\del_\gamma} e^{v\gamma}
        - 
        \frac{e^{ \mc{S}(\hbar \del_s \del_\gamma) \tilde{\psi}(s) \del_\gamma} - 1}{\del_\gamma} e^{v\gamma} e^{ \mc{S}(\hbar \del_\gamma \del_\eta) \tilde{\chi}(\gamma) \del_\eta }
		e^{v \mc{S}(v\hbar \del_{\eta}) \tilde{\psi}^\vee(\eta) }
        \Big)
 		ds \bigg) \Bigg) 
 		\tag{\stepcounter{equation}\theequation}
        \\
        &=
        \sum_{j=0}^\infty (-d \frac{1}{dx})^j [v^j ] 
		\Bigg( 
        e^{-v \psi^\vee (y)}  \frac{e^{-\chi (g) \del_y} - 1}{\del_y} e^{v \psi^\vee (y )} dg 
        \\
        &\quad
        + 
        e^{-vx}
        \bigg( 
        \int_{\eta = 0}^s  e^{v\zeta} (e^{ \mc{S}(\hbar \del_\zeta \del_\eta) \tilde{\chi}(\zeta) \del_\eta } - 1)
		e^{v \mc{S}(v\hbar \del_{\eta}) \tilde{\psi}^\vee(\eta) } d\eta \wedge d\zeta
        \\
		& \qquad
		+
		\Big\lfloor_{\eta \to s}
        \frac{e^{ \mc{S}(\hbar \del_s \del_\zeta  ) \tilde{\psi}(s) \del_\zeta } - 1}{\del_\zeta } e^{v\zeta} (1 - e^{ \mc{S}(\hbar \del_\zeta \del_\eta) \tilde{\chi}(\zeta) \del_\eta } )
		e^{v \mc{S}(v\hbar \del_{\eta}) \tilde{\psi}^\vee(\eta) }
 		ds \bigg) \Bigg) .
	\end{align*}
	Now, we use that $\sum_{j=0}^\infty (-d \frac{1}{dx})^j [v^j ] e^{-vx} dI = 0 $ for any $ I$, for the function
	\begin{equation}
		I
		=
		\int_{\sigma=0}^{s} \Big\lfloor_{\eta \to \sigma} \frac{e^{ \mc{S}(\hbar \del_\sigma \del_\zeta  ) \tilde{\psi}(\sigma) \del_\zeta} - 1}{\del_\zeta } e^{v \zeta} (1 - e^{ \mc{S}(\hbar \del_\zeta \del_\eta) \tilde{\chi}(\zeta) \del_\eta } )
			e^{v \mc{S}(v\hbar \del_{\eta}) \tilde{\psi}^\vee(\eta) }
 			d\sigma
	\end{equation}
	whose derivative is
	\begin{equation*}
		\begin{split}
			dI
			&=
			\int_{\sigma = 0}^s \Big\lfloor_{\eta \to \sigma} (e^{ \mc{S}(\hbar \del_\sigma \del_\zeta) \tilde{\psi}(\sigma) \del_\zeta} - 1) e^{v\zeta} (1 - e^{ \mc{S}(\hbar \del_\zeta \del_\eta) \tilde{\chi}(\zeta) \del_\eta } )
			e^{v \mc{S}(v\hbar \del_{\eta}) \tilde{\psi}^\vee(\eta) } d\sigma \wedge d\zeta
			\\
			&\quad
			+
			\Big\lfloor_{\eta \to s} \frac{e^{ \mc{S}(\hbar \del_s \del_\zeta) \tilde{\psi}(s) \del_\zeta} - 1}{\del_\zeta} e^{v\zeta} (1 - e^{ \mc{S}(\hbar \del_\zeta \del_\eta) \tilde{\chi}(\zeta) \del_\eta } )
			e^{v \mc{S}(v\hbar \del_{\eta}) \tilde{\psi}^\vee(\eta) } ds .
		\end{split}
	\end{equation*}
	This yields
	\begin{align*}
			\bar{\omega}_1 (z)
      &=
      \sum_{j=0}^\infty (-d \frac{1}{dx})^j [v^j ] 
			\Bigg( 
      e^{-v \psi^\vee (y)}  \frac{e^{-\chi (g) \del_y} - 1}{\del_y} e^{v \psi^\vee (y )} dg 
      \\
      &\quad
      + 
      e^{-vx}
      \int_{\sigma = 0}^s \Big\lfloor_{\eta \to \sigma} e^{ \mc{S}(\hbar \del_\sigma \del_\zeta) \tilde{\psi}(\sigma) \del_\zeta} e^{v\zeta} (e^{ \mc{S}(\hbar \del_\zeta \del_\eta) \tilde{\chi}(\zeta) \del_\eta } -1  )
			e^{v \mc{S}(v\hbar \del_{\eta}) \tilde{\psi}^\vee(\eta) } d\sigma \wedge d\zeta
      \Bigg) 
      \\
      &=
      \sum_{j=0}^\infty (-d \frac{1}{dx})^j [v^j ] 
			\Bigg( 
      e^{-v \psi^\vee (y)}  \frac{e^{-\chi (g) \del_y} - 1}{\del_y} e^{v \psi^\vee (y )} dg
      - 
      e^{-vx}
      \int_{\sigma = 0}^s e^{v\zeta} d\sigma \wedge d\zeta
      \\
      &\quad
      + 
      e^{-vx}
      \int_{\sigma = 0}^s \Big\lfloor_{\eta \to \sigma} e^{ \mc{S}(\hbar \del_\sigma \del_\zeta) \tilde{\psi}(\sigma) \del_\zeta} e^{v\zeta} e^{ \mc{S}(\hbar \del_\zeta \del_\eta) \tilde{\chi}(\zeta) \del_\eta } 
			e^{v \mc{S}(v\hbar \del_{\eta}) \tilde{\psi}^\vee(\eta) } d\sigma \wedge d\zeta
      \Bigg) . \qedhere
	\end{align*}
\end{proof}

\subsection{The final proof}

\begin{proof}[Proof of \cref{LeakyTRThm}]
	We want to prove that the $ \omega_{g,n}$ of \cref{LeakyMultidifferentials} satisfy topological recursion on the spectral curve \eqref{SpectralCurveParam}. We know that the $ \tilde{\omega}_{g,n}$ of \cref{DualitiesPreserveTR,ExtraTermForn=1} do, so we want to prove that these two collections of multidifferentials are equal. We already know that to prove $ \omega_{g,n} = \tilde{\omega}_{g,n}$, it is sufficient to prove that $ T = \tilde{T}$, and that $ \bar{\omega}_1(z) $ equals the extra term of \cref{FormulaForn=1}. For the equality $ T = \tilde{T}$, by \cref{VrIsMultiplication,SubstitutionOfDerivatives}, we see that it suffices to prove that for any $ t \geq 0$

	\begin{equation}\label{LAsExponentials}
		\begin{split}
			&e^{v (\zeta - x)}
			\del_s^t \Big\lfloor_{\bar{\sigma} \to s}
			e^{ (\del_\zeta +v) \mc{S}( \hbar (\del_\zeta + v) \del_{\bar{\sigma}} )\tilde{\psi} (\bar{\sigma}) }
			e^{ \del_s \mc{S}( \hbar \del_\zeta \del_s ) \tilde{\chi} (\zeta) }
			e^{v \mc{S}(v \hbar \del_s) \tilde{\psi}^\vee (s)}
			\\
			&\qquad =
			L_t(-v, z, s, \hbar)
			=
			\bar{\phi}_{-v}(z,s)^{-1} \del_s^t \phi_{-v}(z, s, 1)
			=
			\Big( \frac{z}{e^x} \Big)^{v} \del_s^t \phi_{-v}(z, s, 1) ,
		\end{split}
	\end{equation}
	where $s$ is still considered as formally independent of $ z$.\par
	Because $ e^{v(\zeta - x)} = \big( \frac{z}{e^x} \big)^v$, it suffices to prove
	\begin{equation}\label{phiAsExponentials}
		\phi_{-v}(z, s, 1)
		=
		\Big\lfloor_{\bar{\sigma} \to s}
		e^{ (\del_\zeta +v) \mc{S}( \hbar (\del_\zeta + v) \del_{\bar{\sigma}} )\tilde{\psi} (\bar{\sigma}) }
		e^{ \del_s \mc{S}( \hbar \del_\zeta \del_s ) \tilde{\chi} (\zeta) }
		e^{v \mc{S}(v \hbar \del_s) \tilde{\psi}^\vee (s)} .
	\end{equation}

	The right side is given by \cref{ExplicitDualityChain}, while the left is given by \cref{phi_m,phi_mn}:

	\begin{equation*}
		\begin{split}
			\phi_{-v} ( z, s, 1) 
			&=
			\sum_{n=0}^\infty z^{kn} \sum_{i=0}^n \frac{(-1)^{n-i}}{i!(n-i)!} \prod_{j=0}^{i-1} w \big( \frac{s}{\hbar} + \frac{-v + (2j+1-n)k}{2} \big) \prod_{j=i}^{n-1} w \big( \frac{s}{\hbar} + \frac{v + (2j+1-n)k}{2} \big) .
		\end{split}
	\end{equation*}

	Using that $ w(\frac{s}{\hbar}) = \frac{1}{\hbar}\bar{w}(s)$ and $ z = e^\zeta $, we get the desired equality.\par
	For the extra term for $ n =1$, we consider \cref{LAsExponentials} for $ t = 0$:
	\begin{equation}
		\begin{split}
			L_0(-v, z, s, \hbar)
			&=
			e^{v(\zeta - x)} \Big\lfloor_{\bar{\sigma} \to s}
			e^{ (\del_\zeta +v) \mc{S}( \hbar (\del_\zeta + v) \del_{\bar{\sigma}} )\tilde{\psi} (\bar{\sigma}) }
			e^{ \del_s \mc{S}( \hbar \del_\zeta \del_s ) \tilde{\chi} (\zeta) }
			e^{v \mc{S}(v \hbar \del_s) \tilde{\psi}^\vee (s)} 
			\\
			&=
			e^{-v x}
			\Big\lfloor_{\substack{\gamma \to \zeta \\ \sigma \to s}} \Big\lfloor_{\eta \to \sigma}
			e^{ \del_\gamma \mc{S}( \hbar \del_\gamma \del_\sigma )\tilde{\psi} (\sigma ) }
			e^{v \gamma} e^{ \del_\eta \mc{S}( \hbar \del_\gamma \del_\eta)\tilde{\chi} (\gamma) }
			e^{v \mc{S}(v \hbar \del_\eta) \tilde{\psi}^\vee (\eta)}
		\end{split}
	\end{equation}
	so that indeed
	\begin{equation}
		\int_{s = 0}^{S(z)} \!\! L_0 (-v, z, s, \hbar ) ds \wedge d\zeta
		=
		e^{-xv} \int_{\sigma=0}^{s} \Big\lfloor_{\eta \to \sigma} \!\!
			e^{ \del_\zeta \mc{S}( \hbar \del_\zeta \del_\sigma )\tilde{\psi} (\sigma ) }
			e^{v \zeta} e^{ \del_\eta \mc{S}( \hbar \del_\zeta \del_\eta)\tilde{\chi} (\zeta) }
			e^{v \mc{S}(v \hbar \del_\eta) \tilde{\psi}^\vee (\eta)} d\sigma \wedge d\zeta .
	\end{equation}
\end{proof}

\printbibliography

\end{document}